\theoremstyle{plain} 
\newtheorem{prop0}{Proposition}[section]
\newtheorem{lem0}[prop0]{Lemma}
\newtheorem{thm0}[prop0]{Theorem}
\newtheorem{conj0}[prop0]{Conjecture}
\theoremstyle{definition}
\newtheorem{rem0}[prop0]{Remark}
\theoremstyle{plain} 
\newtheorem{prop}{Proposition}[subsection]
\newtheorem{conj}[prop]{Conjecture}
\newtheorem{lem}[prop]{Lemma}
\newtheorem{thm}[prop]{Theorem}
\newtheorem{cor}[prop]{Corollary}
\theoremstyle{definition}
\newtheorem{definit}[prop]{Definition}
\newtheorem{ex}[prop]{Example}
\newtheorem{rem}[prop]{Remark}
\def\ilim#1{\displaystyle \lim_{\stackrel{\longrightarrow}{#1}}}
\def\plim#1{\displaystyle \lim_{\stackrel{\longleftarrow}{#1}}}
\def\varddots{\mathinner{\raise7pt\vbox{\kern3pt\hbox{.}}\mkern1mu\smash{\raise4pt\hbox{.}}\mkern1mu\smash{\raise1pt\hbox{.}}}}
\DeclareMathOperator{\Spec}{Spec}
\DeclareMathOperator{\Frob}{Frob}
\DeclareMathOperator{\Ker}{Ker}
\DeclareMathOperator{\ord}{ord}
\DeclareMathOperator{\Ord}{Ord}
\DeclareMathOperator{\alg}{alg}
\DeclareMathOperator{\soc}{soc}
\DeclareMathOperator{\diag}{diag}
\DeclareMathOperator{\End}{End}
\DeclareMathOperator{\Ind}{Ind}
\DeclareMathOperator{\ind}{c-Ind}
\DeclareMathOperator{\Hom}{Hom}
\DeclareMathOperator{\Gal}{Gal}
\DeclareMathOperator{\GL}{GL}
\DeclareMathOperator{\SL}{SL}
\DeclareMathOperator{\nr}{\mathrm{nr}}
\DeclareMathOperator{\ab}{\mathrm{ab}}
\newcommand{\LL}{L^{\!\otimes}}
\newcommand{\LLbar}{\overline L^{\otimes}}
\newcommand{\Lbar}{\overline L}
\newcommand{\oE}{{\mathcal O}_E}
\newcommand{\oFF}{{\mathcal O}_{\!F^+}}
\newcommand{\oFFv}{{\mathcal O}_{\!F_v^+}}
\newcommand{\oFFp}{{\mathcal O}_{F^+\!,p}}
\newcommand{\oF}{{\mathcal O}_{\!F}}
\newcommand{\oFw}{{\mathcal O}_{\!F_w}}
\newcommand{\pE}{\varpi_E}
\newcommand{\Z}{{\mathbb Z}}
\newcommand{\Q}{{\mathbb Q}}
\newcommand{\R}{{\mathbb R}}
\newcommand{\Qp}{\Q_{p}}
\newcommand{\Zp}{\Z_{p}}
\newcommand{\Fp}{{\mathbb F}_{p}}
\newcommand{\Qpbar}{{\overline{\mathbb Q}}_p}
\newcommand{\rhobar}{\overline\rho}
\newcommand{\rbar}{\overline{r}}
\newcommand{\Gp}{{\mathrm{GL}}_2(\Qp)}
\newcommand{\Av}{{\mathbb A}_{F^+}^{\infty,v}}
\newcommand{\Ap}{{\mathbb A}_{F^+}^{\infty,p}}
\newcommand{\A}{{\mathbb A}_{F^+}^{\infty}}
\newcommand{\GAv}{G(\Av)}
\newcommand{\GAp}{G(\Ap)}
\newcommand{\GA}{G(\A)}
\newcommand{\gp}{{\Gal}(\Qpbar/\Qp)}
\newcommand{\gF}{{\Gal}(\overline F/F)}
\newcommand{\TT}{{\mathbb T}}
\newcommand{\sss}{{\rm ss}}
\newcommand{\m}{{\mathfrak m}}
\newcommand{\p}{{\mathfrak p}}
\newcommand{\ip}{{\Gal}(\Qpbar/\Qp^{\rm nr})}
\newcommand{\smat}[1]{\left( \begin{smallmatrix} #1 \end{smallmatrix} \right)}
\renewcommand{\emptyset}{\varnothing}
\newcommand{\congto}{\xrightarrow{\,\sim\,}}
\newcommand{\bs}{\backslash}
\renewcommand{\o}[1]{\overline{#1}}
\newcommand{\wh}[1]{\widehat{#1}}
\renewcommand{\(}{\textup{(}}
\renewcommand{\)}{\textup{)}}
\title{Ordinary representations of $G(\Qp)$ and fundamental algebraic representations}
\author{Christophe Breuil\footnote{C.\ B.\ was supported by the CNRS and the project Th\'eHopaD ANR-2011-BS01-005.}\\
C.N.R.S.\\
Universit\'e Paris-Sud\\
B\^atiment 425\\
91405 Orsay Cedex\\
France
\and
Florian Herzig\footnote{F.\ H.\ was partially supported by NSF grant DMS-0902044, NSERC grant RGPIN 402885, and a Sloan Fellowship.}\\
Dept.\ of Mathematics\\
University of Toronto\\
40 St.\ George St., BA6290\\
Toronto, ON M5S 2E4\\
Canada}
\date{ }
\begin{document} 

\maketitle

\setcounter{tocdepth}{2}


\tableofcontents

\section{Introduction}

The $p$-adic Langlands programme for the group $\Gp$ is now well understood, both from a local and a global point of view (\cite{Co}, \cite{Pa2}, \cite{CDP}, \cite{Em4}, see \cite{Br3} for an overview). \ In \ particular, \ to \ (essentially) \ any \ continuous \ representation $\rho:\gp\rightarrow \GL_2(E)$ (where $E$ is a finite extension of $\Qp$) one can associate a unitary continuous representation $\Pi(\rho)$ of $\Gp$ on a $p$-adic Banach space over $E$. Likewise, to (essentially) any continuous representation $\rhobar:\gp\rightarrow \GL_2(k_E)$ (where $k_E$ is the residue field of $E$), one can associate a smooth representation $\Pi(\rhobar)$ of $\Gp$ over $k_E$. It is moreover expected that the $p$-adic Langlands correspondence -- if there is one -- for any other group beyond $\Gp$ (e.g.\ $\GL_2(L)$ or $\GL_3(\Qp)$ or ${\rm GSp}_{4}(\Qp)$) will be significantly more involved than for $\Gp$ (see e.g.\ \cite[\S3, \S4]{Br3} or \cite{Sc}). The aim of the present work is nevertheless to start to investigate the possible shape of the representation(s) $\Pi(\rho)$ and $\Pi(\rhobar)$ when $\rho$, $\rhobar$ take values in split reductive groups other than $\GL_2$.

Let $G$ be a split connected reductive algebraic group over $\Qp$ with dual $\widehat G$, $E$ a finite extension of $\Qp$ and $\rho:\gp\rightarrow \widehat G(E)$ a continuous representation. Assume that both $G$ and $\widehat G$ have a connected centre and that (up to conjugation) $\rho$ takes values in a Borel subgroup $\widehat B(E)$ of $\widehat G(E)$. In this setting, at least when $\rho$ is sufficiently generic, we define a unitary continuous representation $\Pi(\rho)^{\rm ord}$ that we expect to be the maximal closed subrepresentation of $\Pi(\rho)$ whose constituents are subquotients of unitary continuous principal series of $G(\Qp)$ over $E$. We define an analogous smooth representation $\Pi(\rhobar)^{\rm ord}$ when $G$ is a split connected reductive algebraic group over $\Zp$ and $\rhobar:\gp\rightarrow \widehat B(k_E)$. When $G=\GL_n$ and $\rhobar$ comes from some (automorphic) global Galois representation $\rbar$, we moreover prove using results of Gee and Geraghty (\cite{Ge}, \cite{GG}) that (under suitable assumptions) the $\GL_n(\Qp)$-representation $\Pi(\rhobar)^{\rm ord}$ occurs in the $\rbar$-part of spaces of mod $p$ automorphic forms for certain definite unitary groups which are outer forms of $\GL_n$. We only consider split reductive $G$ in this paper because, when $G$ is not split, e.g.\ $G={\rm Res}_{L/\Qp}\GL_2$, the results of \cite{BP} (see the remarks at the end of section 19 of {\it loc.\ cit.}) as well as \cite[Thm.\ 1.2]{Ha1} suggest that the representations $\Pi(\rho)^{\rm ord}$, $\Pi(\rhobar)^{\rm ord}$ are generically semi-simple and thus not very interesting.

We now try to motivate the idea underlying the construction of $\Pi(\rho)^{\rm ord}$ and $\Pi(\rhobar)^{\rm ord}$. One crucial ingredient in the $p$-adic Langlands correspondence for $\Gp$ is the construction by Colmez (\cite{Co}) of a covariant exact functor $F$ from unitary continuous (resp.\ smooth) representations of $\Gp$ over $E$ (resp.\ $k_E$) with suitable properties to finite-dimensional representations of $\gp$ over $E$ (resp.\ $k_E$) sending $\Pi(\rho)$ (resp.\ $\Pi(\rhobar)$) to $\rho$ (resp.\ $\rhobar$), see \cite[\S3.4]{Em4} for more details in the ordinary case. One may hope that $F$ has an analogue when $\GL_2$ is replaced by $G$ as above, see \cite{SV} for one tentative construction (see also the recent \cite {Br7}). Denoting by $L^{\otimes}\circ\rho$ the tensor product of all the fundamental algebraic representations of $\widehat G(E)$ composed with $\rho$, evidence coming from various sources (explicit computations with Serre weights, locally analytic vectors, the results of \cite{Ha1}, \cite{Ha2}, etc.) suggest that the internal structure of the representation $\Pi(\rho)$ (its constituents and socle filtration) should somehow ``reflect'' the internal structure of the Galois representation $L^{\otimes}\circ\rho$, and that likewise $\Pi(\rho)^{\rm ord}$ should reflect the structure of an explicit subrepresentation $(L^{\otimes})^{\rm ord}\circ\rho$ of $L^{\otimes}\circ\rho$ (see below). Note that if $\rho$ is the restriction of a global automorphic Galois representation $r$, it is natural to expect that the $r$-part of spaces of $p$-adic/mod $p$ automorphic forms is a direct sum of finitely many copies of $\Pi(\rho)$. (These globally defined spaces are denoted by $\widehat S(U^p,E)[\p^{\Sigma}]$ and $\widehat S(U^p,E)[\m^{\Sigma}]$ in section \ref{globconj}.) This would be a (weak) analogue of Emerton's local-global compatibility when $n=2$ (\cite{Em4}). Therefore, although we don't have $\Pi(\rho)$ and $\Pi(\rhobar)$, if a functor $F$ exists when $n>2$, one may ask whether $F(\Pi(\rho))=L^{\otimes}\circ\rho$ and $F(\Pi(\rhobar))=L^{\otimes}\circ\rhobar$ (see \S\ref{ques} in the text for more details). Let us just emphasize that, although we don't know $\Pi(\rho)$ or $\Pi(\rhobar)$, we can {\it use} $L^{\otimes}\circ\rho$ as a guide to make {\it predictions} about $\Pi(\rho)$ or $\Pi(\rhobar)$ and then test these predictions on cohomology. This is what we start to do in this paper in the case where $\rho$ (resp.\ $\rhobar$) takes values in $\widehat B(E)$ (resp.\ $\widehat B(k_E)$) and is sufficiently generic. But we do hope that the consideration of $L^{\otimes}\circ\rho$ will also be of importance for Galois representations which are not Borel-valued.

In light of the expected local-global compatibility for ${\rm GL}_n$, our main theorems suggest that $\Pi(\rho)^{\rm ord}$ is contained in $\Pi(\rho)$ and $\Pi(\rhobar)^{\rm ord}$ is contained in $\Pi(\rhobar)$, as expected. Moreover, the expected local-global compatibility also gives evidence for our expectation that $\Pi(\rho)^{\rm ord}$ is distinct from $\Pi(\rho)$ and $\Pi(\rhobar)^{\rm ord}$ is distinct from $\Pi(\rhobar)$ for $n>2$. In the mod $p$ case, if $\rhobar$ is semisimple and sufficiently generic, then $\Pi(\rhobar)^{\rm ord}$ contains $n!$ irreducible ${\rm GL}_n(\Zp)$-subrepresentations up to isomorphism, whereas $\Pi(\rhobar)$ should contain a lot more by the Serre-type conjecture of \cite{He1} (see also \cite{EGH}). In the $p$-adic case, suppose for instance that $n=3$ and that $\rho$ is crystabelian as in the second example of \cite[\S6.2]{Br6} (see the case $a_1a_2a_3\ne 0$ and $a_1a_2=a_3$ of {\it loc.\ cit.}). Then \cite[Thms.\ 9.3, 9.10]{Br5} and \cite[Lem.\ 8.8]{Br4} show that the locally analytic representation $C(s_\alpha,s_\alpha s_\beta)$ of \cite[\S6.2]{Br6} should occur as a subrepresentation of $\Pi(\rho)$ but cannot occur as a subrepresentation of $\Pi(\rho)^{\rm ord}$ (see also the forthcoming \cite{HM} for an analogous result in the mod $p$ case).

Let us now describe the main results of the paper.

In section \ref{LL}, for any closed subgroup $\widehat B_C$ of $\widehat B$ containing the maximal torus, we completely describe the maximal $\widehat B_C$-subrepresentation $(L^{\otimes}\vert_{\widehat B_C})^{\rm ord}$ of $L^{\otimes}\vert_{\widehat B_C}$ such that all its weights are in the orbit under the Weyl group $W$ of the highest weight $\lambda$ of $L^{\otimes}$. One finds a direct sum of indecomposable $\widehat B_C$-representations:
$$(\LL\vert_{\widehat B_C})^{\rm ord}=\oplus_{w\in W_C}\LL_{C,w},$$
where $W_C:=\{w\in W : \dot{w}^{-1}\widehat B_C\dot{w}\subseteq \widehat B\}$.

Let $\rho:\gp\rightarrow \widehat B(E)$ and $\widehat\chi_{\rho}:\gp\buildrel\rho\over \rightarrow \widehat B(E)\twoheadrightarrow \widehat T(E)$. In section \ref{ord}, we use this description to associate to any sufficiently generic such $\rho$ an admissible unitary continuous representation $\Pi(\rho)^{\rm ord}$ of $G(\Qp)$ over $E$ which is a successive extension of finitely many unitary continuous principal series. First, we associate to $\rho$ a closed subgroup $\widehat B_{C_\rho}$ of $\widehat B$ (defined as the smallest Zariski closed subgroup of $\widehat B$ such that $\rho$ takes values in $\widehat B_{C_\rho}(E)$). We may assume (after conjugation) that $\widehat B_{C_{\rho}}$ is minimal among all $\widehat B(E)$-conjugates of $\rho$. Then we define $\Pi(\rho)^{\rm ord}$ as
$$\Pi(\rho)^{\rm ord}:=\oplus_{w\in W_{C_\rho}}\Pi(\rho)_{C_\rho,w},$$
where each $\Pi(\rho)_{C_\rho,w}$ is a successive extension of unitary continuous principal series of $G(\Qp)$ over $E$.
More precisely, $\Pi(\rho)_{C_\rho,w}$ mimics the structure of $\LL_{C_{\rho},w}$ in the following way: each time the weight $w'(\lambda)$ appears in $(L^{\otimes}\vert_{\widehat B_{C_\rho}})^{\rm ord}$ (for some $w'\in W$), the continuous principal series $I(\rho)_{w'}:=\big(\Ind_{B^-(\Qp)}^{G(\Qp)}{w'}^{-1}(\chi_\rho)\cdot(\varepsilon^{-1}\circ\theta)\big)^{{\mathcal C}^0}$ appears ``at the same place'' in $\Pi(\rho)_{C_\rho,w}$, where $B^-\subseteq G$ is the Borel opposite to the dual of $\widehat B$, $\chi_\rho: B^-(\Qp)\twoheadrightarrow T(\Qp)\rightarrow E^\times$ corresponds to $\widehat\chi_{\rho}$ by the local correspondence for tori and $\varepsilon^{-1}\circ\theta$ is a certain twist. The minimality assumption on $\widehat B_{C_{\rho}}$ guarantees that $\Pi(\rho)^{\rm ord}$ only depends on the $\widehat G(E)$-conjugacy class of $\rho$.
This construction also works in characteristic $p$ for a generic $\rhobar:\gp\rightarrow \widehat B(k_E)$ (\S\ref{variant1}, \S\ref{variant2}) and produces an admissible smooth $G(\Qp)$-representation $\Pi(\rhobar)^{\rm ord}=\oplus_{w\in W_{C_{\rhobar}}}\Pi(\rhobar)_{C_{\rhobar},w}$, where each $\Pi(\rhobar)_{C_{\rhobar},w}$ is indecomposable and a successive extension of finitely many smooth principal series.

In section \ref{mainsection}, we prove that the representations $\Pi(\rhobar)_{C_{\rhobar},w}$ all occur in some spaces of mod $p$ automorphic forms, and, when $W_{C_\rho}=\{1\}$, that the representation $\Pi(\rho)^{\rm ord}$ occurs in some spaces of $p$-adic automorphic forms. More precisely, let $F^+$ be a totally real field and $G/F^+$ be a totally definite unitary group which is an outer form of $\GL_n/F^+$ that splits over a totally imaginary quadratic extension $F$ of $F^+$ (and is quasi-split at all finite places). We assume that $F/F^+$ is unramified at all finite places and that $p$ splits completely in $F$. If $U^p\subseteq \GAp$ is a compact open subgroup, we have the $k_E$-vector space
$$S(U^p,k_E):=\{f:G(F^+)\backslash \GA/U^p\rightarrow k_E,\ f\ {\rm locally\ constant}\}$$
of mod $p$ automorphic forms of level $U^p$ which is equipped with a smooth action of $G(F^+\otimes_{\Q}\Qp)\cong \prod_{v\vert p}\GL_n(\Qp)$ and with a commuting action of a certain Hecke algebra $\TT$. If $\rbar:\gF\rightarrow {\GL}_n(k_E)$ is a continuous absolutely irreducible representation, we can associate to $\rbar$ a maximal ideal $\m$ of $\TT$ with residue field $k_E$. We denote by $S(U^p,k_E)[\m]\subseteq S(U^p,k_E)$ the corresponding eigenspace and by $S(U^p,k_E)[\m]^{\ord}\subseteq S(U^p,k_E)[\m]$ the maximal $G(F^+\otimes_{\Q}\Qp)$-subrepresentation all of whose constituents are subquotients of principal series. Following \cite[\S6]{GG} we say that $\rbar$ is {\it modular and ordinary} if there exist $U^p$ (ramified only at places of $F^+$ that split in $F$) and an irreducible representation $\sigma$ of $G(\oFF\otimes_{\Z}\Zp)\cong \prod_{v\vert p}\GL_n(\Zp)$ over $k_E$ (a Serre weight) such that the action of the Hecke algebra ${\mathcal H}(\sigma)\cong \otimes_{v\vert p}k_E[T_{1,v},\dots,T_{n-1,v},T_{n,v}^{\pm 1}]$ on $\Hom_{G(\oFF\otimes_{\Z}\Zp)}\big(\sigma,S(U^p,k_E)[\m]\big)$ has a nonzero eigenvector (after possibly extending $k_E$) on which {\it each} $T_{i,v}$ is nonzero. Let us choose a place $\tilde v$ of $F$ above each $v\vert p$ in $F^+$ (this choice won't matter) and denote by $\rbar_{\tilde v}$ the restriction of $\rbar$ to a decomposition subgroup at $\tilde v$. If $\rbar$ is modular and ordinary, then $\rbar_{\tilde v}$ takes values in a Borel subgroup of ${\GL}_n(k_E)$ so that the $\GL_n(\Qp)$-representations $\Pi(\rbar_{\tilde v})^{\rm ord}$ are defined (if $\rbar_{\tilde v}$ is generic). Our main local-global compatibility result is (cf.\ Theorem \ref{ouf!}):

\begin{thm0}\label{thmintro}
Let $\rbar:\gF\rightarrow {\GL}_n(k_E)$ be continuous absolutely irreducible and assume:
\begin{enumerate}
\item $\rbar$ is modular and ordinary;
\item for each $v\vert p$ the restriction of $\rbar_{\tilde v}$ to inertia is generic;
\item $\rbar|_{{\Gal}(\overline F/F(\zeta_p ))}$ is absolutely irreducible, $p > 2n+2$ and $\zeta_p \not\in F$.
\end{enumerate}
Then there is an compact open subgroup $U^p\subseteq \GAp$ and integers $d_w>0$ for each $w=(w_v)\in \prod_{v\vert p}W_{C_{\rbar_{\tilde v}}}$ such that we have an essential injection of admissible smooth representations of $G(F^+\otimes_{\Q}\Qp)$ over $k_E$,
$$\bigoplus_{w=(w_v)} \bigg({\bigotimes_{v\vert p}}\Big(\Pi(\rbar_{\tilde v})_{C_{\rbar_{\tilde v}},w_v}\otimes \omega^{n-1}\circ\det\Big)\bigg)^{\oplus d_{w}}\hookrightarrow S(U^p,k_E)[\m]^{\ord},$$
where $\omega$ is the mod $p$ cyclotomic character.
\end{thm0}

We refer to \S\ref{theresult} for a more precise statement, and to Theorem \ref{ouf!2} for a $p$-adic version assuming that all $W_{C_{\rbar_{\tilde v}}}$ are trivial and replacing $S(U^p,k_E)$ by a suitable $p$-adically completed $E$-vector space of automorphic forms. If all $d_w$ are equal (e.g.\ if all $W_{C_{\rbar_{\tilde v}}}$ are trivial), note that the left-hand side of the injection in Theorem \ref{thmintro} is exactly (up to twist) a direct sum of copies of $\otimes_{v\vert p}\Pi(\rbar_{\tilde v})^{\rm ord}$. The assumption (ii) that $\rbar_{\tilde v}$ is generic on inertia (which implies e.g.\ $p>2n$) shouldn't be crucial and one should be able to replace it by just $\rbar_{\tilde v}$ generic (not necessarily on inertia) and $p>3$. Following our ``philosophy'' that $\Pi(\rhobar)^{\rm ord}$ should be the maximal subrepresentation of $\Pi(\rhobar)$ built out of principal series, we also conjecture (cf.\ Conjecture \ref{theconjbar}):

\begin{conj0}
Let $\rbar:\gF\rightarrow {\GL}_n(k_E)$ be continuous absolutely irreducible and assume:
\begin{enumerate}
\item $\rbar$ is modular;
\item $\rbar_{\tilde v}$ is upper triangular and generic for each $v\vert p$.
\end{enumerate}
Then for any compact open subgroup $U^p\subseteq \GAp$ such that $S(U^p,k_E)[\m]\ne 0$ there is an integer $d > 0$ depending only on $U^p$ and $\rbar$ such that we have an isomorphism of admissible smooth representations of $G(F^+\otimes_{\Q}\Qp)$ over $k_E$,
$$\bigg({\bigotimes_{v\vert p}}\Big(\Pi(\rbar_{\tilde v})^{\ord}\otimes \omega^{n-1}\circ\det\Big)\bigg)^{\oplus d}\congto S(U^p,k_E)[\m]^{\ord}.$$
\end{conj0}

We have an analogous conjecture in characteristic $0$ (cf.\ Conjecture \ref{theconj}).

The proof of Theorem \ref{thmintro} goes as follows: using results of Gee and Geraghty on ordinary Serre weights (\cite{GG}, \cite{Ge}, these results require the small technical assumptions alluded to in (iii)) together with Frobenius reciprocity, we first deduce that for each $w=(w_v)\in \prod_{v\vert p}W_{C_{\rbar_{\tilde v}}}$ the $G(F^+\otimes_{\Q}\Qp)$-socle $\otimes_{v\vert p}I(\rbar_{\tilde v})_{w_v}$ of ${\otimes_{v\vert p}}\big(\Pi(\rbar_{\tilde v})_{C_{\rbar_{\tilde v}},w}\otimes \omega^{n-1}\circ\det\big)$ occurs in some $S(U^p,k_E)[\m]$ with a certain positive multiplicity $d_w$. We thus have an injection as in Theorem \ref{thmintro} but with $\Pi(\rbar_{\tilde v})_{C_{\rbar_{\tilde v}},w_v}\otimes \omega^{n-1}\circ\det$ replaced by its socle $I(\rbar_{\tilde v})_{w_v}$. Moreover, combining the results of \cite{GG} with \cite[Cor.\ 9.13]{He2}, one easily checks that this injection is essential. The whole point is thus to prove that it extends to ${\otimes_{v\vert p}}\big(\Pi(\rbar_{\tilde v})_{C_{\rbar_{\tilde v}},w}\otimes \omega^{n-1}\circ\det\big)$. This follows from a key local theorem (cf.\ Theorem \ref{main}) which states that, under certain conditions on a smooth $G(\Qp)$-representation $\Pi$ over $k_E$, restriction to the $G(\Qp)$-socle induces an isomorphism $\Hom_{G(\Qp)}(\Pi(\rhobar)_{C_{\rhobar},w}\otimes \omega^{n-1}\circ\det,\Pi)\buildrel\sim\over\rightarrow \Hom_{G(\Qp)}(I(\rhobar)_w,\Pi)$. Here, we use an important improvement due to Pa{\v{s}}k{\=u}nas in the proof of this local theorem.

Since the first version of this article, several results have been proven that confirm some of our speculations and questions. In \cite{Br7} the first-named author constructed a functor from smooth representations of $G(\Qp)$ over $\oE/\pE^m$ to \'etale pro-$(\varphi,\Gamma)$-modules over $\oE/\pE^m$ for any $m>0$ which satisfies the mod $p$ analogue of properties (i)--(iii) in \S\ref{ques} (see \cite[Cor.\ 9.5]{Br7}). Hauseux \cite{Ha1}, \cite{Ha2} gave a positive answer to our question on extensions between principal series representations (see Theorem \ref{hau}) and proved our Conjecture \ref{conjloc} on the unicity of $\Pi(\rho)^{\rm ord}$ (assuming the irreducibility of the principal series (\ref{const})).

We now finish this introduction with some notation.

In the whole text $E$ is a finite extension of $\Qp$ (the coefficient field) with ring of integers $\oE$ and residue field $k_E$. We denote by $\pE$ a uniformizer in $\oE$. We denote by $\varepsilon:\gp\rightarrow \Zp^{\times}\hookrightarrow E^{\times}$ the $p$-adic cyclotomic character and by $\omega$ its reduction mod $p$. If $F$ is a number field and $w$ a finite place of $F$, $\Frob_w$ is a geometric Frobenius at $w$. We normalize the reciprocity map of local class field theory so that uniformizers correspond to geometric Frobenius elements. We denote by $\nr(u)$ either the unramified character of $\Qp^{\times}$ sending $p$ to $u$ or the unramified character of $\gp$ sending a geometric Frobenius element to $u$.

If $H$ is a $p$-adic Lie group, e.g.\ $H=G(\Qp)$ where $G$ is an algebraic group over $\Qp$, we call a \emph{continuous representation} of $H$ over $E$ any $p$-adic Banach space $\Pi$ over $E$ endowed with an $E$-linear action of $H$ such that the action map $H\times \Pi\rightarrow \Pi$ is continuous. If $G$ is a connected reductive algebraic group over $\Zp$, we call a \emph{Serre weight} for $G(\Fp)$ any irreducible representation of $G(\Fp)$ (or equivalently any irreducible smooth representation of $G(\Zp)$) over $k_E$. A Serre weight is in fact absolutely irreducible and defined over $\Fp$ when $G$ is split. The other notation will be introduced in the body of the text.

We would like to thank L.\ Clozel, M.\ Emerton, W.\ T.\ Gan, G.\ Henniart, S.\ Morra, S.\ W.\ Shin, and P.-J.\ White for discussions related to this work. We would also like to thank especially T.\ Gee and D.\ Geraghty for patiently answering our questions. The second author would like to thank the University of Paris 11, where some of this work was carried out. Finally, we are deeply indebted to V.\ Pa{\v{s}}k{\=u}nas for finding a much better proof of our main local result (Theorem \ref{main}) which easily extends to characteristic $0$ (Corollary \ref{mainpadic}) and for allowing us to reproduce it here (our initial proof in the first version of that paper was long, laborious and less general).

\section{The algebraic representation $\LL$}\label{LL}

We define the algebraic representation $\LL$ and study its restriction to certain subgroups of the Borel subgroup.

\subsection{Definition of the representation $\LL$}\label{fundamental}

We define the algebraic representation $\LL$.

Let $H/E$ be a split connected reductive algebraic group. Let $T\subseteq H$ be a split maximal torus (over $E$), $X(T)=\Hom_{\rm alg}(T,{\mathbb G}_m)$ the group of characters of $T$ and $X^{\vee}(T)=\Hom_{\rm alg}({\mathbb G}_m,T)$ its group of cocharacters. We let $\big(X(T),R,X^{\vee}(T),R^{\vee}\big)$ be the root datum of $H$, where $R\subseteq X(T)$ (resp.\ $R^{\vee}\subseteq X^{\vee}(T)$) is the set of roots (resp.\ coroots). For $\alpha\in R$, we let $s_{\alpha}$ be the reflection on $X(T)$ associated to $\alpha$ and recall that $s_{\alpha}(\lambda)=\lambda-\langle \lambda,\alpha^{\vee}\rangle\alpha$ for $\lambda\in X(T)$. We let $W$ be the Weyl group, that is, the subgroup of automorphisms of $X(T)$ generated by the $s_{\alpha}$ for $\alpha\in R$.

We fix a choice of simple roots $S\subseteq R$ and denote by $R^+\subseteq R$ the positive roots, i.e.\ the roots that are in $\oplus_{\alpha\in S}\Z_{\geq 0}\alpha$. We finally let $H^{\rm der}$ be the derived algebraic subgroup of $H$ and $\widehat{H}$ the dual algebraic group of $H$. We recall the following standard proposition (see e.g.\ the proof of \cite[Prop.\ 5.23]{DL} where the fact that the base field is algebraically closed of char.\ $p$ is here irrelevant).

\begin{prop}\label{sc}
The following conditions are equivalent:
\begin{enumerate}
\item the dual group $\widehat{H}$ has connected centre;
\item the derived subgroup $H^{\rm der}$ is \(semi-simple\) simply connected;
\item there exists $(\lambda_{\alpha})_{\alpha\in S}\in X(T)^{|S|}$ such that for any $\beta\in S$: $$\langle\lambda_{\alpha},{\beta}^{\vee}\rangle=\left\{\begin{array}{ccc}1&\ {\rm if}\ &\alpha=\beta\\ 0&\ {\rm if}\ &\alpha \ne \beta. \end{array}\right.$$
\end{enumerate}
\end{prop}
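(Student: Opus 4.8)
The plan is to prove the cyclic chain (i) $\Rightarrow$ (ii) $\Rightarrow$ (iii) $\Rightarrow$ (i), which is the most economical route.

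For (i) $\Rightarrow$ (ii), I would use the general description of the root datum of the dual group: if $\big(X(T),R,X^{\vee}(T),R^{\vee}\big)$ is the root datum of $H$, then the root datum of $\widehat{H}$ is $\big(X^{\vee}(T),R^{\vee},X(T),R\big)$. The centre of a connected reductive group is connected if and only if its character lattice is torsion-free, equivalently if and only if the cocharacter lattice of its maximal torus spans over $\Z$ the lattice generated by the coroots' ``complement''; more precisely $Z(\widehat H)$ is connected iff $X(T)/\Z R$ is torsion-free (here I am using that $X(T)$ plays the role of the \emph{cocharacter} lattice for $\widehat H$, and $Z(\widehat H)^\circ$ corresponds to the sublattice annihilated by all roots of $\widehat H$, i.e.\ by all coroots $R^\vee$, while $\pi_0(Z(\widehat H))$ is the torsion of the cokernel). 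Dually, $H^{\rm der}$ is simply connected iff the coroot lattice $\Z R^\vee$ is saturated in $X^\vee(T)$, i.e.\ $X^\vee(T)/\Z R^\vee$ is torsion-free. Since $X(T)/\Z R$ and $X^\vee(T)/\Z R^\vee$ are in perfect duality (up to torsion) via the pairing $\langle\ ,\ \rangle$, one is torsion-free iff the other is; this gives the equivalence (i) $\Leftrightarrow$ (ii) in one stroke, but I will only use (i) $\Rightarrow$ (ii) here.

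For (ii) $\Rightarrow$ (iii), I would argue as follows. When $H^{\rm der}$ is simply connected, its fundamental weights are well defined \emph{inside} $X(T_{\rm der})$ for $T_{\rm der}=T\cap H^{\rm der}$, because the fundamental group of $H^{\rm der}$ being trivial means $X(T_{\rm der})$ equals the full weight lattice $\{x : \langle x,\beta^\vee\rangle\in\Z\ \forall\beta\in S\}$, which contains the basis dual to the simple coroots. The restriction map $X(T)\to X(T_{\rm der})$ is surjective (since $T\to T_{\rm der}$ has finite cokernel? — here one needs $T_{\rm der}\hookrightarrow T$ so restriction of characters $X(T)\twoheadrightarrow X(T_{\rm der})$ because $T/T_{\rm der}$ is a torus, hence the short exact sequence of character groups splits the relevant surjectivity), so we may lift each fundamental weight $\varpi_\alpha\in X(T_{\rm der})$ to some $\lambda_\alpha\in X(T)$. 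Since the pairing $\langle\lambda_\alpha,\beta^\vee\rangle$ only depends on the restriction of $\lambda_\alpha$ to $T_{\rm der}$ (as $\beta^\vee\in X^\vee(T_{\rm der})$), we get exactly $\langle\lambda_\alpha,\beta^\vee\rangle=\delta_{\alpha\beta}$, which is (iii). The only subtlety to check carefully is the surjectivity of $X(T)\to X(T_{\rm der})$; this is where I'd slow down, using that $T/T_{\rm der}$ is a split torus so its character group is free and the exact sequence $0\to X(T/T_{\rm der})\to X(T)\to X(T_{\rm der})\to ?$ has vanishing next term — equivalently $\mathrm{Hom}(\mu,\mathbb G_m)$ or an $\mathrm{Ext}^1$ computation, but over a field with $T_{\rm der}$ a subtorus this is standard.

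For (iii) $\Rightarrow$ (i), I would show that the existence of the $\lambda_\alpha$ forces $X(T)/\Z R$ to be torsion-free, hence (by the duality discussed above, now read in the other direction) $Z(\widehat H)$ is connected. Indeed, suppose $x\in X(T)$ and $mx\in\Z R$ for some $m\geq 1$, say $mx=\sum_{\alpha} c_\alpha\alpha$ after writing everything in terms of simple roots (using $R\subseteq\Z S$). Pairing with $\beta^\vee$ for $\beta\in S$ gives $m\langle x,\beta^\vee\rangle=\sum_\alpha c_\alpha\langle\alpha,\beta^\vee\rangle$, i.e.\ $m\langle x,\beta^\vee\rangle\in\Z$ trivially — that's not yet enough; instead I pair $x$ with the coweights dual to the $\lambda_\alpha$. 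More directly: the $(\lambda_\alpha)_{\alpha\in S}$ together with a basis of $X(T/T_{\rm der})$ span a finite-index subgroup of $X(T)$ on which the quotient by $\Z R$ is visibly free (the $\lambda_\alpha$ map to a basis of $\Z S/(\text{nothing})$... ), so one deduces freeness of $X(T)/\Z R$. The cleanest formulation: the composite $\Z R\hookrightarrow X(T)\to \bigoplus_{\alpha\in S}\Z$, $x\mapsto(\langle x,\beta^\vee\rangle)_\beta$ is injective on $\Z R$ with image of finite index, and condition (iii) exhibits a section, proving the quotient is torsion-free. I expect the \textbf{main obstacle} to be bookkeeping the duality between $X(T)/\Z R$ and $X^\vee(T)/\Z R^\vee$ and getting the direction of ``simply connected $\leftrightarrow$ connected centre of the dual'' exactly right, since that is precisely the content being asserted and it is easy to state it backwards; everything else is lattice arithmetic.
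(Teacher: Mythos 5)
The paper states this proposition as standard and gives no proof, so there is nothing to compare your route against; I will only assess it, and it contains a genuine error in the lattice bookkeeping that breaks two of your three implications. The condition controlling (i) is \emph{not} torsion-freeness of $X(T)/\Z R$. The centre of $\widehat H$ is cut out inside $\widehat T$ by the roots of $\widehat H$, which are the coroots $R^\vee$ viewed as characters of $\widehat T$; since $X(\widehat T)=X^\vee(T)$, the character group of $Z(\widehat H)$ is $X^\vee(T)/\Z R^\vee$, and (i) holds iff \emph{that} quotient is torsion-free. Your claimed bridge, that $X(T)/\Z R$ is torsion-free iff $X^\vee(T)/\Z R^\vee$ is, is false: for $H=\SL_2$ one has $X(T)/\Z R\cong \Z/2$ while $X^\vee(T)/\Z R^\vee=0$, and for $H={\rm PGL}_2$ the situation is reversed. (These are not counterexamples to the proposition, of course — for $\SL_2$ all three conditions hold, for ${\rm PGL}_2$ all three fail — but they kill your intermediate claims.) In particular your step ``(iii) implies $X(T)/\Z R$ torsion-free'' is itself false, again by $\SL_2$: there $\lambda_\alpha=\varpi$ satisfies (iii), yet $X(T)/\Z R=\Z\varpi/2\Z\varpi$ has torsion. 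So (i)~$\Rightarrow$~(ii) rests on a false duality, and (iii)~$\Rightarrow$~(i) both aims at the wrong target and asserts a false statement on the way. This is exactly the ``easy to state it backwards'' trap you flagged, and you did state it backwards.

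The repair is short once the correct lattice is identified. Both (i) and (ii) are equivalent to the single condition that $X^\vee(T)/\Z R^\vee$ is torsion-free: for (i) by the description of $X(Z(\widehat H))$ above, and for (ii) because $X^\vee(T_{\rm der})=X^\vee(T)\cap\Q R^\vee$, so $H^{\rm der}$ is simply connected iff $X^\vee(T)\cap\Q R^\vee=\Z R^\vee$, which is precisely the vanishing of the torsion of $X^\vee(T)/\Z R^\vee$. This gives (i)~$\Leftrightarrow$~(ii) with no duality between the two quotients needed. For (iii)~$\Rightarrow$~(ii), take $y\in X^\vee(T)\cap\Q R^\vee$ and write $y=\sum_{\alpha\in S}c_\alpha\alpha^\vee$ with $c_\alpha\in\Q$; pairing with $\lambda_\beta$ gives $c_\beta=\langle\lambda_\beta,y\rangle\in\Z$, so $y\in\Z R^\vee$ — this is the pairing argument you were reaching for, but it must be applied to cocharacters against the $\lambda_\alpha$, not to $\Z R$ inside $X(T)$. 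Your (ii)~$\Rightarrow$~(iii) step is correct as written: surjectivity of $X(T)\to X(T_{\rm der})$ holds because $T_{\rm der}$ is a closed subgroup of the torus $T$ (anti-equivalence between diagonalizable groups and finitely generated abelian groups), the simple coroots factor through $T_{\rm der}$ so the pairing only sees the restriction, and simple connectedness puts the fundamental weights inside $X(T_{\rm der})$.
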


We assume from now on that $H$ satisfies the equivalent conditions of Proposition \ref{sc}. We recall that $\lambda\in X(T)$ is said to be dominant if $\langle \lambda,\alpha^{\vee}\rangle\geq 0$ for all $\alpha\in R^+$. If $\lambda\in X(T)$ is a dominant weight, we denote by $L(\lambda)$ the irreducible algebraic representation of $H$ over $E$ with highest weight $\lambda$. Let $X^0(T):=\{\lambda\in X(T) : \langle\lambda,\beta^{\vee}\rangle=0\ \forall\beta^{\vee}\in R^{\vee}\}$. Then $L(\lambda)$ has dimension $1$ if and only if $\lambda\in X^0(T)$. The weights $\lambda_{\alpha}$ of Proposition \ref{sc} for $\alpha\in S$ are clearly dominant and are called {\it fundamental} weights. They are uniquely defined in $X(T)/X^0(T)$, that is, one can obviously replace $\lambda_{\alpha}$ by $\lambda_{\alpha}+\lambda_0$ for any $\lambda_0\in \Hom_{\rm alg}(H,{\mathbb G}_{\rm m})\cong X^0(T)$. However this only changes $L(\lambda_{\alpha})$ by a twist by a $1$-dimensional $L(\lambda_0)$.

\begin{definit}\label{fund}
Assume $\widehat{H}$ has connected centre or equivalently $H^{\rm der}$ is simply connected. The \emph{fundamental algebraic representations} of $H$ are the irreducible representations $(L(\lambda_{\alpha}))_{\alpha\in S}$ (defined up to twist).
\end{definit}

\begin{ex}\label{glgsp}
We briefly give the examples of $H={\GL}_{n}$ and $H={\rm GSp}_{2n}$ ($n\geq 2$).\\
When $H={\GL}_{n}$, $T$ is the torus of diagonal matrices and $B$ the upper triangular matrices, we have $X(T)=\Z e_1\oplus \cdots \oplus \Z e_n$, where $e_i$ is the character sending $\diag(x_1,\ldots, x_n)$ to $x_i$ and $S=\{e_i-e_{i+1} : 1\leq i\leq n-1\}$. Then $\lambda_{e_i-e_{i+1}}=e_1+e_2+\cdots +e_i$ (up to an element of $X^0(T)=\Z(e_1+\cdots +e_n)$) and we have
$$L(\lambda_{e_i-e_{i+1}})=\Lambda_E^i({\rm Std})\ \ 1\leq i\leq n-1\ \ \ {\rm (up\ to\ a\ twist)},$$
where Std is the standard $n$-dimensional algebraic representation of $\GL_n$ over $E$.\\
When $H={\rm GSp}_{2n}=\{A\in \GL_{2n} : \tau(A)\smat{0& J_n\\-J_n& 0}A=\nu \smat{0& J_n\\-J_n& 0}\ {\rm for\ some\ }\nu\in {\mathbb G}_{\rm m}\}$ ($\tau$ is the transpose in ${\GL_n}$, $J_n$ is the anti-diagonal ``identity'' matrix of size $n$), $T$ is the torus of diagonal matrices in ${\rm GSp}_{2n}$ and $B$ the upper triangular matrices in ${\rm GSp}_{2n}$, we have $X(T)=\Z e_1\oplus \cdots \oplus \Z e_n\oplus \Z e$, where $e_i$ (resp. $e$) is the character sending $\diag(x_1,\dots,x_n,\nu x_n^{-1},\dots,\nu x_1^{-1})$ to $x_i$ (resp. $\nu$) and $S=\{e_i-e_{i+1} : 1\leq i\leq n-1,2e_n-e\}$. Then $\lambda_{e_i-e_{i+1}}=e_1+\cdots +e_i$, $1\leq i\leq n-1$ and $\lambda_{2e_{n}-e}=e_1+\cdots +e_n$ (up to an element of $X^0(T)=\Z e$). If Std is the standard $2n$-dimensional algebraic representation of ${\rm GSp}_{2n}$ over $E$, there is for $2\leq i\leq n$ a surjection $\psi^i:\Lambda_E^i({\rm Std})\twoheadrightarrow \Lambda_E^{i-2}({\rm Std})(\nu)$ and we have $L(\lambda_{e_1-e_{2}})={\rm Std}$, $L(\lambda_{2e_n-e})=\Ker(\psi^n)$ and
$$L(\lambda_{e_i-e_{i+1}})=\Ker(\psi^i),\ \ 2\leq i\leq n-1\ \ \ {\rm (all\ up\ to\ a\ twist)}$$
(we refer to \cite[\S 17.2]{FH} for more details).
\end{ex}

We define the following algebraic representation of $H$ over $E$:
$$\LL:=\bigotimes_{\alpha\in S}L(\lambda_{\alpha}).$$
It is a reducible algebraic representation in general. The representation $\LL$ depends on the choice of weights $\lambda_{\alpha}$ as in Proposition \ref{sc}(iii), but any change in this choice only modifies $\LL$ by a twist, which won't affect the results of this paper. We just keep in mind in the sequel that $\LL$ is canonical only up to twist.

\begin{prop}\label{dual}
There is an algebraic character $\lambda_0\in \Hom_{\rm alg}(H,{\mathbb G}_{\rm m}) \cong X^0(T)$ such that $(\LL)^{\vee}=\LL\otimes L(\lambda_0)$.
\end{prop}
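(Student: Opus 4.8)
The plan is to compute the dual of $\LL = \bigotimes_{\alpha \in S} L(\lambda_\alpha)$ weight by weight, or rather highest-weight by highest-weight, and check that the answer has the required shape. Recall that for a split reductive group $H$ with simply connected derived subgroup, the dual of an irreducible representation is again irreducible: $(L(\lambda))^\vee \cong L(-w_0\lambda)$, where $w_0$ is the longest element of $W$. Since duality is a tensor functor, $(\LL)^\vee \cong \bigotimes_{\alpha \in S} L(-w_0\lambda_\alpha)$. So the first step is to understand $-w_0\lambda_\alpha$ for each simple root $\alpha$.

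The key observation is that $w_0$ acts on the set of simple roots by $\alpha \mapsto -w_0(\alpha) =: \alpha^*$, which is again a simple root (this is the standard ``$*$-involution'' on $S$). I would then compute $\langle -w_0\lambda_\alpha, \beta^\vee\rangle$ for $\beta \in S$. Using $\langle -w_0\lambda_\alpha, \beta^\vee\rangle = -\langle \lambda_\alpha, w_0\beta^\vee\rangle = \langle \lambda_\alpha, (-w_0\beta)^\vee\rangle = \langle \lambda_\alpha, (\beta^*)^\vee\rangle$, which equals $1$ if $\alpha = \beta^*$, i.e.\ $\beta = \alpha^*$, and $0$ otherwise. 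Hence $-w_0\lambda_\alpha$ and $\lambda_{\alpha^*}$ have the same pairing with every simple coroot, so they differ by an element of $X^0(T)$: there is $\mu_\alpha \in X^0(T)$ with $-w_0\lambda_\alpha = \lambda_{\alpha^*} + \mu_\alpha$. Consequently $L(-w_0\lambda_\alpha) \cong L(\lambda_{\alpha^*}) \otimes L(\mu_\alpha)$, where $L(\mu_\alpha)$ is one-dimensional.

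Taking the tensor product over $\alpha \in S$, and using that $\alpha \mapsto \alpha^*$ is a bijection of $S$, we get
$$(\LL)^\vee \cong \Big(\bigotimes_{\alpha \in S} L(\lambda_{\alpha^*})\Big) \otimes \Big(\bigotimes_{\alpha \in S} L(\mu_\alpha)\Big) \cong \LL \otimes L(\lambda_0),$$
where $\lambda_0 := \sum_{\alpha \in S}\mu_\alpha \in X^0(T) \cong \Hom_{\mathrm{alg}}(H,{\mathbb G}_{\mathrm m})$, and $L(\lambda_0) = \bigotimes_\alpha L(\mu_\alpha)$ is the corresponding one-dimensional representation. This is exactly the claimed statement.

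The only genuinely substantive point is the identity $-w_0(S) = S$ and the resulting reindexing; everything else is formal manipulation of weights and the fact that duality commutes with tensor products and sends $L(\lambda)$ to $L(-w_0\lambda)$. I expect no real obstacle — the mild care needed is just to keep track that the $\lambda_\alpha$ are only defined modulo $X^0(T)$ in the first place, so that ``$=$'' in statements like $-w_0\lambda_\alpha = \lambda_{\alpha^*} + \mu_\alpha$ should be read with a fixed choice of representatives, and to note that the resulting $\lambda_0$ may depend on those choices (harmlessly, as the paper has already emphasized that $\LL$ is canonical only up to twist).
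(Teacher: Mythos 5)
Your proof is correct and follows essentially the same route as the paper's: use $L(\lambda_\alpha)^\vee \cong L(-w_0\lambda_\alpha)$, observe that $-w_0$ permutes $S$ so that $-w_0(\lambda_\alpha)-\lambda_{-w_0(\alpha)}\in X^0(T)$, and reindex the tensor product, with $\lambda_0$ the sum of these differences. The only difference is that you spell out the pairing computation justifying $-w_0(\lambda_\alpha)\equiv\lambda_{-w_0(\alpha)}$ mod $X^0(T)$, which the paper leaves implicit.
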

\begin{proof}
Let $w_0$ be the longest element of $W$. Then $-w_0$ permutes the elements of $S$, and thus we have from Definition \ref{fund}
$$-w_0(\lambda_{\alpha})-\lambda_{-w_0(\alpha)} \in X^0(T){\rm\ for\ any\ }\alpha\in S.$$
Since $L(\lambda_{\alpha})^{\vee}=L(-w_0\lambda_{\alpha})$, we get
$$L(\lambda_{\alpha})^{\vee}=L(\lambda_{-w_0(\alpha)})\otimes L\big(-w_0(\lambda_{\alpha})-\lambda_{-w_0(\alpha)}\big).$$
Setting $\lambda_0:=\sum_{\alpha\in S}(-w_0(\lambda_{\alpha})-\lambda_{-w_0(\alpha)}) \in X^0(T)$, we deduce $(\LL)^{\vee}=\LL\otimes L(\lambda_0)$.
\end{proof}

\begin{rem}
Any change in the choice of the $\lambda_{\alpha}$ that doesn't affect $\sum_{\alpha\in S}\lambda_{\alpha}$ doesn't change $\LL$ either (as is immediately checked).
\end{rem}

Note that, if $H=H_1\times H_2$, then one has $\LL=\LL_1\otimes_E\LL_2$ (where we index by $i$ everything related to $H_i$, $i=1,2$).

\subsection{Multiplicity one weights of $\LL$}\label{one}

We determine the weights of $\LL\vert_T$ which occur with multiplicity $1$. We keep the notation of \S\ref{fundamental}.

We start with some lemmas.

\begin{lem}\label{petit}
Let $\alpha\in R^+$ and $\beta\in S$. If $s_{\alpha}(\lambda_{\beta'})=\lambda_{\beta'}$ for all $\beta'\in S$ except possibly $\beta$, then $s_{\alpha}(\lambda_{\beta})=\lambda_{\beta}- \alpha$.
\end{lem}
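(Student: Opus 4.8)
The statement to prove is Lemma \ref{petit}: if $\alpha \in R^+$, $\beta \in S$, and $s_\alpha(\lambda_{\beta'}) = \lambda_{\beta'}$ for all $\beta' \in S$ with $\beta' \neq \beta$, then $s_\alpha(\lambda_\beta) = \lambda_\beta - \alpha$. The plan is to exploit the reflection formula $s_\alpha(x) = x - \langle x, \alpha^\vee\rangle \alpha$ together with the defining property of the fundamental weights, namely $\langle \lambda_{\beta'}, \gamma^\vee\rangle = \delta_{\beta'\gamma}$ for $\beta', \gamma \in S$.

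\textbf{Key steps.} First I would write $\alpha^\vee$ in terms of the simple coroots: since $\alpha \in R^+$, we have $\alpha^\vee = \sum_{\gamma \in S} c_\gamma \gamma^\vee$ with integer coefficients $c_\gamma \geq 0$ (and not all zero). Pairing the fundamental weight $\lambda_{\beta'}$ against $\alpha^\vee$ gives $\langle \lambda_{\beta'}, \alpha^\vee\rangle = \sum_{\gamma \in S} c_\gamma \langle \lambda_{\beta'}, \gamma^\vee\rangle = c_{\beta'}$. Now the hypothesis $s_\alpha(\lambda_{\beta'}) = \lambda_{\beta'}$ for $\beta' \neq \beta$ translates, via $s_\alpha(\lambda_{\beta'}) = \lambda_{\beta'} - \langle \lambda_{\beta'}, \alpha^\vee\rangle\alpha$ and the fact that $\alpha \neq 0$, into $\langle \lambda_{\beta'}, \alpha^\vee\rangle = 0$, i.e.\ $c_{\beta'} = 0$, for every $\beta' \in S \setminus \{\beta\}$. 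Hence $\alpha^\vee = c_\beta \beta^\vee$ for some positive integer $c_\beta$.

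\textbf{Finishing.} It remains to pin down $c_\beta$. The relation $\alpha^\vee = c_\beta\beta^\vee$ between two coroots forces $c_\beta = 1$: for instance, applying the canonical $W$-invariant pairing (or using that both $\alpha^\vee$ and $\beta^\vee$ are indivisible elements of the coroot lattice, since coroots are never proper multiples of other coroots in a reduced root system), one gets $\alpha^\vee = \beta^\vee$ and in fact $\alpha = \beta$. Then $\langle \lambda_\beta, \alpha^\vee\rangle = \langle \lambda_\beta, \beta^\vee\rangle = 1$, so $s_\alpha(\lambda_\beta) = \lambda_\beta - \langle\lambda_\beta,\alpha^\vee\rangle\alpha = \lambda_\beta - \alpha$, as claimed.

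\textbf{Main obstacle.} The only delicate point is the normalization argument showing $c_\beta = 1$ (equivalently that $\alpha$ must be the simple root $\beta$ itself). This is where one uses genuine structure theory of root systems — that a coroot cannot be a nontrivial positive integer multiple of another coroot — rather than just the combinatorics of the pairing. Everything else is a direct substitution into the reflection formula. I would expect the authors to phrase this last step compactly, perhaps by noting that $\alpha^\vee$ and $\beta^\vee$ being coroots with $\alpha^\vee = c_\beta \beta^\vee$, $c_\beta \in \Z_{> 0}$, forces $c_\beta = 1$ in a reduced root system.
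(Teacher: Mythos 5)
Your proof is correct and follows essentially the same route as the paper: both arguments deduce from the hypothesis that all coordinates of $\alpha^{\vee}$ in the simple coroots other than $\beta^{\vee}$ vanish, then invoke $\Z\beta^{\vee}\cap R^{\vee}=\pm\beta^{\vee}$ (reducedness) together with positivity to conclude $\alpha=\beta$, whence $s_{\alpha}(\lambda_{\beta})=\lambda_{\beta}-\langle\lambda_{\beta},\beta^{\vee}\rangle\beta=\lambda_{\beta}-\alpha$.
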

\begin{proof}
We have
$$\langle \lambda_{\beta}-s_{\alpha}(\lambda_{\beta}),\alpha^{\vee}\rangle=2\langle\lambda_{\beta},\alpha^{\vee}\rangle,$$
where $\langle\lambda_{\beta},\alpha^{\vee}\rangle$ is the coordinate of $\beta^{\vee}$ in $\alpha^{\vee}$ (using the properties of the fundamental weight $\lambda_{\beta}$, see Proposition \ref{sc}). Since $s_{\alpha}(\lambda_{\beta'})=\lambda_{\beta'}$ for all $\beta'\in S\backslash\{\beta\}$, this coordinate is $0$ for $\beta'\ne\beta$ and we thus get $\alpha^{\vee}\in \Z\beta^{\vee}$. Since $\Z\beta^{\vee}\cap R^{\vee}=\pm\beta^{\vee}$ and $\alpha$ is a positive root, we have $\alpha=\beta$ which finishes the proof.
\end{proof}

For $\lambda,\mu\in X(T)$, we write $\mu\leq \lambda$ (resp.\ $\mu\geq\lambda$) if $\lambda-\mu\in \oplus_{\alpha\in S}\Z_{\geq 0}\alpha$ (resp.\ $\lambda-\mu\in \oplus_{\alpha\in S}\Z_{\leq 0}\alpha$). We write $\mu< \lambda$ (resp.\ $\mu>\lambda$) if $\mu\leq \lambda$ and $\lambda\ne \mu$ (resp.\ $\mu\geq\lambda$ and $\lambda\ne\mu$). Recall that, if $\lambda$ is dominant, then $w(\lambda)\leq \lambda$ for all $w\in W$.

\begin{lem}\label{plus}
Let $\lambda\in X(T)$ be a dominant weight and $\mu$ be a weight that appears in $L(\lambda)|_T$. Then there exist an integer $s\geq 1$, a sequence of weights $\mu_1,\dots,\mu_s$ in $L(\lambda)|_T$ and a sequence of positive roots $\alpha_1,\dots,\alpha_{s-1}$ satisfying the following conditions:
\begin{enumerate}
\item $\mu_1=\lambda$ and $\mu_s=\mu$;
\item $\mu_{i+1}\in \mu_i+\Z \alpha_i$ and $s_{\alpha_i}(\mu_i)\leq \mu_{i+1}< \mu_i$ for all $i\in \{1,\dots,s-1\}$.
\end{enumerate}
\end{lem}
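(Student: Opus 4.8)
\textbf{Proof proposal for Lemma \ref{plus}.} The plan is to induct on the standard partial order. If $\mu = \lambda$ we take $s = 1$ and there is nothing to prove, so assume $\mu < \lambda$; note that $\mu$ appearing in $L(\lambda)|_T$ forces $\mu \leq \lambda$ since $\lambda$ is the highest weight, so $\mu < \lambda$ is the only remaining case. The idea is to find a single intermediate weight $\nu$ with $\mu \leq \nu < \lambda$ such that $s_{\alpha}(\lambda) \leq \nu$ for some positive root $\alpha$, and then apply the inductive hypothesis to the pair $(\nu, \mu)$ inside $L(\lambda)|_T$ — or rather, we build the chain from the top down, peeling off one application of the highest-weight vector's $\mathfrak{sl}_2$-strings at a time.

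First I would recall the basic structure theory: since $\mu < \lambda$ and $\mu$ is a weight of $L(\lambda)$, there is some simple root $\alpha \in S$ with $\langle \lambda, \alpha^\vee\rangle > 0$ and such that $\mu \leq \lambda - \alpha$ with $\lambda - \alpha$ still a weight of $L(\lambda)$ (this is the standard fact that weights of $L(\lambda)$ form a "saturated"/connected set: every weight below $\lambda$ is connected to $\lambda$ by a chain of weights differing by simple roots, and one can always start such a chain by subtracting a simple root $\alpha$ with $\langle \lambda, \alpha^\vee\rangle > 0$, using that $\lambda - \mu$ is a nonzero nonnegative sum of simple roots). More precisely, writing $\lambda - \mu = \sum_{\beta \in S} c_\beta \beta$ with $c_\beta \geq 0$ not all zero, one shows $\langle \lambda, \alpha^\vee\rangle > 0$ for at least one $\alpha$ with $c_\alpha > 0$ (otherwise $\langle \lambda, \lambda - \mu\rangle \leq 0$, contradicting dominance of $\lambda$ together with $\lambda - \mu$ being a nonzero dominant-ish combination — one uses positive-definiteness of the Killing form here). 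Fix such an $\alpha$.

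Next, consider the $\alpha$-string through $\lambda$: the weights $\lambda, \lambda - \alpha, \dots, s_\alpha(\lambda) = \lambda - \langle\lambda,\alpha^\vee\rangle\alpha$ are all weights of $L(\lambda)|_T$. Set $\mu_1 := \lambda$ and let $\mu_2$ be the weight at the "correct height" in the chain: we want $s_\alpha(\mu_1) \leq \mu_2 < \mu_1$, so we may take $\mu_2 := \lambda - \alpha$ if $\mu \leq \lambda - \alpha$ continues to work, or more safely take $\mu_2 := s_\alpha(\lambda) = \lambda - \langle\lambda,\alpha^\vee\rangle\alpha$, which trivially satisfies $s_{\alpha_1}(\mu_1) = \mu_2 \leq \mu_2 < \mu_1$ with $\alpha_1 := \alpha$. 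But we also need $\mu_2 \geq \mu$ so that the induction can continue; this may fail for the full reflection. The cleanest fix is to choose $\mu_2$ to be the \emph{lowest} weight in the $\alpha$-string through $\lambda$ that still dominates $\mu$, i.e.\ $\mu_2 := \lambda - t\alpha$ where $t \geq 1$ is maximal subject to $\lambda - t\alpha$ being a weight of $L(\lambda)$ and $\lambda - t\alpha \geq \mu$; one must verify (a) such $t \geq 1$ exists — it does, since $\lambda - \alpha \geq \mu$ is not guaranteed, so actually one needs the more delicate point that \emph{some} $\lambda - t\alpha$ with $t \geq 1$ still lies above $\mu$, which follows because $c_\alpha \geq 1$ in the expansion of $\lambda - \mu$; and (b) that $s_\alpha(\mu_1) \leq \mu_2$, i.e.\ $t \leq \langle\lambda,\alpha^\vee\rangle$, which holds because weights of $L(\lambda)$ along the string are bounded below by $s_\alpha(\lambda)$ and $\mu_2$ is in the string. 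With this $\mu_2$ satisfying $s_{\alpha_1}(\mu_1) \leq \mu_2 < \mu_1$ and $\mu_2 \geq \mu$, apply the inductive hypothesis to $\mu$ as a weight of $L(\lambda)|_T$ with starting point $\mu_2$ — strictly, one inducts on the height $\mathrm{ht}(\lambda - \mu) - \mathrm{ht}(\lambda - \mu_2) \geq 1$, or simply on $\mathrm{ht}(\mu_2 - \mu)$ — to get a chain $\mu_2 = \nu_1, \dots, \nu_r = \mu$ with positive roots realizing condition (ii) at each step, and concatenate.

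The main obstacle I anticipate is point (a) above: showing that from a weight $\mu_i > \mu$ one can always take \emph{one further step} down a simple-root string staying $\geq \mu$ and respecting $s_{\alpha_i}(\mu_i) \leq \mu_{i+1}$. This is really the assertion that the weight diagram of $L(\lambda)$ is "connected in the strong order" — a standard but slightly fiddly fact (see e.g.\ Bourbaki or Humphreys on the saturation of weight sets, or Jantzen II.2) — and the condition $s_{\alpha_i}(\mu_i) \leq \mu_{i+1}$ is automatic once $\mu_{i+1}$ is taken within the $\alpha_i$-string through $\mu_i$, since any weight of $L(\lambda)$ in that string sits between $s_{\alpha_i}(\mu_i)$ and $\mu_i$ in the $\alpha_i$-direction, hence $s_{\alpha_i}(\mu_i) \leq \mu_{i+1} \leq \mu_i$. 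So the real content is the existence of a suitable $\alpha_i$ at each stage, which I would extract from the "$\langle \mu_i, \alpha^\vee\rangle$ cannot be $\leq 0$ for all $\alpha$ with positive coefficient in $\mu_i - \mu$" argument together with the fact that $\mu_i - \alpha$ (for such $\alpha$) is again a weight of $L(\lambda)$ whenever $\langle \mu_i, \alpha^\vee\rangle > 0$.
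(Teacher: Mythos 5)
Your top-down strategy has a genuine gap: the existence claim underlying the inductive step is false. You assert (and identify as the "main obstacle") that at each stage one can find a simple root $\alpha$ with positive coefficient in $\mu_i-\mu=\sum_\beta c_\beta\beta$ and $\langle\mu_i,\alpha^\vee\rangle>0$, arguing this "cannot fail for all such $\alpha$". Counterexample: take the adjoint representation $L(\alpha_1+\alpha_2)$ in type $A_2$, target $\mu=-\alpha_1$, and suppose the chain has reached $\mu_i=0$. Then $\mu_i-\mu=\alpha_1$, so the only candidate is $\alpha_1$, but $\langle 0,\alpha_1^\vee\rangle=0$. Worse, no chain through $0$ can reach $-\alpha_1$ at all: since $0-(-\alpha_1)$ has height $1$, the next step would have to be the last, and condition (ii) would force $s_\alpha(0)=0\leq-\alpha_1$ for some positive root $\alpha$, which is false. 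Your greedy procedure really can land there: starting from $\lambda=\alpha_1+\alpha_2$ and choosing $\alpha_1$ first (both simple roots are admissible at that stage and you give no tie-breaking rule) yields $\mu_2=\alpha_2$; the only admissible simple root at $\alpha_2$ is then $\alpha_2$ itself, and the largest $t$ with $\alpha_2-t\alpha_2\geq-\alpha_1$ is $t=1$, landing on $0$ — a dead end, whereas the valid chain is $\lambda,\ \alpha_1,\ -\alpha_1$. So a correct top-down argument would need foresight in the choice of root that you do not supply, and the two constraints you must satisfy simultaneously ($\mu_{i+1}\geq\mu$ and $\mu_{i+1}\geq s_{\alpha_i}(\mu_i)$) genuinely conflict.

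The paper sidesteps this by building the chain \emph{bottom-up} from $\mu$ and using arbitrary positive roots, not just simple ones: given the current weight $\nu<\lambda$, it produces $\alpha\in R^+$ with $\nu+\alpha$ a weight and $\nu+\alpha\leq\lambda$ (taking any $\alpha$ with $\langle\nu,\alpha^\vee\rangle<0$ if $\nu$ is not dominant, and invoking Stembridge's lemma \cite[Lem.\ 2.3]{Ra} if it is), and then jumps all the way to the \emph{top} $\nu+i_2\alpha$ of the $\alpha$-string through $\nu$. Condition (ii) then holds automatically, since the bottom of that string is $s_\alpha(\nu+i_2\alpha)\leq\nu$, and termination follows because all weights are bounded above by $\lambda$. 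If you want to repair your write-up, reversing the direction of the construction in this way is the essential fix.
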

\begin{proof}
Since (ii) is empty when $s=1$, we can assume $\mu<\lambda$. We use the following result: there exists $\alpha\in R^+$ such that $\mu+\alpha\leq \lambda$ and $\mu+\alpha$ is still a weight of $L(\lambda)|_T$. Indeed, if $\mu$ is not dominant, then we can take any $\alpha\in R^+$ such that $\langle \mu,\alpha^{\vee}\rangle<0$ (considering the restriction of $\LL$ to the subgroup ${\rm SL}_2$ corresponding to the root $\alpha$, we see by \cite[Prop.\ 21.3]{Hu} that the weight $\mu+\alpha$ appears in $L(\lambda)|_T$ being between $\mu$ and $s_{\alpha}(\mu)=\mu-\langle \mu,\alpha^{\vee}\rangle\alpha$). If $\mu$ is dominant, then by a result of Stembridge (see \cite[Lem.\ 2.3]{Ra}) there exists $\alpha\in R^+$ such that $\mu+\alpha\leq \lambda$ and $\mu+\alpha$ is still a dominant weight, hence $\mu+\alpha$ still appears in $L(\lambda)|_T$ (use that $w(\mu+\alpha)\leq \mu+\alpha\ \forall\ w\in W$ together with \cite[Prop.\ 21.3]{Hu}). Choosing such an $\alpha$, the set of weights $\mu+i\alpha$ for $i\in\Z$ that appear in $L(\lambda)|_T$ is of the form $\{\mu-i_1\alpha,\mu-(i_1-1)\alpha,\dots,\mu+i_2\alpha\}$ for some $i_1\in \Z_{\geq 0}$ and some $i_2\in \Z_{\geq 1}$, and we have $\mu-i_1\alpha=s_{\alpha}(\mu+i_2\alpha)\leq \mu<\mu+i_2\alpha$ (see again \cite[\S 21.3]{Hu}). If $\mu'_2:=\mu+i_2\alpha<\lambda$, we start again with another positive root $\beta$ such that $\mu'_2+\beta\leq\lambda$ (and $\mu'_2+\beta$ appears in $L(\lambda)|_T$) and get a weight $\mu'_3$ such that $s_{\alpha}(\mu'_3)\leq \mu'_2<\mu'_3$. Since all weights in $L(\lambda)|_T$ are bounded by $\lambda$, we necessarily reach $\lambda$ after a finite number of iterations, say $s-1$. We finally set $\mu_s:=\mu$ and $\mu_i:=\mu'_{s+1-i}$ for $i\in \{1,\dots,s-1\}$.
\end{proof}

\begin{lem}\label{cache}
Let $n\in \Z_{\geq 1}$ and $\lambda_1,\dots,\lambda_n\in X(T)$ be dominant. The weights that appear in $\otimes_{i=1}^nL(\lambda_i)|_T$ are the weights that appear in $L(\sum_{i=1}^n\lambda_i)|_T$ \(up to multiplicity\).
\end{lem}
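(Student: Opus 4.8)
The plan is to prove Lemma~\ref{cache} by induction on $n$, reducing everything to the case $n=2$, and then to prove the case $n=2$ by a double inclusion of the sets of weights, using Lemma~\ref{plus} as the main engine for the nontrivial inclusion.

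The easy inclusion is that every weight of $L(\sum_i\lambda_i)|_T$ appears in $\otimes_i L(\lambda_i)|_T$. Indeed, $\sum_i\lambda_i$ is the highest weight of $\otimes_i L(\lambda_i)$, so $L(\sum_i\lambda_i)$ is (isomorphic to) the submodule of $\otimes_i L(\lambda_i)$ generated by a highest weight vector; hence every weight of $L(\sum_i\lambda_i)|_T$ occurs in $\otimes_i L(\lambda_i)|_T$. (Alternatively, $L(\sum_i\lambda_i)$ is a subquotient of $\otimes_i L(\lambda_i)$, which suffices.) Note we only claim equality of the \emph{sets} of weights, not of multiplicities, so this level of care is all that is needed.

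For the reverse inclusion, by induction it is enough to treat $n=2$: if $\mu$ is a weight of $L(\lambda_1)\otimes L(\lambda_2)|_T$, write $\mu=\mu'+\mu''$ with $\mu'$ a weight of $L(\lambda_1)|_T$ and $\mu''$ a weight of $L(\lambda_2)|_T$. Apply Lemma~\ref{plus} to each of $\mu'$ (inside $L(\lambda_1)$, with highest weight $\lambda_1$) and $\mu''$ (inside $L(\lambda_2)$, with highest weight $\lambda_2$), obtaining chains $\lambda_1=\nu_1,\dots,\nu_s=\mu'$ with positive roots $\beta_1,\dots,\beta_{s-1}$ and $\lambda_2=\nu'_1,\dots,\nu'_t=\mu''$ with positive roots $\gamma_1,\dots,\gamma_{t-1}$, satisfying the saturation condition (ii) of Lemma~\ref{plus} along each chain. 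The idea is then to build, step by step from the top, a chain of weights of $L(\lambda_1+\lambda_2)|_T$ running from $\lambda_1+\lambda_2$ down to $\mu$: first descend in the first factor, keeping the second coordinate fixed at $\lambda_2$, to reach $\mu'+\lambda_2$; then descend in the second factor to reach $\mu'+\mu''=\mu$. The point to check is that at each stage the intermediate weight is actually a weight of the \emph{irreducible} module $L(\lambda_1+\lambda_2)$. For this one uses that condition (ii) of Lemma~\ref{plus} is exactly the statement that passes a weight through an $\alpha$-string: if $\sigma$ is a weight of $L(\lambda_1+\lambda_2)$ and $s_\alpha(\sigma)\le \tau< \sigma$ for a positive root $\alpha$, then $\tau$ lies on the $\alpha$-string through $\sigma$ between $\sigma$ and $s_\alpha(\sigma)$, and $\alpha$-strings in a weight module are unbroken, so $\tau$ is again a weight of $L(\lambda_1+\lambda_2)$. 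Starting from $\lambda_1+\lambda_2$ and applying this successively with $\alpha=\beta_1,\dots,\beta_{s-1}$ and the weights $\nu_i+\lambda_2$ brings us to $\mu'+\lambda_2$; then applying it with $\alpha=\gamma_1,\dots,\gamma_{t-1}$ and the weights $\mu'+\nu'_j$ brings us to $\mu$.

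\textbf{The main obstacle} I anticipate is making rigorous the claim that the partial chain $\nu_i+\lambda_2$ (and later $\mu'+\nu'_j$) stays inside $L(\lambda_1+\lambda_2)|_T$: condition (ii) of Lemma~\ref{plus} guarantees that $s_{\beta_i}(\nu_i)\le\nu_{i+1}<\nu_i$, but I must upgrade this to the assertion about the translated weights $s_{\beta_i}(\nu_i+\lambda_2)\le \nu_{i+1}+\lambda_2< \nu_i+\lambda_2$, which requires $s_{\beta_i}(\lambda_2)=\lambda_2-\langle\lambda_2,\beta_i^\vee\rangle\beta_i\ge \lambda_2$ to fail in general --- so the cleanest route is not to translate the reflection but to argue directly on $\beta_i$-strings: the $\beta_i$-string through $\nu_i+\lambda_2$ in $L(\lambda_1+\lambda_2)$ contains $\nu_i+\lambda_2$ by induction, its bottom is $\le \nu_{i+1}+\lambda_2$ because $\langle\nu_i+\lambda_2,\beta_i^\vee\rangle\ge\langle\nu_i,\beta_i^\vee\rangle\ge \langle\nu_i-\nu_{i+1},\beta_i^\vee\rangle$ (using dominance of $\lambda_2$ and that $\nu_i-\nu_{i+1}$ is a nonnegative multiple of $\beta_i$), and strings are unbroken, hence $\nu_{i+1}+\lambda_2$ is a weight. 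Getting this string bookkeeping exactly right, rather than the combinatorial skeleton above, is where the real work lies; everything else is formal.
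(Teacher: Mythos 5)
Your easy inclusion is fine, and the first half of your descent works, but the second phase has a genuine gap. In phase 1 you freeze the summand $\lambda_2$ and descend in the first coordinate; the string estimate succeeds precisely because $\lambda_2$ is \emph{dominant}, so that $\langle \nu_i+\lambda_2,\beta_i^\vee\rangle\geq\langle\nu_i,\beta_i^\vee\rangle$. In phase 2 the frozen summand is $\mu'$, an arbitrary weight of $L(\lambda_1)$, which is in general not dominant, and the analogous inequality $\langle\mu'+\nu'_j,\gamma_j^\vee\rangle\geq\langle\nu'_j,\gamma_j^\vee\rangle$ fails whenever $\langle\mu',\gamma_j^\vee\rangle<0$. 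Concretely, for $\mathrm{SL}_3$ with $\lambda_1=\varpi_1$, $\lambda_2=\varpi_2$, $\mu'=\varpi_1-\alpha_1-\alpha_2$ and $\mu''=\varpi_2-\alpha_2$: after phase 1 you sit at $\sigma=\mu'+\varpi_2=0$ in the adjoint representation, and you must step down by $\alpha_2$; but $\langle\sigma,\alpha_2^\vee\rangle=0$, so your bound on the downward length $q$ of the $\alpha_2$-string gives only $q\geq 0$, not $q\geq 1$. (The conclusion is true here because the string also extends \emph{upward}, but your argument does not establish that.) A secondary issue: condition (ii) of Lemma \ref{plus} as stated only gives $s_{\alpha_i}(\mu_i)\leq\mu_{i+1}<\mu_i$ in the dominance order, which for non-simple $\alpha_i$ does not force $\mu_i-\mu_{i+1}$ to be a multiple of $\alpha_i$; you would have to invoke the chains actually constructed in the proof of that lemma, not just its statement.

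Both problems evaporate if you replace the chain-chasing by the standard saturation/convexity criterion, which is what the paper does: for dominant $\lambda$, a weight $\mu$ with $\lambda-\mu$ in the root lattice occurs in $L(\lambda)|_T$ if and only if $\lambda-w(\mu)\in\oplus_{\alpha\in S}\Z_{\geq 0}\alpha$ for all $w\in W$. Applying this to each $\mu_i\in L(\lambda_i)|_T$ and summing the resulting inequalities over $i$ gives $\sum\lambda_i-w(\sum\mu_i)\geq 0$ for all $w$, hence $\sum\mu_i$ is a weight of $L(\sum\lambda_i)|_T$. If you prefer to avoid that criterion, an alternative repair is to use $W$-invariance of all three weight sets to reduce to the case where $\mu'+\mu''$ is dominant, and then use that a dominant weight $\mu\leq\lambda$ (with $\lambda-\mu$ in the root lattice) always occurs in $L(\lambda)$.
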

\begin{proof}
Since $L(\sum_{i=1}^n\lambda_i)$ is a direct summand of $\otimes_{i=1}^nL(\lambda_i)$, the weights that appear in $L(\sum_{i=1}^n\lambda_i)|_T$ clearly all appear in $\otimes_{i=1}^nL(\lambda_i)|_T$. Let us prove the converse. Let $\mu_1+\cdots +\mu_n$ be a weight of $\otimes_{i=1}^nL(\lambda_i)|_T$, where $\mu_i$ is a weight of $L(\lambda_i)|_T$. We use the following well-known result: if $\lambda\in X(T)$ is a dominant weight, then the weights that appear in $L(\lambda)|_T$ (up to multiplicity) are exactly all the weights $\mu\in X(T)$ satisfying the following condition:
\begin{equation}\label{convexe}
\lambda-w(\mu)\in \oplus_{\alpha\in S}\Z_{\geq 0}\alpha\ \ \forall w\in W.
\end{equation}
For $w\in W$, we have $\lambda_i-w(\mu_i)\in \oplus_{\alpha\in S}\Z_{\geq 0}\alpha$ by applying (\ref{convexe}) to $L(\lambda_i)$. By summing, we get
$$\sum_{i=1}^n\lambda_i-w\Big(\sum_{i=1}^n\mu_i\Big)\in \oplus_{\alpha\in S}\Z_{\geq 0}\alpha \ \forall w\in W,$$
which implies that $\sum_{i=1}^n\mu_i$ is a weight of $L(\sum_{i=1}^n\lambda_i)|_T$.
\end{proof}

We now state the main theorem of this section.

\begin{thm}\label{mult}
The weights of $\LL\vert_T$ that occur with multiplicity $1$ are exactly the weights $\{w\big(\sum_{\alpha\in S}\lambda_{\alpha}\big) : w\in W\}$.
\end{thm}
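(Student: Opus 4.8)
The plan is to set $\lambda := \sum_{\alpha\in S}\lambda_\alpha$, which is the highest weight of $\LL$ (the highest weight of a tensor product is the sum of the highest weights), and argue in two directions. First I would show that every $w(\lambda)$, $w\in W$, occurs in $\LL|_T$ with multiplicity exactly one. Second I would show that any weight $\mu$ occurring with multiplicity one must lie in the Weyl orbit $W\lambda$.

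For the first direction: fix $w\in W$. Since $\LL=\bigotimes_{\alpha\in S}L(\lambda_\alpha)$, the weights of $\LL|_T$ with their multiplicities are obtained by summing, over all tuples $(\mu_\alpha)_{\alpha\in S}$ with $\mu_\alpha$ a weight of $L(\lambda_\alpha)|_T$, the products of the corresponding multiplicities. The weight $w(\lambda)=\sum_\alpha w(\lambda_\alpha)$ is visibly achieved by the tuple $(w(\lambda_\alpha))_\alpha$, each $w(\lambda_\alpha)$ being an extremal (hence multiplicity-one) weight of $L(\lambda_\alpha)|_T$. I must check this is the \emph{only} contributing tuple. Suppose $\sum_\alpha \mu_\alpha = w(\lambda)$ with each $\mu_\alpha$ a weight of $L(\lambda_\alpha)|_T$. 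Applying $w^{-1}$ and then the convexity characterization \eqref{convexe} of weights (so $\lambda_\alpha - w'(w^{-1}\mu_\alpha)\in\bigoplus_{\beta\in S}\Z_{\geq 0}\beta$ for all $w'\in W$, hence in particular $w(\lambda_\alpha) - \mu_\alpha \geq 0$ is false in general, but) it is cleaner to argue as follows: since $w(\lambda_\alpha)$ is the \emph{highest} weight of $L(\lambda_\alpha)|_T$ with respect to the ordering $\leq_w$ twisted by $w$ (equivalently, $\mu_\alpha \leq_w w(\lambda_\alpha)$, meaning $w(\lambda_\alpha)-\mu_\alpha \in \bigoplus_{\beta\in S}\Z_{\geq 0}\,w(\beta)$), summing gives $w(\lambda)-\sum_\alpha\mu_\alpha\in\bigoplus_\beta\Z_{\geq 0}\,w(\beta)$, which is $0$, forcing $w(\lambda_\alpha)-\mu_\alpha = 0$ for every $\alpha$ since these lie in a cone with linearly independent extremal rays. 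Thus the tuple is unique and, as each extremal weight $w(\lambda_\alpha)$ has multiplicity $1$ in $L(\lambda_\alpha)|_T$, the weight $w(\lambda)$ has multiplicity $1$ in $\LL|_T$.

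For the converse: let $\mu$ be a weight of $\LL|_T$ with $\mu\notin W\lambda$; I must produce at least two distinct contributing tuples, or otherwise exhibit multiplicity $\geq 2$. Replacing $\mu$ by a $W$-conjugate we may assume $\mu$ is dominant. By Lemma~\ref{cache}, $\mu$ is then a weight of $L(\lambda)|_T$, and since $\mu\neq\lambda$ we have $\mu<\lambda$. Apply Lemma~\ref{plus} to $L(\lambda)|_T$: there is a chain $\lambda=\mu_1,\dots,\mu_s=\mu$ and positive roots $\alpha_1,\dots,\alpha_{s-1}$ with $s_{\alpha_i}(\mu_i)\leq\mu_{i+1}<\mu_i$. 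Focus on the first step: $\mu_2<\lambda$ with $s_{\alpha_1}(\lambda)\leq\mu_2$. Now decompose $\alpha_1$ through the fundamental weights. Here is where I would use Lemma~\ref{petit}: since $\lambda-s_{\alpha_1}(\lambda)=\langle\lambda,\alpha_1^\vee\rangle\alpha_1$ and $\langle\lambda,\alpha_1^\vee\rangle=\sum_{\beta\in S}\langle\lambda_\beta,\alpha_1^\vee\rangle\geq 1$, there are at least \emph{two} indices $\beta\in S$ with $\langle\lambda_\beta,\alpha_1^\vee\rangle\geq 1$ unless $\alpha_1$ pairs nontrivially with exactly one $\lambda_\beta$, in which case Lemma~\ref{petit} applies and $s_{\alpha_1}(\lambda_\beta)=\lambda_\beta-\alpha_1$ with $\alpha_1=\beta$ simple. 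Either way I can write $s_{\alpha_1}(\lambda)=\sum_\alpha \nu_\alpha^{(1)}$ and $\lambda=\sum_\alpha\lambda_\alpha$ as two genuinely different tuples of weights, and since $\mu$ lies between $s_{\alpha_1}(\lambda)$ and $\lambda$ along the $\alpha_1$-string, $\mu$ admits two distinct decompositions $\mu=\sum_\alpha\mu_\alpha = \sum_\alpha\mu_\alpha'$ with $\mu_\alpha,\mu_\alpha'$ weights of $L(\lambda_\alpha)|_T$ — roughly, shift the $\alpha_1$-weight in one tensor factor up and in another down. Each such tuple contributes a positive integer to the multiplicity of $\mu$, so $\mu$ has multiplicity $\geq 2$.

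The main obstacle is this last point: carefully producing two distinct contributing tuples for a non-Weyl-orbit dominant weight $\mu$, controlling the combinatorics of $\alpha$-strings across the tensor factors and ensuring the two tuples are honestly different (not merely reindexed) while each remains a legitimate tuple of weights of the individual $L(\lambda_\alpha)$. The cases $\langle\lambda,\alpha_1^\vee\rangle\geq 2$ (string long enough in a single factor) versus the "spread across two factors" case will likely need to be handled separately, and one must make sure the chosen $\mu_\alpha$ are actually weights — this is where \eqref{convexe} applied factor-by-factor, together with $\mu_\alpha$ lying on an $\alpha_1$-string between $\lambda_\alpha$ and $s_{\alpha_1}(\lambda_\alpha)$, does the work.
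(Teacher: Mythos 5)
Your first direction is fine (though the paper dispatches it in one line: $\lambda$ is the highest weight of the tensor product, hence has multiplicity $1$, and multiplicities are constant on $W$-orbits). The converse, however, has a structural gap. After invoking Lemma~\ref{plus} you obtain a chain $\lambda=\mu_1,\dots,\mu_s=\mu$, but you then ``focus on the first step'' and claim that $\mu$ lies between $s_{\alpha_1}(\lambda)$ and $\lambda$ along the $\alpha_1$-string. That is only (at best) true of $\mu_2$; when $s>2$ the weight $\mu$ sits several root-string steps below $\lambda$ and is not on the $\alpha_1$-line through $\lambda$ at all, so your two-tuple construction never gets off the ground for such $\mu$. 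Reducing to $\mu$ dominant does not shorten the chain. The paper's proof closes exactly this gap by inducting on $s(\mu)$: the inductive hypothesis says $\mu_{s-1}$ either already has multiplicity $>1$ --- in which case multiplicity $>1$ propagates down the $\alpha_{s-1}$-string to $\mu$ by restricting to the corresponding $\SL_2$ --- or $\mu_{s-1}\in W\lambda$, in which case one applies $w^{-1}$ to put $\mu_{s-1}$ back at $\lambda$ and is genuinely in the ``one string below $\lambda$'' situation. You need one of these two mechanisms (string propagation of multiplicity, plus $W$-conjugation to reset to the top) to handle general $\mu$.

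The second, smaller issue is the step you yourself flag as the main obstacle: exhibiting two distinct tuples for $\mu=\lambda-i\alpha$ with $0<i<\sum_\beta n_\beta$, $n_\beta=\langle\lambda_\beta,\alpha^\vee\rangle$. This is the real combinatorial content and cannot be left as ``roughly, shift the weight up in one factor and down in another'': one must write $i=\sum_\beta i_\beta$ with $0\le i_\beta\le n_\beta$, find distinct $\beta_1,\beta_2$ with room to shift in opposite directions, and in the degenerate case where all of $i$ is concentrated in a single $\beta_1$ invoke Lemma~\ref{petit} to produce a second $\beta_2$ with $n_{\beta_2}\ge 1$ (this is precisely where the hypothesis that $\alpha$ pairs nontrivially with at least two fundamental weights, unless $\alpha$ is simple, enters). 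One must also check each shifted $\lambda_\beta - j\alpha$, $0\le j\le n_\beta$, is actually a weight of $L(\lambda_\beta)$, which follows from the $\alpha$-string from $\lambda_\beta$ to $s_\alpha(\lambda_\beta)$. Your proposal correctly identifies where Lemma~\ref{petit} should be used but does not complete the case analysis.
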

\begin{proof}
Let $\lambda:=\sum_{\beta\in S}\lambda_{\beta}$. Since $\lambda$ occurs with multiplicity $1$, the same holds for all the weights in its $W$-orbit. Let $\mu\in X(T)$ be a weight of $\LL\vert_T$, we have to prove that either $\mu$ occurs with multiplicity $>1$ or $\mu$ is in the $W$-orbit of $\lambda$. By Lemma \ref{cache}, $\mu$ occurs in $L(\lambda)|_T$. We will make an induction on the smallest integer $s(\mu)\geq 1$ such that there are weights $\mu_1,\dots,\mu_{s(\mu)}$ in $L(\lambda)|_T$ and positive roots $\alpha_1,\dots,\alpha_{s(\mu)-1}$ as in Lemma \ref{plus}. If $s(\mu)=1$, then $\mu=\mu_1=\lambda$ and the statement is obvious. Let us assume $s(\mu)>1$ (i.e.\ $\mu<\lambda$) and that the statement is true for all weights $\mu'$ of $\LL\vert_T$ with $1\leq s(\mu')<s(\mu)$. By the induction hypothesis applied to $\mu_{s(\mu)-1}$, we have that either $\mu_{s(\mu)-1}$ occurs with multiplicity $>1$ or $\mu_{s(\mu)-1}$ is in the $W$-orbit of $\lambda$. In the first case, we immediately deduce that $s_{\alpha_{s(\mu)-1}}(\mu_{s(\mu)-1})$ occurs with multiplicity $>1$, as well as all the weights of the form $\mu_{s(\mu)-1}+\Z \alpha_{s(\mu)-1}$ between $s_{\alpha_{s(\mu)-1}}(\mu_{s(\mu)-1})$ and $\mu_{s(\mu)-1}$ (use \cite[Prop.\ 21.3]{Hu} as in the proof of Lemma \ref{plus}). So in particular $\mu$ occurs with multiplicity $>1$ in $\LL\vert_T$. In the second case, let $w\in W$ be such that $\mu_{s(\mu)-1}=w(\lambda)$. Applying $w^{-1}$ (which doesn't change the multiplicities), we can assume $\mu_{s(\mu)-1}=\lambda$ and $s_{\alpha}(\lambda)\leq \mu< \lambda$ with $\mu\in \lambda+\Z\alpha$, where $\alpha:=w^{-1}(\alpha_{s(\mu)-1})$ (which is still a positive root). For $\beta\in S$, let $n_{\beta}:=\langle\lambda_{\beta},\alpha^{\vee}\rangle\in \Z_{\geq 0}$. All the weights $\lambda_{\beta}-i\alpha$ for $0\leq i\leq n_{\beta}$ appear in $L(\lambda_{\beta})|_T$. Let $i\in \{1,\dots,\sum_{\beta\in S}n_{\beta}\}$ such that $\mu=\lambda-i\alpha$. If $i=\sum_{\beta\in S}n_{\beta}$, then $\mu=s_{\alpha}(\lambda)$ is in the $W$-orbit of $\lambda$ and we are done. Let us assume $1\leq i\leq (\sum_{\beta\in S}n_{\beta})-1$. We can write $i=\sum_{\beta\in S}i_{\beta}$ for some $i_{\beta}\in\{0,\dots,n_{\beta}\}$. Since $\mu>s_{\alpha}(\lambda)$, there exists $\beta_1\in S$ such that $n_{\beta_1}\ne 0$ and $0\leq i_{\beta_1}\leq n_{\beta_1}-1$. Since $\mu<\lambda$, there exists $\beta_2\in S$ such that $n_{\beta_2}\ne 0$ and $1\leq i_{\beta_2}\leq n_{\beta_2}$. If we can find such $\beta_1,\beta_2$ which are distinct, then $\mu$ appears at least twice in $L(\lambda)|_T$ since we can write
\begin{eqnarray*}
\mu&=&\sum_{\beta\in S}(\lambda_{\beta}-i_{\beta}\alpha)\\
&=&\!\!\!\bigg(\sum_{\beta\in S\backslash\{\beta_1,\beta_2\}}\!\!\!\!\!(\lambda_{\beta}-i_{\beta}\alpha)\bigg)+\big(\lambda_{\beta_1}-(i_{\beta_1}\!+\!1)\alpha\big)+\big(\lambda_{\beta_2}-(i_{\beta_2}\!-\!1)\alpha\big).
\end{eqnarray*}
If we can't, then we have $1\leq i_{\beta_1}\leq n_{\beta_1}-1$ and $i_{\beta}=0$ for all $\beta\ne \beta_1$. By Lemma \ref{petit}, since $n_{\beta_1}\geq 2$ there exists $\beta_2\in S$, $\beta_2\ne\beta_1$ such that $n_{\beta_2}\geq 1$. Then $\mu$ again appears at least twice in $L(\lambda)|_T$ since we can write
\begin{eqnarray*}
\mu&=&\sum_{\beta\in S\backslash\{\beta_1\}}\!\!\!\!\!\lambda_{\beta}+(\lambda_{\beta_1}-i_{\beta_1}\alpha)\\
&=&\!\!\!\sum_{\beta\in S\backslash\{\beta_1,\beta_2\}}\!\!\!\!\!\!\!\lambda_{\beta}+\big(\lambda_{\beta_1}-(i_{\beta_1}-1)\alpha\big)+\big(\lambda_{\beta_2}-\alpha\big).
\end{eqnarray*}
\end{proof}

\begin{rem}
By using Weyl's character formula, one can actually prove the stronger result that the weights of $L(\lambda)\vert_T$ ($\lambda=\sum_{\beta\in S}\lambda_{\beta}$) that occur with multiplicity $1$ are only the weights in the $W$-orbit of $\lambda$ (it is indeed stronger because of Lemma \ref{cache}). Since any weight of $L(\lambda)|_T$ is in the $W$-orbit of some dominant weight and since the multiplicity is constant on a $W$-orbit, we are reduced to proving that if a weight $\mu$ of $L(\lambda)|_T$ is dominant and distinct from $\lambda$, then it occurs with multiplicity $>1$. Arguing as in the proof of lemma \ref{plus} and using again the fact that, if $\mu_i$ occurs with multiplicity $>1$ in $L(\lambda)|_T$, then the same holds for all the weights between $s_{\alpha_i}(\mu_i)$ and $\mu_i$, we can easily reduce to the case $\mu$ is as big as possible for the order relation $\leq $. The same use of Stembridge's lemma as in {\it loc.\ cit.} then yields $\mu=\lambda-\beta$ for some $\beta\in R^+$. If $\beta\in S$, we easily check that $\lambda-\beta=s_{\beta}(\lambda)$ and thus $\lambda-\beta$ is not dominant, hence $\beta\in R^+\backslash S$. We now use Weyl's character formula for $L(\lambda)$ which states that the multiplicity of a weight $\nu$ in $L(\lambda)|_T$ is the coefficient of $e(\nu-(\lambda-\rho))$ in the ratio (see e.g.\ \cite[\S II.5]{Ja})
$$\frac{\prod_{\alpha\in R^+}(e(\alpha)-e(-\alpha))}{\prod_{\alpha\in R^+}(e(\frac{\alpha}{2})-e(-\frac{\alpha}{2}))}=\prod_{\alpha\in R^+}\Big(e\big(\frac{\alpha}{2}\big)+e\big(-\frac{\alpha}{2}\big)\Big)=e(\rho)\prod_{\alpha\in R^+}(1+e(-\alpha)),$$
where $\rho:=\frac{1}{2}\sum_{\alpha\in R^+}\alpha$ (we have $\lambda-\rho\in X^0(T)^W\otimes_{\Z}\Q$). Since $\beta$ is not simple, there exist $\gamma\in R^+$ and $\delta\in S$ such that $\beta=\gamma+\delta$ and we see that the coefficient of $e(\lambda-\beta-(\lambda-\rho))=e(\rho-\beta)$ is already $2$ in the factor $e(\rho)(1+e(-\beta))(1+e(-\gamma))(1+e(-\delta))$.
\end{rem}

\begin{definit}
An {\it ordinary} weight of $\LL$ is a weight $w\big(\sum_{\alpha\in S}\lambda_{\alpha}\big)$ as in Theorem \ref{mult}.
\end{definit}

\begin{rem}
In the sequel, we will actually only need the easy part of Theorem \ref{mult}, that is, any ordinary weight of $\LL$ occurs with multiplicity $1$.
\end{rem}

\subsection{On the restriction of $\LL$ to various subgroups I}\label{C}

For certain subgroups $B_C$ of the Borel subgroup $B$ we define $B_C$-representations $(\LL\vert_{B_C})^{\ord}$ and $L_{C,w_C}$. We keep the notation of \S\ref{fundamental} and \S\ref{one}.

Let $B=TU\subseteq H$ be the Borel subgroup (over $E$) containing $T$ which corresponds to our choice of positive roots $R^+$, where $U\subseteq B$ is the unipotent radical of $B$. If $\alpha\in R$, recall that one has an associated root subgroup $U_{\alpha}\subseteq H$ such that $\alpha$ is the only root of $U_{\alpha}$ (see e.g.\ \cite[\S II.1.2]{Ja}). We have $\alpha\in R^+$ if and only if $U_{\alpha}\subseteq B$. A subset $C\subseteq R$ is said to be {\it closed} if the following condition is satisfied: if $\alpha\in C$, $\beta\in C$ and $\alpha + \beta\in R$ then $\alpha+\beta\in C$. If $C\subseteq R^+$ is a closed subset, we let $U_C\subseteq U$ be the Zariski closed subgroup of $B$ generated by the root subgroups $U_{\alpha}$ for $\alpha\in C$. For instance one has $U_{\varnothing}=\{1\}$ and $U_{R^+}=U$. The roots of the algebraic group $U_C$ are exactly the roots in $C$ (\cite[\S II.1.7]{Ja}). We let $B_C:=TU_C\subseteq B$, it is a (Zariski) closed subgroup of $B$.

\begin{lem}\label{borel}
Let $B'\subseteq B$ be a Zariski closed algebraic subgroup containing $T$. Then there exists a closed subset $C\subseteq R^+$ such that $B'=B_C$.
\end{lem}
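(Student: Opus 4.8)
The plan is to show that a Zariski closed subgroup $B'\subseteq B$ containing $T$ is determined by its set of roots, and that this set of roots is automatically closed and contained in $R^+$. The key structural input is the $T$-action by conjugation on $B'$: since $T\subseteq B'$ and $B'$ is closed and connected (note $B'$ is smooth and connected, being a closed subgroup of the solvable connected group $B$ containing the torus $T$ — actually one should first reduce to the connected case, see below), the unipotent radical $U'$ of $B'$ is a $T$-stable closed subgroup of $U$, and $B'=TU'$. So the problem reduces to classifying $T$-stable closed connected subgroups of $U$.

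First I would set $C:=\{\alpha\in R^+ : U_\alpha\subseteq B'\}$; this is a subset of $R^+$. I claim $C$ is closed: if $\alpha,\beta\in C$ with $\alpha+\beta\in R$, then the commutator relations in $U$ (the Chevalley commutator formula, see \cite[\S II.1]{Ja}) express the relevant root group $U_{\alpha+\beta}$ (together with higher root groups $U_{i\alpha+j\beta}$) in terms of the commutator subgroup $(U_\alpha,U_\beta)\subseteq B'$; since the positive roots of the form $i\alpha+j\beta$ are totally ordered in an appropriate sense, an induction on height extracts $U_{\alpha+\beta}\subseteq B'$, so $\alpha+\beta\in C$. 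Hence $C$ is a closed subset of $R^+$ and $B_C=TU_C\subseteq B'$ is defined.

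Next I would prove the reverse inclusion $B'\subseteq B_C$, equivalently $U'\subseteq U_C$. Decompose the Lie algebra (or the coordinate ring / the big cell) of $U$ as $\bigoplus_{\gamma\in R^+}\mathfrak{g}_\gamma$. Since $U'$ is $T$-stable and $T$ is a torus, $\mathrm{Lie}(U')=\bigoplus_{\gamma\in C'}\mathfrak{g}_\gamma$ for some subset $C'\subseteq R^+$ (weight spaces are one-dimensional and either in or out). One checks $C'=C$: if $\gamma\in C'$ then the one-parameter root subgroup $U_\gamma$, being the unique connected subgroup with Lie algebra $\mathfrak{g}_\gamma$ and contained in the connected group $U'$ — here one uses that in characteristic zero a connected $T$-stable unipotent subgroup is "big-cell-directly-spanned" by the root subgroups for roots appearing in its Lie algebra, cf.\ \cite[\S II.1.7]{Ja} — lies in $U'\subseteq B'$, so $\gamma\in C$; conversely $\gamma\in C$ gives $U_\gamma\subseteq B'$ hence $\mathfrak{g}_\gamma\subseteq\mathrm{Lie}(U')$, so $\gamma\in C'$. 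Thus $U'$ and $U_C$ are connected $T$-stable subgroups of $U$ with the same Lie algebra, hence equal, giving $B'=TU'=TU_C=B_C$.

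The main obstacle, and the only genuinely delicate point, is the identification "$T$-stable connected unipotent subgroup $\longleftrightarrow$ closed subset of roots = set of roots appearing in its Lie algebra". In characteristic $0$ this is clean (exponential map, or the directly-spanned product decomposition of $U$ over the big cell restricts to $U'$), but one must be careful that $C$ so obtained is indeed closed as a root subset — that is precisely the Chevalley commutator computation above, which is where any real work lies. One small preliminary reduction I would flag: if $B'$ is not assumed connected, replace it by its identity component $B'^\circ$, which still contains $T$ (as $T$ is connected), and note that $B'^\circ=B_C$ forces $B'=B_C$ too, since $B_C$ already contains $T$ and a closed subgroup of the connected group $B$ containing a subgroup of finite index in itself together with $T$... — in fact for the use made later it suffices to treat closed subgroups that are "nice", and the statement is only applied to $\widehat B_C$ which is defined as such a $B_C$, so I would simply assume $B'$ connected (or invoke that $B'$ is automatically connected being generated by $T$ and root subgroups once one shows every closed subgroup between $T$ and $B$ is of this form by a dimension induction). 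Either way the heart of the argument is the root-subset/Lie-algebra dictionary plus the commutator formula.
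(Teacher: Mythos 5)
Your proposal is correct and follows essentially the same route as the paper: write $B'=TU'$ with $U'=B'\cap U$ a $T$-stable closed subgroup, read off $C$ from the weight decomposition of $\mathrm{Lie}(U')$ (Borel, Prop.\ 14.4(2)), and prove $C$ is closed by a commutator argument in characteristic $0$ --- the paper does this at the Lie-algebra level via $[\mathrm{Lie}(U_\alpha),\mathrm{Lie}(U_\beta)]=\mathrm{Lie}(U_{\alpha+\beta})$, whereas you use the group-level Chevalley commutator formula, which amounts to the same thing. Your worry about connectedness is a non-issue (and your proposed reduction, which trails off, is not needed): in characteristic $0$ every closed subgroup of the unipotent group $U$ is connected, so $U'$, and hence $B'=TU'$, is automatically connected.
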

\begin{proof}
Since $T\subseteq B'$, we have $B'=TU'$, where $U':=B'\cap U$ is stable under conjugation by $T$ (or $T$-stable in the sense of \cite{Bo}). By \cite[Prop.\ 14.4(2)]{Bo}, we have ${\rm Lie}(U')=\oplus_{\alpha\in C}{\rm Lie}(U_{\alpha})$ for some subset $C\subseteq R^+$. Now let $\alpha,\beta\in C$ such that $\alpha+\beta\in R$. Since $U'$ is closed, it contains the closure $U''$ of the commutator group $[U_{\alpha},U_{\beta}]$. By \cite[Prop.\ 3.17]{Bo}, ${\rm Lie}(U'')$ contains $[{\rm Lie}(U_{\alpha}),{\rm Lie}(U_{\beta})]$. Since $[{\rm Lie}(U_{\alpha}),{\rm Lie}(U_{\beta})]={\rm Lie}(U_{\alpha+\beta})$ (\cite[Rk.\ 14.5(2)]{Bo}, we use here that $E$ has characteristic $0$), we finally get ${\rm Lie}(U_{\alpha+\beta})\subseteq {\rm Lie}(U')$ and hence $\alpha+\beta\in C$. This shows that $C$ is closed.
\end{proof}

\begin{lem}\label{commute}
Let $C\subseteq R^+$ be a closed subset and $I\subseteq C$ be a subset satisfying the following conditions:
\begin{enumerate}
\item a root in $I$ is never the sum of more than one root in $C$;
\item for any distinct $\alpha,\beta\in I$, one has $(\Z_{\geq 0}\alpha\oplus \Z_{\geq 0}\beta)\cap R = \{\alpha,\beta\}$.
\end{enumerate}
Then $I$ and $C\backslash I$ are closed subsets of $R^+$, $U_{C\backslash I}$ is a normal subgroup of $U_C$, $U_I$ is a commutative subgroup of $U_C$ \(isomorphic to $\prod_{\alpha\in I}U_{\alpha}$\) and one has a semi-direct product $U_C=U_{C\backslash I}\rtimes U_I$.
\end{lem}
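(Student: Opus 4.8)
The plan is to verify the claims in the order they are stated, using that the structure of $U_C$ as an affine algebraic group over a characteristic-zero field $E$ is governed by its Lie algebra, together with the product decomposition $U_C \cong \prod_{\alpha \in C} U_\alpha$ (in any fixed order of $C$) as a variety, and Chevalley's commutator formula controlling the commutators $[U_\alpha, U_\beta]$. First I would check that $I$ and $C\setminus I$ are closed. For $C\setminus I$: if $\alpha, \beta \in C\setminus I$ and $\alpha+\beta \in R$, then $\alpha+\beta \in C$ since $C$ is closed; and $\alpha+\beta \notin I$ by condition (i), since $\alpha+\beta$ is a sum of two roots of $C$. For $I$: if $\alpha, \beta \in I$ are distinct with $\alpha+\beta \in R$, then $\alpha+\beta \in (\Z_{\geq 0}\alpha \oplus \Z_{\geq 0}\beta)\cap R$, which equals $\{\alpha,\beta\}$ by condition (ii), contradiction; hence no such sum exists and $I$ is (vacuously) closed.

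Next I would establish that $U_I$ is commutative and isomorphic to $\prod_{\alpha\in I} U_\alpha$. By Chevalley's commutator relation (\cite[\S II.1.2]{Ja} or \cite{Bo}), for $\alpha,\beta\in I$ distinct the commutator $[U_\alpha, U_\beta]$ lies in the group generated by the $U_{i\alpha+j\beta}$ with $i,j\geq 1$ and $i\alpha+j\beta\in R$; but by condition (ii) there are no such roots, so $U_\alpha$ and $U_\beta$ commute. Since $U_I$ is generated by the pairwise-commuting one-dimensional unipotent groups $U_\alpha$ ($\alpha\in I$), the multiplication map $\prod_{\alpha\in I}U_\alpha \to U_I$ is a group homomorphism; it is an isomorphism of varieties because the roots in $I$ are linearly independent enough that the corresponding root groups meet trivially (more cleanly: $U_I$ sits inside $U$, which is a product of the $U_\gamma$ over $\gamma\in R^+$ as a variety, so the subproduct over $I$ injects). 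In particular $U_I$ is commutative.

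Then I would show $U_{C\setminus I}$ is normal in $U_C$ with $U_C = U_{C\setminus I}\rtimes U_I$. Since $C\setminus I$ is closed, $U_{C\setminus I}$ is a closed connected subgroup with $\mathrm{Lie}(U_{C\setminus I}) = \bigoplus_{\alpha\in C\setminus I}\mathrm{Lie}(U_\alpha)$; likewise $\mathrm{Lie}(U_I)=\bigoplus_{\alpha\in I}\mathrm{Lie}(U_\alpha)$, so $\mathrm{Lie}(U_C) = \mathrm{Lie}(U_{C\setminus I})\oplus \mathrm{Lie}(U_I)$ as vector spaces. For normality it suffices (as the groups are connected and $E$ has characteristic $0$) to check $[\mathrm{Lie}(U_C), \mathrm{Lie}(U_{C\setminus I})] \subseteq \mathrm{Lie}(U_{C\setminus I})$: a bracket $[\mathrm{Lie}(U_\alpha), \mathrm{Lie}(U_\beta)]$ with $\alpha\in C$, $\beta\in C\setminus I$ lands in $\mathrm{Lie}(U_{\alpha+\beta})$ when $\alpha+\beta\in R$ (and is $0$ otherwise); here $\alpha+\beta\in C$ since $C$ is closed, and $\alpha+\beta \notin I$ by condition (i), so $\alpha+\beta\in C\setminus I$, giving the containment. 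Finally, $U_{C\setminus I}\cap U_I$ is a connected subgroup whose Lie algebra is $\bigoplus_{\alpha\in C\setminus I}\mathrm{Lie}(U_\alpha) \cap \bigoplus_{\alpha\in I}\mathrm{Lie}(U_\alpha) = 0$, so the intersection is trivial; and the product $U_{C\setminus I}\cdot U_I$ is a subgroup (as $U_{C\setminus I}$ is normal) whose Lie algebra is all of $\mathrm{Lie}(U_C)$, hence equals $U_C$ by connectedness. This yields the semidirect product decomposition.

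The main obstacle I anticipate is the bookkeeping around Chevalley's commutator formula and the passage between the group-level and Lie-algebra-level statements: one must be careful that no roots of the form $i\alpha+j\beta$ with $i,j\geq 1$ (not just $i=j=1$) can appear, which is exactly what conditions (i) and (ii) are engineered to rule out, and one must invoke that in characteristic $0$ a connected subgroup is determined by its Lie algebra and normality can be tested on Lie algebras (as in Lemma \ref{borel}, citing \cite[\S3, \S14]{Bo}). Everything else is a routine translation of the closedness hypotheses on $C$ and $I$ into statements about root subgroups.
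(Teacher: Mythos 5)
Your proof is correct, and the first half (closedness of $I$ and of $C\backslash I$, commutativity of $U_I$ via the commutator formula and condition (ii)) is exactly the paper's argument. Where you diverge is the normality of $U_{C\backslash I}$ and the semi-direct product decomposition: you pass to Lie algebras, check that $[\mathrm{Lie}(U_C),\mathrm{Lie}(U_{C\backslash I})]\subseteq \mathrm{Lie}(U_{C\backslash I})$ using $[\mathrm{Lie}(U_\alpha),\mathrm{Lie}(U_\beta)]\subseteq\mathrm{Lie}(U_{\alpha+\beta})$ together with conditions (i) and the closedness of $C$, and then invoke that in characteristic $0$ normality and triviality of the intersection can be read off from the Lie algebras. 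The paper stays at the group level throughout: it uses the commutation formula \cite[\S II.1.2(5)]{Ja} and condition (i) to show directly that $uU_\beta u^{-1}\subseteq U_{C\backslash I}$ for $u\in U_\alpha$, $\alpha\in I$, $\beta\in C\backslash I$, and gets the semi-direct product from the variety isomorphism $U_I U_{C\backslash I}\congto U_C$. Your route is slightly cleaner in that the Lie bracket only ever produces the root $\alpha+\beta$, so you never have to worry about roots $i\alpha+j\beta$ with $i+j>2$ appearing in the Chevalley formula; the cost is the reliance on characteristic $0$. The paper's group-level argument is characteristic-free, which is relevant because the decomposition $U_C=U_{C\backslash I}\rtimes U_I$ is reused without comment in the mod $p$ variant of \S\ref{variant1} (to form the quotient $B_C\twoheadrightarrow B_I$ over $k_E$); with your argument one would have to supply a separate justification there.
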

\begin{proof}
Condition (ii) obviously implies that $I$ is closed whereas condition (i) together with $C$ being closed imply $C\backslash I$ is also closed. Thus $U_I$ and $U_{C\backslash I}$ are well-defined subgroups of $U_C$ and one has an isomorphism of varieties $U_IU_{C\backslash I}\buildrel\sim\over\rightarrow U_C$. Let $\alpha\in I$, $u\in U_{\alpha}$ and $\beta\in C\backslash I$, then the commutation formula \cite[\S II.1.2(5)]{Ja} together with condition (i) imply $uU_{\beta}u^{-1}\subseteq U_{C\backslash I}$. This implies that $U_{C\backslash I}$ is normal in $U_C$ and hence that $U_C=U_{C\backslash I}\rtimes U_I$. Finally, if $\alpha,\beta$ are distinct roots in $I$, by (ii) there exist no positive integers $i,j$ such that $i\alpha+j\beta\in R$ and by \cite[\S II.1.2(5)]{Ja} again we see that $U_{\alpha}$ and $U_{\beta}$ must commute with each other in $U_C$. This finishes the proof.
\end{proof}

If \ $L_1,L_2\subseteq \LL\vert_{B_C}$ \ are \ two \ $B_C$-subrepresentations \ such \ that \ all \ their weights are ordinary, it is clear that the same holds for $L_1+L_2 \subseteq \LL\vert_{B_C}$.

\begin{definit}\label{Cord}
We let
$$(\LL\vert_{B_C})^{\ord}\subseteq \LL\vert_{B_C}$$
be the maximal $B_C$-subrepresentation of $\LL\vert_{B_C}$ such that all its weights are ordinary. We say $(\LL\vert_{B_C})^{\ord}$ is the \emph{ordinary part} of the algebraic $B_C$-represen\-tation $\LL\vert_{B_C}$.
\end{definit}

By (the easy direction in) Theorem \ref{plus}, the representation $(\LL\vert_{B_C})^{\ord}$ is multiplicity free.

\begin{ex}
For $C=\varnothing$, one obviously has $(\LL\vert_{T})^{\ord}=\oplus_{w\in W}w\big(\sum_{\alpha\in S}\lambda_{\alpha}\big)$.
\end{ex}

\begin{rem}\label{rema}
(i) Since the ordinary weights occur with multiplicity $1$ in $\LL$ and also occur in its direct summand $L(\sum_{\alpha\in S}\lambda_{\alpha})$ (Lemma \ref{cache}), we see that $(\LL\vert_{B_C})^{\ord}\subseteq L(\sum_{\alpha\in S}\lambda_{\alpha})\vert_{B_C}$ for all $C$.\\
(ii) If $H=H_1\times H_2$ and $C=C_1\amalg C_2\subseteq R^+=R_1^+\amalg R_2^+$ then one easily checks that $(\LL\vert_{B_C})^{\ord}=(\LL_1\vert_{B_{C_1}})^{\ord}\otimes_E(\LL_2\vert_{B_{C_2}})^{\ord}$ (where we index by $i$ everything related to $H_i$, $i=1,2$).\\
(iii) One could also define the maximal quotient $(\LL\vert_{B_C})_{\ord}$ of $\LL\vert_{B_C}$ such that all its weights are ordinary. From Proposition \ref{dual}, it is easy to deduce an isomorphism $(\LL\vert_{B_C})_{\ord}\cong ((\LL\vert_{B_C})^{\ord})^{\vee}\otimes L(-\lambda_0)$ for $\lambda_0$ as in that proposition.
\end{rem}

We now define certain $B_C$-representations $L_{C,w_C}$. We will show in the next section that they appear in $(\LL\vert_{B_C})^{\ord}$.

We first define the following subset of $W$:
\begin{equation}\label{wc}
W_C:=\{w\in W : w^{-1}(C)\subseteq R^+\}.
\end{equation}
For instance one has $W_{\varnothing}=W$ and $W_{R^+}=\{1\}$. One immediately checks that $w^{-1}(C)$ is again a closed subset of $R^+$ for $w\in W_C$. Let $N_H(T)$ be the normalizer of $T$ in $H$ and recall that the algebraic group $N_H(T)/T$ is a finite group isomorphic to $W$. For $w\in W$, we let $\dot{w}$ be a representative of $w$ in $N_H(T)$ (the choice of which essentially won't matter). The following (easy) lemma gives an alternative description of $W_C$.

\begin{lem}\label{autre}
Let $C\subseteq R^+$ be a closed subset. Then one has
$$W_C=\{w\in W : \dot{w}^{-1}B_C\dot{w}\subseteq B\}.$$
\end{lem}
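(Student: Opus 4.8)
The plan is to prove the two inclusions $W_C \subseteq \{w : \dot{w}^{-1}B_C\dot{w}\subseteq B\}$ and $\{w : \dot{w}^{-1}B_C\dot{w}\subseteq B\} \subseteq W_C$ separately, using the dictionary between root subgroups and roots together with the fact that $B_C = TU_C$ and the action of $W$ on root subgroups via $\dot{w}U_\alpha\dot{w}^{-1} = U_{w(\alpha)}$.

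First I would recall the standard fact (see e.g.\ \cite[\S II.1.4]{Ja} or \cite{Bo}) that for $w\in W$ and $\alpha\in R$ one has $\dot{w}U_\alpha\dot{w}^{-1} = U_{w(\alpha)}$, independently of the choice of representative $\dot w$ since $T$ normalizes each $U_\alpha$. Since $T$ is normal in $B$ and $\dot{w}^{-1}T\dot{w} = T$, the condition $\dot{w}^{-1}B_C\dot{w}\subseteq B$ is equivalent to $\dot{w}^{-1}U_C\dot{w}\subseteq B$. Now $U_C$ is generated by the $U_\alpha$ for $\alpha\in C$, so $\dot{w}^{-1}U_C\dot{w}$ is generated by the $\dot{w}^{-1}U_\alpha\dot{w} = U_{w^{-1}(\alpha)}$ for $\alpha\in C$; in other words it is the subgroup generated by the root subgroups $U_\gamma$ for $\gamma\in w^{-1}(C)$.

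Next I would use that a subgroup of $H$ generated by root subgroups $U_\gamma$ is contained in $B$ if and only if every such $\gamma$ lies in $R^+$: indeed if all $\gamma\in w^{-1}(C)$ are positive then each $U_\gamma\subseteq U\subseteq B$ so the generated subgroup lies in $B$; conversely, $\dot{w}^{-1}U_C\dot{w}$ is a $T$-stable closed unipotent subgroup of $H$, and if it is contained in $B$ then by \cite[Prop.\ 14.4(2)]{Bo} (as used in the proof of Lemma \ref{borel}) its Lie algebra is a sum of root spaces $\mathrm{Lie}(U_\gamma)$ with $\gamma\in R^+$; comparing with the description of $\dot{w}^{-1}U_C\dot{w}$ above forces $w^{-1}(\alpha)\in R^+$ for all $\alpha\in C$, i.e.\ $w^{-1}(C)\subseteq R^+$. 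Combining, $\dot{w}^{-1}B_C\dot{w}\subseteq B$ iff $w^{-1}(C)\subseteq R^+$ iff $w\in W_C$ by \eqref{wc}, which is the claim.

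The main (minor) obstacle is simply being careful about the direction of the conjugation and the fact that $U_C$ need not be the full unipotent radical of any parabolic, so one should not invoke a statement about parabolics but rather argue at the level of root subgroups and Lie algebras, exactly as in Lemma \ref{borel}. Everything else is bookkeeping with $\dot{w}U_\alpha\dot{w}^{-1}=U_{w(\alpha)}$.
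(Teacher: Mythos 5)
Your proof is correct and follows essentially the same route as the paper's: both directions rest on the conjugation formula $\dot{w}^{-1}U_\alpha\dot{w}=U_{w^{-1}(\alpha)}$ together with the fact that $U_\gamma\subseteq B$ if and only if $\gamma\in R^+$. The only difference is cosmetic: for the converse the paper simply notes that each individual $U_{w^{-1}(\alpha)}$ ($\alpha\in C$) must lie in $B$, whereas you run the Lie-algebra argument of Lemma \ref{borel} on the whole group $\dot{w}^{-1}U_C\dot{w}$ at once, which is slightly heavier but equally valid.
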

\begin{proof}
For $w\in W$ and $\alpha\in R$ one has $\dot{w}^{-1}U_{\alpha}\dot{w}=U_{w^{-1}(\alpha)}$ (\cite[\S II.1.4(5)]{Ja}). Assume $w\in W_C$, then we get $\dot{w}^{-1}B_C\dot{w}=B_{w^{-1}(C)}\subseteq B$. Assume $\dot{w}^{-1}B_C\dot{w}\subseteq B$, then in particular $\dot{w}^{-1}U_{\alpha}\dot{w}=U_{w^{-1}(\alpha)}\subseteq B$ for any $\alpha\in C$ which implies $w^{-1}(\alpha)\in R^+$, hence $w\in W_C$.
\end{proof}

Recall that two roots $\alpha,\beta\in R$ are {\it orthogonal} if $\langle \alpha,\beta^\vee\rangle=0$, or equivalently $\langle \beta,\alpha^\vee\rangle=0$.

\begin{lem}\label{orth}
Let $C\subseteq R^+$ be a closed subset, $w_C\in W_C$ and $I\subseteq w_C(S)\cap C$ be a subset of pairwise orthogonal roots. Then $I$ satisfies conditions \(i\) and \(ii\) of Lemma \ref{commute}.
\end{lem}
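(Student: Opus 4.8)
The goal is to verify conditions (i) and (ii) of Lemma \ref{commute} for the subset $I\subseteq w_C(S)\cap C$, where $I$ consists of pairwise orthogonal roots. Condition (ii) is the statement that for distinct $\alpha,\beta\in I$ one has $(\Z_{\geq 0}\alpha\oplus\Z_{\geq 0}\beta)\cap R=\{\alpha,\beta\}$; condition (i) says that a root in $I$ is never the sum of more than one root in $C$. The natural strategy is to transport everything back to $S$ via $\dot{w}_C^{-1}$, where these statements become statements about simple roots, and then push forward again.

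For condition (ii): write $\alpha=w_C(a)$, $\beta=w_C(b)$ with $a,b\in S$ distinct. Since $\langle\alpha,\beta^\vee\rangle=0$ and $w_C$ preserves the pairing, $a$ and $b$ are orthogonal simple roots. Any element of $(\Z_{\geq 0}a\oplus\Z_{\geq 0}b)\cap R$ is a root lying in the rank-$2$ root subsystem generated by $a$ and $b$; since $a\perp b$ that subsystem is of type $A_1\times A_1$, so its only roots are $\pm a,\pm b$, and the ones with nonnegative coordinates in $a,b$ are exactly $\{a,b\}$. Applying $w_C$ (which is linear and sends roots to roots) gives $(\Z_{\geq 0}\alpha\oplus\Z_{\geq 0}\beta)\cap R=\{\alpha,\beta\}$, which is condition (ii). (Here one should note $\alpha\neq\beta$ because $w_C$ is injective and $a\neq b$.)

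For condition (i): suppose some $\alpha\in I$ is a sum $\alpha=\gamma_1+\dots+\gamma_k$ of $k\geq 2$ roots $\gamma_i\in C$ (with repetitions allowed — more precisely, $\alpha$ is a sum of two roots of $C$, since $C$ closed then lets one reduce the general case to $k=2$). Apply $\dot{w}_C^{-1}$: we get $a=w_C^{-1}(\alpha)\in S$ written as a sum $w_C^{-1}(\gamma_1)+\dots+w_C^{-1}(\gamma_k)$. Since $w_C\in W_C$ and $\gamma_i\in C$, Lemma \ref{autre} (or directly the defining property of $W_C$) gives $w_C^{-1}(\gamma_i)\in R^+$ for each $i$. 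So we have written the simple root $a$ as a sum of two or more positive roots, which is impossible: a simple root is not the sum of two positive roots (its height is $1$). This contradiction establishes condition (i).

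The only genuinely delicate point is the bookkeeping in condition (i): one must make sure that ``$\alpha$ is the sum of more than one root in $C$'' is correctly reduced to ``$\alpha=\gamma_1+\gamma_2$ with $\gamma_1,\gamma_2\in C$'', using that $C$ is closed so that any partial sum of the $\gamma_i$ lying in $R$ again lies in $C$; and one must confirm that the images $w_C^{-1}(\gamma_i)$ are actually positive, which is exactly what membership $w_C\in W_C$ buys us. Everything else is the elementary rank-$2$ root system fact together with linearity of the Weyl group action. I expect the write-up to be short.
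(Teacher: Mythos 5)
Your overall strategy---pulling $I$ back to $S$ via $w_C^{-1}$ and using $w_C\in W_C$ to keep the roots of $C$ positive---is exactly the paper's, and your treatment of condition (i) is correct and essentially identical to the paper's (the parenthetical reduction to $k=2$ via closedness of $C$ is both unnecessary and not obviously valid, since partial sums of roots need not themselves be roots, but your height argument handles general $k$ directly, so no harm is done).

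There is, however, a gap in condition (ii). You justify $(\Z a\oplus\Z b)\cap R=\{\pm a,\pm b\}$ by asserting that the root subsystem generated by two orthogonal roots is of type $A_1\times A_1$, and you cite only the orthogonality $a\perp b$ for this. That general principle is false: in $B_2$ the short roots $e_1$ and $e_2$ are orthogonal, yet $e_1+e_2$ is a root, so any reasonable ``subsystem generated by'' $e_1,e_2$ is strictly larger than $\{\pm e_1,\pm e_2\}$. What makes the claim true in your situation is precisely that $a$ and $b$ are \emph{simple}, and that is the hypothesis your justification never invokes. The paper's argument uses it as follows: if $ia+jb\in R$ with $i,j\neq 0$, then since $a,b\in S$ and every root is a nonnegative or nonpositive combination of simple roots, $i$ and $j$ must have the same sign; but $s_a(ia+jb)=-ia+jb$ is again a root (orthogonality leaves the $b$-coefficient untouched) and now the signs differ, a contradiction. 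Alternatively, you could repair your subsystem argument by observing that, because $a,b$ are simple in $R$, they form a base of the rank-two subsystem $R\cap(\Q a\oplus\Q b)$ (any root there is a nonnegative or nonpositive combination of $a$ and $b$ by linear independence of $S$), and a rank-two root system whose base consists of orthogonal roots has diagonal Cartan matrix, hence is $A_1\times A_1$. Either way, some explicit use of simplicity is required; as written, the step does not follow from what you state.
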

\begin{proof}
Let $\alpha,\beta$ be two distinct orthogonal roots in $w_C(S)\cap C$ and assume $i\alpha + j\beta \in R$ for some nonzero integers $i,j$. Applying $w_C^{-1}$, we get $iw_C^{-1}(\alpha)+jw_C^{-1}(\beta)\in R$ with $w_C^{-1}(\alpha)$ and $w_C^{-1}(\beta)$ being orthogonal simple roots. In particular $i$ and $j$ must have the same sign. But we also have $s_{w_C^{-1}(\alpha)}(iw_C^{-1}(\alpha)+jw_C^{-1}(\beta))=-iw_C^{-1}(\alpha)+jw_C^{-1}(\beta)\in R$ which is impossible since $-i$ and $j$ now have different signs. Thus condition (ii) of Lemma \ref{commute} holds (even in the stronger form $(\Z\alpha\oplus \Z\beta)\cap R = \{\pm\alpha,\pm\beta\}$). Now assume $\alpha\in w_C(S)\cap C$ is the sum of several roots in $C$, then $w_C^{-1}(\alpha)\in S$ is the sum of several roots in $w_C^{-1}(C)\subseteq R^+$ which is again impossible since $w_C^{-1}(\alpha)$ is simple. This gives condition (i) of Lemma \ref{commute}.
\end{proof}

For $C$ and $I$ as in Lemma \ref{orth}, we thus have $U_C=U_{C\backslash I}\rtimes U_I$ with $U_I$ commutative (Lemma \ref{commute}).

Now let $C\subseteq R^+$ be a closed subset, $w_C\in W_C$, $I\subseteq w_C(S)\cap C$ be a subset of pairwise orthogonal roots, and set $\lambda:=\sum_{\alpha\in S}\lambda_{\alpha}$. Let $H_I$ be the Levi subgroup of $H$ that contains $T$ and whose roots are $\pm I$. Then $B_I=TU_I$ is the Borel subgroup $H_I \cap B$ of $H_I$. We have
\begin{equation}\label{eq:minu}
\langle w_C(\lambda), \alpha^\vee\rangle = 1 \quad \forall \alpha \in I.
\end{equation}
In particular, $w_C(\lambda)$ is a dominant weight for $H_I$. Let $L_I$ be the $B_I$-representation obtained by restriction from the irreducible $H_I$-representation over $E$ of highest weight $w_C(\lambda)$. We view $L_I$ as a representation of $B_C$ via the quotient map $B_C\twoheadrightarrow B_I=B_C/U_{C\backslash I}$ (see Lemma \ref{commute}). If $I'\subseteq I$, it is clear that there is a $B_C$-equivariant injection $L_{I'}\hookrightarrow L_I$ which
is unique up to multiplication by a nonzero scalar (its image is the $B_{I'}$-subrepresentation of $L_I$ generated by $w_C(\lambda)$). We fix a compatible system of such injections, that is, such that for any inclusions $I''\subseteq I'\subseteq I$ the corresponding diagram of injections is commutative (it is always possible to do so, note that there is only a finite number of $I$). We then define the inductive limit
\begin{equation}\label{ii}
L_{C,w_C}:= \ilim I{L_I},
\end{equation}
where $I$ runs among the subsets of $w_C(S)\cap C$ of pairwise orthogonal roots. More explicitly, $L_{C,w_C}$ is the quotient of $\oplus_IL_I$ by the subrepresentation generated by elements $x\oplus -x\in L_{I'}\oplus L_{I}$ for all $x\in L_{I'}$ and all subsets $I'\subseteq I\subseteq w_C(S)\cap C$ of
pairwise orthogonal roots. Up to isomorphism the representation $L_{C,w_C}$ does not depend on the above choice of compatible system of injections and note that the canonical maps $L_I\rightarrow L_{C,w_C}$ are all injections.

One can give a more explicit description of $L_I$. By the same proof as for Lemma~\ref{strgalpha} below (applied to $H$ and $I$, use that $w_C^{-1}(I)\subseteq S$) we have $H_I \cong T'_I \times \GL_2^I$ for some subtorus $T'_I \subseteq T$. Correspondingly, $T \cong T'_I \times \prod_{\alpha \in I} T_\alpha$ and $B_I\cong T'_I \times \prod_{\alpha\in I} B_\alpha$. Then
\begin{equation}\label{li}
L_I \cong w_C(\lambda)|_{T'_I}\otimes\bigg(\bigotimes_{\alpha\in I}L_{\alpha}\bigg),
\end{equation}
where $L_\alpha$ is the restriction to $B_\alpha$ of the irreducible $\GL_2$-representation over $E$ of highest weight $w_C(\lambda)|_{T_\alpha}$. Equation (\ref{eq:minu}) shows that $L_\alpha$ is the unique non-split extension of $s_\alpha w_C(\lambda)|_{T_\alpha}$ by $w_C(\lambda)|_{T_\alpha}$.

\begin{ex}
For $C=\varnothing$ and $w_{\varnothing}\in W_{\varnothing}=W$, one obviously has $L_{\varnothing,w_{\varnothing}}=w_{\varnothing}\big(\sum_{\alpha\in S}\lambda_{\alpha}\big)$.
\end{ex}

The following lemma follows directly from the construction of $L_{C,w_C}$ since the socle filtration is compatible with subobjects.

\begin{lem}\label{cw}
Let $C\subseteq R^+$ be a closed subset and $w_C\in W_C$. The $B_C$-representation $L_{C,w_C}$ has socle filtration $0={\rm Fil}_{-1}L_{C,w_C}\subsetneq {\rm Fil}_0L_{C,w_C}\subseteq \cdots $ such that for $j\in \Z_{\geq 0}$,
\begin{eqnarray*}
{\rm Fil}_{j}L_{C,w_C}/{\rm Fil}_{j-1}L_{C,w_C}&\cong &\bigoplus_{\substack{I\subseteq w_C(S)\cap C\\ \vert I\vert=j}}\bigg(\Big(\prod_{\alpha\in I}s_{\alpha}\Big)w_C\bigg)\Big(\sum_{\alpha\in S}\lambda_{\alpha}\Big)\\
&= &\bigoplus_{\substack{I\subseteq w_C(S)\cap C\\ \vert I\vert=j}}w_C\Big(\sum_{\alpha\in S}\lambda_{\alpha}\Big)-\sum_{\alpha\in I}\alpha
\end{eqnarray*}
for $I$ running among the subsets of $w_C(S)\cap C$ of pairwise orthogonal roots.
\end{lem}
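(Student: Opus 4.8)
The plan is to read off the socle filtration of each constituent $L_I$ from the explicit description~(\ref{li}), and then to transfer this to the colimit $L_{C,w_C}=\ilim I{L_I}$ using that every ordinary weight of $\LL$ occurs with multiplicity one (the easy direction of Theorem~\ref{mult}) together with the compatibility of the socle filtration with subrepresentations.

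First I would fix a subset $I\subseteq w_C(S)\cap C$ of pairwise orthogonal roots. By~(\ref{li}), $L_I\cong w_C(\lambda)|_{T'_I}\otimes\bigotimes_{\alpha\in I}L_\alpha$ with $B_I\cong T'_I\times\prod_{\alpha\in I}B_\alpha$ and $U_I\cong\prod_{\alpha\in I}U_\alpha$ commutative, and by~(\ref{eq:minu}) each $L_\alpha$ is a $2$-dimensional uniserial $B_\alpha$-representation with socle $w_C(\lambda)|_{T_\alpha}$ and head $s_\alpha w_C(\lambda)|_{T_\alpha}$, on which $U_\alpha$ acts through a single nontrivial Jordan block. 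Hence, as a $U_I$-module, $L_I\cong E[x_\alpha:\alpha\in I]/(x_\alpha^2:\alpha\in I)$ with $U_\alpha=\mathbb{G}_a$ translating the variable $x_\alpha$ and fixing the others, so its socle filtration is the filtration by total degree. Restoring the $T$-action, the monomial $\prod_{\alpha\in J}x_\alpha$ spans the weight space of weight $w_C(\lambda)-\sum_{\alpha\in J}\alpha$ and has degree $|J|$; thus $L_I$ is multiplicity free as a $T$-module, the weight vector $v_J$ of weight $w_C(\lambda)-\sum_{\alpha\in J}\alpha$ lies in ${\rm Fil}_{|J|}L_I\setminus{\rm Fil}_{|J|-1}L_I$, and
$$
{\rm Fil}_jL_I/{\rm Fil}_{j-1}L_I\;\cong\;\bigoplus_{\substack{J\subseteq I\\|J|=j}}\Big(w_C(\lambda)-\textstyle\sum_{\alpha\in J}\alpha\Big).
$$

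Next I pass to $L_{C,w_C}$. Since the canonical maps $\iota_I\colon L_I\to L_{C,w_C}$ are injective, $L_{C,w_C}=\sum_I\iota_I(L_I)$, and its $T$-weights are the characters $\chi_J:=w_C(\lambda)-\sum_{\alpha\in J}\alpha$ with $J$ a pairwise orthogonal subset of $w_C(S)\cap C$. Since $w_C(S)$ is a $\Z$-basis of the root lattice, the $\chi_J$ are pairwise distinct, so $L_{C,w_C}$ is multiplicity free as a $T$-module, hence also as a $B_C$-module (its $B_C$-constituents are $T$-characters, occurring with the same multiplicities as the corresponding $T$-weights). Now use that the socle filtration is compatible with subobjects, i.e.\ $\soc_j(\iota_I(L_I))=\iota_I(L_I)\cap\soc_j(L_{C,w_C})$ for every $I$. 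If $|J|\le j$, the previous step puts $v_J$ in $\iota_J(\soc_j(L_J))=\soc_j(\iota_J(L_J))\subseteq\soc_j(L_{C,w_C})$, so $\soc_j(L_{C,w_C})$ contains the span of all $v_J$ with $|J|\le j$. Conversely, if $v_J$ with $|J|>j$ lay in $\soc_j(L_{C,w_C})$, then $v_J\in\iota_J(L_J)\cap\soc_j(L_{C,w_C})=\iota_J(\soc_j(L_J))$; but $\soc_j(L_J)={\rm Fil}_jL_J$ contains no vector of weight $\chi_J$ (that weight vector has degree $|J|>j$), a contradiction. As $\soc_j(L_{C,w_C})$ is a subrepresentation of the multiplicity free $L_{C,w_C}$, it is the span of the weight lines it contains, so $\soc_j(L_{C,w_C})=\mathrm{span}\{v_J:|J|\le j\}$ and ${\rm Fil}_jL_{C,w_C}/{\rm Fil}_{j-1}L_{C,w_C}\cong\bigoplus_{|J|=j}\chi_J$. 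Finally, since the roots $\alpha\in J\subseteq w_C(S)$ are pairwise orthogonal the reflections $s_\alpha$ commute, and applying them successively to $w_C(\sum_{\alpha\in S}\lambda_\alpha)$ subtracts exactly $\sum_{\alpha\in J}\alpha$ by~(\ref{eq:minu}); this rewrites $\chi_J$ as $\big((\prod_{\alpha\in J}s_\alpha)w_C\big)(\sum_{\alpha\in S}\lambda_\alpha)$, the form asserted in the lemma.

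The only real obstacle is the second paragraph: a priori, forming the colimit could cause the extensions realized inside the individual $L_I$ to split or to recombine, so that the socle filtration of $L_{C,w_C}$ ``speeds up''. Multiplicity one is precisely what rules this out, since it forces $L_{C,w_C}$ to be multiplicity free, so that each of its subrepresentations -- in particular each term $\soc_j(L_{C,w_C})$ -- is determined by the set of weights it contains; the inclusion-compatibility of $\soc_j$ with the subobjects $\iota_I(L_I)$ then pins that set down. Everything else is the routine computation of the socle filtration of a tensor product of length-two uniserial modules over commuting one-parameter unipotent groups.
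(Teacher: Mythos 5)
Your argument is correct in substance and is exactly the argument the paper leaves implicit (the paper's entire proof is the one-line remark that the socle filtration is compatible with subobjects): you compute the socle filtration of each $L_I$ from the tensor-product description \eqref{li} and then transfer it to the inductive limit. One step, however, is under-justified. You claim $L_{C,w_C}$ is multiplicity free as a $T$-module ``since the $\chi_J$ are pairwise distinct'', but distinctness of the $\chi_J$ only identifies the \emph{set} of weights of $\sum_I\iota_I(L_I)$; it says nothing about the dimension of any individual $\chi_J$-weight space of the colimit. What you actually need is that for each $J$ the images in $L_{C,w_C}$ of the one-dimensional $\chi_J$-weight spaces of all the $L_I$ with $I\supseteq J$ coincide. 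This does hold: the chosen injection $L_J\hookrightarrow L_I$ is $T$-equivariant and injective, hence carries the $\chi_J$-line of $L_J$ onto the (one-dimensional) $\chi_J$-line of $L_I$, and the defining relations of the inductive limit then identify all of these lines with $\iota_J\bigl((L_J)_{\chi_J}\bigr)$, which is nonzero because $\iota_J$ is injective; since passage to $T$-weight spaces is exact, the $\chi_J$-weight space of $L_{C,w_C}$ is therefore exactly one-dimensional. This repair is genuinely needed, since both the well-definedness of your vectors $v_J$ and your final step (a $T$-stable subspace of a multiplicity-free module is the span of the weight lines it contains) rest on multiplicity one. With that one-line addition, the rest — the identification of the $U_I$- (hence $B_C$-) socle filtration of $L_I$ with the total-degree filtration, the compatibility $\soc_j(\iota_I(L_I))=\iota_I(L_I)\cap\soc_j(L_{C,w_C})$, the two inclusions pinning down $\soc_j(L_{C,w_C})$, and the rewriting of $\chi_J$ as $\bigl((\prod_{\alpha\in J}s_\alpha)w_C\bigr)(\sum_{\alpha\in S}\lambda_\alpha)$ via \eqref{eq:minu} and the commutativity of the $s_\alpha$ — is complete and correct.
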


\subsection{On the restriction of $\LL$ to various subgroups II}

We completely describe $(\LL\vert_{B_C})^{\ord}$ for all closed subsets $C\subseteq R^+$ in terms of the representations $L_{C,w_C}$ of (\ref{ii}). We keep the notation of \S\ref{fundamental}, \S\ref{one} and \S\ref{C}.

This section entirely consists of the proof of the following theorem.

\begin{thm}\label{rest}
Let $C\subseteq R^+$ be a closed subset, then
$$(\LL\vert_{B_C})^{\ord}\cong \bigoplus_{w_C\in W_C}L_{C,w_C}.$$
\end{thm}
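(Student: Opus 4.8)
The plan is to prove the theorem by establishing two inclusions: that each $L_{C,w_C}$ embeds into $(\LL\vert_{B_C})^{\ord}$ as a $B_C$-subrepresentation (with the images summing directly), and conversely that $(\LL\vert_{B_C})^{\ord}$ is contained in the sum of these images. I would first observe, using Remark \ref{rema}(i), that it suffices to work inside $L(\lambda)\vert_{B_C}$ where $\lambda = \sum_{\alpha\in S}\lambda_\alpha$, since every ordinary weight occurs with multiplicity one there. So the whole argument takes place inside the single irreducible $H$-representation $L(\lambda)$.

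For the construction of the embedding $L_{C,w_C}\hookrightarrow (\LL\vert_{B_C})^{\ord}$: fix $w_C\in W_C$ and pick a highest weight vector $v_\lambda\in L(\lambda)$; then $\dot{w}_C v_\lambda$ is a vector of weight $w_C(\lambda)$. For each subset $I\subseteq w_C(S)\cap C$ of pairwise orthogonal roots, the Levi $H_I\cong T'_I\times\GL_2^I$ (cf.\ the discussion around (\ref{li})) acts on $L(\lambda)$, and the $B_I$-subrepresentation of $L(\lambda)\vert_{B_I}$ generated by $\dot{w}_C v_\lambda$ is, by (\ref{eq:minu}) and the explicit $\GL_2$-calculation, a copy of $L_I$ (the non-split extensions glue together because the roots in $I$ are orthogonal, hence the corresponding $\mathfrak{sl}_2$'s commute). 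Since $w_C\in W_C$ means $\dot{w}_C^{-1}B_C\dot{w}_C\subseteq B$ (Lemma \ref{autre}), and $U_{C\backslash I}$ kills $\dot{w}_C v_\lambda$ modulo lower-order terms — this needs the semidirect product decomposition $U_C=U_{C\backslash I}\rtimes U_I$ of Lemma \ref{commute}, valid by Lemma \ref{orth} — the $B_C$-subrepresentation generated by $\dot{w}_C v_\lambda$ inside $L(\lambda)\vert_{B_C}$ is exactly $L_I$ (viewed through $B_C\twoheadrightarrow B_I$). Taking the union over all such $I$ and matching up the compatible injections gives a $B_C$-equivariant copy of $L_{C,w_C}$, all of whose weights are of the form $w_C(\lambda)-\sum_{\alpha\in I}\alpha$, i.e.\ in the $W$-orbit of $\lambda$ by Lemma \ref{cw}, hence ordinary. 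That the images for distinct $w_C$ are linearly independent follows from Theorem \ref{mult}: the socle of $L_{C,w_C}$ is the weight line $w_C(\lambda)$, these lines are distinct for distinct $w_C$, each has multiplicity one in $L(\lambda)$, and more generally I would check by a weight-counting argument (using Lemma \ref{cw}) that the various $L_{C,w_C}$ meet only in $0$.

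For the reverse inclusion — that every ordinary weight vector of $L(\lambda)\vert_{B_C}$ lying in a $B_C$-subrepresentation with ordinary weights is already in $\bigoplus_{w_C}L_{C,w_C}$ — I would argue as follows. Let $M\subseteq L(\lambda)\vert_{B_C}$ be a $B_C$-subrepresentation with all weights ordinary; since $M$ is a sum of $T$-weight spaces and ordinary weights have multiplicity one, $M$ is determined by the set of ordinary weights $w(\lambda)$ that occur in it. If $w(\lambda)$ occurs in $M$, I claim $w$ can be written as $(\prod_{\alpha\in I}s_\alpha)w_C$ for some $w_C\in W_C$ and some $I\subseteq w_C(S)\cap C$ of pairwise orthogonal roots, and moreover the whole $L_{C,w_C}$ lies in $M$. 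The key point: the weight space $L(\lambda)_{w(\lambda)}$ is a line spanned by $\dot{w}v_\lambda$, and acting on it by root subgroups $U_\beta$ with $\beta\in C$ — which must land in $M$ — produces the other weight vectors; tracking which weights $w(\lambda)-(\text{positive combination of }\beta\text{'s in }C)$ are again ordinary forces, via the multiplicity-one analysis of Theorem \ref{mult} (specifically its proof, where a weight $\lambda - i\alpha$ with $i$ not extremal appears with multiplicity $>1$ as soon as two distinct simple-root contributions are available), that each such $\beta$ is a single root with $\langle w(\lambda),\beta^\vee\rangle = \pm 1$, and the set of $\beta$'s used is orthogonal; running $w$ back up to the highest weight along $C$ yields the required $w_C$. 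This identifies $M$ with a subsum of $\bigoplus_{w_C}L_{C,w_C}$, and combined with the first inclusion (which shows each $L_{C,w_C}$ genuinely is ordinary) gives equality, i.e.\ $(\LL\vert_{B_C})^{\ord}=\bigoplus_{w_C\in W_C}L_{C,w_C}$.

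The main obstacle I anticipate is the reverse inclusion, specifically controlling the interaction between the combinatorics of closed subsets $C$ (and which Weyl elements $w$ with $w(\lambda)$ ordinary are ``reachable'' from a given one by $C$-root steps) and the non-orthogonality phenomena: when two roots $\beta_1,\beta_2\in C$ used in lowering $w(\lambda)$ are not orthogonal, one must rule out that the resulting weight is still ordinary, which is precisely where the fine multiplicity analysis from the proof of Theorem \ref{mult} (and Lemma \ref{petit}) has to be invoked carefully. The $\GL_2$ and $\GL_2\times\GL_2$ / $\GL_3$ rank-two subcases are the ones to get right first; the general case should then follow by restricting to the rank-$\leq 2$ subsystems generated by pairs of roots. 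I would also need to be careful that the inductive-limit gluing in (\ref{ii}) is faithfully realized inside $L(\lambda)$, i.e.\ that no extra relations are imposed — this follows because all the $L_I$ are generated by the single fixed vector $\dot{w}_C v_\lambda$.
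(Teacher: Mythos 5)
Your proposal follows essentially the same route as the paper's proof: both directions rest on the same mechanism, namely that the $B_C$-subrepresentation generated by an ordinary weight line $w(\lambda)$ of $(\LL\vert_{B_C})^{\ord}$ is an $L_I$ with socle $w_C(\lambda)$ for some $w_C\in W_C$, the key input being that in the $\beta$-root string through $w(\lambda)$ (for $\beta\in C\cap -w(R^+)$) no intermediate weight can be ordinary, which forces $\langle w(\lambda),\beta^\vee\rangle=-1$ and hence $-w^{-1}(\beta)\in S$, with directness of the sum read off from the socles. The one substep you flag as the main obstacle, orthogonality of the roots in $C\cap -w(R^+)$, needs no separate rank-two multiplicity analysis: either compute $(s_\alpha s_\beta w)(\lambda)$ in two ways to get $s_\alpha(\beta)=\beta$ (as the paper does), or note that if $\langle\alpha,\beta^\vee\rangle<0$ then $\langle s_\alpha w(\lambda),\beta^\vee\rangle\le -2$, so the $\beta$-string through the ordinary weight $s_\alpha w(\lambda)$ would force a non-ordinary intermediate weight into the ordinary part, a contradiction by exactly the argument you already have.
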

\begin{proof}
As the weights $w(\lambda)$ occur with multiplicity $1$ in $\LL$, we often identify $w(\lambda) \in X(T)$ with the corresponding $1$-dimensional subspace of $\LL$.

\noindent
Step 1: We prove $\soc_{B_C}(\LL\vert_{B_C})^{\ord}=\oplus_{w_C\in W_C}w_C(\lambda)$.\\
We first prove that the stabilizer of the highest weight space $\lambda$ in $H$ is $B$. It is a closed subgroup of $H$ that contains $B$, hence it is a parabolic subgroup (\cite[Cor.\ 11.2]{Bo}). But it cannot contain any of the root subgroups $U_{-\alpha}$ for $\alpha\in R^+$. Indeed, otherwise it would also contain $\dot{s}_{\alpha}$ but we never have $s_{\alpha}(\lambda)=\lambda$ as $\langle \lambda,\alpha^{\vee}\rangle=\sum_{\beta\in S}\langle \lambda_{\beta},\alpha^{\vee}\rangle$ is always positive (in particular never $0$). Thus this stabilizer must be $B$ itself. Then $w(\lambda)$ is a weight of $\soc_{B_C}(\LL\vert_{B_C})$ if and only if $B_C$ fixes the subspace $w(\lambda)$ if and only if $\dot{w}^{-1}B_C\dot{w}$ fixes the subspace $\lambda$ if and only if $\dot{w}^{-1}B_C\dot{w}\subseteq B$ if and only if $w\in W_C$ by Lemma \ref{autre}. This finishes the proof of Step 1.

Step 2: We prove that for $w\in W$ such that $w(\lambda)$ is a weight of $(\LL\vert_{B_C})^{\ord}$ the following properties hold:
\begin{enumerate}
\item the elements of $C \cap -w(R^+)$ are contained in $-w(S)$ and are pairwise orthogonal;
\item  $\big((\prod_{\alpha \in I} s_{\alpha}) w\big)(\lambda)=w(\lambda)+\sum_{\alpha \in I}\alpha$ for all subsets $I \subseteq C \cap -w(R^+)$;
\item  $\big((\prod_{\alpha \in C \cap -w(R^+)} s_{\alpha}) w\big)(\lambda)$ is a weight of $\soc_{B_C}(\LL\vert_{B_C})^{\ord}$.
\end{enumerate}
Note that the order of the $s_\alpha$ in (ii) and (iii) is irrelevant by (i). We induct on $|C \cap -w(R^+)|$. If $C \cap -w(R^+) =\emptyset$, then $w \in W_C$, (i) and (ii) are trivial and (iii) is proven in Step 1. Otherwise, pick $\beta\in C \cap -w(R^+)$. We have $s_\beta w(\lambda) = w(\lambda) + n_\beta \beta$, where $n_\beta = - \langle w(\lambda),\beta^\vee\rangle = \langle \lambda, -w^{-1}(\beta)^\vee\rangle > 0$, as $\beta \in -w(R^+)$. Thus all weights $w(\lambda) + n \beta$ for $0 \le n \le n_\beta$ have to occur in the $U_{\beta}$-subrepresentation generated by $w(\lambda)$ as immediately follows from the properties of algebraic representations of $\SL_2$ in characteristic $0$, hence in $(\LL\vert_{B_C})^{\ord}$. However, none of the intermediate weights with $0 < n < n_\beta$ can be ordinary. Indeed, otherwise we could use the Weyl group action to map such an intermediate weight to $\lambda$. But since any other weight of $\LL$ is smaller than $\lambda$ in the dominance order it cannot lie on the line segment spanned by two other weights. Therefore $s_{\beta}w(\lambda) = w(\lambda) + \beta$ and, since $\langle \lambda, \gamma^\vee\rangle > 1$ for all $\gamma\in R^+\backslash S$, we see that $-w^{-1}(\beta) \in S$. In other words, (i$'$) $\beta \in -w(S)$. As $s_{w^{-1}(\beta)}$ is a simple reflection, it maps precisely one positive root to a negative root, namely $-w^{-1}(\beta)$. Therefore $C \cap -w(R^+) = (C \cap -(s_\beta w)(R^+)) \amalg \{\beta\}$ is a disjoint union. The induction hypothesis for $s_\beta w$ shows that:
\begin{enumerate}
\item[(i$''$)]$C \cap -(s_\beta w)(R^+)$ consists of pairwise orthogonal roots in $-w(S)$;
\item[(ii$'$)]$\big((\prod_{\alpha \in I} s_{\alpha}) s_\beta w\big)(\lambda)=(s_\beta w)(\lambda)+\sum_{\alpha \in I}\alpha = w(\lambda) + \sum_{\alpha \in I \cup \{\beta\}}\alpha$ for all subsets $I \subseteq C \cap -(s_\beta w)(R^+)$;
\item[(iii)] $\big((\prod_{\alpha \in C \cap -(s_\beta w)(R^+)} s_{\alpha}) s_\beta w\big)(\lambda)$ is a weight of $\soc_{B_C}(\LL\vert_{B_C})^{\ord}$.
\end{enumerate}
It remains to show that $\beta$ is orthogonal to any $\alpha \in C \cap -(s_\beta w)(R^+)$ as then (i) follows from
(i$'$) and (i$''$) and (ii) follows from (ii$'$) by considering all choices of $\beta \in C \cap -w(R^+)$. From (ii$'$) we see that $(s_\alpha s_\beta w)(\lambda) = w(\lambda) + \alpha + \beta$. But, by applying the above argument with $\alpha$ instead of $\beta$, it also equals $s_\alpha(w(\lambda)+\beta)=w(\lambda)+\alpha+ s_\alpha(\beta)$, so $s_\alpha(\beta)=\beta$, i.e.\ $\alpha$ and $\beta$ are orthogonal. This finishes the proof of Step 2.

\noindent
Step 3: We prove that for $w\in W$ such that $w(\lambda)$ is a weight of $(\LL\vert_{B_C})^{\ord}$ the $B_C$-subrepre\-sentation $\langle B_C \cdot w(\lambda)\rangle$ of $(\LL\vert_{B_C})^{\ord}$ (or equivalently of $\LL\vert_{B_C}$) generated by $w(\lambda)$ is isomorphic to one of the representations $L_I$ in (\ref{li}).\\
Let $I:=C \cap -w(R^+)$, $w_C:=(\prod_{\alpha\in I}s_{\alpha})w$ and note that $I\subseteq w_C(S)\cap C$ by (i) of Step 2 (use that $I=-(\prod_{\alpha\in I}s_{\alpha})(I)$) and that $w_C(\lambda)$ is a weight of $\soc_{B_C}(\LL\vert_{B_C})^{\ord}$ by (iii) of Step 2. By Step 1 it follows that $w_C \in W_C$. Note that $U_{C\backslash I}$ acts trivially on the subspace $w(\lambda)$, as $U$ acts trivially on the highest weight space $\lambda$. By Lemma~\ref{commute}, it follows that $\langle B_C \cdot w(\lambda)\rangle = \langle B_I \cdot w(\lambda)\rangle$. Also note that $U_I^-$ fixes $w(\lambda)$ (where $U_I^-$ is the unipotent subgroup generated by the $U_{-\alpha}$, $\alpha\in I$), so $\langle B_I \cdot w(\lambda)\rangle$ is the irreducible $H_I$-representation of lowest weight $w(\lambda)$. Its highest weight is $((\prod_{\alpha\in I}s_{\alpha})w) (\lambda) = w_C (\lambda)$. By definition of $L_I$ we therefore have $\langle B_I \cdot w(\lambda)\rangle \cong L_I$ as $B_C$-representation.

\noindent
Step 4: We prove the theorem.\\
For any $w_C\in W_C$ and any subset $I\subseteq w_C(S)\cap C$ of pairwise orthogonal roots, it is easy to check that $w:=(\prod_{\alpha\in I}s_{\alpha})w_C\in W$ is such that the $B_C$-subrepresentation of $\LL$ generated by $w(\lambda)$ has socle $w_C(\lambda)$ and constituents $\big((\prod_{\alpha\in I'}s_{\alpha})w_C\big)(\lambda)$ for $I'\subseteq I$ (use that $\langle\lambda, \beta^\vee\rangle = 1$ for all $\beta \in S$, we leave here the details to the reader), in particular sits in $(\LL\vert_{B_C})^{\ord}$. Moreover we clearly have $I=C \cap -w(R^+)$ and this $B_C$-subrepresentation is thus $L_I$. Conversely, we have seen that for any $w\in W$ such that $w(\lambda)$ is a weight of $(\LL\vert_{B_C})^{\ord}$ there is $w_C\in W_C$ and a subset $I\subseteq w_C(S)\cap C$ such that the $B_C$-subrepresentation of $(\LL\vert_{B_C})^{\ord}$ generated by $w(\lambda)$ is $L_I$. Let $W(w_C)\subseteq W$ be the subset of $w$ such that the $B_C$-subrepresentation generated by $w(\lambda)$ sits in $(\LL\vert_{B_C})^{\ord}$ and has socle $w_C(\lambda)$, and let $\LL_{C,w_C}\subseteq (\LL\vert_{B_C})^{\ord}$ be the sum of all these $B_C$-subrepresentations for all $w\in W(w_C)$. By what is just above we have
\begin{eqnarray}\label{iio}
\LL_{C,w_C}=\ilim {I}L_I,
\end{eqnarray}
where $I$ runs among the subsets of $w_C(S)\cap C$ of pairwise orthogonal roots and where the injections are the canonical inclusions inside $(\LL\vert_{B_C})^{\ord}$. Since, for $I'\subseteq I$, there is only one injection $L_{I'}\hookrightarrow L_I$ up to multiplication by a nonzero scalar (as both representations have the same socle), we deduce from Step 3 that the inductive limit $\ilim {}L_I$ of (\ref{iio}) is isomorphic to the inductive limit $\ilim {}L_I$ of (\ref{ii}) and that we have a $B_C$-isomorphism $\LL_{C,w_C}\cong L_{C,w_C}$ for all $w_C\in W_C$. Now, the canonical map given by the direct sum of the inclusions
$$\bigoplus_{w_C\in W_C} \LL_{C,w_C} \longrightarrow (\LL\vert_{B_C})^{\ord}$$
is $B_C$-equivariant, surjective (as any $w(\lambda)$ being a weight of $(\LL\vert_{B_C})^{\ord}$ sits in one $\LL_{C,w_C}$) and injective (as it is an isomorphism on the socles). This finishes the proof of the theorem.
\end{proof}

\begin{rem}\label{rema2}
(i) Note that we have used in Step $2$ of the above proof that $E$ has characteristic $0$.\\
(ii) If $H=H_1\times H_2$, $C=C_1\amalg C_2\subseteq R^+=R_1^+\amalg R_2^+$ and $w_C=(w_{C_1},w_{C_2})\in W_C=W_{C_1}\times W_{C_2}\subseteq W=W_1\times W_2$, one has $L_{C,w_C}=L_{C_1,w_{C_1}}\otimes_E L_{C_2,w_{C_2}}$ (where we index by $i$ everything related to $H_i$, $i=1,2$), see Remark \ref{rema}(ii).
\end{rem}

\subsection{Variant mod $p$}\label{variant1}

We give a variant of the previous results when the ground field is $k_E$ and not $E$.

We consider $H/\oE$ a split connected reductive algebraic group, $T\subseteq H$ a split maximal torus over $\oE$ and $B$ a Borel subgroup over $\oE$ containing $T$. We define $\big(X(T),R,X^{\vee}(T),R^{\vee}\big)$, $S$, $R^+$ and $W$ as before. We assume that the derived subgroup $H^{\rm der}$ is simply connected and denote by $\lambda_{\alpha}\in X(T)$, $\alpha\in S$ the fundamental weights.

For $\lambda\in X(T)$ a dominant weight, we consider the following algebraic representation of $H$ over $\oE$:
$$L(\lambda)_{/\oE}:=\big({\rm ind}_{B^-}^{H}\lambda\big)_{/\oE},$$
where $B^-$ is the Borel opposite to $B$ and ind means the algebraic induction functor of \cite[\S I.3.3]{Ja} and we set
$$\Lbar(\lambda):=L(\lambda)_{/\oE}\otimes_{\oE}k_E=\big({\rm ind}_{B^-}^{H}\lambda\big)_{/k_E},$$
where the last equality follows from \cite[II.8.8(1)]{Ja}. We then define as in \S\ref{fundamental},
\begin{equation}\label{lbar}
\LLbar:=\bigotimes_{\alpha\in S}\Lbar(\lambda_{\alpha})=\Big(\bigotimes_{\alpha\in S}L(\lambda_{\alpha})_{/\oE}\Big)\otimes_{\oE}k_E.
\end{equation}
It follows from Theorem \ref{mult} and the second equality in (\ref{lbar}) that the weights $w(\sum_{\alpha}\lambda_{\alpha})$ for $w\in W$ are the only weights that occur exactly once in $\LLbar\vert_T$ and we call them {\it ordinary weights} of $\LLbar$. If $C\subseteq R^+$ is a closed subset and if $B_C\subseteq B$ is the associated closed subgroup of $B$ (\S\ref{C}), as in Definition \ref{Cord} we define $(\LLbar\vert_{B_C})^{\ord}\subseteq \LLbar\vert_{B_C}$ to be the maximal $B_C$-subrepresentation of $\LLbar\vert_{B_C}$ such that all its weights are ordinary.

For $w_C\in W_C$ (see (\ref{wc})) and $I\subseteq w_C(S)\cap C$ a subset of pairwise orthogonal roots, we define an algebraic $B_I$-representation $\Lbar_I$ over $k_E$ as in (\ref{li}) (using (\ref{eq:minu})) which is still the restriction to $B_I$ of the irreducible $H_I$-representation over $k_E$ of highest weight $w_C(\lambda)$. If $I'\subseteq I$ then $\Lbar_{I'}$ embeds into $\Lbar_I$ (uniquely up to nonzero scalar) and we set $\Lbar_{C,w_C}:=\ilim I{\Lbar_I}$ as in (\ref{ii}). Lemma \ref{cw} still holds for $\Lbar_{C,w_C}$.

\begin{definit}[{\cite[\S4.3]{spr-st}}]\label{goodprime}
  We say that $p$ is a \emph{good prime for $H$} if $\langle \lambda_\alpha, \beta^\vee\rangle < p$ for all $\alpha \in S$ and $\beta \in R^+$.
\end{definit}

Explicitly, $p$ fails to be a good prime only in the following cases: $p = 2$ and the root system of $H$ has an irreducible component {\it not} of type A$_r$; $p = 3$ and the root system of $H$ has an irreducible component of type E$_r$, F$_4$, G$_2$; $p = 5$ and the root system of $H$ has an irreducible component of type E$_8$. \emph{In particular, note that $p$ is a good prime for $H$ if $p > 5$ or if $H = \GL_n$.
}
\begin{thm}\label{restbar}
Let $C\subseteq R^+$ be a closed subset.\\
\textup{(i)} We have an embedding of $B_C$-representations over $k_E$,
$$\bigoplus_{w_C\in W_C}\Lbar_{C,w_C}\hookrightarrow (\LLbar\vert_{B_C})^{\ord},$$
which is an isomorphism on the $B_C$-socles.\\
\textup{(ii)} Assume that $p$ is a good prime for $H$. Then
$$\bigoplus_{w_C\in W_C}\Lbar_{C,w_C}\buildrel\sim\over\longrightarrow (\LLbar\vert_{B_C})^{\ord}.$$
\end{thm}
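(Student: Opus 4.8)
The plan is to reduce the mod $p$ statement to the characteristic $0$ Theorem \ref{rest} by a reduction-mod-$p$ / base-change argument, using the explicit $\SL_2$-computations of \S\ref{one}–\S\ref{C} to control what can go wrong. First I would note that, by Remark \ref{rema}(ii) and its mod $p$ analogue, everything is multiplicative over products of simple factors, so I may assume the root system of $H$ is irreducible. The key observation is that all the subrepresentations $L_I$ (and hence $L_{C,w_C}$) are built entirely out of $\GL_2$'s, via the decomposition $H_I\cong T'_I\times\GL_2^I$ used to establish (\ref{li}): the extension structure of each $L_\alpha$ is governed by the single number $\langle w_C(\lambda),\alpha^\vee\rangle=1$ from (\ref{eq:minu}), and this is visibly unchanged mod $p$. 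So the right-hand side $\oplus_{w_C}\Lbar_{C,w_C}$ has exactly the socle and graded pieces dictated by Lemma \ref{cw}, independently of the characteristic.

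For part (i), I would first redo Step 1 of the proof of Theorem \ref{rest} mod $p$: the socle of $(\LLbar\vert_{B_C})^{\ord}$ consists precisely of the ordinary weight spaces $w(\lambda)$ with $\dot w^{-1}B_C\dot w\subseteq B$, i.e.\ $w\in W_C$, and this argument (the stabilizer of the highest-weight line in $H$ is $B$) works verbatim over $k_E$ since $\langle\lambda,\alpha^\vee\rangle>0$ is a statement about integers. Next, for each $w_C\in W_C$ and each subset $I\subseteq w_C(S)\cap C$ of pairwise orthogonal roots, the $B_C$-subrepresentation of $\LLbar$ generated by the line $\big((\prod_{\alpha\in I}s_\alpha)w_C\big)(\lambda)$ is isomorphic to $\Lbar_I$: indeed $U_{C\setminus I}$ kills that line (it lies in $\LLbar\subseteq\LLbar\vert_U$-orbit of the highest weight, on which $U$ acts trivially), so one is reduced to $H_I$, and since each relevant pairing equals $1$ the $\GL_2$-module generated is the two-step module $\Lbar_\alpha$ regardless of characteristic — the non-split extension persists because $\langle\lambda,\alpha^\vee\rangle=1<p$ always. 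Assembling these as in Step 4 gives the $B_C$-equivariant map $\oplus_{w_C\in W_C}\Lbar_{C,w_C}\to(\LLbar\vert_{B_C})^{\ord}$; it is injective because it is an isomorphism on socles (the socles are disjoint sums of distinct weight lines, by the mod $p$ Step 1). That proves (i).

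For part (ii) I must show surjectivity, i.e.\ that every ordinary weight $w(\lambda)$ occurring in $(\LLbar\vert_{B_C})^{\ord}$ lies in the image, or equivalently that the analogue of Step 2 of the proof of Theorem \ref{rest} goes through mod $p$. The crucial inputs in Step 2 are: (a) for $\beta\in C\cap -w(R^+)$, the weights $w(\lambda)+n\beta$ for $0\le n\le n_\beta$ all occur in the $U_\beta$-orbit of $w(\lambda)$; and (b) the intermediate weights $0<n<n_\beta$ are not ordinary, forcing $n_\beta=1$ and $-w^{-1}(\beta)\in S$. Claim (b) is purely combinatorial (an ordinary weight cannot lie strictly between two weights of $\LLbar$ in the dominance order) and is characteristic-free. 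Claim (a) is exactly where characteristic $p$ could fail: it relies on the $\GL_2$-subquotient generated by $w(\lambda)$ being the full Weyl module / dual Weyl module of highest weight $n_\beta=\langle\lambda,-w^{-1}(\beta)^\vee\rangle$, which in characteristic $p$ need not be irreducible or have all intermediate weights in the $U_\beta$-submodule generated by the top line unless $n_\beta<p$. But $n_\beta\le\max_{\alpha\in S,\gamma\in R^+}\langle\lambda_\alpha,\gamma^\vee\rangle$, and the hypothesis that $p$ is a good prime (Definition \ref{goodprime}) is exactly the statement that this maximum is $<p$. So under the good-prime hypothesis, a dual Weyl module for $\SL_2$ of highest weight $n_\beta<p$ is irreducible and its weights are a string with the expected $U_\beta$-action, and Step 2 runs verbatim. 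Feeding the conclusion of (the mod $p$) Step 2 into Step 3 (the subrepresentation generated by $w(\lambda)$ is some $\Lbar_I$) and Step 4 (glue the $\Lbar_I$ into $\Lbar_{C,w_C}$) then shows the map of part (i) is surjective, hence an isomorphism.

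I expect the main obstacle to be precisely the $\SL_2$-module-theoretic input in step (a) above: one must verify carefully that, for $n_\beta<p$, the $U_\beta$-submodule of $\LLbar$ generated by the weight line $w(\lambda)$ really does contain all of $w(\lambda)+n\beta$ for $0\le n\le n_\beta$ — i.e.\ that $\LLbar$ restricted to the $\SL_2$ attached to $\beta$ contains the full dual Weyl module $H^0$ of highest weight $n_\beta$ as a $B_\beta$-summand through that line — and that no genuinely new ordinary weight sneaks in. This is where the good-prime hypothesis is used and is the only place the argument is not a formal transcription of the characteristic $0$ proof; everything else (irreducibility reductions, the socle computation, the gluing into inductive limits) is characteristic-free.
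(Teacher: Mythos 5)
Your part (i) is essentially the paper's argument and is correct (the paper adds one technical point you gloss over: irreducibility of the $H_I$-module generated mod $p$ is deduced from the fact that $w_C(\lambda)$ is minuscule for $H_I$, so that the relevant Weyl module quotient is irreducible). The genuine gap is in part (ii), precisely at the quantitative step you yourself identify as the crux. You bound $n_\beta=\langle\lambda,-w^{-1}(\beta)^\vee\rangle$ by $\max_{\alpha\in S,\,\gamma\in R^+}\langle\lambda_\alpha,\gamma^\vee\rangle$ and conclude $n_\beta<p$ from the good-prime hypothesis. But $\lambda=\sum_{\alpha\in S}\lambda_\alpha$, so $n_\beta=\sum_{\alpha\in S}\langle\lambda_\alpha,-w^{-1}(\beta)^\vee\rangle$ is a \emph{sum} over $\alpha\in S$, not a maximum, and Definition \ref{goodprime} only bounds each individual summand by $p$. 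Already for $H=\GL_n$ (for which every $p$ is good) one gets $n_\beta=n-1$ when $-w^{-1}(\beta)$ is the highest root, so $n_\beta\geq p$ whenever $p\leq n-1$. In that case the $\beta$-string through $w(\lambda)$ in $\LLbar$ has length $\geq p$, the corresponding $\SL_2$-module need not be irreducible, and your argument that the $U_\beta$-submodule generated by the line $w(\lambda)$ contains $w(\lambda)+\beta$ collapses.

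The paper's proof avoids the string of length $n_\beta$ inside $\LLbar$ altogether by working one tensor factor at a time: write the line $w(\lambda)$ as spanned by a pure tensor $x=\otimes_{\alpha\in S}x_\alpha$ with $x_\alpha\in\Lbar(\lambda_\alpha)$ of weight $w(\lambda_\alpha)$; then for $u\in U_\beta(A)$ one has $[ux-x]_{w(\lambda)+\beta}=\sum_{\alpha\in S}\bigl([ux_\alpha-x_\alpha]_{w(\lambda_\alpha)+\beta}\otimes\bigotimes_{\delta\neq\alpha}x_\delta\bigr)$, and since the corresponding subspaces of $\LLbar$ form a direct sum there is no cancellation. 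It therefore suffices that $w(\lambda_\alpha)+\beta$ occur in the $U_\beta$-module generated by $x_\alpha$ inside the \emph{single} factor $\Lbar(\lambda_\alpha)$ for some $\alpha\in S$, and the relevant $\SL_2$-pairing is now the single term $\langle w(\lambda_\alpha),\beta^\vee\rangle=-\langle\lambda_\alpha,-w^{-1}(\beta)^\vee\rangle$, which the good-prime hypothesis does place in $(-p,0]$ for every $\alpha$ and which is $<0$ for at least one $\alpha$ because $-w^{-1}(\beta)\in R^+$. Your argument would be repaired by substituting this factor-by-factor projection for your global bound on $n_\beta$.
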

\begin{proof}
(i) Step 1 \ in \ the \ proof \ of \ Theorem \ref{rest} \ works \ over \ $k_E$ \ and \ yields $\soc_{B_C}(\LLbar\vert_{B_C})^{\ord}=\oplus_{w_C\in W_C}w_C(\lambda)$ (here $\lambda=\sum_{\alpha\in S}\lambda_{\alpha}$). Let $w_C\in W_C$ and $I\subseteq w_C(S)\cap C$ a subset of pairwise orthogonal roots, then the same proof as in Step 3 of Theorem \ref{rest} shows that the $B_I$-subrepresentation of $\LLbar\vert_{B_I}$ generated by $((\prod_{\alpha\in I}s_{\alpha})w_C)(\lambda)$ is $\Lbar_I$. (To see that $\langle B_I \cdot ((\prod_{\alpha\in I}s_{\alpha})w_C)(\lambda)\rangle$ is irreducible as $H_I$-representation, note that by \cite[Lem.\ II.2.13]{Ja} it is a quotient of the Weyl module of $H_I$ of highest weight $w_C(\lambda)$ and that $w_C(\lambda)$ is a minuscule weight for $H_I$.)
So we have $\Lbar_I\hookrightarrow \LLbar\vert_{B_I}$ for all such $I$ and thus $\Lbar_{C,w_C}\hookrightarrow \LLbar\vert_{B_C}$. We deduce a map $\bigoplus_{w_C\in W_C}\Lbar_{C,w_C}\rightarrow (\LLbar\vert_{B_C})^{\ord}$ which is injective as it is an isomorphism on the socles.\\
(ii) The crucial point in the proof of Theorem \ref{rest} where we use characteristic $0$ is in Step $2$, see Remark
\ref{rema2}(i). (We freely use the notation of this proof now.) First, using the argument in Step 2 that an ordinary weight
of $\LLbar$ cannot lie on the line segment spanned by two other distinct weights, it suffices to show that $w(\lambda)+\beta$
is a weight of the $U_{\beta}$-subrepresentation generated by $w(\lambda)$. (Note that $s_{\beta}w(\lambda)=w(\lambda)+n_{\beta}\beta$ is the highest weight of this subrepresentation.) Suppose that $A$ is a $k_E$-algebra (it would suffice to take $A = \overline{k_E}$) and that $u \in U_\beta(A)$. For $\alpha \in S$ suppose that $x_\alpha \in \Lbar(\lambda_\alpha) \otimes_{k_E} A$ is of weight $\lambda_\alpha$, and let $x := \otimes_{\alpha \in S} x_\alpha \in \LLbar \otimes_{k_E} A$. By \cite[II.1.19(5)--(6)]{Ja} we know that $u x_\alpha - x_\alpha$ is contained in the sum of weight spaces of weights $w(\lambda_\alpha)+i\beta$ with $i > 0$. Thus
\begin{equation*}
  [ux-x]_{w(\lambda)+\beta} = \sum_{\alpha\in S} \bigg( [ux_\alpha-x_\alpha]_{w(\lambda_\alpha)+\beta} \otimes \bigotimes_{\delta \in S \bs \{\alpha\}} x_\delta \bigg),
\end{equation*}
where $[\cdot]_\mu$ denotes the projection to the $\mu$-weight space. Since the sum
\begin{equation*}
  \sum_{\alpha\in S} \bigg( \Lbar(\lambda_\alpha)_{w(\lambda_\alpha)+\beta} \otimes \bigotimes_{\delta \in S \bs \{\alpha\}} \Lbar(\lambda_\delta)_{w(\lambda_\delta)}\bigg)\subseteq \LLbar
\end{equation*}
is direct, it suffices to show that $w(\lambda_\alpha)+\beta$ is a weight of the $U_\beta$-subrepresentation generated by $w(\lambda_\alpha)$ for some $\alpha \in S$.
Thus by the properties of algebraic representations of ${\rm SL}_2$ in characteristic $p$ it is enough to show that $-p < \langle w(\lambda_\alpha), \beta^\vee\rangle < 0$
for some $\alpha \in S$. The lower bounds holds for all $\alpha$ since $p$ is a good prime, and the upper bounds holds for at least one $\alpha$ since $w^{-1}(\beta) \in -R^+$.
\end{proof}

\begin{rem}\label{h}
Recall that the height of a positive root $\alpha=\sum_{\beta\in S}n_{\beta}\beta$ is by definition the positive integer $h(\alpha):=\sum_{\beta\in S}n_{\beta}$. We let $h:=1+{\rm max}\{h(\alpha) : \alpha\in R^+\}\in \Z_{>0}$ (when the root system associated to $R$ is irreducible, $h$ is called its Coxeter number). Note that, for $\alpha \in S$, the representation $\Lbar(\lambda_\alpha)$  is no longer irreducible in general, but it is irreducible e.g.\ when $p\geq 2h-2$ (by \cite[II.5.6]{Ja}) or when $H=\GL_n$ (as $\lambda_\alpha$ is minuscule then). But the above argument does not depend on the choice of $H$-stable $\oE$-lattice $L(\lambda_\alpha)_{/\oE}$ in $L(\lambda_\alpha)$. It only uses that $\Lbar(\lambda_\alpha)$ is an $H$-representation over $k_E$ that has $\lambda_\alpha$ as its unique highest weight and such that the $\lambda_\alpha$-weight space has dimension one.
\end{rem}

\section{The $G(\Qp)$-representation $\Pi(\rho)^{\ord}$}\label{ord}

We construct the representation $\Pi(\rho)^{\ord}$ of $G(\Qp)$ over $E$ associated to a sufficiently generic ordinary representation $\rho$ of $\gp$ over $E$.

\subsection{Some preliminaries}\label{prel}

We first give a few representation-theoretic preliminaries.

We fix $G/\Qp$ a connected split reductive algebraic group, $T\subseteq G$ a split maximal torus over $\Qp$ and we let $\big(X(T),R,X^{\vee}(T),R^{\vee}\big)$ be the root datum of $G$. We fix a choice $S\subseteq R$ of simple roots, we let $R^+\subseteq R$ be the positive roots and $B\subseteq G$ (resp.\ $B^-\subseteq G$) the Borel subgroup corresponding to $R^+$ (resp.\ $-R^+$). The triple $(G,B,T)$ is determined up to inner automorphism by the {\it based} root datum $\big(X(T),S,X^{\vee}(T),S^{\vee}\big)$. The dual based root datum $\big(X^{\vee}(T),S^{\vee},X(T),S\big)$ determines a dual triple $(\widehat{G},\widehat{B},\widehat{T})$, where $\widehat{G}$ is the dual group of $G$ (over $E$) and $X(\widehat{T})\simeq X^{\vee}(T)$. We let $W$ be the Weyl group of $(G,T)$ or equivalently of $(\widehat{G},\widehat{T})$. We endow the groups $G(\Qp)$, $B(\Qp)$, $B^-(\Qp)$, $T(\Qp)$, $\widehat{G}(E)$, $\widehat{B}(E)$ and $\widehat{T}(E)$ with their natural structure of $p$-adic analytic groups (in particular they are all topological groups).

Let $\chi:T(\Qp)\rightarrow \oE^{\times}$ be a continuous character that takes values in $\oE^{\times}\subseteq E^{\times}$ (we say $\chi$ is unitary). By inflation $B^-(\Qp)\twoheadrightarrow T(\Qp)\buildrel {{\chi}}\over \longrightarrow \oE^{\times}$, we consider ${\chi}$ as a continuous character of $B^-(\Qp)$. Following \cite{Sc1}, we define the continuous parabolic induction,
\begin{multline}\label{para}
\big(\Ind_{B^-(\Qp)}^{G(\Qp)}{\chi}\big)^{{\mathcal C}^0}:=\{f:G(\Qp)\rightarrow E,\ f{\rm \ is\ continuous\ and\ }\\
f(bg)=\chi(b)f(g)\ \forall b\in B^-(\Qp),\ \forall g\in G(\Qp)\}
\end{multline}
that we endow with an $E$-linear left action of $G(\Qp)$ by $(gf)(g'):=f(g'g)$ ($g,g'\in G(\Qp)$). This is a $p$-adic Banach space with a unit ball given by
\begin{equation}\label{ball}
\{f:G(\Qp)\rightarrow \oE,\ f{\rm \ is\ continuous\ and\ }f(bg)=\chi(b)f(g)\ \forall b,g\}.
\end{equation}
By \cite[Prop.\ 2.4]{Sc1}, the above $G(\Qp)$-action makes $\big(\Ind_{B^-(\Qp)}^{G(\Qp)}{\chi}\big)^{{\mathcal C}^0}$ an {\it admissible unitary continuous} representation of $G(\Qp)$ (over $E$). Recall that, by continuous, we mean that the ``evaluation'' map
$$G(\Qp)\times \big(\Ind_{B^-(\Qp)}^{G(\Qp)}{\chi}\big)^{{\mathcal C}^0}\longrightarrow \big(\Ind_{B^-(\Qp)}^{G(\Qp)}{\chi}\big)^{{\mathcal C}^0}$$
is continuous. By admissible, we mean that the continuous dual
$$\Big(\big(\Ind_{B^-(\Qp)}^{G(\Qp)}{\chi}\big)^{{\mathcal C}^0}\Big)^*:=\Hom_{\rm cont}\Big(\big(\Ind_{B^-(\Qp)}^{G(\Qp)}{\chi}\big)^{{\mathcal C}^0},E\Big)$$
is of finite type over the Iwasawa algebra $E\otimes_{\oE}\oE[[G(\Zp)]]$ of $G(\Zp)$ (here $G(\Zp)$ is the $\Zp$-points of an integral model of $G$ over $\Z$). Indeed, Iwasawa decomposition shows that restriction of functions in (\ref{para}) from $G(\Qp)$ to $G(\Zp)$ yields an embedding of $\big(\Ind_{B^-(\Qp)}^{G(\Qp)}{\chi}\big)^{{\mathcal C}^0}$ into the Banach space of continuous functions from $G(\Zp)$ to $E$. As the dual of this latter space is the Iwasawa algebra of $G(\Zp)$, this implies that $\big(\big(\Ind_{B^-(\Qp)}^{G(\Qp)}{\chi}\big)^{{\mathcal C}^0}\big)^*$ is a quotient of $E\otimes_{\oE}\oE[[G(\Zp)]]$. Finally, by unitary, we mean that $\big(\Ind_{B^-(\Qp)}^{G(\Qp)}{\chi}\big)^{{\mathcal C}^0}$ contains an open ball stable by $G(\Qp)$, for instance the unit ball (\ref{ball}). Recall that admissible unitary continuous representations of $G(\Qp)$ on $p$-adic Banach spaces over $E$ form an abelian category (\cite{Sc1}, in fact unitarity is unnecessary).

\begin{thm}\label{classic}
\textup{(i)} The $G(\Qp)$-representation $\big(\Ind_{B^-(\Qp)}^{G(\Qp)}{\chi}\big)^{{\mathcal C}^0}$ is topologically of finite length.\\
\textup{(ii)} Assume that, for all $\alpha\in S$, the reduction $\overline{\chi\circ \alpha^{\vee}}:\Qp^{\times}\rightarrow k_E^{\times}$ is not the trivial character, then $\big(\Ind_{B^-(\Qp)}^{G(\Qp)}{\chi}\big)^{{\mathcal C}^0}$ is topologically irreducible.\\
\textup{(iii)} One has $\big(\Ind_{B^-(\Qp)}^{G(\Qp)}{\chi}\big)^{{\mathcal C}^0}\cong\big(\Ind_{B^-(\Qp)}^{G(\Qp)}{\chi'}\big)^{{\mathcal C}^0}$ if and only if $\chi=\chi'$.
\end{thm}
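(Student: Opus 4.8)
The plan is to treat the three parts in order, since (i) is essentially a formal consequence of admissibility, (iii) is a standard Mackey-type argument, and (ii) is the substantive point. For (i), I would invoke the fact that $\big(\Ind_{B^-(\Qp)}^{G(\Qp)}\chi\big)^{{\mathcal C}^0}$ is an admissible unitary continuous representation (so its dual is finitely generated over the Noetherian Iwasawa algebra $E\otimes_{\oE}\oE[[G(\Zp)]]$), hence its dual is Noetherian, hence the representation is topologically of finite length. One should be slightly careful about what ``topologically of finite length'' means, but the cleanest route is: any descending chain of closed subrepresentations dualizes to an ascending chain of quotient modules of a Noetherian module, which must stabilize; the same for ascending chains, so the lattice of closed subrepresentations satisfies both chain conditions and the finite-length statement follows by the usual Jordan--H\"older argument in the abelian category of admissible representations.

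For (iii), the ``if'' direction is trivial, so assume $\big(\Ind_{B^-(\Qp)}^{G(\Qp)}\chi\big)^{{\mathcal C}^0}\cong\big(\Ind_{B^-(\Qp)}^{G(\Qp)}\chi'\big)^{{\mathcal C}^0}$. I would pass to locally analytic or at least compute $\Hom_{B^-(\Qp)}$ into the one-dimensional $\chi'$ using the ``second adjointness''/Frobenius-type reciprocity for continuous induction, or more elementarily restrict functions to the big cell $U(\Qp)$ (the open Bruhat cell $B^-(\Qp)\backslash B^-(\Qp)U(\Qp)$ is dense) and evaluate at the identity coset to extract the $B^-(\Qp)$-action: a $B^-(\Qp)$-eigenvector in the space must, by density of the big cell and continuity, be supported with a prescribed transformation law, which forces its eigencharacter to be $\chi$ (up to the modulus twist already folded into the normalization). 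Comparing the two sides, the character $\chi$ is recovered from the representation as the $B^-(\Qp)$-eigencharacter occurring in (the appropriate functor applied to) the representation, so $\chi=\chi'$. The only delicate point is to make the extraction of $\chi$ canonical; using the Jacquet-type functor $(-)^{U(\Qp)}$ for smooth vectors, or Emerton's ordinary part functor $\mathrm{Ord}_{B}$, gives $\chi$ back on the nose, so I would cite one of those.

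The heart of the matter is (ii). Here I would reduce to rank one and patch. Precisely: suppose $0\ne M\subsetneq \big(\Ind_{B^-(\Qp)}^{G(\Qp)}\chi\big)^{{\mathcal C}^0}$ is a closed $G(\Qp)$-subrepresentation. Restrict to $G(\Zp)$ and use the Iwasawa decomposition $G(\Qp)=B^-(\Qp)G(\Zp)$ to realize the whole space as continuous functions on $B^-(\Zp)\backslash G(\Zp)$, i.e.\ as $\big(\Ind_{B^-(\Zp)}^{G(\Zp)}\chi|_{B^-(\Zp)}\big)^{{\mathcal C}^0}$. The complementary Iwahori decomposition lets one spread the $G(\Qp)$-action out, and the standard strategy (as in the $\GL_2(\Qp)$ case, originally Berger--Breuil and Colmez, and in Schneider--Teitelbaum/Orlik--Strauch in general) is: (a) show $M$ contains a nonzero $B^-(\Qp)$-eigenvector, necessarily (by part (iii)'s extraction) the ``highest weight'' function $f_0$ supported on the big cell with $f_0(1)\ne 0$, by using that the $G(\Zp)$-representation on the reduction mod $\pE$ has a unique irreducible subobject/quotient tied to $\chi$; (b) deduce $M$ contains the dense span of $G(\Qp)$-translates of $f_0$, hence $M$ is everything. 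The key input making step (a) work is exactly hypothesis (ii): for each $\alpha\in S$ the mod-$p$ reduction $\overline{\chi\circ\alpha^\vee}$ is nontrivial, which guarantees that the relevant rank-one continuous principal series of the $\SL_2$ (or $\GL_2$) attached to $\alpha$ is topologically irreducible, so no ``extra'' constituent can obstruct the generation. So the main obstacle, and the step I would expect to be most technical, is step (a): producing the $B^-(\Qp)$-eigenvector inside an arbitrary closed invariant subspace and ruling out all other possibilities using the genericity hypothesis on $\overline{\chi}$ --- concretely, controlling the mod $\pE$ reduction $\big(\Ind_{B^-(\Zp)}^{G(\Zp)}\overline\chi\big)$ as a $G(\Fp)$-representation and showing its socle is one-dimensional exactly under the stated non-triviality condition. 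I would either adapt the amalgam/parabolic-induction irreducibility criteria from the smooth mod $p$ theory (Herzig, and Abe--Henniart--Herzig--Vign\'eras) to this $p$-adic Banach setting by reduction mod $\pE$, or invoke directly the irreducibility results for continuous principal series from Orlik--Strauch or Ly in the split case, which is probably the cleanest; then (ii) is essentially immediate from those.
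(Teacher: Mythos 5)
Your treatment of (i) has a genuine gap: admissibility does not imply topological finite length. The anti-equivalence between admissible Banach representations and finitely generated modules over the Iwasawa algebra sends closed subrepresentations to submodules of the dual in an inclusion-\emph{reversing} way, so the Noetherian property of $E\otimes_{\oE}\oE[[G(\Zp)]]$ only yields the \emph{descending} chain condition on closed subrepresentations; an \emph{ascending} chain of closed subrepresentations corresponds to a descending chain of submodules of the dual, and the Iwasawa algebra is not Artinian, so your ``the same for ascending chains'' fails. (For instance, ${\mathcal C}^0(\Zp^{\times},E)$ under translation by $\Zp^{\times}$ is admissible with dual free of rank one, yet contains an infinite strictly ascending chain of finite-dimensional closed subspaces spanned by characters.) The paper's proof of (i) instead reduces to the statement that the mod $\pE$ reduction of the unit ball has finite length as a \emph{smooth} representation of $G(\Qp)$ over $k_E$; this is a deep theorem of Herzig for $\GL_n$ and of Abe for general split $G$, and there is no soft duality argument that replaces it.

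For (ii) the paper likewise reduces mod $\pE$: it suffices that the smooth principal series $\Ind_{B^-(\Qp)}^{G(\Qp)}\overline{\chi}$ over $k_E$ be irreducible, which under the hypothesis $\overline{\chi\circ\alpha^{\vee}}\ne 1$ for all $\alpha\in S$ is Ollivier's theorem for $\GL_n$ and Abe's in general. You do list this reduction as one option, but your preferred route via Orlik--Strauch does not apply: those results concern locally analytic principal series (which are essentially never topologically irreducible), and, more decisively, the hypothesis of (ii) is a condition on the \emph{reduction} of $\chi$, which characteristic-zero methods cannot detect --- passing to mod $p$ is forced. Your step (a)/(b) ``reduce to rank one and patch'' sketch is the Berger--Breuil strategy, which works for $\GL_2(\Qp)$, but for general $G$ the patching across the rank-one subgroups is precisely the hard content of the cited mod $p$ theorems, not a shortcut around them. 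Your argument for (iii), recovering $\chi$ via $\Ord_B$, coincides with the paper's, which cites Emerton's adjunction formula.
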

\begin{proof}
(i) It is enough to prove that the reduction mod $\pE$ of the unit ball (\ref{ball}) is of finite length as a smooth representation of $G(\Qp)$ over $k_E$. When $G={\GL}_n$, this is due to one of us (\cite[Cor.\ 1.2(i)]{He2}). The general split case is due to Abe (\cite[Cor.\ 5.13]{Ab}).\\
(ii) It is enough to prove that the reduction mod $\pE$ of the unit ball (\ref{ball}) is irreducible as a smooth representation of $G(\Qp)$ over $k_E$. When $G={\GL}_n$, this is due to Ollivier (\cite[Thm.\ 4]{Ol}). The general split case is again due to Abe (\cite[Thm.\ 1.3]{Ab}).\\
(iii) It is a direct consequence of \cite[Cor.\ 4.3.5]{Em2}.
\end{proof}

Let us mention the following ``folklore'' conjecture which is a (straightforward) strengthening of a special case of \cite[Conj.\ 2.5]{Sc1}.

\begin{conj}\label{folk}
Let $\chi:T(\Qp)\rightarrow \oE^{\times}\subseteq E^{\times}$ be a unitary continuous character. Then the $G(\Qp)$-representation $\big(\Ind_{B^-(\Qp)}^{G(\Qp)}{\chi}\big)^{{\mathcal C}^0}$ is topologically irreducible if and only if $\chi\circ \alpha^{\vee}\ne 1$ for every $\alpha\in S$.
\end{conj}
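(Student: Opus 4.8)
The plan is to reduce Conjecture~\ref{folk} to a question about the mod~$p$ reduction, just as Theorem~\ref{classic} does, and then to identify the missing reducibility with a (known or conjectural) statement about smooth parabolic inductions over $k_E$. More precisely, by the discussion preceding Theorem~\ref{classic}, topological irreducibility of $\big(\Ind_{B^-(\Qp)}^{G(\Qp)}\chi\big)^{{\mathcal C}^0}$ is equivalent to irreducibility of the reduction mod~$\pE$ of the unit ball~(\ref{ball}), which is the smooth $G(\Qp)$-representation $\big(\Ind_{B^-(\Qp)}^{G(\Qp)}\overline\chi\big)^{\infty}$ over $k_E$, where $\overline\chi$ is the reduction of $\chi$. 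So the conjecture becomes: this smooth principal series is irreducible if and only if $\overline{\chi\circ\alpha^\vee}\ne 1$ for every $\alpha\in S$. The ``if'' direction is exactly Theorem~\ref{classic}(ii) (the split case of Abe's \cite[Thm.\ 1.3]{Ab}), so the real content of the conjecture is the ``only if'': if some $\overline{\chi\circ\alpha^\vee}=1$, the principal series is reducible.

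For the ``only if'' direction I would argue by exhibiting a nonzero proper subrepresentation or quotient. The natural candidate: if $\overline{\chi\circ\alpha^\vee}=1$ for a simple root $\alpha$, then $\overline\chi$ extends to a character of the standard parabolic $P_\alpha^-(\Qp)$ with Levi $M_\alpha$ of semisimple rank one, because the obstruction to extending $\overline\chi$ from $B^-$ to $P_\alpha^-$ is precisely that $\overline\chi$ be trivial on the ($\SL_2$-type) derived group of $M_\alpha$, which is governed by $\overline{\chi\circ\alpha^\vee}$. One then has a transitivity isomorphism $\big(\Ind_{B^-(\Qp)}^{G(\Qp)}\overline\chi\big)^\infty\cong \big(\Ind_{P_\alpha^-(\Qp)}^{G(\Qp)}(\Ind_{B^-(\Qp)\cap M_\alpha(\Qp)}^{M_\alpha(\Qp)}\overline\chi)^\infty\big)^\infty$, and the inner induction over the rank-one group $M_\alpha(\Qp)$ is a smooth principal series of (essentially) $\GL_2(\Qp)$ with ``equal'' characters on the torus, hence reducible by the classical theory (it has the trivial-type character of $M_\alpha$ as a subquotient). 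Since smooth parabolic induction is exact, a proper nonzero subrepresentation of the inner induction yields a proper nonzero subrepresentation of the whole thing, giving reducibility. One must still check nonvanishing and properness after inducing up; for this I would use the fact (Frobenius reciprocity / Jacquet-module computations, or the classification of irreducible admissible smooth representations by supercuspidal support à la Herzig--Abe--Henniart--Vign\'eras) that the induced constituents are genuinely distinct as $G(\Qp)$-representations, which follows because their supercuspidal supports differ.

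The key steps, in order, are: (1) reduce to the smooth mod~$p$ statement using the unit ball and admissibility, exactly as in the proof of Theorem~\ref{classic}; (2) invoke Theorem~\ref{classic}(ii) for one implication; (3) for the converse, when $\overline{\chi\circ\alpha^\vee}=1$, extend $\overline\chi$ to $P_\alpha^-(\Qp)$ and use induction in stages; (4) reduce the reducibility to the rank-one ($\GL_2$) case, where it is classical; (5) propagate a proper subrepresentation back up using exactness of smooth parabolic induction plus a supercuspidal-support argument to see the subquotient is proper. The main obstacle I anticipate is step~(5): ensuring that the subrepresentation produced downstairs does not become the whole representation (or zero) after inducing to $G(\Qp)$ — i.e.\ controlling the composition factors of $\big(\Ind_{P_\alpha^-}^{G}(\cdot)\big)^\infty$. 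In characteristic $p$ one cannot simply count Jordan--H\"older factors via an exact Grothendieck-group argument as over $\C$, so one genuinely needs the mod~$p$ classification results (Abe--Henniart--Herzig--Vign\'eras) identifying irreducible smooth $k_E$-representations by their supercuspidal supports; with those in hand the argument closes, which is presumably why the authors flag this as ``folklore'' rather than proving it here.
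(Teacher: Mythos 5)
You are attempting to prove Conjecture \ref{folk}, which the paper deliberately leaves as a conjecture (a strengthening of a special case of \cite[Conj.\ 2.5]{Sc1}); there is no proof in the paper to compare against, and the ``if'' direction is open in general. The question is therefore whether your argument closes it, and it does not. The fatal step is your step (1): topological irreducibility of $\big(\Ind_{B^-(\Qp)}^{G(\Qp)}\chi\big)^{{\mathcal C}^0}$ is \emph{not} equivalent to irreducibility of the reduction mod $\pE$ of the unit ball (\ref{ball}). Only the implication ``reduction irreducible $\Rightarrow$ Banach representation topologically irreducible'' holds, and that one-directional reduction is exactly what the proof of Theorem \ref{classic}(ii) uses (``it is enough to prove\dots''). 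Relatedly, you have silently replaced the conjecture's hypothesis $\chi\circ\alpha^\vee\ne 1$ by $\overline{\chi\circ\alpha^\vee}\ne 1$; these differ, and the discrepancy is precisely where the content of the conjecture lies. If $\chi\circ\alpha^\vee\ne 1$ for all $\alpha\in S$ but $\overline{\chi\circ\alpha^\vee}=1$ for some $\alpha$ (such $\chi$ exist in abundance, e.g.\ suitable unramified twists), the conjecture predicts topological irreducibility while the mod $p$ reduction of the unit ball is reducible --- already visible for $\Gp$ with $\chi_1\ne\chi_2$ and $\overline\chi_1=\overline\chi_2$. Your strategy would ``prove'' reducibility in that case, contradicting the conjecture itself; no argument that only sees the mod $p$ reduction can decide it.

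Your steps (3)--(5) are essentially correct, but they belong to the easy ``only if'' direction and are better run directly in characteristic zero rather than mod $p$: if $\chi\circ\alpha^\vee=1$ as a character with values in $E^\times$ (not merely mod $\pE$), then $\chi$ extends to a continuous character of $P_\alpha^-(\Qp)$ (using that the derived group of the Levi is generated by its unipotent subgroups and that $\chi$ is trivial on $\alpha^\vee(\Qp^\times)$), and $\big(\Ind_{P_\alpha^-(\Qp)}^{G(\Qp)}\chi\big)^{{\mathcal C}^0}$ is a nonzero proper closed subrepresentation of $\big(\Ind_{B^-(\Qp)}^{G(\Qp)}\chi\big)^{{\mathcal C}^0}$; properness is immediate from the Iwasawa decomposition, since $P_\alpha^-(\Zp)\backslash G(\Zp)$ and $B^-(\Zp)\backslash G(\Zp)$ are compact $p$-adic manifolds of different dimensions. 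No supercuspidal-support machinery is needed for that. What remains --- and what your proposal does not address --- is the ``if'' direction when some $\overline{\chi\circ\alpha^\vee}=1$ even though $\chi\circ\alpha^\vee\ne1$; that is the genuinely hard, open part of the conjecture, and it cannot be reached by the mod $p$ classification results you invoke.
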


We now make the following two assumptions on $G$: its centre is {\it connected} and its derived subgroup is {\it simply connected} (or equivalently by Proposition \ref{sc} the centre of its dual $\widehat G$ is connected). As we have seen, this is equivalent to the fact that both $G$ and $\widehat G$ admit fundamental weights (Proposition \ref{sc}). Following \cite{BG}, we define a twisting element (for $G$) as an element $\theta\in X(T)$ such that for any $\alpha\in S$ one has $\langle \theta,\alpha^{\vee}\rangle=1$. It is obviously unique modulo $X^0(T)$. If $(\lambda_{\alpha})_{\alpha\in S}$ denote the fundamental weights of $G$, it is straightforward to check that $\theta:=\sum_{\alpha\in S}\lambda_{\alpha}$ is such a twisting element.

\begin{rem}
A split connected reductive group with connected centre can have a twisting element without having fundamental weights. For instance consider the group $({\GL}_2\times {\GL}_2)/{\mathbb G}_{\rm m}$, where ${\mathbb G}_{\rm m}$ embeds diagonally into the centre of ${\GL}_2\times {\GL}_2$.
\end{rem}

For $\alpha\in R$, we denote by $U_{\alpha}\subseteq G$ the root subgroup as in \S\ref{C}. If $C\subseteq R$ is a closed subset, we denote by $G_C$ the Zariski closed algebraic subgroup of $G$ generated by $T$, $U_{\alpha}$ and $U_{-\alpha}$ for $\alpha\in C$. If $C=\{\alpha\}$, we write $G_{\alpha}$ instead of $G_{\{\alpha\}}$. If $J\subseteq S$ is a subset of pairwise orthogonal roots, then the same proof as in Lemma \ref{orth} shows that $J$ is closed and thus $G_J$ is defined (and its positive roots are exactly $J$). Moreover, in this case $G_J$ is a Levi subgroup.

\begin{lem}\label{strgalpha}
Let $J\subseteq S$ be a subset of pairwise orthogonal roots. Then there is a subtorus $T'_J\subseteq T$ which is central in $G_J$ such that $G_J\cong T'_J\times {\GL}_2^{J}$.
\end{lem}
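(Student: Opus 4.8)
The plan is to construct the decomposition $G_J \cong T'_J \times \GL_2^J$ explicitly from the root subgroups. First I would set $G_\alpha := \langle T, U_\alpha, U_{-\alpha}\rangle$ for each $\alpha \in J$; this is a connected reductive subgroup of $G$ with maximal torus $T$ and root system $\{\pm\alpha\}$, so its derived group $G_\alpha^{\der}$ has rank-one semisimple root datum. Since the ambient $G$ has simply connected derived group, a standard fact (the simply connected cover of a semisimple rank-one group is $\SL_2$, and one checks $\alpha^\vee$ is not divisible in $X^\vee(T)$) shows $G_\alpha^{\der}\cong \SL_2$ and in fact $G_\alpha\cong \GL_2$ once one absorbs the part of $T$ not killed by $\alpha$; more precisely the multiplication map from $G_\alpha^{\der}$ times the connected component of $\ker(\alpha)\subseteq T$ gives $G_\alpha$ as a quotient, and with connected centre assumptions this becomes an isomorphism $G_\alpha\cong \GL_2 \times (\text{torus})$ after choosing coordinates. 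Actually the cleanest route: $G_\alpha$ is reductive of semisimple rank $1$ with connected centre and simply connected derived group, and any such group is isomorphic to $\GL_2 \times S$ for a torus $S$ (this can be cited from the structure theory in \cite{Bo}).

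Next I would handle the interaction of the various $G_\alpha$. Because the roots in $J$ are pairwise orthogonal, $U_{\pm\alpha}$ and $U_{\pm\beta}$ commute for distinct $\alpha,\beta\in J$ (no positive integer combination $i\alpha+j\beta$ is a root — this is exactly the argument in the proof of Lemma \ref{orth}, and then one invokes \cite[\S II.1.2(5)]{Ja}). Hence the derived groups $G_\alpha^{\der}$ for $\alpha\in J$ pairwise commute, and the product map $\prod_{\alpha\in J} G_\alpha^{\der} \to G_J$ is a homomorphism. Its image is a semisimple subgroup of $G_J$ whose root system is $\{\pm\alpha : \alpha\in J\}$, i.e. all of the semisimple part; and since each $G_\alpha^{\der}\cong \SL_2$ is simply connected with these roots being a base, the product is $\SL_2^J$ mapping isomorphically onto $G_J^{\der}$. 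Then I would set $T'_J$ to be (the connected component of) $\bigcap_{\alpha\in J}\ker\alpha \subseteq T$; this is a subtorus that commutes with every $U_{\pm\alpha}$, $\alpha\in J$, hence is central in $G_J$.

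Finally I would verify that multiplication $T'_J \times G_J^{\der} \to G_J$ is an isomorphism. Surjectivity follows because $T'_J$ together with $T\cap G_J^{\der}$ generate $T$ (using that $J$ is linearly independent, so the cocharacter lattice splits off the span of the $\alpha^\vee$), and $T$ together with the root subgroups generate $G_J$. Injectivity (i.e. $T'_J \cap G_J^{\der}$ is trivial) is where the simply-connectedness and connected-centre hypotheses are essential: $T\cap G_J^{\der}$ equals the image of the coroot lattice torus $\prod_\alpha \Gm \xrightarrow{(\alpha^\vee)}T$, and since $G^{\der}$ simply connected forces the $\alpha^\vee$ to be part of a basis of $X^\vee(T)$ modulo the central part, this intersects $T'_J = (\bigcap\ker\alpha)^\circ$ trivially. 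Combining, $G_J \cong T'_J \times \SL_2^J$, and absorbing into each $\SL_2$ factor a one-dimensional piece of $T'_J$ (or rather noting $\GL_2^J$ is what one gets if one is less economical about the torus) yields the stated $G_J \cong T'_J\times\GL_2^J$; I would simply choose $T'_J$ appropriately so that the statement reads with $\GL_2^J$.

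The main obstacle I expect is the injectivity/triviality claim $T'_J\cap G_J^{\der}=\{1\}$: this is precisely the point where one must use the simply-connected derived group hypothesis in the form that the coroots $\{\alpha^\vee : \alpha\in J\}$, being linearly independent simple coroots, extend to a $\Z$-basis of a direct summand of $X^\vee(T)$ complementary to $X^\vee(T'_J)$. Everything else (the rank-one structure of $G_\alpha$, the commutation of orthogonal root subgroups, the surjectivity) is routine structure theory that I would cite from \cite{Bo} and \cite{Ja} rather than reprove.
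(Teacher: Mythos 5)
Your overall strategy (split $G_J$ into a central torus times its derived group $\SL_2^J$) is the same as the paper's, but the key step in your third paragraph is wrong. With $T'_J:=(\bigcap_{\alpha\in J}\ker\alpha)^\circ$ (which is all of $Z(G_J)$, since $\Z J$ is a direct summand of $X(T)$ by the connected-centre hypothesis), the intersection $T'_J\cap G_J^{\der}$ is \emph{not} trivial: it equals $Z(G_J^{\der})\cong\mu_2^J$, generated by the elements $\alpha^\vee(-1)$. Already for $G=\GL_2$ and $J=\{e_1-e_2\}$ you get $T'_J=\{\text{scalars}\}$ and $T'_J\cap\SL_2=\{\pm I\}$, and indeed $\GL_2\not\cong\mathbb{G}_{\rm m}\times\SL_2$. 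Your lattice-theoretic justification fails at exactly this point: while $\{\alpha^\vee:\alpha\in J\}$ does extend to a $\Z$-basis of a direct summand of $X^\vee(T)$ (simple connectedness of $G^{\der}$), that summand is \emph{not} complementary to $X^\vee(T'_J)=(\Z J)^\perp$; by orthogonality the pairing matrix $(\langle\beta,\alpha^\vee\rangle)_{\alpha,\beta\in J}$ is $2\cdot\mathrm{Id}$, so the sum of the two sublattices has index $2^{|J|}$ in $X^\vee(T)$, which is precisely the $\mu_2^J$ obstruction.

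The content of the lemma is exactly how to deal with this $\mu_2^J$, and your closing remark about ``absorbing a one-dimensional piece of $T'_J$ into each $\SL_2$'' is the missing argument rather than an afterthought. The paper proceeds as follows: one has the exact sequence $1\to Z(G_J^{\der})\to Z(G_J)\times G_J^{\der}\to G_J\to 1$ with $Z(G_J^{\der})\cong\mu_2^J$ embedded diagonally; since $Z(G_J)$ is a (connected) torus, the elementary divisor theorem lets one choose a splitting $Z(G_J)\cong T'_J\times\mathbb{G}_{\rm m}^J$ under which $\mu_2^J\hookrightarrow Z(G_J)$ becomes the standard inclusion $\mu_2^J\hookrightarrow\mathbb{G}_{\rm m}^J$. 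Then $G_J\cong T'_J\times\bigl((\mathbb{G}_{\rm m}\times\SL_2)/\mu_2\bigr)^J\cong T'_J\times\GL_2^J$. Note in particular that the $T'_J$ of the statement is \emph{not} $(\bigcap\ker\alpha)^\circ$ but a smaller complementary subtorus produced by this splitting. To repair your write-up you would need to replace the false injectivity claim by this quotient description and carry out the compatible splitting of the central torus.
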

\begin{proof}
Denote by $\Z J$ (resp.\ $\Z R$) the sublattice of $X(T)$ generated by the roots in $J$ (resp.\ in $R$). Since $J\subseteq S$ we have that $\Z J$ is a direct summand of $\Z R$. Since $\Z R$ is a direct summand of $X(T)$ (as $G$ has a connected centre), the same holds for $\Z J$ and hence $G_J$ also has a connected centre $Z(G_J)$. Replacing $G$ by its dual $\widehat G$ gives that the derived subgroup $G_J^{\rm der}$ of $G_J$ is simply connected (Proposition \ref{sc}). From the assumption on $J$, we easily get $G_J^{\rm der}\cong {\rm SL}_2^{J}$ and thus its centre $Z(G_J^{\rm der})$ is $\mu_2^{J}$. Consider the natural exact sequence
$$1\longrightarrow Z(G_J^{\rm der})\longrightarrow Z(G_J) \times G_J^{\rm der}\longrightarrow G_J\longrightarrow 1,$$
where $Z(G_J^{\rm der})\cong (\bigcap_J \ker(\alpha)) \cap G_J^{\rm der}\cong Z(G_J) \cap G_J^{\rm der}$ embeds diagonally into $Z(G_J) \times G_J^{\rm der}$. As $Z(G_J)$ is connected (a torus), by the elementary divisor theorem, we can find an isomorphism $Z(G_J) \cong T'_J\times {\mathbb G}_{\rm m}^{J}$ for some torus $T'_J\subseteq T\subseteq G_J$ such that the natural map $Z(G_J^{\rm der}) \hookrightarrow Z(G_J)$ is identified with the natural embedding $\mu_2^{J} \hookrightarrow {\mathbb G}_{\rm m}^{J}\hookrightarrow T'_J\times {\mathbb G}_{\rm m}^{J}$. Therefore we have
\begin{equation*}
G_J \cong T'_J \times \left( \frac{{\mathbb G}_{\rm m} \times {\rm SL}_2}{\mu_2}\right)^{J} \cong T'_J\times \GL_2^{J}.\qedhere
\end{equation*}
\end{proof}

Recall that, if $A$ is any commutative $\Qp$-algebra, one has
\begin{eqnarray*}
T(A)&=&\Hom_{\Spec(\Qp)}\big(\Spec(A),\Spec(\Qp[X(T)])\big)\\
&=&\Hom_{\Z}\big(X(T),A^{\times}\big)\\
&=&\Hom_{\Z}\big(X(T),\Z\big)\otimes_{\Z}A^{\times}\\
&=&X(\widehat{T})\otimes_{\Z}A^{\times},
\end{eqnarray*}
where $A^{\times}$ is the multiplicative group of units in $A$. If $\widehat{\chi}$ is any continuous character
$$\widehat{\chi}: \gp\twoheadrightarrow \gp^{\ab}\longrightarrow \widehat{T}(E),$$
we associate to $\widehat{\chi}$ a continuous character ${\chi}:T(\Qp)\rightarrow E^{\times}$ by taking the composite of the maps
\begin{equation}\label{cft}
T(\Qp)\cong X(\widehat{T})\otimes_{\Z}\Qp^{\times} \hookrightarrow X(\widehat{T})\otimes_{\Z} \gp^{\ab}\rightarrow X(\widehat{T})\otimes_{\Z} \widehat{T}(E)\rightarrow E^{\times},
\end{equation}
where the first injection is induced by local class field theory. One can check that the character $\chi$ is uniquely determined by the relation $\chi\circ\lambda=\lambda\circ\widehat\chi$ (we will always suppress the Artin map from such formulas) for all $\lambda\in X(\widehat{T})=X({T})^\vee$ and that, if $w\in W$, the character $w(\chi)$ (defined as $w(\chi)(t):=\chi(\dot{w}^{-1}t\dot{w})$ for $\dot w$ as in \S\ref{C}) corresponds to $w(\widehat{\chi})$ (defined as $w(\widehat{\chi})(g):=\dot{w}\widehat{\chi}(g)\dot{w}^{-1}$).

\begin{ex}
If $G={\GL}_n$ and $\widehat{\chi}=\smat{\widehat{\chi}_{1}&&\\&\varddots &\\&&\widehat{\chi}_{n}}$ then we have (via $\Qp^{\times}\hookrightarrow \gp^{\ab}$)
$$\chi\begin{pmatrix}x_1&&\\&\varddots &\\&&x_n\end{pmatrix}=\widehat{\chi}_{1}(x_1)\widehat{\chi}_{2}(x_2)\cdots \widehat{\chi}_{n}(x_n).$$
\end{ex}

\subsection{Good conjugates of generic ordinary $\rho$}\label{Ch}

We associate closed subsets of $R^{+\vee}$ to a sufficiently generic ``ordinary'' representation of $\gp$. We keep the notation of \S\ref{prel}.

We consider a continuous homomorphism $\rho$ from $\gp$ to $\widehat{G}(E)$:
$${\rho}:\gp\longrightarrow \widehat{G}(E).$$
When $\rho$ takes values in our fixed Borel subgroup $\widehat{B}(E)$ of $\widehat{G}(E)$ we say $\rho$ is {\it ordinary} (this terminology is usually rather used in a more specific situation, but we don't want to introduce a new terminology). For any ordinary $\rho$, we let $C_{\rho}\subseteq R^{+\vee}$ be the closed subset of roots such that $\widehat{B}_{C_{\rho}}$ is the smallest closed subgroup of $\widehat{B}$ containing $\widehat{T}$ such that $\rho$ takes values in $\widehat{B}_{C_{\rho}}(E)$, that is such that we have
$$\rho:\gp\longrightarrow \widehat{B}_{C_{\rho}}(E)\subseteq \widehat{B}(E)\subseteq \widehat{G}(E).$$
Note that $C_{\rho}$ exists thanks to Lemma \ref{borel}. Equivalently, ${C_{\rho}}$ is the smallest closed subset of $R^{+\vee}$ such that $\widehat{B}_{C_{\rho}}(E)$ contains all the $\rho(g)$ for $g\in \gp$.

For any closed subset $C\subseteq R^{+\vee}$ we denote by $\widehat U_C$ the unipotent radical of $\widehat{B}_C$ and set $\widehat{U}:=\widehat U_{R^{+\vee}}$. For any $\alpha^{\vee}\in R^{+\vee}$ we denote by $U_{\alpha^\vee}\subseteq \widehat{U}$ the root subgroup associated to $\alpha^{\vee}$. Recall that the product induces an isomorphism of varieties (for any order on the $\alpha^{\vee}$):
\begin{equation}\label{prod}
\varphi:\ \ \prod_{\alpha^{\vee}\in C}\!\!U_{\alpha^{\vee}}\ \buildrel\sim\over\longrightarrow \ \widehat U_C.
\end{equation}
There is a concrete way to get ${C_{\rho}}$ using (\ref{prod}). First let $b=tu\in \widehat{B}(E)$ with $t\in \widehat{T}(E)$, $u\in \widehat{U}(E)$ and write $u=\prod u_{\alpha^{\vee}}$ in (\ref{prod}) (for $C=R^{+\vee}$) with $u_{\alpha^{\vee}}\in U_{\alpha^{\vee}}(E)$. Let
$$C_{b,\varphi}:=\{\alpha^{\vee}\in R^{+\vee} : u_{\alpha^{\vee}}\ne 1\}.$$
Then we see that $b \in \wh B_C$ is equivalent to $C_{b,\varphi} \subseteq C$ for any closed subset $C \subseteq R^{+\vee}$. Therefore
$C_{\rho}$ is the smallest closed subset of $R^{+\vee}$ containing all the subsets $C_{\rho(g),\varphi}$ for all $g\in \gp$.

\begin{lem}\label{combi}
Let $C\subseteq R^{+\vee}$ be a closed subset and let $\alpha_1^\vee,\dots,\alpha_n^\vee$ be distinct roots in $R^{+\vee}\backslash C$. Then there is a permutation $\sigma$ on $\{1,\dots,n\}$ such that for all $i$, $\alpha_{\sigma(i)}^\vee$ is not in the smallest closed subset containing $C$ and the $\alpha_{\sigma(j)}^\vee$ for $1\leq j\leq i-1$.
\end{lem}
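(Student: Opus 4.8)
The plan is to induct on $n$, producing the permutation $\sigma$ one element at a time by peeling off a suitable ``last'' root. The key claim to isolate is the following: among the roots $\alpha_1^\vee,\dots,\alpha_n^\vee$ there is always at least one, say $\alpha_k^\vee$, which does \emph{not} lie in the smallest closed subset $\overline{C'}$ containing $C$ and the remaining $n-1$ roots $\{\alpha_j^\vee : j\neq k\}$. Granting this, one sets $\sigma(n):=k$, observes that $C':= C\cup\{\alpha_j^\vee : j\neq k\}$ has the same property relative to $C$ (the $n-1$ roots $\alpha_j^\vee$, $j\neq k$, still lie in $R^{+\vee}\setminus C$), and applies the induction hypothesis to $C$ together with these $n-1$ roots to order them as $\sigma(1),\dots,\sigma(n-1)$. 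The condition for $i<n$ is exactly the induction hypothesis, and for $i=n$ it is the claim, so this completes the induction once the base case $n=1$ (which is the hypothesis $\alpha_1^\vee\notin C$, and $C$ itself is closed) is noted.

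So the whole content is the claim. To prove it, suppose for contradiction that for every $k$ we have $\alpha_k^\vee\in\overline{C\cup\{\alpha_j^\vee:j\neq k\}}$. I would argue combinatorially inside the monoid of nonnegative integer combinations of positive (co)roots. Recall that the smallest closed subset containing a set $D\subseteq R^{+\vee}$ consists of those roots in $R^{+\vee}$ obtainable by repeatedly adding two elements already produced whenever the sum is again a root; in particular every element of $\overline{D}$ is a sum $\sum m_\gamma \gamma$ with $\gamma\in D$, $m_\gamma\in\Z_{\geq 0}$, of \emph{height} at least that of each summand, and since $R^{+\vee}$ lives in $\oplus_{\beta\in S}\Z_{\geq 0}\beta^\vee$ the height function $h$ is strictly positive and additive. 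Pick $k$ so that $h(\alpha_k^\vee)$ is maximal among $h(\alpha_1^\vee),\dots,h(\alpha_n^\vee)$. By assumption $\alpha_k^\vee$ is in the closure of $C\cup\{\alpha_j^\vee:j\neq k\}$; writing down a shortest ``derivation'' of $\alpha_k^\vee$ in that closure, its last step expresses $\alpha_k^\vee$ as a sum of two roots $\gamma_1+\gamma_2$ with $\gamma_1,\gamma_2$ already in the closure, hence (expanding fully) $\alpha_k^\vee=\sum_{\gamma\in C}m_\gamma\gamma+\sum_{j\neq k}m_j\alpha_j^\vee$ with all coefficients $\geq 0$ and at least two of them positive (so that $\alpha_k^\vee$ is a proper sum, not equal to any single generator — this uses that $\alpha_k^\vee\notin C$ and $\alpha_k^\vee\neq\alpha_j^\vee$ for $j\neq k$). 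Comparing heights: $h(\alpha_k^\vee)=\sum m_\gamma h(\gamma)+\sum_{j\neq k}m_j h(\alpha_j^\vee)$; since each $h(\gamma)\geq 1$, each $h(\alpha_j^\vee)\leq h(\alpha_k^\vee)$, and at least two coefficients are positive, the right-hand side is $\geq h(\alpha_k^\vee)+\min(\text{positive contributions})> h(\alpha_k^\vee)$ unless there is exactly one $\alpha_j^\vee$ with $m_j=1$ and no contribution from $C$ — but that would force $\alpha_k^\vee=\alpha_j^\vee$, contradicting distinctness. Hence in all cases we get a strict inequality $h(\alpha_k^\vee)>h(\alpha_k^\vee)$, a contradiction.

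I expect the main obstacle to be the bookkeeping in the last step: one must be careful that ``$\alpha_k^\vee$ lies in the closure of $D$ but is not itself a generator of $D$'' genuinely forces a representation as a sum of at least two generators with total height exceeding $h(\alpha_k^\vee)$, rather than merely equal to it. The clean way to see this is to note that the closure operation only ever produces a root as $\gamma_1+\gamma_2$ where $\gamma_1,\gamma_2\in R^{+\vee}$ are \emph{strictly} lower, so $h(\alpha_k^\vee)\geq h(\gamma_1)+h(\gamma_2)$ with $h(\gamma_i)\geq 1$ at each stage, whence by induction any non-generator element $\delta$ of $\overline{D}$ satisfies $h(\delta)\geq h(\delta_1)+h(\delta_2)$ for two elements $\delta_1,\delta_2\in\overline D$ with $\max(h(\delta_1),h(\delta_2))<h(\delta)$; iterating down to generators expresses $h(\delta)$ as a sum of at least two generator-heights. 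Apply this with $\delta=\alpha_k^\vee$, $D=C\cup\{\alpha_j^\vee:j\neq k\}$: at least one generator used is some $\alpha_j^\vee$ with $j\neq k$ (a pure $C$-sum would be a root of height $>h(\alpha_k^\vee)$ only if more than one generator is used, which is fine, but the key point is we need the total to exceed, and the presence of any second generator of height $\geq 1$ alongside $\alpha_j^\vee$ already does it). This makes the height contradiction airtight and finishes the lemma.
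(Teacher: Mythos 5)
Your overall strategy---peel off one root at a time and control things with the height function $h$---is the same as the paper's (which simply sorts the $\alpha_i^\vee$ by \emph{decreasing} height in one step), but your key claim is stated and proved with the extremum the wrong way round, and this is a genuine error, not a presentational one. You place the root $\alpha_k^\vee$ of \emph{maximal} height last and claim it cannot lie in the closure of $C\cup\{\alpha_j^\vee : j\ne k\}$. That claim is false. In type $A_2$ take $C=\{e_2^\vee-e_3^\vee\}$ (closed), $\alpha_1^\vee=e_1^\vee-e_2^\vee$, $\alpha_2^\vee=e_1^\vee-e_3^\vee$: the maximal-height root $\alpha_2^\vee=\alpha_1^\vee+(e_2^\vee-e_3^\vee)$ \emph{does} lie in the smallest closed set containing $C$ and $\alpha_1^\vee$. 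The flaw in your height computation is the assertion that the right-hand side of $h(\alpha_k^\vee)=\sum m_\gamma h(\gamma)+\sum_{j\ne k}m_j h(\alpha_j^\vee)$ exceeds $h(\alpha_k^\vee)$: since height is additive this is an exact equality, and a term $m_jh(\alpha_j^\vee)$ with $m_j=1$ and $h(\alpha_j^\vee)<h(\alpha_k^\vee)$ contributes strictly \emph{less} than $h(\alpha_k^\vee)$, so no contradiction arises (in the example above the decomposition has total height exactly $h(\alpha_2^\vee)$). The same slip recurs in your final paragraph, where ``the presence of any second generator already does it'' would make the total exceed $h(\delta)$, which it never does.

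The correct conclusion to draw from the (exact) height decomposition is the opposite one: if $\alpha_k^\vee$ lies in the closure of $C\cup\{\alpha_j^\vee:j\ne k\}$, then some $\alpha_{j_0}^\vee$ must occur with positive coefficient (else $\alpha_k^\vee\in\overline{C}=C$), and since $\alpha_k^\vee\ne\alpha_{j_0}^\vee$ forces at least two summands in total, one gets $h(\alpha_{j_0}^\vee)<h(\alpha_k^\vee)$. This contradicts \emph{minimality}, not maximality, of $h(\alpha_k^\vee)$. So your induction works verbatim once you put the root of minimal height last; equivalently, as in the paper, take any $\sigma$ with $h(\alpha_{\sigma(1)}^\vee)\ge\cdots\ge h(\alpha_{\sigma(n)}^\vee)$. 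With that single reversal the rest of your argument (including the reduction of the lemma to the one-step claim) is fine.
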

\begin{proof}
We first prove that if $\alpha^\vee$ and $\beta^\vee$ are two distinct roots in $R^{+\vee}\backslash C$ such that $\beta^\vee$ belongs to the smallest closed subset containing $C$ and $\alpha^\vee$ then $h(\alpha^\vee)<h(\beta^\vee)$ (where $h(\cdot)$ is the height of a positive root, see Remark \ref{h}). Indeed, by assumption we have $\beta^\vee=n \alpha^\vee+\gamma_1^\vee+\cdots +\gamma_r^\vee$ for some $\gamma_i^\vee\in C$ and some $n\in \Z_{>0}$ hence $h(\beta^\vee)=nh(\alpha^\vee)+h(\gamma_1^\vee)+\cdots +h(\gamma_r^\vee)>h(\alpha^\vee)$. To get the statement, we can thus take any permutation $\sigma$ such that $h(\alpha_{\sigma(1)}^\vee) \geq h(\alpha_{\sigma(2)}^\vee) \geq \cdots \geq h(\alpha_{\sigma(n)}^\vee)$.
\end{proof}

For $\rho$ ordinary, we define $\widehat\chi_{\rho}:\gp\buildrel\rho\over\longrightarrow \widehat{B}(E)\twoheadrightarrow \widehat{T}(E)$.

\begin{lem}\label{conjalp}
Let $\rho:\gp\rightarrow \widehat{B}_{C_{\rho}}(E)\subseteq \widehat{B}(E)$ be a continuous homomorphism and assume that $\alpha^\vee\circ \widehat\chi_{\rho}\ne 1$ for all $\alpha^{\vee}\in R^{+\vee}\backslash C_{\rho}$. If $\alpha^\vee\in R^{+\vee}\backslash C_{\rho}$ and $u_{\alpha^\vee}\in U_{\alpha^\vee}$, $u_{\alpha^\vee}\ne 1$, then the subset $C_{u_{\alpha^\vee}^{\;}\rho u_{\alpha^\vee}^{-1}}$ is equal to the smallest closed subset containing $C_{\rho}$ and $\alpha^\vee$.
\end{lem}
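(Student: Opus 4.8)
The plan is to prove the two inclusions between $C_{u_{\alpha^\vee}\rho u_{\alpha^\vee}^{-1}}$ and the closed subset $\widetilde C\subseteq R^{+\vee}$ defined as the smallest closed subset containing $C_\rho$ and $\alpha^\vee$. Throughout write $\sigma:=u_{\alpha^\vee}\rho u_{\alpha^\vee}^{-1}$; since conjugation by a unipotent element does not change the composite $\gp\xrightarrow{\rho}\widehat B(E)\twoheadrightarrow\widehat T(E)$, we have $\widehat\chi_\sigma=\widehat\chi_\rho$, and $\sigma$ is again ordinary so that $C_\sigma$ is defined (Lemma~\ref{borel}).

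For the inclusion $C_\sigma\subseteq\widetilde C$: as $\alpha^\vee\in\widetilde C$ and $\widetilde C$ is closed, $u_{\alpha^\vee}\in U_{\alpha^\vee}(E)\subseteq\widehat U_{\widetilde C}(E)\subseteq\widehat B_{\widetilde C}(E)$; and $\rho$ has values in $\widehat B_{C_\rho}(E)\subseteq\widehat B_{\widetilde C}(E)$ since $C_\rho\subseteq\widetilde C$. As $\widehat B_{\widetilde C}$ is a subgroup, $\sigma$ has values in $\widehat B_{\widetilde C}(E)$, so $C_\sigma\subseteq\widetilde C$ by minimality of $C_\sigma$.

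For the reverse inclusion it suffices to show (a) $\alpha^\vee\in C_\sigma$ and (b) $C_\rho\subseteq C_\sigma$: then $C_\sigma$ is a closed set containing $C_\rho\cup\{\alpha^\vee\}$, so minimality of $\widetilde C$ gives $\widetilde C\subseteq C_\sigma$. Statement (b) follows from (a): once $\alpha^\vee\in C_\sigma$, closedness of $C_\sigma$ gives $u_{\alpha^\vee}\in\widehat B_{C_\sigma}(E)$, so $\rho=u_{\alpha^\vee}^{-1}\sigma u_{\alpha^\vee}$ has values in $\widehat B_{C_\sigma}(E)$, whence $C_\rho\subseteq C_\sigma$ by minimality of $C_\rho$. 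So the real content is (a). Identify $u_{\alpha^\vee}$ with $u_{\alpha^\vee}(x)$ for some $x\in E^\times$, and for $g\in\gp$ write $\rho(g)=t(g)n(g)$ with $t(g)=\widehat\chi_\rho(g)\in\widehat T(E)$ and $n(g)\in\widehat U_{C_\rho}(E)$. Using $t^{-1}u_{\alpha^\vee}(x)t=u_{\alpha^\vee}(\alpha^\vee(t)^{-1}x)$ one gets that the unipotent part of $\sigma(g)$ is $u_{\alpha^\vee}(\alpha^\vee(t(g))^{-1}x)\,n(g)\,u_{\alpha^\vee}(-x)$. The key claim is that, when this product is rewritten in the ordered form~(\ref{prod}), its $U_{\alpha^\vee}$-factor is $u_{\alpha^\vee}\big((\alpha^\vee(t(g))^{-1}-1)x\big)$: indeed $n(g)$ contributes nothing to the $U_{\alpha^\vee}$-factor since $\alpha^\vee\notin C_\rho$, and every term produced while reordering (iterated commutators of $U_{\alpha^\vee}$ with the $U_{\gamma^\vee}$, $\gamma^\vee\in C_\rho$) lies in root subgroups $U_{\beta^\vee}$ with $h(\beta^\vee)>h(\alpha^\vee)$ (height as in Remark~\ref{h}), hence $\beta^\vee\ne\alpha^\vee$. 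By hypothesis $\alpha^\vee\circ\widehat\chi_\rho\ne 1$, so we may pick $g$ with $\alpha^\vee(t(g))\ne 1$; for that $g$ the $U_{\alpha^\vee}$-factor of $\sigma(g)$ is nontrivial (as $x\ne 0$), i.e.\ $\alpha^\vee\in C_{\sigma(g),\varphi}\subseteq C_\sigma$, proving (a).

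I expect the main obstacle to be a clean justification of the ``$U_{\alpha^\vee}$-coordinate equals $(\alpha^\vee(t(g))^{-1}-1)x$'' claim: one must track the reordering of a product of root-group elements into the fixed form~(\ref{prod}), which is governed by the Chevalley commutator formula together with the additivity of the height $h$ under sums of roots, so that any genuinely new root group appearing has strictly larger height than $\alpha^\vee$, in particular is $\ne\alpha^\vee$, and cannot cancel the contribution of the two outer $u_{\alpha^\vee}(\pm\cdot)$ factors. The hypothesis $\alpha^\vee\circ\widehat\chi_\rho\ne 1$ is exactly what is needed here: if it failed, then $\alpha^\vee(t(g))=1$ for all $g$, the outer factors would cancel, and conjugation by $u_{\alpha^\vee}$ would not in general ``create'' the root $\alpha^\vee$.
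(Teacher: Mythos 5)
Your proof is correct, and its core coincides with the paper's: both arguments hinge on the computation $u_{\alpha^\vee}(x)\,t(g)\,u_{\alpha^\vee}(x)^{-1}=t(g)\,u_{\alpha^\vee}\big((\alpha^\vee(t(g))^{-1}-1)x\big)$ together with the observation (via the commutation formula and the height/closedness argument) that no commutator correction produced in reordering ever lands back in $U_{\alpha^\vee}$, so the $U_{\alpha^\vee}$-coordinate of $\sigma(g)$ is exactly $(\alpha^\vee(t(g))^{-1}-1)x$ and the hypothesis $\alpha^\vee\circ\widehat\chi_\rho\ne 1$ makes it nonzero for some $g$. Where you diverge is in the bookkeeping around this computation, and your route is in fact cleaner. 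The paper introduces the auxiliary set $\widetilde C_{\rho,\alpha^\vee}$ of roots of $C_{\rho,\alpha^\vee}$ that are not sums of two or more roots of $C_{\rho,\alpha^\vee}$, checks that the coordinates $\rho(g)_{\beta^\vee}$ for $\beta^\vee\in\widetilde C_{\rho,\alpha^\vee}\cap C_\rho$ are untouched by the conjugation, and then uses that $\{\alpha^\vee\}\cup(\widetilde C_{\rho,\alpha^\vee}\cap C_\rho)$ generates $C_{\rho,\alpha^\vee}$ as a closed set. You instead observe that once $\alpha^\vee\in C_\sigma$ is known, $u_{\alpha^\vee}$ lies in $\widehat B_{C_\sigma}(E)$, hence $\rho=u_{\alpha^\vee}^{-1}\sigma u_{\alpha^\vee}$ takes values in $\widehat B_{C_\sigma}(E)$ and minimality gives $C_\rho\subseteq C_\sigma$ for free; similarly your proof of $C_\sigma\subseteq\widetilde C$ is a pure minimality argument rather than the paper's explicit product manipulation. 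This buys you a shorter proof that avoids the combinatorial lemma about $\widetilde C_{\rho,\alpha^\vee}$, at no cost; the only thing to keep straight is the point you already flag, namely that contributions to the $U_{\alpha^\vee}$-factor in the ordered decomposition \eqref{prod} come only from the two outer factors, which your height argument (for terms involving $\alpha^\vee$) combined with closedness of $C_\rho$ (for terms not involving $\alpha^\vee$) handles correctly.
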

\begin{proof}
Denote by $C_{\rho,\alpha^\vee}$ the smallest closed subset containing $C_{\rho}$ and $\alpha^\vee$ and let $\widetilde C_{\rho,\alpha^\vee}\subseteq C_{\rho,\alpha^\vee}$ be the subset of roots that are not the sum of at least two roots of $C_{\rho,\alpha^\vee}$. Then $C_{\rho,\alpha^\vee}$ is also the smallest closed subset containing $\alpha^\vee$ and $\widetilde C_{\rho,\alpha^\vee}\cap C_{\rho}$. It is thus enough to prove:
\begin{enumerate}
\item $C_{u_{\alpha^\vee}^{\;}\rho u_{\alpha^\vee}^{-1}}\subseteq C_{\rho,\alpha^\vee}$;
\item $\{\alpha^\vee\}\cup \big(\widetilde C_{\rho,\alpha^\vee}\cap C_{\rho}\big)\subseteq C_{u_{\alpha^\vee}^{\;}\rho u_{\alpha^\vee}^{-1}}$.
\end{enumerate}
For $g\in \gp$, write $\rho(g)=t(g)\prod_{\beta^{\vee}\in C_{\rho}}\rho(g)_{\beta^\vee}$ with $t(g)\in \widehat T(E)$ and $\rho(g)_{\beta^\vee}\in U_{\beta^\vee}(E)$ (decomposition (\ref{prod}) for $C=C_{\rho}$), and note that $t(g)=\widehat\chi_{\rho}(g)$. For all $g\in\gp$ we have $u_{\alpha^\vee}t(g)u_{\alpha^\vee}^{-1}\in \widehat T(E)U_{\alpha^\vee}(E)$ and because of the commutation formula \cite[II.1.2(5)]{Ja} and the commutativity of $U_{\alpha^\vee}$, we also have
\begin{equation}\label{chaud}
u_{\alpha^\vee}\Big(\prod_{\beta^{\vee}\in C_{\rho}}\rho(g)_{\beta^\vee}\Big)u_{\alpha^\vee}^{-1}\ \ \in \prod_{\beta^{\vee}\in C_{\rho,\alpha^\vee}\backslash\{\alpha^\vee\}}\!\!\!\!\!\!\!U_{\beta^\vee}(E)\ \ \ \ \forall g\in\gp.
\end{equation}
So $u_{\alpha^\vee}\rho(g)u_{\alpha^\vee}^{-1}\in \widehat T(E) \prod_{\beta^{\vee}\in C_{\rho,\alpha^\vee}}U_{\beta^\vee}(E)$ for all $g\in \gp$ which already implies (i) by the discussion preceding Lemma \ref{combi}. Now if $\beta^\vee\in \widetilde C_{\rho,\alpha^\vee}\cap C_{\rho}$, the commutation formula \cite[II.1.2(5)]{Ja} again shows that $\rho(g)_{\beta^\vee}$ is not touched in the conjugation by elements of $U_{\alpha^\vee}$, that is, $\rho(g)_{\beta^\vee}$ is still the entry of (\ref{chaud}) in the factor $\widehat U_{\beta^\vee}(E)$. Since $\rho(g)_{\beta^\vee}\ne 1$ for some $g\in \gp$ (otherwise $\beta^\vee$ wouldn't be in $C_{\rho}$), this implies $\widetilde C_{\rho,\alpha^\vee}\cap C_{\rho}\subseteq C_{u_{\alpha^\vee}^{\;}\rho u_{\alpha^\vee}^{-1}}$ by the discussion preceding Lemma \ref{combi}. In order to get (ii), it remains to prove that $\alpha^\vee\in C_{u_{\alpha^\vee}^{\;}\rho u_{\alpha^\vee}^{-1}}$. Let us choose an isomorphism $x_{\alpha^\vee}:{\mathbb G}_{\rm a}\buildrel\sim\over\rightarrow U_{\alpha^\vee}$. Then $u_{\alpha^\vee}=x_{\alpha^\vee}(a)$ for some $a\in E^{\times}$ and we compute:
\begin{eqnarray*}
u_{\alpha^\vee}t(g)u_{\alpha^\vee}^{-1}&=&t(g)\big(t(g)^{-1}u_{\alpha^\vee}t(g)\big)u_{\alpha^\vee}^{-1}\\
&=&t(g)\big(t(g)^{-1}x_{\alpha^\vee}(a)t(g)\big)x_{\alpha^\vee}(-a)\\
&=&t(g)x_{\alpha^\vee}\big(\alpha^\vee(t(g)^{-1})a\big)x_{\alpha^\vee}(-a)\\
&=&t(g)x_{\alpha^\vee}\big(a(\alpha^\vee(t(g))^{-1}-1)\big).
\end{eqnarray*}
Since $a\ne 0$ and since there exists $g\in \gp$ such that $\alpha^\vee(t(g))^{-1}=\alpha^\vee(\widehat\chi_{\rho}(g))^{-1}\ne 1$ by assumption, we see that the entry in $U_{\alpha^\vee}(E)$ of $u_{\alpha^\vee}\rho(g) u_{\alpha^\vee}^{-1}$ in the decomposition (\ref{prod}) is nontrivial for some $g\in \gp$, and hence $\alpha^\vee\in C_{u_{\alpha^\vee}^{\;}\rho u_{\alpha^\vee}^{-1}}$.
\end{proof}

\begin{prop}\label{conjbor}
Let $\rho:\gp\rightarrow \widehat{B}(E)$ be a continuous homomorphism and assume that $\alpha^\vee\circ \widehat\chi_{\rho}\ne 1$ for all $\alpha\in R^{+}$. Then there is $b_0\in \widehat{B}(E)$ such that $C_{b_0\rho b_0^{-1}}\subseteq C_{b\rho b^{-1}}$ for all $b\in \widehat{B}(E)$.
\end{prop}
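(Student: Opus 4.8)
The plan is to show that among all $\widehat B(E)$-conjugates of $\rho$ the associated closed subsets $C_{b\rho b^{-1}}\subseteq R^{+\vee}$ have a unique minimum, and that this minimum is attained. First I would observe that conjugation by the torus $\widehat T(E)$ does not change the set $C_{b\rho b^{-1}}$: if $t\in\widehat T(E)$ then $t U_{\alpha^\vee}(E) t^{-1}=U_{\alpha^\vee}(E)$ for each $\alpha^\vee$, and conjugating the decomposition $(\ref{prod})$ of an element of $\widehat B_C(E)$ by $t$ stays inside $\widehat U_C(E)$; hence $C_{t\rho t^{-1}}=C_\rho$ and, more generally, $C_{tbt^{-1}\rho(tbt^{-1})^{-1}}=C_{b\rho b^{-1}}$. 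So I may reduce to studying $C_{u\rho u^{-1}}$ for $u\in\widehat U(E)$, and even, after a preliminary torus conjugation, assume $C_\rho$ is as small as it will be made by the torus. Note also that $\widehat\chi_\rho$ is unchanged under $\widehat B(E)$-conjugation up to $\widehat T(E)$-conjugation, so the hypothesis ``$\alpha^\vee\circ\widehat\chi_\rho\ne 1$ for all $\alpha\in R^+$'' is stable and moreover remains valid for all conjugates; in particular Lemma \ref{conjalp} applies to every $u_{\alpha^\vee}$-conjugate of any conjugate of $\rho$.

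Next I would prove that if $u\in\widehat U(E)$ is written in the product decomposition $(\ref{prod})$ (for $C=R^{+\vee}$) with factors $u_{\alpha^\vee}$, and if we conjugate $\rho$ successively by the nontrivial $u_{\alpha^\vee}$, then the resulting closed subset is exactly the smallest closed subset $C'$ containing $C_\rho$ together with $\{\alpha^\vee : u_{\alpha^\vee}\ne 1\}$. The inclusion $\subseteq$ is immediate from the commutation formula \cite[II.1.2(5)]{Ja} as in the proof of Lemma \ref{conjalp}(i): conjugating $\rho(g)$, which lies in $\widehat T(E)\prod_{\beta^\vee\in C_\rho}U_{\beta^\vee}(E)$, by $u$ lands it in $\widehat T(E)\prod_{\beta^\vee\in C'}U_{\beta^\vee}(E)$. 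The reverse inclusion is the subtle point: it requires applying Lemma \ref{conjalp} one root at a time, and to do so I first use Lemma \ref{combi} to order the roots $\alpha^\vee$ with $u_{\alpha^\vee}\ne 1$ (more precisely, the roots in $C'\setminus C_\rho$, or just the relevant ``new'' generators $\widetilde C'\setminus C_\rho$) so that at each step the next root is not yet in the closed subset generated by $C_\rho$ and the previously treated roots. Then each single conjugation by the appropriate $u_{\alpha^\vee}$ (noting that the commutation formula lets one reduce to conjugating by a single root factor, since the others produce contributions only in already-present root subgroups) enlarges the closed subset by exactly that root, via Lemma \ref{conjalp}, and the hypothesis $\alpha^\vee\circ\widehat\chi\ne 1$ is what guarantees the $\alpha^\vee$-factor becomes genuinely nontrivial. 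After finitely many steps we reach $C'$, which shows $C'\subseteq C_{u\rho u^{-1}}$ modulo checking that different orders of multiplying the $u_{\alpha^\vee}$ give the same $C_{u\rho u^{-1}}$ — this again follows from the commutation formula, since reordering only modifies factors in $U_{\beta^\vee}$ for $\beta^\vee$ a proper sum of the involved roots, all of which lie in $C'$.

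Finally, with the equality $C_{u\rho u^{-1}}=C'\supseteq C_\rho$ in hand for every $u\in\widehat U(E)$, and the torus-invariance from the first paragraph, we conclude: for any $b=tu\in\widehat B(E)$ we have $C_{b\rho b^{-1}}=C_{u\rho u^{-1}}\supseteq C_\rho$, so taking $b_0=1$ already works — the minimum is $C_\rho$ itself and it is attained by $\rho$. (If the preliminary torus conjugation in paragraph one was needed to shrink $C_\rho$, then $b_0$ is that torus element.) Thus $C_{b_0\rho b_0^{-1}}\subseteq C_{b\rho b^{-1}}$ for all $b\in\widehat B(E)$, which is the assertion. The main obstacle is the reverse-inclusion step: controlling how conjugation by a product of root-group elements builds up the closed subset, and in particular justifying the inductive application of Lemma \ref{conjalp} through the combinatorial ordering of Lemma \ref{combi} while keeping track, via the commutator relations, of which root subgroups are affected at each stage.
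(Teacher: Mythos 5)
There is a genuine gap, and it sits exactly at the point your final paragraph glosses over: the conclusion that $b_0=1$ (or a torus element) always works is false. Conjugation by the torus indeed never changes $C_\rho$, so your parenthetical escape hatch is vacuous; the real issue is the component $u_{C_\rho}\in\widehat U_{C_\rho}(E)$ of $u$ in the decomposition \eqref{prod}. Lemma \ref{combi} and Lemma \ref{conjalp} only govern roots in $R^{+\vee}\setminus C_\rho$; they say nothing about conjugation by root factors $u_{\alpha^\vee}$ with $\alpha^\vee\in C_\rho$, and such a conjugation can \emph{strictly shrink} the closed subset. Concretely: start from $\rho_{\min}$ with $C_{\rho_{\min}}=\{\alpha^\vee\}$, pick $\beta^\vee\in R^{+\vee}\setminus\{\alpha^\vee\}$ with $\beta^\vee\circ\widehat\chi_{\rho_{\min}}\ne 1$ and $1\ne u\in U_{\beta^\vee}(E)$, and set $\rho:=u\rho_{\min}u^{-1}$. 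By Lemma \ref{conjalp}, $C_\rho$ is the smallest closed subset containing $\alpha^\vee$ and $\beta^\vee$, so $u^{-1}\in\widehat U_{C_\rho}(E)$; yet $C_{u^{-1}\rho u}=\{\alpha^\vee\}\not\supseteq C_\rho$. For this $\rho$ your claimed equality $C_{b\rho b^{-1}}=C'\supseteq C_\rho$ fails (with $b=u^{-1}$ one has $C'=C_\rho$ but $C_{b\rho b^{-1}}\subsetneq C_\rho$), and $b_0=1$ does not satisfy the proposition.

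The paper's proof repairs exactly this: before running the induction it replaces $\rho$ by $u_0\rho u_0^{-1}$ for a suitable $u_0\in\widehat U_{C_\rho}(E)$ so that $C_{u\rho u^{-1}}=C_\rho$ for \emph{all} $u\in\widehat U_{C_\rho}(E)$ (possible since $C_{u\rho u^{-1}}\subseteq C_\rho$ for such $u$ and there are finitely many closed subsets). Only after this normalization does the decomposition $u=u_n\cdots u_1u_{C_\rho}$ (ordered via Lemma \ref{combi}) together with the inductive use of Lemma \ref{conjalp} give $C_{u\rho u^{-1}}\supseteq C_\rho$. So the element $b_0$ of the statement is in general a nontrivial unipotent element of $\widehat U_{C_\rho}(E)$, not the identity and not a torus element. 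The rest of your argument (torus reduction, the ordering from Lemma \ref{combi}, the one-root-at-a-time application of Lemma \ref{conjalp}) matches the paper, but without the normalization step the proof does not go through.
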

\begin{proof}
Since $\widehat{T}$ normalizes the $\widehat{B}_C$, one can restrict to $b=u\in \widehat U(E)$. Since $C_{u\rho u^{-1}}\subseteq C_{\rho}$ for all $u\in \widehat{U}_{C_{\rho}}(E)$, replacing $\rho$ by a suitable conjugate $u_0\rho u_0^{-1}$ with $u_0\in \widehat{U}_{C_{\rho}}(E)$ we can assume
\begin{equation}\label{minim}
C_{u\rho u^{-1}}= C_{\rho}\ \ \ \ \forall u\in \widehat{U}_{C_{\rho}}(E).
\end{equation}
It is enough to prove that $C_{\rho}\subseteq C_{u\rho u^{-1}}$ for all $u\in \widehat{U}(E)$. By Lemma \ref{combi}, we can enumerate the roots $\alpha_{n}^\vee,\dots,\alpha_{1}^\vee$ in $R^{+\vee}\backslash C_{\rho}$ so that $\alpha_{i}^\vee$ is not in the smallest closed subset containing $C_{\rho}$ and the $\alpha_{j}^\vee$ for $1\leq j\leq i-1$. Since (\ref{prod}) holds for any order, we can write any $u\in \widehat{U}(E)$ as $u=u_nu_{n-1}\cdots u_1u_{C_{\rho}}$, where $u_i\in U_{\alpha_{i}^\vee}(E)$ and $u_{C_{\rho}}\in \widehat U_{C_{\rho}}(E)$. Moreover we have $C_{u_{C_{\rho}}^{\;}\rho u_{C_{\rho}}^{-1}}=C_{\rho}$ by (\ref{minim}). Then it follows from Lemma \ref{conjalp} (and a straightforward induction) that $C_{u\rho u^{-1}}$ is the smallest closed subset containing $C_{\rho}$ and $\{\alpha_i^{\vee} : u_i\ne 1\}$. {\it A fortiori} we thus have $C_{\rho}\subseteq C_{u\rho u^{-1}}$.
\end{proof}

Note that if $\rho$ ordinary is such that $\alpha^\vee\circ \widehat\chi_{\rho}\ne 1$ for all $\alpha\in R^{+}$, then this condition is satisfied by any other conjugate $\rho'$ of $\rho$ taking values in $\widehat{B}(E)$ (as is easily deduced from the fact that $\widehat\chi_{\rho'}$ is conjugate to $\widehat\chi_{\rho}$ in $\widehat G(E)$ by the Jordan decomposition).

\begin{definit}\label{conjgoo}
Let $\rho:\gp\rightarrow \widehat{G}(E)$ be a continuous homomorphism. Assume that $\rho'$ is a conjugate of $\rho$ taking values in $\widehat{B}(E)$ and such that $\alpha^\vee\circ \widehat\chi_{\rho'}\ne 1$ for all $\alpha\in R^{+}$. We say that $\rho'$ is a \emph{good conjugate} (of $\rho$) if $C_{\rho'}\subseteq C_{b\rho' b^{-1}}$ for all $b\in \widehat{B}(E)$.
\end{definit}

By Proposition \ref{conjbor}, good conjugates always exist (under the assumptions of Definition \ref{conjgoo}).

\begin{lem}\label{conjexe}
Let $\rho:\gp\rightarrow \widehat{G}(E)$ be a continuous homomorphism and let $\rho'$ be a good conjugate of $\rho$ as in Definition \ref{conjgoo}. Then any $b\rho' b^{-1}$ for $b\in \widehat{B}_{C_{\rho'}}(E)$ and any $\dot{w}^{-1} \rho' \dot{w}$ for $w\in W_{C_{\rho'}}\subseteq W$ \(see \eqref{wc}\) is a good conjugate of $\rho$. Moreover we have $C_{b\rho' b^{-1}}=C_{\rho'}$ and $C_{\dot{w}^{-1} \rho' \dot{w}}=w^{-1}(C_{\rho'})$.
\end{lem}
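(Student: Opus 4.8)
The plan is to verify the two claimed equalities on the invariant $C$ and then deduce the ``good conjugate'' property formally from them. The two statements to be proved are: for $b\in\widehat B_{C_{\rho'}}(E)$ we have $C_{b\rho'b^{-1}}=C_{\rho'}$, and for $w\in W_{C_{\rho'}}$ we have $C_{\dot w^{-1}\rho'\dot w}=w^{-1}(C_{\rho'})$. Once these are known, the good conjugate assertions follow quickly: first, one checks that $b\rho'b^{-1}$ (resp.\ $\dot w^{-1}\rho'\dot w$) still takes values in $\widehat B(E)$ and still satisfies the genericity condition $\alpha^\vee\circ\widehat\chi\ne 1$ for all $\alpha\in R^+$, using the remark just before Definition \ref{conjgoo} that this condition depends only on the $\widehat G(E)$-conjugacy class of $\rho$ (via the Jordan decomposition of $\widehat\chi$); then one checks the minimality defining a good conjugate.

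For the first equality, one inclusion is essentially Lemma \ref{conjalp}'s mechanism in reverse: since $b\in\widehat B_{C_{\rho'}}(E)=\widehat T(E)\widehat U_{C_{\rho'}}(E)$ and $\widehat T$ normalizes every $\widehat B_C$, conjugation by $b$ preserves $\widehat B_{C_{\rho'}}$, so $C_{b\rho'b^{-1}}\subseteq C_{\rho'}$. For the reverse inclusion I would invoke goodness of $\rho'$: by definition $C_{\rho'}\subseteq C_{b\rho'b^{-1}}$ for \emph{all} $b\in\widehat B(E)$, in particular for $b\in\widehat B_{C_{\rho'}}(E)$. Combining the two inclusions gives equality. (Alternatively one observes directly that conjugation by $\widehat U_{C_{\rho'}}(E)$ cannot enlarge $C$ and cannot shrink it either, because $\rho'$ was already chosen so that \eqref{minim} holds for the good conjugate; but the appeal to goodness is cleaner.) It then also follows that $b\rho'b^{-1}$ is itself a good conjugate, since $C_{b\rho'b^{-1}}=C_{\rho'}\subseteq C_{b'\rho'b'^{-1}}= C_{b'(b\rho'b^{-1})b'^{-1}}$ after relabelling $b'\mapsto b'b^{-1}$ and using that $b^{-1}\widehat B(E)b=\widehat B(E)$.

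For the Weyl-group statement, the key computation is $\dot w^{-1}U_{\alpha^\vee}\dot w = U_{w^{-1}(\alpha^\vee)}$ for all $\alpha^\vee\in R^\vee$ (cf.\ \cite[\S II.1.4(5)]{Ja}, as already used in Lemma \ref{autre}), together with $\dot w^{-1}\widehat T\dot w=\widehat T$. Writing $\rho'(g)=t(g)\prod_{\beta^\vee\in C_{\rho'}}\rho'(g)_{\beta^\vee}$ via \eqref{prod}, conjugation by $\dot w^{-1}$ sends each factor $U_{\beta^\vee}$ to $U_{w^{-1}(\beta^\vee)}$, and since $w\in W_{C_{\rho'}}$ we have $w^{-1}(C_{\rho'})\subseteq R^{+\vee}$ (noted after \eqref{wc}) and $w^{-1}(C_{\rho'})$ is again a closed subset of $R^{+\vee}$, so $\dot w^{-1}\rho'\dot w$ takes values in $\widehat B_{w^{-1}(C_{\rho'})}(E)$; hence $C_{\dot w^{-1}\rho'\dot w}\subseteq w^{-1}(C_{\rho'})$. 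For the reverse inclusion, for each $\beta^\vee\in C_{\rho'}$ there is $g$ with $\rho'(g)_{\beta^\vee}\ne 1$; its image under $\dot w^{-1}(\cdot)\dot w$ lies in $U_{w^{-1}(\beta^\vee)}(E)$ and is nontrivial, and it is precisely the $U_{w^{-1}(\beta^\vee)}$-entry of $\dot w^{-1}\rho'(g)\dot w$ in the decomposition \eqref{prod} for $C=w^{-1}(C_{\rho'})$ (the other entries are indexed by $w^{-1}(C_{\rho'})\setminus\{w^{-1}(\beta^\vee)\}$ and do not interfere). Thus $w^{-1}(\beta^\vee)\in C_{\dot w^{-1}\rho'\dot w}$ for every $\beta^\vee\in C_{\rho'}$, giving $w^{-1}(C_{\rho'})\subseteq C_{\dot w^{-1}\rho'\dot w}$, hence equality. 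Finally, to see $\dot w^{-1}\rho'\dot w$ is a good conjugate, take any $b\in\widehat B(E)$; then $b(\dot w^{-1}\rho'\dot w)b^{-1}=\dot w^{-1}(\dot w b\dot w^{-1}\,\rho'\,\dot w b^{-1}\dot w^{-1})\dot w$, and applying the already-established $w$-equivariance of $C$ together with goodness of $\rho'$ gives $C_{\dot w^{-1}\rho'\dot w}=w^{-1}(C_{\rho'})\subseteq w^{-1}(C_{(\dot w b\dot w^{-1})\rho'(\dot w b\dot w^{-1})^{-1}})=C_{b(\dot w^{-1}\rho'\dot w)b^{-1}}$, as required.

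The only genuine subtlety — the step I would treat most carefully — is the bookkeeping in \eqref{prod}: one must be sure that after conjugating by $\dot w^{-1}$ the product decomposition of $\dot w^{-1}\rho'(g)\dot w$ over the index set $w^{-1}(C_{\rho'})$ really reads off the $U_{w^{-1}(\beta^\vee)}$-component as $\dot w^{-1}\rho'(g)_{\beta^\vee}\dot w$, with no mixing from the torus part or from reordering. This is where one uses that $\dot w$ is a single element normalizing $\widehat T$ and permuting the root subgroups, so conjugation is an isomorphism of varieties intertwining the two product decompositions; since \eqref{prod} is valid for any ordering of the roots, one simply transports the ordering on $C_{\rho'}$ to $w^{-1}(C_{\rho'})$. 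No positivity/commutator gymnastics (as in Lemma \ref{conjalp}) are needed here precisely because $\dot w$-conjugation is a bijection on root subgroups rather than a unipotent conjugation that can create new factors.
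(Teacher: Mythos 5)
Most of your argument is sound and matches the paper's: the case $b\in\widehat{B}_{C_{\rho'}}(E)$ (both the equality $C_{b\rho'b^{-1}}=C_{\rho'}$, obtained from goodness plus the trivial containment, and the goodness of $b\rho'b^{-1}$ via relabelling) and the equality $C_{\dot{w}^{-1}\rho'\dot{w}}=w^{-1}(C_{\rho'})$ via transport of the product decomposition \eqref{prod} are all handled correctly, by exactly the intended route.

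The gap is in your final step, where you show that $\dot{w}^{-1}\rho'\dot{w}$ is a good conjugate. You take an arbitrary $b\in\widehat{B}(E)$, rewrite $b(\dot{w}^{-1}\rho'\dot{w})b^{-1}=\dot{w}^{-1}\big((\dot{w}b\dot{w}^{-1})\rho'(\dot{w}b\dot{w}^{-1})^{-1}\big)\dot{w}$, and then apply the goodness of $\rho'$ to the conjugating element $\dot{w}b\dot{w}^{-1}$. But goodness (Definition \ref{conjgoo}) only controls conjugation by elements of $\widehat{B}(E)$, and for $w\neq 1$ the element $\dot{w}b\dot{w}^{-1}$ is in general \emph{not} in $\widehat{B}(E)$: its unipotent part involves root subgroups $U_{\alpha^\vee}$ with $\alpha^\vee\in w(R^{+\vee})\not\subseteq R^{+\vee}$. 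Consequently $(\dot{w}b\dot{w}^{-1})\rho'(\dot{w}b\dot{w}^{-1})^{-1}$ need not be ordinary, so $C$ of it is not even defined, and the $w$-equivariance of $C$ that you invoke is likewise unavailable for it. The paper closes exactly this hole by first invoking the reduction contained in the proof of Proposition \ref{conjbor}: to verify goodness of an ordinary $\sigma$ it suffices to check $C_{u\sigma u^{-1}}=C_{\sigma}$ for $u$ ranging over $\widehat{U}_{C_{\sigma}}(E)$ only. Applied to $\sigma=\dot{w}^{-1}\rho'\dot{w}$ this restricts you to $u\in\widehat{U}_{w^{-1}(C_{\rho'})}(E)$, and for such $u$ one has $\dot{w}u\dot{w}^{-1}\in\widehat{U}_{C_{\rho'}}(E)\subseteq\widehat{B}(E)$ precisely because $w\in W_{C_{\rho'}}$; your chain of equalities then goes through verbatim (using the already-proved first part of the lemma for the middle equality). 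Without that preliminary reduction the argument does not close.
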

\begin{proof}
The statement is obvious for $b\in \widehat{B}_{C_{\rho'}}(E)$. If $w\in W_{C_{\rho'}}$, from the discussion preceding Lemma \ref{combi} and $\dot{w}^{-1}U_{\alpha^\vee}\dot{w}=U_{\dot{w}^{-1}(\alpha^\vee)}$, we already see that $C_{\dot{w}^{-1} \rho' \dot{w}}=w^{-1}(C_{\rho'})$ (we don't need here that $\rho'$ is a good conjugate). From the proof of Proposition \ref{conjbor}, it is enough to have $C_{u\dot{w}^{-1} \rho' \dot{w}u^{-1}}=C_{\dot{w}^{-1} \rho' \dot{w}}$ for all $u\in \widehat{U}_{w^{-1}(C_{\rho'})}(E)$. But $u\dot{w}^{-1} \rho' \dot{w}u^{-1}=\dot{w}^{-1} (\dot{w}u\dot{w}^{-1}) \rho' (\dot{w}u^{-1}\dot{w}^{-1})\dot{w}$ with $\dot{w}u\dot{w}^{-1}\in \widehat{U}_{C_{\rho'}}(E)$ and since $\rho'$ is a good conjugate we have $C_{(\dot{w}u\dot{w}^{-1}) \rho' (\dot{w}u^{-1}\dot{w}^{-1})}=C_{\rho'}$. We deduce
$$C_{u\dot{w}^{-1} \rho' \dot{w}u^{-1}}=w^{-1}\big(C_{(\dot{w}u\dot{w}^{-1}) \rho' (\dot{w}u^{-1}\dot{w}^{-1})}\big)=w^{-1}(C_{\rho'})=C_{\dot{w}^{-1} \rho' \dot{w}},$$
which finishes the proof.
\end{proof}

\begin{prop}\label{conj}
Let $\rho:\gp\rightarrow \widehat{G}(E)$ be a continuous homomorphism and let $\rho',\ \rho''$ be good conjugates of $\rho$ as in Definition \ref{conjgoo}. Then there exist $b\in \widehat{B}_{C_{\rho'}}(E)$ and $w\in W_{C_{\rho'}}$ such that $\rho''=\dot{w}^{-1}(b^{-1}\rho' b) \dot{w}$. In particular, one has $C_{\rho''}=w^{-1}(C_{\rho'})$.
\end{prop}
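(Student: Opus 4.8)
The plan is to use the Bruhat decomposition to pin down $g$ with $\rho''=g^{-1}\rho'g$. Such a $g$ exists since $\rho'$ and $\rho''$ are both $\widehat{G}(E)$-conjugate to $\rho$. I would write $g=u\dot{w}c$ with $w\in W$, $c\in\widehat{B}(E)$ and $u$ in the unipotent group $\widehat{U}(E)\cap\dot{w}\widehat{U}^-(E)\dot{w}^{-1}$, whose roots form the set $J:=\{\alpha^\vee\in R^{+\vee}:w^{-1}(\alpha^\vee)\in -R^{+\vee}\}$. Setting $\rho_1:=u^{-1}\rho'u$ and $\rho_2:=\dot{w}^{-1}\rho_1\dot{w}$, one has $\rho''=c^{-1}\rho_2c$, and both $\rho_1$ (a $\widehat{B}(E)$-conjugate of $\rho'$) and $\rho_2=c\rho''c^{-1}$ (a $\widehat{B}(E)$-conjugate of $\rho''$) are $\widehat{B}(E)$-valued.

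Next I would exploit that $\rho_2$ is $\widehat{B}(E)$-valued. Since $\rho_1$ is $\widehat{B}_{C_{\rho_1}}(E)=\widehat{T}(E)\widehat{U}_{C_{\rho_1}}(E)$-valued, $\rho_2$ is $\widehat{T}(E)\widehat{U}_{w^{-1}(C_{\rho_1})}(E)$-valued; splitting the (closed) set $w^{-1}(C_{\rho_1})$ into its intersections with $\pm R^{+\vee}$ and using $\widehat{U}\cap\widehat{U}^-=\{1\}$ in the corresponding product decomposition of $\widehat{U}_{w^{-1}(C_{\rho_1})}$, the condition $\rho_2(\gp)\subseteq\widehat{B}(E)$ forces $C_{\rho_2}\subseteq w^{-1}(C_{\rho_1})$, while the symmetric argument applied to $\rho_1=\dot{w}\rho_2\dot{w}^{-1}$ gives $C_{\rho_1}\subseteq w(C_{\rho_2})$. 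Hence $C_{\rho_2}=w^{-1}(C_{\rho_1})\subseteq R^{+\vee}$, i.e.\ $w\in W_{C_{\rho_1}}$, and in particular $C_{\rho_1}\cap J=\emptyset$; since $\rho'$ is a good conjugate we also have $C_{\rho'}\subseteq C_{\rho_1}$ (Definition \ref{conjgoo}), so $C_{\rho'}\cap J=\emptyset$ too.

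The crucial point is to prove $u=1$. For this I would rerun the argument in the proof of Proposition \ref{conjbor} with the good conjugate $\rho'$ (which satisfies (\ref{minim}) and the regularity hypothesis of Lemma \ref{conjalp}) and the element $u^{-1}\in\widehat{U}(E)$: it identifies $C_{\rho_1}=C_{u^{-1}\rho'u}$ with the smallest closed subset containing $C_{\rho'}$ and the set $E$ of roots of $R^{+\vee}\setminus C_{\rho'}$ occurring in $u^{-1}$. As $u^{-1}\in\widehat{U}_J(E)$ we get $E\subseteq J$, but $E\subseteq C_{\rho_1}$ by construction and $C_{\rho_1}\cap J=\emptyset$, so $E=\emptyset$ and hence $u^{-1}\in\widehat{U}_{C_{\rho'}}(E)\cap\widehat{U}_J(E)=\{1\}$, i.e.\ $u=1$. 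Then $\rho_1=\rho'$, $w\in W_{C_{\rho'}}$, and $\rho_2=\dot{w}^{-1}\rho'\dot{w}$ is a good conjugate with $C_{\rho_2}=w^{-1}(C_{\rho'})$ by Lemma \ref{conjexe}. Now $\rho''=c^{-1}\rho_2c$ presents $\rho''$ and the good conjugate $\rho_2$ as $\widehat{B}(E)$-conjugates; a short uniqueness statement, again obtained by rerunning the computation of Proposition \ref{conjbor}, shows that two $\widehat{B}(E)$-conjugate good conjugates have the same set $C$ and differ by conjugation by an element of $\widehat{B}_C(E)$, so $C_{\rho''}=C_{\rho_2}=w^{-1}(C_{\rho'})$ and $c\in\widehat{B}_{w^{-1}(C_{\rho'})}(E)$. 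Putting $b:=\dot{w}c\dot{w}^{-1}\in\widehat{B}_{C_{\rho'}}(E)$ (using that $w\in W_{C_{\rho'}}$ conjugates $\widehat{B}_{w^{-1}(C_{\rho'})}$ onto $\widehat{B}_{C_{\rho'}}$, compare Lemma \ref{autre}), one rewrites $\rho''=c^{-1}\dot{w}^{-1}\rho'\dot{w}c=\dot{w}^{-1}(b^{-1}\rho'b)\dot{w}$, the asserted form, and the final claim $C_{\rho''}=w^{-1}(C_{\rho'})$ is already proved.

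I expect the step $u=1$ to be the real obstacle: it requires combining the Bruhat description of $u$ (its roots lie in $J$, which the $\widehat{B}(E)$-valuedness of $\rho_2$ then excludes from $C_{\rho_1}$) with the precise bookkeeping of Proposition \ref{conjbor}'s proof (Lemmas \ref{combi} and \ref{conjalp}) and with the standard product decompositions of the groups $\widehat{U}_D$ for closed $D\subseteq R^{\vee}$. By contrast, the uniqueness statement for $\widehat{B}(E)$-conjugate good conjugates and the final assembly are routine once this machinery is in place.
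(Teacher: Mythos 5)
Your argument is correct and, at its core, is the paper's own proof: both run on a Bruhat decomposition of the conjugating element combined with the bookkeeping of Lemmas \ref{combi} and \ref{conjalp} (i.e.\ the computation in Proposition \ref{conjbor}), and your closing step --- that two $\widehat{B}(E)$-conjugate good conjugates have the same $C$ and differ by $\widehat{B}_{C}(E)$-conjugation --- is exactly how the paper finishes. The one genuine variation is how the unipotent factor sitting to the left of $\dot{w}$ is eliminated. The paper writes $x=tu'\dot{w}^{-1}u$ with $u\in\widehat{U}(E)$ unconstrained, shows that every nontrivial root factor $u_i$ of $u$ satisfies $w^{-1}(\alpha_i^\vee)\in R^{+\vee}$, and then transports those factors across $\dot{w}^{-1}$ so as to absorb them into the remaining $\widehat{B}(E)$-factor before the final clean-up. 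You instead use the refined cell $\widehat{B}(E)\dot{w}\widehat{B}(E)=\widehat{U}_J(E)\dot{w}\widehat{B}(E)$ with $J=R^{+\vee}\cap w(-R^{+\vee})$ and kill $u$ outright: its nontrivial root components must lie in $J$, Lemma \ref{conjalp} forces them into $C_{\rho_1}$, and $C_{\rho_1}\cap J=\emptyset$ once you know $w\in W_{C_{\rho_1}}$. This is a clean, slightly more transparent way to see that the Weyl part of the conjugating element can be taken on the nose; the price is that you must check $J$ is closed (it is) and justify carefully the big-cell step that $\widehat{B}(E)$-valuedness of $\rho_2$ kills the components of $\rho_2(g)$ at the negative roots of $w^{-1}(C_{\rho_1})$ --- you state this tersely (``using $\widehat{U}\cap\widehat{U}^-=\{1\}$''), but it is the same fact the paper invokes in the discussion preceding Lemma \ref{combi}, applied after ordering the roots so the negative ones come first. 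Two presentational points: your set of roots occurring in $u^{-1}$ is denoted $E$, which clashes with the coefficient field; and when you rerun Proposition \ref{conjbor} for $\rho'$ you should say explicitly that a good conjugate satisfies (\ref{minim}), which is immediate from Definition \ref{conjgoo} since $C_{b\rho' b^{-1}}\subseteq C_{\rho'}$ for $b\in\widehat{U}_{C_{\rho'}}(E)$.
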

\begin{proof}
By assumption there is $x\in \widehat{G}(E)$ such that $\rho''(g)=x\rho'(g) x^{-1}$ for all $g\in \gp$. By the Bruhat decomposition $\widehat G(E)=\widehat B(E)W\widehat B(E)$, we can write $x=tu' \dot{w}^{-1} u$ with $t\in \widehat{T}(E)$, $w\in W$ and $u,u'\in \widehat{U}(E)$. As in the proof of Proposition \ref{conjbor}, we use Lemma \ref{combi} to enumerate the roots $\alpha_{n}^\vee,\dots,\alpha_{1}^\vee$ in $R^{+\vee}\backslash C_{\rho'}$ so that $\alpha_{i}^\vee$ is not in the smallest closed subset containing $C_{\rho'}$ and the $\alpha_{j}^\vee$ for $1\leq j\leq i-1$, and we write $u=u_nu_{n-1}\cdots u_1u_{C_{\rho'}}$, where $u_i\in U_{\alpha_{i}^\vee}(E)$ for all $i$ and $u_{C_{\rho'}}\in \widehat U_{C_{\rho'}}(E)$. Replacing $\rho'$ by $u_{C_{\rho'}}\rho' u_{C_{\rho'}}^{-1}$, we can assume $u_{C_{\rho'}}=1$. By Lemma \ref{conjalp} and an obvious induction, $C_{u_n\cdots u_1 \rho' u_1^{-1}\cdots u_n^{-1}}$ is the smallest closed subset containing $C_{\rho'}$ and the $\alpha_{i}^\vee$ such that $u_i\ne 1$. Since $\dot{w}^{-1} (u_n\cdots u_1 \rho' u_1^{-1}\cdots u_n^{-1})\dot{w}$ still takes values in $\widehat{B}(E)$, the discussion preceding Lemma \ref{combi} together with the $\dot{w}^{-1}$-conjugate of (\ref{prod}) show that we must have ${w}^{-1}\big(C_{u_n\cdots u_1 \rho' u_1^{-1}\cdots u_n^{-1}}\big)\subseteq R^{+\vee}$, i.e.\ $w\in W_{C_{\rho'}}$ and $w^{-1}(\alpha_{i}^\vee)\in R^{+\vee}$ for all $i$ such that $u_i\ne 1$. Setting $v_i:=\dot{w}^{-1} u_i\dot{w}$, we have
$$\rho''=t(u'v_n\cdots v_1) (\dot{w}^{-1}\rho' \dot{w})(v_1^{-1}\cdots v_n^{-1}u'^{-1})t^{-1},$$
where $u'v_n\cdots v_1\in \widehat{B}(E)$ and where $\rho''$, $\dot{w}^{-1}\rho' \dot{w}$ are good conjugates of $\rho$ (the latter by Lemma \ref{conjexe}). This already implies $C_{\rho''}=C_{\dot{w}^{-1} \rho' \dot{w}}=w^{-1}(C_{\rho'})$. Writing again $u'v_n\cdots v_1=u'_nu'_{n-1}\cdots u'_1u'_{w^{-1}(C_{\rho'})}$, where $u'_i\in U_{\beta_{i}^\vee}(E)$ and $u'_{w^{-1}(C_{\rho'})}\in \widehat U_{w^{-1}(C_{\rho'})}(E)$ with $\beta_{n}^\vee,\dots,\beta_{1}^\vee\in R^{+\vee}\backslash w^{-1}(C_{\rho'})$ as in Lemma \ref{combi}, we see from Lemma \ref{conjalp} that we must have $u'_i=1$ for all $i$ (otherwise $C_{\rho''}$ would be strictly bigger than $C_{u'_{w^{-1}(C_{\rho'})}\dot{w}^{-1} \rho' \dot{w}u_{w^{-1}(C_{\rho'})}'^{-1}}=w^{-1}(C_{\rho'})$). Setting
$$b:=\dot{w}tu'_{w^{-1}(C_{\rho'})}\dot{w}^{-1}\in \dot{w}\widehat{B}_{w^{-1}(C_{\rho'})}(E)\dot{w}^{-1}=\widehat{B}_{C_{\rho'}}(E)$$
(see the proof of Lemma \ref{autre} for the last equality) gives the statement.
\end{proof}

\subsection{Construction of $\Pi(\rho)^{\ord}$ for $\rho$ generic ordinary}\label{cons}

In this section, we associate to a generic ordinary representation $\rho$ of $\gp$  a represen\-tation $\Pi(\rho)^{\ord}$ of $G(\Qp)$ which is ``modelled'' on the representation $(\LL\vert_{\widehat{B}_{C_{\rho}}})^{\ord}$ of \S\ref{C}. We keep the notation and assumptions of \S\ref{prel} and of \S\ref{Ch} and we denote by $\theta$ a twisting element for $G$.

We fix a good conjugate $\rho$ as in Definition \ref{conjgoo}:
$$\rho:\gp\longrightarrow \widehat{B}_{C_{\rho}}(E)\subseteq \widehat{B}(E)\subseteq \widehat{G}(E),$$
that is, such that the closed subset $C_{\rho}\subseteq R^{+\vee}$ is minimal under conjugation by $\widehat{B}(E)$. We denote by $\chi_{\rho}$ the character (\ref{cft}) on $T(\Qp)$ corresponding to the character $\widehat\chi_{\rho}:\gp\buildrel\rho\over\longrightarrow \widehat{B}_{C_{\rho}}(E)\twoheadrightarrow \widehat{T}(E)$. Note that $\chi_{\rho}$ is unitary since $\gp$ is a compact group and that, from the condition in Definition \ref{conjgoo}, we have $\chi_{\rho}\circ \alpha^{\vee}\ne 1$ for all $\alpha\in R^+$. Recall that $\varepsilon$ is the $p$-adic cyclotomic character that we view as a continuous character $\varepsilon:\Qp^{\times}\rightarrow \oE^{\times}\hookrightarrow E^{\times}$ via class field theory.

\begin{definit}\label{gen}
We say that $\rho$ is \emph{generic} if $\alpha^\vee\circ \widehat\chi_{\rho}\notin \{1,\varepsilon,\varepsilon^{-1}\}$ for all $\alpha\in R^{+}$ (or equivalently all $\alpha\in R$).
\end{definit}

This is equivalent to $\chi_{\rho}\circ \alpha^{\vee}\notin \{1,\varepsilon,\varepsilon^{-1}\}$ for all $\alpha\in R^{+}$. In fact, one could slightly weaken this genericity condition (see e.g.\ Remark \ref{add} below).

\begin{lem}\label{invgen}
Let $\rho'$ be another good conjugate of $\rho$ as in Definition \ref{conjgoo}. Then $\rho$ is generic if and only if $\rho'$ is generic.
\end{lem}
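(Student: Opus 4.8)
The plan is to reduce everything to Proposition \ref{conj}, which already describes exactly how two good conjugates of the same $\widehat G(E)$-conjugacy class differ. Since $\rho$ (the fixed good conjugate of \S\ref{cons}) and $\rho'$ are both good conjugates, Proposition \ref{conj} provides $b\in \widehat B_{C_\rho}(E)$ and $w\in W_{C_\rho}$ with $\rho'=\dot w^{-1}(b^{-1}\rho b)\dot w$. So the first step is simply to invoke this.

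The second step is to read off the effect on the torus part $\widehat\chi_{(\cdot)}$. Write $b=tu$ with $t\in \widehat T(E)$ and $u\in \widehat U_{C_\rho}(E)$. Conjugation by $b$ carries $\rho(g)\in \widehat B_{C_\rho}(E)$ to an element having the same image under the projection $\widehat B_{C_\rho}(E)\twoheadrightarrow \widehat B_{C_\rho}(E)/\widehat U_{C_\rho}(E)=\widehat T(E)$, because that projection is a homomorphism killing the normal subgroup $\widehat U_{C_\rho}$ and $\widehat T$ is abelian; hence $\widehat\chi_{b^{-1}\rho b}=\widehat\chi_\rho$, and therefore $\widehat\chi_{\rho'}(g)=\dot w^{-1}\widehat\chi_\rho(g)\dot w$ for all $g\in\gp$, i.e.\ $\widehat\chi_{\rho'}=w^{-1}(\widehat\chi_\rho)$ in the notation of \S\ref{prel}.

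The third step is the computation of $\alpha^\vee\circ\widehat\chi_{\rho'}$. For $\alpha\in R$ and $g\in\gp$,
$$\alpha^\vee\bigl(\widehat\chi_{\rho'}(g)\bigr)=\alpha^\vee\bigl(\dot w^{-1}\widehat\chi_\rho(g)\dot w\bigr)=(w\alpha^\vee)\bigl(\widehat\chi_\rho(g)\bigr)=\bigl((w\alpha)^\vee\circ\widehat\chi_\rho\bigr)(g),$$
using the compatibility of the $W$-actions on $X(\widehat T)=X^\vee(T)$ and the identity $w(\alpha^\vee)=(w\alpha)^\vee$. Since $\alpha\mapsto w\alpha$ is a bijection of $R$, the families of characters $(\alpha^\vee\circ\widehat\chi_{\rho'})_{\alpha\in R}$ and $(\beta^\vee\circ\widehat\chi_\rho)_{\beta\in R}$ coincide as sets. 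Because genericity in Definition \ref{gen} may be tested over all of $R$ (the set $\{1,\varepsilon,\varepsilon^{-1}\}$ being stable under inversion), this immediately gives that $\rho'$ is generic if and only if $\rho$ is. The argument is purely formal once Proposition \ref{conj} is available; the only step needing care is the bookkeeping of the $W$-conjugation conventions together with the observation that passing to another good conjugate changes $\widehat\chi_\rho$ only by a Weyl conjugation, which is precisely what makes the genericity condition invariant.
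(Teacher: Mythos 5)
Your proof is correct and follows essentially the same route as the paper: invoke Proposition \ref{conj}, observe that conjugation by $b\in\widehat B_{C_\rho}(E)$ does not change $\widehat\chi_\rho$, and then use $\alpha^\vee\circ\widehat\chi_{\rho'}=(w\alpha)^\vee\circ\widehat\chi_\rho$ together with the $W$-invariance of the genericity condition on $R$. The paper states these steps more tersely (the invariance of $\widehat\chi_\rho$ under $\widehat B_{C_\rho}(E)$-conjugation is just a parenthetical remark there), but the argument is identical.
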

\begin{proof}
By Proposition \ref{conj}, replacing $\rho$ by a conjugate in $\widehat{B}_{C_{\rho}}(E)$ (which doesn't change $\widehat\chi_{\rho}$ nor $C_{\rho}$), there is $w\in W_{C_{\rho}}$ such that $\rho'=\dot w^{-1}\rho\dot w$. Since $\widehat{\chi}_{\rho'}=\dot{w}^{-1}\widehat{\chi}_{\rho}\dot{w}=w^{-1}(\widehat{\chi}_{\rho})$, we have $\alpha^\vee\circ \widehat\chi_{\rho'}=w(\alpha)^{\vee}\circ \widehat\chi_{\rho}$ for $\alpha\in R$ from which the statement is obvious.
\end{proof}

We now assume that our good conjugate $\rho$ is generic.

We let $(\widehat\lambda_{\alpha})_{\alpha\in S}$ be fundamental weights for $\widehat G$ and define $(\LL\vert_{\widehat{B}_{C_{\rho}}})^{\ord}$ as in \S\ref{C}. We also define the following subset of $W$:
\begin{equation}\label{wrho}
W_{\rho}:=\{w\in W : w\Big(\sum_{\alpha\in S}\widehat\lambda_{\alpha}\Big){\rm\ is\ a\ weight\ of\ }(\LL\vert_{\widehat{B}_{C_{\rho}}})^{\ord}\}.
\end{equation}
Of course $W_{C_{\rho}}\subseteq W_{\rho}$ and we can describe $W_{\rho}$ as in Lemma \ref{cw}.

\begin{prop}\label{piI}
Let $w_{C_{\rho}}\in W_{C_{\rho}}$, $I\subseteq w_{C_{\rho}}(S^\vee)\cap C_{\rho}$ a subset of pairwise orthogonal roots and set $J:=w_{C_{\rho}}^{-1}(I)^\vee\subseteq S$. There exists a unique admissible unitary continuous representation $\widetilde\Pi(\rho)_I$ of $G_J(\Qp)$ over $E$ with socle filtration ${\rm Fil}_j\widetilde\Pi(\rho)_I$ such that
$$0={\rm Fil}_{-1}\widetilde\Pi(\rho)_I\subsetneq {\rm Fil}_0\widetilde\Pi(\rho)_I\subsetneq \cdots \subsetneq {\rm Fil}_{\vert I\vert-1}\widetilde\Pi(\rho)_I\subsetneq {\rm Fil}_{\vert I\vert}\widetilde\Pi(\rho)_I=\widetilde\Pi(\rho)_I,$$
where for $j\in \{0,\dots,\vert I\vert\}$
\begin{multline}\label{ouf}
{\rm Fil}_{j}\widetilde\Pi(\rho)_I/{\rm Fil}_{j-1}\widetilde\Pi(\rho)_I\cong \\
\bigoplus_{\substack{I'\subseteq I\\ \vert I'\vert=j}}\Big(\Ind_{B^-(\Qp)\cap G_J(\Qp)}^{G_J(\Qp)}\Big(\big(\prod_{\alpha\in I'^\vee}s_{\alpha}\big)w_{C_{\rho}}\Big)^{-1}(\chi_{\rho})\cdot(\varepsilon^{-1}\circ\theta)\Big)^{{\mathcal C}^0}
\end{multline}
and where we view any character of $T(\Qp)$ as a character of $B^-(\Qp)\cap G_J(\Qp)$ by inflation $B^-(\Qp)\cap G_J(\Qp)\twoheadrightarrow T(\Qp)$.
\end{prop}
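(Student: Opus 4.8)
The statement says that the $G_J(\Qp)$-representation $\widetilde\Pi(\rho)_I$ is an exact mirror, on the automorphic side, of the $\widehat B_{C_\rho}$-representation $L_I$ of (\ref{li}): each weight $w'(\widehat\lambda)$ that occurs in $L_I$ (equivalently each subset $I'\subseteq I$) is replaced by the continuous principal series attached to the conjugate character ${w'}^{-1}(\chi_\rho)$, and the socle-by-socle structure is the same. Since $J\subseteq S$ is a set of pairwise orthogonal simple roots, Lemma \ref{strgalpha} gives $G_J\cong T'_J\times \GL_2^J$, and under this isomorphism everything factors as an external tensor product over the factors $\alpha\in I$ (together with a one-dimensional contribution from $T'_J$): the character $\chi_\rho|_{G_J}$, the twist $\varepsilon^{-1}\circ\theta$, and the induction $\Ind_{B^-\cap G_J}^{G_J}$ all split. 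This reduces the whole proposition to the rank-one case $G_\alpha\cong\GL_2$, i.e.\ to constructing, for each $\alpha\in I$, an admissible unitary continuous $\GL_2(\Qp)$-representation $\widetilde\Pi_\alpha$ which is a non-split extension of the continuous principal series attached to $s_\alpha w_{C_\rho}$-conjugate character by the one attached to the $w_{C_\rho}$-conjugate character (compare (\ref{ouf}) with $\vert I'\vert=1$ versus $\vert I'\vert=0$), and then tensoring.

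**Key steps, in order.** First I would set up the $\GL_2(\Qp)$ building block. Writing $\eta_1,\eta_2$ for the two relevant characters of $T(\Qp)$ (the component of $w_{C_\rho}^{-1}(\chi_\rho)\cdot(\varepsilon^{-1}\circ\theta)$ and of $(s_\alpha w_{C_\rho})^{-1}(\chi_\rho)\cdot(\varepsilon^{-1}\circ\theta)$), genericity of $\rho$ (Definition \ref{gen}) guarantees $\eta_1\eta_2^{-1}\circ\alpha^\vee\notin\{1,\varepsilon^{\pm1}\}$, so $\eta_1/\eta_2$ is, after the usual normalisation, a ramified-enough character; standard theory of unitary continuous principal series for $\GL_2(\Qp)$ (the $p$-adic analogue of the smooth theory — see the references to \cite{Sc1}, \cite{Em2}, or one may cite \cite{BP}/\cite{Co} for the extension groups) then shows $\Ext^1_{\GL_2(\Qp)}$ between the two irreducible continuous principal series is one-dimensional, in the direction dictated by the weights, so there is a unique non-split extension $\widetilde\Pi_\alpha$ with the prescribed socle. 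Admissibility and unitarity of $\widetilde\Pi_\alpha$ follow since it is an extension of admissible unitary objects (the category is abelian by \cite{Sc1}, and an extension of two open-ball-stable Banach representations carries a stable open ball). Second, I would form $\widetilde\Pi(\rho)_I:=(\eta_0\circ\text{pr}_{T'_J})\otimes\bigotimes_{\alpha\in I}\widetilde\Pi_\alpha$ as a $G_J(\Qp)=T'_J(\Qp)\times\prod_\alpha\GL_2(\Qp)$-representation (completed tensor product of Banach spaces; this is again admissible unitary continuous). Third, I would compute its socle filtration: for an external tensor product of finitely many representations with known socle filtrations over groups whose smooth mod-$p$ reductions behave well, the socle filtration of the tensor product is the convolution of the individual ones — concretely $\mathrm{Fil}_j=\sum_{\sum j_\alpha=j}\bigotimes\mathrm{Fil}_{j_\alpha}$ — and the graded pieces are the external tensor products of graded pieces; matching terms $\bigotimes_{\alpha\in I'}(\text{top of }\widetilde\Pi_\alpha)\otimes\bigotimes_{\alpha\in I\setminus I'}(\text{socle of }\widetilde\Pi_\alpha)$ with the principal series indexed by $I'$ in (\ref{ouf}) — using that induction to $G_J$ is the tensor product of inductions to the $\GL_2$-factors — gives exactly the claimed formula. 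Finally, uniqueness: any representation with the stated socle filtration has irreducible graded pieces by Theorem \ref{classic}(ii) (genericity) and Theorem \ref{classic}(iii) shows these are pairwise non-isomorphic across the filtration, so the socle at each stage is forced; an extension-group computation (the relevant $\Ext^1$ is one-dimensional in the $\GL_2$-factors and vanishes in the "wrong" directions, the general-$G_J$ case following again from the tensor decomposition and a Künneth-type argument) pins down the isomorphism type.

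**Main obstacle.** The genuinely delicate point is the behaviour of socle filtrations and $\Ext^1$-groups under the completed tensor product over a product of $p$-adic groups — i.e.\ establishing the Künneth-type statement that $\soc$ (and more, the whole socle filtration and the first extension groups) of $\widetilde\Pi_1\,\widehat\otimes\,\widetilde\Pi_2$ is determined by those of the factors in the category of admissible unitary continuous representations. Over a field of characteristic $0$ for Banach-space representations this is not entirely formal; the clean route is to pass to the reduction mod $\pE$ of suitable stable lattices and argue with smooth mod-$p$ representations of $\prod\GL_2(\Qp)$, where the needed results (finite length, irreducibility of the reduced principal series under genericity, one-dimensionality of the relevant smooth $\Ext^1$) are available via \cite{He2}, \cite{Ol}, \cite{Ab}, and then lift. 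A secondary (routine but fiddly) point is bookkeeping: carefully checking that the twist $\varepsilon^{-1}\circ\theta$ and the Weyl-conjugation by $s_\alpha$, $w_{C_\rho}$ are compatible with the isomorphism $G_J\cong T'_J\times\GL_2^J$ of Lemma \ref{strgalpha}, so that (\ref{ouf}) comes out with the correct characters on the nose; this is exactly parallel to the algebraic computation (\ref{li}) and equation (\ref{eq:minu}), which should be invoked as the template.
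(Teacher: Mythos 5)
Your construction of $\widetilde\Pi(\rho)_I$ is exactly the paper's: decompose $G_J\cong T'_J\times\GL_2^J$ via Lemma \ref{strgalpha}, build for each root in $I$ the unique non-split extension of the two relevant continuous principal series of $\GL_2(\Qp)$ (this is Proposition \ref{ch}(i), which uses the genericity hypothesis), take the completed tensor product, and read off the socle filtration from K\"unneth-type statements for $\Hom$ and ${\rm Ext}^1$ of completed tensor products. The ``main obstacle'' you single out is precisely what Appendix A of the paper supplies (Lemmas \ref{penible1}--\ref{penible3}), though the paper handles it by working with $\pE^n$-torsion levels and projective limits, and by Emerton's ordinary parts functor for the $\GL_2$ extension computation, rather than by reducing mod $\pE$ and ``lifting''; be cautious with that shortcut, since socle information for the mod-$\pE$ reduction does not formally transfer back to the Banach representation.

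The genuine gap is in your uniqueness argument. Knowing that the socle filtration is forced layer by layer, that all constituents are pairwise non-isomorphic irreducibles, and that each relevant ${\rm Ext}^1$ between adjacent constituents is one-dimensional does \emph{not} determine the isomorphism class: already for a square with socle $\Pi_\varnothing$, middle layer $\Pi_{\{\beta_1\}}\oplus\Pi_{\{\beta_2\}}$ and cosocle $\Pi_{\{\beta_1,\beta_2\}}$ there is a priori a positive-dimensional family of ways to glue the top constituent onto the two length-two pieces. The paper's Step 3 closes this by an induction on $\vert J_2\backslash J_1\vert$: it first rules out certain uniserial length-three subquotients (Lemma \ref{penible3}(i) applied inductively), which forces every two-constituent subquotient $\Pi_{J'}$, $\Pi_{J'\amalg\{\beta_0\}}$ to be a non-split extension; it then exhibits the candidate representation as an extension of two smaller hypercubes and invokes the one-dimensionality of ${\rm Ext}^1$ between those \emph{blocks} --- Lemma \ref{penible3}(ii), a K\"unneth statement for the partial tensor products, not merely for the irreducible constituents. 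Your sketch gestures at ``vanishing in the wrong directions'' and a K\"unneth argument, which are the right ingredients, but as written the inference from ``socle filtration determined'' to ``isomorphism type determined'' is unjustified, and it is where most of the actual work in the paper's proof of this proposition lies.
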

\begin{proof}
For $J'\subseteq J$, we set
$$\Pi_{J'}:=\Big(\Ind_{B^-(\Qp)\cap G_J(\Qp)}^{G_J(\Qp)}\Big(\big(\prod_{\alpha\in w_{C_{\rho}}(J')}s_{\alpha}\big)w_{C_{\rho}}\Big)^{-1}(\chi_{\rho})\cdot(\varepsilon^{-1}\circ\theta)\Big)^{{\mathcal C}^0}.$$
\noindent
Step 1: We prove several statements on $\Pi_{J'}$.\\
The set $J$ being a subset of $S$ of pairwise orthogonal roots, by Lemma \ref{strgalpha} we can fix an isomorphism $T'_J\times {\GL}_2^{J}\cong G_J$. Under this isomorphism we have $T'_J \times (\prod_{\beta\in J}T_{\beta})\cong T$, where $T_{\beta}$ is a split maximal torus in the copy of ${\GL}_2$ corresponding to the $\beta$ entry. Letting $\chi'_{\rho,J}:=w_{C_{\rho}}^{-1}(\chi_{\rho})\vert_{T'_J(\Qp)}$ and $\chi_{\rho,J',\beta}:=\big((\prod_{\alpha\in w_{C_{\rho}}(J')}s_{\alpha})w_{C_{\rho}}\big)^{-1}\!(\chi_{\rho})\vert_{T_{\beta}(\Qp)}$ for $J'\subseteq J$ and $\beta\in J$, we can rewrite $\Pi_{J'}$ as
\begin{eqnarray}\label{dirsum}
\nonumber \Pi_{J'}&\!\!\!\!\cong\!\!\!\!&\chi'_{\rho,J}\!\cdot\!(\varepsilon^{-1}\!\circ\theta\vert_{T'_J(\Qp)})\!\otimes_E \!\bigg(\!\Ind_{\smat{*&0\\ *&*}^{J}}^{\Gp^{J}}\!\!\otimes_{\beta\in J}\big(\chi_{\rho,J',\beta}\cdot(\varepsilon^{-1}\!\circ\theta)\vert_{T_{\beta}(\Qp)}\big)\bigg)^{{\mathcal C}^0}\\
&\!\!\!\!\cong\!\!\!\!&\chi'_{\rho,J}\!\cdot\!(\varepsilon^{-1}\!\circ\theta\vert_{T'_J(\Qp)})\!\otimes_E \!\Big(\!\widehat\otimes_{\beta\in J}\Big(\!\Ind_{\smat{*&0\\ *&*}}^{\Gp}\!\chi_{\rho,J',\beta}\cdot(\varepsilon^{-1}\!\circ\theta)\vert_{T_{\beta}(\Qp)}\Big)^{{\mathcal C}^0}\Big),
\end{eqnarray}
where $\widehat\otimes$ is the completed tensor product in the category of $p$-adic Banach spaces. We have
\begin{multline}\label{beta}
\chi_{\rho,J',\beta}=\Big(\big(\prod_{\alpha\in w_{C_{\rho}}(J')}s_{w_{C_{\rho}}^{-1}(\alpha)}\big)w_{C_{\rho}}^{-1}\Big)(\chi_{\rho})\vert_{T_{\beta}(\Qp)}=\\
\left\{ \begin{array}{lcc}
w_{C_{\rho}}^{-1}(\chi_{\rho})\vert_{T_{\beta}(\Qp)}\ &{\rm if}&\ \beta\notin J'\\
(s_{\beta}w_{C_{\rho}}^{-1})(\chi_{\rho})\vert_{T_{\beta}(\Qp)}\ &{\rm if}&\ \beta\in J'
\end{array}\right.
\end{multline}
and also
$$\big(\chi_{\rho,J',\beta}\cdot(\varepsilon^{-1}\circ\theta)\vert_{T_{\beta}(\Qp)}\big)\circ \beta^{\vee}=(\chi_{\rho,J',\beta}\circ \beta^{\vee})\cdot\big((\varepsilon^{-1}\circ\theta)\circ \beta^{\vee}\big)=(\chi_{\rho}\circ w(\beta)^{\vee})\cdot \varepsilon^{-1},$$
where $w:=\big(\prod_{\alpha\in w_{C_{\rho}}(J')}s_{\alpha}\big)w_{C_{\rho}}\in W_{\rho}$ and where we use $\langle \theta,\beta^{\vee}\rangle=1$ (as $\beta\in J\subseteq S$). Since $\rho$ is generic it follows from Definition \ref{gen} that this is never the trivial character (of $\Qp^{\times}$) and from Proposition \ref{fu}, we get that the representation (\ref{dirsum}) is always topologically irreducible. If $J'_1,J'_2$ are two distinct subsets in $J$, we have $\big((\prod_{\alpha\in w_{C_{\rho}}(J'_1)}s_{\alpha})w_{C_{\rho}}\big)^{-1}\!(\chi_{\rho})\ne \big((\prod_{\alpha\in w_{C_{\rho}}(J'_2)}s_{\alpha})w_{C_{\rho}}\big)^{-1}\!(\chi_{\rho})$. Indeed, assume, say, there exists $\beta$ such that $\beta\notin J'_1$, $\beta\in J'_2$. By (\ref{beta}) we have $\chi_{\rho,J_1',\beta}=w_{C_{\rho}}^{-1}(\chi_{\rho})\vert_{T_{\beta}(\Qp)}$ and $\chi_{\rho,J_2',\beta}=s_{\beta}(w_{C_{\rho}}^{-1}(\chi_{\rho})\vert_{T_{\beta}(\Qp)})$ and, since $T_{\beta}$ is a split maximal torus in $\GL_2$, it suffices to have $w_{C_{\rho}}^{-1}(\chi_{\rho})\circ \beta^{\vee}\ne 1$ or equivalently $\chi_{\rho}\circ w_{C_{\rho}}(\beta)^{\vee}\ne 1$. But this follows from Definition \ref{gen}. By Theorem \ref{classic}(iii), we get $\Pi_{J'_1}\ncong \Pi_{J'_2}$. All the $\Pi_{J'}$ have scalar endomorphisms and are residually of finite length, as follows from (\ref{dirsum}) and Proposition \ref{fu}.

\noindent
Step 2: We prove the existence of a representation as in the statement.\\
If $\beta\in J$ and $\chi_{\beta}:T_{\beta}(\Qp)\rightarrow \oE^{\times}\subseteq E^{\times}$ is a unitary continuous character, define
$$\Pi_{\beta}(\chi_{\beta}):=\Big(\Ind_{\smat{*&0\\ *&*}}^{\Gp}\chi_{\beta}\cdot(\varepsilon^{-1}\circ\theta)\vert_{T_{\beta}(\Qp)}\Big)^{{\mathcal C}^0}.$$
Consider the following admissible unitary continuous representation of $T'_J(\Qp)\times \Gp^{J}$:
$$\widetilde\Pi(\rho)_I:= \chi'_{\rho,J}\cdot(\varepsilon^{-1}\circ\theta\vert_{T'_J(\Qp)})\otimes_E \big(\widehat\otimes_{\beta\in J}\mathcal E_{\beta}\big),$$
where \ the \ $\Gp$-representation \ $\mathcal E_{\beta}$ \ is \ the \ unique \ non-split \ extension \ of $\Pi_{\beta}(s_{\beta}(w_{C_{\rho}}^{-1}\!(\chi_{\rho})\vert_{T_{\beta}(\Qp)}))$ by $\Pi_{\beta}(w_{C_{\rho}}^{-1}(\chi_{\rho})\vert_{T_{\beta}(\Qp)})$, see Proposition \ref{ch}. From Proposition \ref{fu} its constituents are
$$\chi'_{\rho,J}\cdot(\varepsilon^{-1}\circ\theta\vert_{T'_J(\Qp)})\otimes_E \big(\widehat\otimes_{\beta\in J}\Pi_{\beta}(\chi_{\beta})\big)$$
for $\chi_{\beta}\in \{w_{C_{\rho}}^{-1}(\chi_{\rho})\vert_{T_{\beta}(\Qp)},s_{\beta}(w_{C_{\rho}}^{-1}\!(\chi_{\rho})\vert_{T_{\beta}(\Qp)})\}$ and we see from (\ref{dirsum}) and (\ref{beta}) that \ they \ are \ exactly \ the \ $\Pi_{J'}$. \ Now \ let \ $\beta\in J$ \ and \ choose \ $\chi_{\beta'}\in \{w_{C_{\rho}}^{-1}(\chi_{\rho})\vert_{T_{\beta'}(\Qp)},s_{\beta'}(w_{C_{\rho}}^{-1}\!(\chi_{\rho})\vert_{T_{\beta'}(\Qp)})\}$ for all $\beta'\in J\backslash \{\beta\}$. The $\Gp^{J}$-repre\-sentation
$$\mathcal E_{\beta}\widehat\otimes_E \big(\widehat\otimes_{\beta'\in J\backslash \{\beta\}}\Pi_{\beta'}(\chi_{\beta'})\big)$$
(a subquotient of $\widetilde\Pi(\rho)_I$) is still a {\it non-split} extension between its $2$ irreducible constituents by Lemma \ref{penible3} applied inductively to $V_1\!:=\!\mathcal E_{\beta}\widehat\otimes_E \big(\widehat\otimes_{\beta'\in J'}\Pi_{\beta'}(\chi_{\beta'})\big)$ and $\Pi_2:=\Pi_{\beta''}(\chi_{\beta''})$ for $J'\subsetneq J\backslash \{\beta\}$ and $\beta''\in J\backslash (\{\beta\}\amalg J')$ (all the assumptions are satisfied by Proposition \ref{fu} and by Step 1). It is then easy to check that the socle filtration of $\widetilde\Pi(\rho)_I$ is exactly as in the statement. More generally, the same argument shows that, for any $J_1\subseteq J_2\subseteq J$, the representation
\begin{multline}\label{rab}
\chi'_{\rho,J}\cdot(\varepsilon^{-1}\circ\theta\vert_{T'_J(\Qp)})\otimes_E \big(\widehat\otimes_{\beta\in J\backslash J_2}\Pi_{\beta}(w_{C_{\rho}}^{-1}(\chi_{\rho})\vert_{T_{\beta}(\Qp)})\big)\widehat\otimes_E \\
\big(\widehat\otimes_{\beta\in J_1}\Pi_{\beta}(s_{\beta}(w_{C_{\rho}}^{-1}(\chi_{\rho})\vert_{T_{\beta}(\Qp)}))\big)\widehat\otimes_E \big(\widehat\otimes_{\beta\in J_2\backslash J_1}\mathcal E_{\beta}\big)
\end{multline}
has socle filtration with graded pieces $\oplus_{\substack{J_1\subseteq J'\subseteq J_2\\ \vert J'\vert=j}}\Pi_{J'}$.

\noindent
Step 3: We finally prove unicity.\\
Let $J_1\subseteq J_2\subseteq J$, we prove the following statement by induction on $\vert J_2\backslash J_1\vert$: there is (up to isomorphism) at most one representation of $G_J(\Qp)$ such that the graded pieces of its socle filtration are $\oplus_{\substack{J_1\subseteq J'\subseteq J_2\\ \vert J'\vert=j}}\Pi_{J'}$. Denote by $\Pi$ such a representation. Note first that by Lemma \ref{penible3} (applied inductively with $G_2 = \GL_2$ together with Proposition \ref{fu}, Proposition \ref{ch}, and Lemma \ref{penible2}), a constituent $\Pi_{J'}$ has a (unique) non-split extension with another (distinct) constituent $\Pi_{J''}$ if and only if either $J''\subsetneq J'$ and $\vert J'\backslash J''\vert=1$ or $J'\subsetneq J''$ and $\vert J''\backslash J'\vert=1$. If $J''=J'\amalg \{\beta_0\}$, this implies (together with the fact that all constituents are distinct by Step 1) that $\Pi$ has a unique subquotient with constituents $\Pi_{J'}$ and $\Pi_{J''}$. Let us prove by induction on $\vert J'\backslash J_1\vert$ that this subquotient is necessarily a non-split extension of $\Pi_{J''}$ by $\Pi_{J'}$. If $\vert J'\backslash J_1\vert=0$, this is obviously the case, otherwise $\Pi_{J''}$ would be in a lower socle layer of $\Pi$. Assume $\vert J'\backslash J_1\vert\geq 1$, and suppose that the above subquotient is {\it not} a non-split extension of $\Pi_{J''}$ by $\Pi_{J'}$. Then the only other possibility (in view of the socle filtration of $\Pi$) is $\Pi_{J'}\oplus \Pi_{J''}$. Let $\beta_1\in J'$ be such that the unique non-split extension of $\Pi_{J''}$ by $\Pi_{J''\backslash\{\beta_1\}}$ occurs as a subquotient of $\Pi$ ($\beta_1$ necessarily exists because otherwise $\Pi_{J''}$ would be in the socle of $\Pi$). By induction (applied to $J'\backslash \{\beta_1\}$), $\Pi$ has a (unique) subquotient which is the unique non-split extension of $\Pi_{J''\backslash\{\beta_1\}}$ by $\Pi_{J'\backslash \{\beta_1\}}$ (namely the representation (\ref{rab}) replacing $J_1$ by $J'\backslash \{\beta_1\}$ and $J_2$ by $J''\backslash\{\beta_1\}$). Hence $\Pi$ has a unique subquotient $\Pi'$ with socle $\Pi_{J'\backslash \{\beta_1\}}$, cosocle $\Pi_{J''}$, and unique intermediate constituent $\Pi_{J''\backslash\{\beta_1\}}$. (Here we used that there is no extension between $\Pi_{J''}$ and $\Pi_{J'}$.) But it follows from Lemma \ref{penible3}(i) by induction that such a representation $\Pi'$ doesn't exist. We now prove the unicity statement at the beginning of Step 3. When $\vert J_2\backslash J_1\vert=0$, the result is trivial and when $\vert J_2\backslash J_1\vert= 1$ it follows from Proposition \ref{ch}(i). Assume it is true when $\vert J_2\backslash J_1\vert=n$ and let us prove it for $\vert J_2\backslash J_1\vert=n+1\geq 2$. Let $\widetilde\Pi(\rho)_{J_1,J_2}$ be such a representation and fix $J'_2$ such that $J_1\subseteq J'_2\subsetneq J_2$ and $J_2\backslash J'_2=\{\beta_0\}$. By what we have seen above, there is a (unique) subrepresentation of $\widetilde\Pi(\rho)_{J_1,J_2}$ with constituents all the $\Pi_{J''}$ for $J_1\subseteq J''\subseteq J'_2$ and a (unique) quotient of $\widetilde\Pi(\rho)_{J_1,J_2}$ with constituents the $\Pi_{J'}$ for $J'_1\subseteq J'\subseteq J_2$ where $J'_1:=J_1\amalg \{\beta_0\}$. Since the socle filtration induces the socle filtration on subrepresentations, we already have by induction that this subrepresentation is necessarily $\widetilde\Pi(\rho)_{J_1,J'_2}$. To prove (by induction) that this quotient is $\widetilde\Pi(\rho)_{J'_1,J_2}$, we need to check that the graded pieces of its socle filtration are $\oplus_{\substack{J'_1\subseteq J'\subseteq J_2\\ \vert J'\vert=j}}\Pi_{J'}$. But this easily follows from the fact that any subquotient (of this quotient) with two constituents $\Pi_{J'}$, $\Pi_{J''}$, where $J' \subsetneq J''$ and $\vert J''\backslash J'\vert=1$, is a non-split extension between them (see above). Now set $\Pi_1:=\Pi_{\beta_0}(w_{C_{\rho}}^{-1}(\chi_{\rho})\vert_{T_{\beta_0}(\Qp)})$, $\Pi'_1:=\Pi_{\beta_0}(s_{\beta_0}(w_{C_{\rho}}^{-1}(\chi_{\rho})\vert_{T_{\beta_0}(\Qp)}))$ and
\begin{multline*}
\Pi_2:= \chi'_{\rho,J}\cdot(\varepsilon^{-1}\circ\theta\vert_{T'_J(\Qp)})\otimes_E\big(\widehat\otimes_{\beta\in J\backslash J_2}\Pi_{\beta}(w_{C_{\rho}}^{-1}(\chi_{\rho})\vert_{T_{\beta}(\Qp)})\big)\widehat\otimes_E \\
\big(\widehat\otimes_{\beta\in J_1}\Pi_{\beta}(s_{\beta}(w_{C_{\rho}}^{-1}(\chi_{\rho})\vert_{T_{\beta}(\Qp)}))\big)\widehat\otimes_E \big(\widehat\otimes_{\beta\in J'_2\backslash J_1}\mathcal E_{\beta}\big).
\end{multline*}
By (\ref{rab}) we must have $\widetilde\Pi(\rho)_{J_1,J'_2}\cong \Pi_1\widehat\otimes_E \Pi_2$ and $\widetilde\Pi(\rho)_{J'_1,J_2}\cong \Pi'_1\widehat\otimes_E \Pi_2$. Moreover, since $\vert J_2\backslash J'_2\vert=1$, any $J$ between $J_1$ and $J_2$ is necessarily either between $J_1$ and $J'_2$ or between $J'_1$ and $J_2$. This implies that $\widetilde\Pi(\rho)_{J_1,J_2}$ is an extension of $\widetilde\Pi(\rho)_{J'_1,J_2}$ by $\widetilde\Pi(\rho)_{J_1,J'_2}$. This extension is non-split otherwise it wouldn't have the right socle. But Lemma \ref{penible3} tells us that such a non-split extension is necessarily unique. (Note that in order to deal with the assumption $\dim_E{\rm Ext}^1_{G_2}(\Pi_2,\Pi_2)<\infty$ in (ii) of this lemma, we actually need to apply Lemma \ref{penible3} inductively replacing $\Pi_2$ first by $\chi'_{\rho,J}\cdot(\varepsilon^{-1}\circ\theta\vert_{T'_J(\Qp)})$, then $\Pi_1$, $\Pi'_1$ by $\Pi_1\otimes \chi'_{\rho,J}\cdot(\varepsilon^{-1}\circ\theta\vert_{T'_J(\Qp)})$, $\Pi'_1\otimes \chi'_{\rho,J}\cdot(\varepsilon^{-1}\circ\theta\vert_{T'_J(\Qp)})$ and $\Pi_2$ by $\Pi_{\beta}(w_{C_{\rho}}^{-1}(\chi_{\rho}))$ for some $\beta\in J\backslash J_2$ etc., we leave the easy details to the reader.) This proves the unicity of $\widetilde\Pi(\rho)_{J_1,J_2}$, and in particular of $\widetilde\Pi(\rho)_I=\widetilde\Pi(\rho)_{\varnothing,J}$.
\end{proof}

\begin{rem}\label{add}
(i) It follows from the proof of Proposition \ref{piI} that one could replace the condition in Definition \ref{gen} by the two conditions: $\chi_{\rho}\circ \alpha^{\vee}\ne 1$ for $\alpha\in R^{+}$ and $\chi_{\rho}\circ \alpha^{\vee}\ne \varepsilon$ for $\alpha\in w(S)$ and $w\in W_{\rho}$ (one can check that Lemma \ref{invgen} still holds with this weaker condition of genericity).\\
(ii) From the proof of Proposition \ref{piI} (in particular Step 3), one also gets that the representation $\widetilde\Pi(\rho)_I$ is rigid, that is, its cosocle filtration equals its socle filtration (up to numbering). In fact, the proof shows that the submodule structure of $\widetilde\Pi(\rho)_I$ is given by a ``hypercube''.
\end{rem}

If $I'\subseteq I$ and $J':=w_{C_{\rho}}^{-1}(I')^\vee\subseteq J$, then $(B^-\cap G_J)G_{J'}$ is a parabolic subgroup of $G_J$ with Levi $G_{J'}$ and we easily get from (\ref{rab}) (with the same notation as in the previous proof, see in particular Step 3 for $\widetilde\Pi(\rho)_{\varnothing,J'}$),
\begin{equation}\label{j'}
\widetilde\Pi(\rho)_{\varnothing,J'}\cong \big(\Ind_{(B^-(\Qp)\cap G_J(\Qp))G_{J'}(\Qp)}^{G_J(\Qp)}\widetilde\Pi(\rho)_{I'}\big)^{{\mathcal C}^0},
\end{equation}
where we consider $\widetilde\Pi(\rho)_{I'}$ by inflation as an (admissible) unitary continuous representation of $(B^-(\Qp)\cap G_J(\Qp))G_{J'}(\Qp)$ and where the parabolic induction is analogous to that in (\ref{ball}) (see \cite[\S4.1]{Em2} for details). For $I$, $J$ as in Proposition \ref{piI}, $B^-G_J\subseteq G$ is the standard parabolic subgroup of $G$ containing $B^-$ with Levi subgroup $G_J$. We set
$$\Pi(\rho)_I:=\big(\Ind_{B^-(\Qp)G_J(\Qp)}^{G(\Qp)}\widetilde\Pi(\rho)_I\big)^{{\mathcal C}^0},$$
where $\widetilde\Pi(\rho)_I$ is seen as a (unitary continuous) representation of $B^-(\Qp)G_J(\Qp)$ by inflation. The representation $\Pi(\rho)_I$ of $G(\Qp)$ is admissible unitary continuous of finite length (by Proposition \ref{piI} and Theorem \ref{classic}). Moreover by \cite[Thm.\ 4.4.6]{Em2} and \cite[Cor.\ 4.3.5]{Em2} we have for $I'\subseteq I$,
\begin{multline*}
\Hom_{G(\Qp)}(\Pi(\rho)_{I'},\Pi(\rho)_I)\cong \\
\Hom_{G_J(\Qp)}\Big(\big(\Ind_{(B^-(\Qp)\cap G_J(\Qp))G_{J'}(\Qp)}^{G_J(\Qp)}\widetilde\Pi(\rho)_{I'}\big)^{{\mathcal C}^0},\widetilde\Pi(\rho)_I\Big)
\end{multline*}
and we deduce from (\ref{j'}) and the proof of Proposition \ref{piI},
$$\Hom_{G(\Qp)}(\Pi(\rho)_{I'},\Pi(\rho)_I)\cong \Hom_{G_J(\Qp)}\big(\widetilde\Pi(\rho)_{\varnothing,J'},\widetilde\Pi(\rho)_I\big)=E$$
(for the last equality, note that any $G_J(\Qp)$-equivariant morphism $\widetilde\Pi(\rho)_{\varnothing,J'}\rightarrow \widetilde\Pi(\rho)_I$ is necessarily scalar since $\widetilde\Pi(\rho)_{\varnothing,J'}$ is contained in $\widetilde\Pi(\rho)_I$ and these two representations have the same irreducible socle, $\Pi_{\varnothing}$, which has scalar endomorphisms and appears only once). We can then proceed as in (\ref{ii}) fixing a compatible system of injections $\Pi(\rho)_{I'}\hookrightarrow \Pi(\rho)_I$ for $I'\subseteq I$ (the choice of which won't matter) and define in the abelian category of admissible unitary continuous representations of $G(\Qp)$ over $E$ the inductive limit
$$\Pi(\rho)_{C_{\rho},w_{C_{\rho}}}:=\ilim I{\Pi(\rho)_I},$$
where $I$ runs among the subsets of $w_{C_{\rho}}(S^\vee)\cap C_{\rho}$ of pairwise orthogonal roots. Finally we set
$$\Pi(\rho)^{\ord}:=\oplus_{w_{C_{\rho}}\in W_{C_{\rho}}}\Pi(\rho)_{C_{\rho},w_{C_{\rho}}}.$$
This is an admissible unitary continuous representation of $G(\Qp)$ of finite length (see \S\ref{ques} for more on its constituents).

\begin{lem}\label{invpi}
Let $\rho'$ be another good conjugate of $\rho$ as in Definition \ref{conjgoo}. Then $\Pi(\rho)^{\ord}\cong \Pi(\rho')^{\ord}$.
\end{lem}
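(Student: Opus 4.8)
The plan is to reduce to the case where $\rho'$ is a Weyl conjugate of $\rho$, and then to match the building blocks of $\Pi(\rho)^{\ord}$ and $\Pi(\rho')^{\ord}$ one by one, invoking the uniqueness assertion of Proposition~\ref{piI}.

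First, by Proposition~\ref{conj} (applied to a common representation of which $\rho$ and $\rho'$ are both good conjugates) there are $b\in\widehat{B}_{C_{\rho}}(E)$ and $w\in W_{C_{\rho}}$ with $\rho'=\dot{w}^{-1}(b^{-1}\rho b)\dot{w}$. Put $\rho_1:=b^{-1}\rho b$. Since the projection $\widehat{B}\twoheadrightarrow\widehat{T}$ factors through $\widehat{B}/\widehat{U}$, on which inner automorphisms by $\widehat{B}$ act trivially, we get $\widehat\chi_{\rho_1}=\widehat\chi_{\rho}$ and hence $\chi_{\rho_1}=\chi_{\rho}$; moreover $C_{\rho_1}=C_{\rho}$ and $\rho_1$ is again a good, generic conjugate of $\rho$ by Lemmas~\ref{conjexe} and~\ref{invgen}. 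As the construction of $\Pi(-)^{\ord}$ uses only the pair $(C_{(-)},\chi_{(-)})$ together with the fixed $\theta$ and $\varepsilon$, this gives $\Pi(\rho_1)^{\ord}=\Pi(\rho)^{\ord}$. Replacing $\rho$ by $\rho_1$, I may thus assume $\rho'=\dot{w}^{-1}\rho\dot{w}$ with $w\in W_{C_{\rho}}$; by Lemma~\ref{conjexe} the representation $\rho'$ is then a good conjugate with $C_{\rho'}=w^{-1}(C_{\rho})$ and $\widehat\chi_{\rho'}=w^{-1}(\widehat\chi_{\rho})$, so $\chi_{\rho'}=w^{-1}(\chi_{\rho})$, and $\rho'$ is generic by Lemma~\ref{invgen}.

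Next I would set up a dictionary between the indexing data of $\Pi(\rho)^{\ord}$ and of $\Pi(\rho')^{\ord}$. From the definition~(\ref{wc}) and $C_{\rho'}=w^{-1}(C_{\rho})$, the map $v\mapsto wv$ is a bijection $W_{C_{\rho'}}\congto W_{C_{\rho}}$; fix $v\in W_{C_{\rho'}}$ and set $w_{C_{\rho}}:=wv$. Then $w_{C_{\rho}}(S^\vee)\cap C_{\rho}=w\big(v(S^\vee)\cap C_{\rho'}\big)$, so $I\mapsto I':=w^{-1}(I)$ is a bijection between the subsets of pairwise orthogonal roots in $w_{C_{\rho}}(S^\vee)\cap C_{\rho}$ and those in $v(S^\vee)\cap C_{\rho'}$ (an element of $W$ preserves orthogonality). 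For such $I$ one checks $v^{-1}(I')^\vee=w_{C_{\rho}}^{-1}(I)^\vee=:J$, so the Levi $G_J$ and the standard parabolic $B^-G_J$ used in the definitions of $\Pi(\rho)_I$ and of $\Pi(\rho')_{I'}$ coincide. Finally, for any subset $K\subseteq I$ of pairwise orthogonal roots, the identity $s_{w^{-1}(\gamma)}=w^{-1}s_{\gamma}w$ in $W$ gives $\big(\prod_{\gamma\in w^{-1}(K)}s_{\gamma}\big)v=w^{-1}\big(\prod_{\gamma\in K}s_{\gamma}\big)w_{C_{\rho}}$, hence, since $\chi_{\rho'}=w^{-1}(\chi_{\rho})$,
$$\Big(\big(\textstyle\prod_{\gamma\in w^{-1}(K)}s_{\gamma}\big)v\Big)^{-1}(\chi_{\rho'})=\Big(\big(\textstyle\prod_{\gamma\in K}s_{\gamma}\big)w_{C_{\rho}}\Big)^{-1}(\chi_{\rho}).$$
Comparing the graded pieces of the socle filtrations in Proposition~\ref{piI}, $\widetilde\Pi(\rho)_I$ and $\widetilde\Pi(\rho')_{I'}$ are therefore $G_J(\Qp)$-representations with the same associated graded (matched by $K\leftrightarrow w^{-1}(K)$), so the uniqueness part of Proposition~\ref{piI} yields $\widetilde\Pi(\rho)_I\cong\widetilde\Pi(\rho')_{I'}$, and inducing to $G(\Qp)$ gives $\Pi(\rho)_I\cong\Pi(\rho')_{I'}$.

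It remains to pass to the inductive limits and then to direct sums. Because $\Hom_{G(\Qp)}(\Pi(\rho)_{I_1},\Pi(\rho)_{I_2})=E$ for $I_1\subseteq I_2$ (and likewise for $\rho'$), the isomorphisms $\Pi(\rho)_I\cong\Pi(\rho')_{I'}$ can be rescaled to be compatible with the chosen systems of injections, so $\Pi(\rho)_{C_{\rho},w_{C_{\rho}}}\cong\Pi(\rho')_{C_{\rho'},v}$; summing over $v\in W_{C_{\rho'}}$ (equivalently over $w_{C_{\rho}}\in W_{C_{\rho}}$) gives $\Pi(\rho)^{\ord}\cong\Pi(\rho')^{\ord}$. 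The main work, and the only place I expect subtlety, is the Weyl-group bookkeeping in the dictionary: verifying that $J$ is unchanged in passing from $(\rho,I)$ to $(\rho',I')$ — so that $\widetilde\Pi(\rho)_I$ and $\widetilde\Pi(\rho')_{I'}$ genuinely live on the same group — and that the characters above are equal on the nose rather than merely conjugate. Everything else is then formal from Proposition~\ref{piI} and the construction of the inductive limits.
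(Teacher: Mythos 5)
Your proof is correct and follows essentially the same route as the paper's: reduce via Proposition \ref{conj} to $\rho'=\dot w^{-1}\rho\dot w$ with $w\in W_{C_\rho}$, match the indexing data $(w_{C_{\rho'}},I')\leftrightarrow(w w_{C_{\rho'}},w(I'))$, verify the character identity, and invoke the uniqueness in Proposition \ref{piI}. Your extra care about the Levi $G_J$ being unchanged and about rescaling the isomorphisms to respect the compatible systems of injections is exactly the bookkeeping the paper leaves implicit.
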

\begin{proof}
By Proposition \ref{conj}, replacing $\rho$ by a conjugate in $\widehat{B}_{C_{\rho}}(E)$ (which doesn't change $\chi_{\rho}$ nor $C_{\rho}$ and hence doesn't change $\Pi(\rho)^{\ord}$), there is $w\in W_{C_{\rho}}$ such that $\rho'=\dot w^{-1}\rho\dot w$. We have ${\chi}_{\rho'}=w^{-1}(\chi_{\rho})$ (see the proof of Lemma \ref{invgen}) and it is easy to check we also have $C_{\rho'}=w^{-1}(C_{\rho})$ and $W_{C_{\rho'}}=w^{-1}W_{C_{\rho}}$. Let $w_{C_{\rho'}}\in W_{C_{\rho'}}$, $I'\subseteq w_{C_{\rho'}}(S^\vee)\cap C_{\rho'}$ a subset of pairwise orthogonal roots and set $w_{C_{\rho}}:=ww_{C_{\rho'}}\in W_{C_{\rho}}$ and $I:=w(I')\subseteq w_{C_{\rho}}(S^\vee)\cap C_{\rho}$ (also a subset of pairwise orthogonal roots). Then one has $\widetilde\Pi(\rho')_{I'}=\widetilde\Pi(\rho)_{w(I')}$. Indeed, they are representations of the same $G_J(\Qp)$ and by Proposition \ref{piI} it suffices to check they have the same constituents in the socle filtration, which is immediate as, for any $I''\subseteq I'$,
\begin{eqnarray*}
\Big(\big(\prod_{\alpha\in I''^\vee}s_{\alpha}\big)w_{C_{\rho'}}\Big)^{-1}(\chi_{\rho'})&=& \Big(\big(\prod_{\alpha\in I''^\vee}s_{w_{C_{\rho'}}^{-1}(\alpha)}\big)w_{C_{\rho'}}^{-1}\Big)(\chi_{\rho'})\\
&=&\Big(\big(\prod_{\alpha\in I''^\vee}s_{w_{C_{\rho}}^{-1}(w(\alpha))}\big)w_{C_{\rho}}^{-1}w\Big)(w^{-1}(\chi_{\rho}))\\
&=&\Big(\big(\prod_{\alpha\in w(I'')^\vee}s_{\alpha}\big)w_{C_{\rho}}\Big)^{-1}(\chi_{\rho}).
\end{eqnarray*}
We deduce $\Pi(\rho')_{I'}=\Pi(\rho)_{w(I')}$, $\Pi(\rho')_{C_{\rho'},w_{C_{\rho'}}}=\Pi(\rho)_{C_{\rho},w_{C_{\rho}}}$ and finally $\Pi(\rho)^{\ord}= \Pi(\rho')^{\ord}$.
\end{proof}

Therefore $\Pi(\rho)^{\ord}$ doesn't depend on the choice of a good conjugate in the sense of Definition \ref{conjgoo}. Using Proposition \ref{piI} and arguing as in the proof of Proposition \ref{trans} below, one can check more generally that $\Pi(\rho)^{\ord}$ only depends on the conjugacy class of $\rho$ in $\widehat G(E)$ and not on the choice of the Borel $\widehat B$ (we leave the details to the reader).

\begin{rem}\label{rema3}
If $G=G_1\times G_2$, $\rho=\rho_1\oplus \rho_2$ with $\rho_i:\gp\longrightarrow \widehat{G}_i(E)$ generic ordinary ($i=1,2$) and $w_{C_{\rho}}=(w_{C_{\rho_1}},w_{C_{\rho_2}})\in W_{C_{\rho}}=W_{C_{\rho_1}}\times W_{C_{\rho_2}}\subseteq W=W_1\times W_2$, one easily checks that $\Pi(\rho)_{C_{\rho},w_{C_{\rho}}}=\Pi(\rho_1)_{C_{\rho_1},w_{C_{\rho_1}}}\widehat\otimes_E\Pi(\rho_2)_{C_{\rho_2},w_{C_{\rho_2}}}$ and that one has $\Pi(\rho)^{\ord}=\Pi(\rho_1)^{\ord}\widehat \otimes_E\Pi(\rho_2)^{\ord}$ (where we index by $i$ everything related to $G_i$), see Remark \ref{rema}(ii) and Remark \ref{rema2}(ii).
\end{rem}

\subsection{Variant mod $p$}\label{variant2}

We construct the representation $\Pi(\rhobar)^{\ord}$ of $G(\Qp)$ over $k_E$ associated to a sufficiently generic ordinary representation $\rhobar$ of $\gp$ over $k_E$.

We first modify the setting of \S\ref{prel} so as to deal with characteristic $p$. In this section, $G/\Zp$ is a connected split reductive algebraic group over $\Zp$, $T\subseteq G$ a split maximal torus over $\Zp$ and we let as usual $\big(X(T),R,X^{\vee}(T),R^{\vee}\big)$ be the root datum of $G$. We fix a choice $S\subseteq R$ of simple roots, we let $R^+\subseteq R$ be the positive roots, $B\subseteq G$ (resp.\ $B^-\subseteq G$) the Borel subgroup over $\Zp$ corresponding to $R^+$ (resp.\ $-R^+$) and $(\widehat{G},\widehat{B},\widehat{T})$ the dual triple of $(G,B,T)$ over $\oE$. We moreover assume that both $G$ and $\widehat G$ have a connected centre and denote by $\theta$ a twisting element for $G$. We also assume that $p$ is large enough so that the following lemma holds (so this is no restriction if $G = \GL_n$).

\begin{lem}\label{borelp}
Suppose that $p > 3$ or that $p$ is a good prime for $G$ \(Definition \ref{goodprime}\). Let $B'\subseteq B$ be a Zariski closed algebraic subgroup containing $T$. Then there exists a closed subset $C\subseteq R^+$ such that $B'=B_C$.
\end{lem}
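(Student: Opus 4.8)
The plan is to rerun the proof of Lemma~\ref{borel} almost word for word, pinpointing the one place where characteristic~$0$ was used and replacing it by a short computation with structure constants. That place --- and the genuine obstacle --- is the identity $[\mathrm{Lie}(U_\alpha),\mathrm{Lie}(U_\beta)]=\mathrm{Lie}(U_{\alpha+\beta})$, which in characteristic~$p$ may fail (already in $G_2$ at $p=3$, which is exactly why some hypothesis on $p$ is forced); everything else is the same bookkeeping.

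As in Lemma~\ref{borel}, write $B'=TU'$ with $U':=B'\cap U$ a Zariski closed subgroup of $U$ stable under conjugation by $T$, so that $B'=B_C$ is equivalent to $U'=U_C$ for some closed $C\subseteq R^+$. Here $U'=\ker(B'\to B/U\cong T)$ is smooth, and a smooth $T$-stable closed subgroup of the unipotent group $U$ is connected (if $\mu\in X^\vee(T)$ is dominant regular then $\lim_{t\to0}\mu(t)u\mu(t)^{-1}=1$ for every $u\in U$, so a component of $U'$ not containing $1$, being $\mu({\mathbb G}_{\rm m})$-stable and closed, would nonetheless have $1$ in its closure). By \cite[Prop.~14.4(2)]{Bo}, valid in arbitrary characteristic, $U'$ is directly spanned by the root subgroups it contains; setting $C:=\{\gamma\in R^+ : U_\gamma\subseteq U'\}$ we get $U'=U_C$ together with $\mathrm{Lie}(U')=\bigoplus_{\gamma\in C}\mathrm{Lie}(U_\gamma)$, so it remains to prove that $C$ is closed.

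Let $\alpha,\beta\in C$ with $\alpha+\beta\in R$. Since $U'$ is a subgroup, $\mathrm{Lie}(U')$ is a Lie subalgebra of $\mathrm{Lie}(G)$, hence contains $[\mathrm{Lie}(U_\alpha),\mathrm{Lie}(U_\beta)]$. In a Chevalley basis $(e_\gamma)$ one has $[e_\alpha,e_\beta]=\pm(p_{\alpha\beta}+1)\,e_{\alpha+\beta}$, where $p_{\alpha\beta}\geq0$ is the largest integer with $\beta-p_{\alpha\beta}\alpha\in R$; so as soon as $p\nmid(p_{\alpha\beta}+1)$ we get $[\mathrm{Lie}(U_\alpha),\mathrm{Lie}(U_\beta)]=\mathrm{Lie}(U_{\alpha+\beta})\subseteq\mathrm{Lie}(U')$, i.e.\ $\alpha+\beta\in C$, so $C$ is closed. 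It remains to see that $p\nmid(p_{\alpha\beta}+1)$ whenever $\alpha,\beta,\alpha+\beta\in R$. For such a pair the $\alpha$-string through $\beta$ has length $p_{\alpha\beta}+q_{\alpha\beta}+1\leq4$ with $q_{\alpha\beta}\geq1$, so $p_{\alpha\beta}+1\in\{1,2,3\}$, the value $3$ occurring only when $R$ has a component of type $G_2$; hence $p\nmid(p_{\alpha\beta}+1)$ automatically for $p>3$, and also for $p\leq3$ when $p$ is good for $G$, since a good $p=2$ forces every component of $R$ to be of type $\mathrm{A}_r$ (so $p_{\alpha\beta}=0$) and a good $p=3$ excludes type $G_2$ (so $p_{\alpha\beta}+1\in\{1,2\}$). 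This handling of small primes via good primes --- i.e.\ that good primes are exactly the ones modulo which the relevant structure constants survive --- is the crux; the rest follows Lemma~\ref{borel} verbatim.
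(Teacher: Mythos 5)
Your proof is correct. The paper's own proof of this lemma is a one-line citation: it says the argument of Lemma~\ref{borel} still goes through by \cite[\S2.5]{BT}, ``even under a slightly weaker condition on $p$''. You have in effect unpacked that citation: you correctly isolate the single characteristic-zero step, namely $[\mathrm{Lie}(U_\alpha),\mathrm{Lie}(U_\beta)]=\mathrm{Lie}(U_{\alpha+\beta})$, and replace it by the observation that the Chevalley structure constant $N_{\alpha,\beta}=\pm(p_{\alpha\beta}+1)$ lies in $\{\pm1,\pm2,\pm3\}$, with $\pm3$ only in type $\mathrm{G}_2$, so it is a unit mod $p$ under either hypothesis of the lemma (for a good $p=2$ all components are of type $\mathrm{A}_r$, so $N_{\alpha,\beta}=\pm 1$; a good $p=3$ excludes $\mathrm{G}_2$). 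You also supply the connectedness of $U'$ (via the contracting cocharacter), which is needed before invoking \cite[Prop.\ 14.4(2)]{Bo} in characteristic $p$ and is implicit in the paper's appeal to Borel--Tits. Your computation moreover explains the paper's parenthetical remark about a ``slightly weaker condition'': the genuine requirement is only $p\nmid(p_{\alpha\beta}+1)$, i.e.\ $p=3$ need only exclude $\mathrm{G}_2$ (not $\mathrm{E}_r$ or $\mathrm{F}_4$), and $p=5$ imposes no condition at all. So the two proofs take the same route; yours trades the external reference for a self-contained verification.
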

\begin{proof}
  The proof of Lemma \ref{borel} still holds by \cite[\S2.5]{BT} (even under a slightly weaker condition on $p$).
\end{proof}

Let ${\rhobar}:\gp\longrightarrow \widehat{G}(k_E)$ be a continuous homomorphism. When $\rhobar$ takes values in $\widehat{B}(k_E)\subseteq\widehat{G}(k_E)$ we say $\rhobar$ is {\it ordinary}. For any ordinary $\rhobar$, we let $C_{\rhobar}\subseteq R^{+\vee}$ be the closed subset of roots such that $\widehat{B}_{C_{\rhobar}}$ is the smallest closed subgroup of $\widehat{B}$ containing $\widehat{T}$ such that $\rhobar$ takes values in $\widehat{B}_{C_{\rhobar}}(k_E)$ ($C_{\rhobar}$ exists thanks to Lemma \ref{borelp}). For $\rhobar$ ordinary, we define $\widehat\chi_{\rhobar}:\gp\buildrel\rhobar\over\longrightarrow \widehat{B}(k_E)\twoheadrightarrow \widehat{T}(k_E)$.

If $\alpha^{\vee}\circ \widehat\chi_{\rhobar}\ne 1$ for all $\alpha\in R^{+}$, the same proof as in \S\ref{Ch} shows that, conjugating $\rhobar$ by an element in $\widehat{B}(k_E)$ if necessary, it is always possible to assume that $C_{\rhobar}$ is minimal under conjugation by elements in $\widehat{B}(k_E)$ (indeed, the fact that $E$ has characteristic $0$ is never used in \S\ref{Ch}). As in Definition \ref{conjgoo}, we call ``good conjugates'' ordinary $\rhobar$ such that $C_{\rhobar}$ is minimal, and as in Proposition \ref{conj}, we can show that any two good conjugates $\rhobar$ and $\rhobar'$ are related by $\rhobar'=\dot{w}^{-1}(b^{-1}\rhobar b) \dot{w}$ for some $b\in \widehat{B}_{C_{\rhobar}}(k_E)$ and some $w\in W_{C_{\rhobar}}$.

\begin{definit}\label{genbar}
An ordinary $\rhobar$ is \emph{generic} if $\alpha^{\vee}\circ \widehat\chi_{\rhobar}\notin \{1,\omega,\omega^{-1}\}$ for all $\alpha\in R^{+}$ (or equivalently all $\alpha\in R$).
\end{definit}

This is equivalent to $\chi_{\rhobar}\circ \alpha^{\vee}\notin \{1,\omega,\omega^{-1}\}$ for all $\alpha\in R^{+}$, where $\chi_{\rhobar}$ corresponds to $\widehat\chi_{\rhobar}$ as in (\ref{cft}).

From now on we fix a good conjugate $\rhobar:\gp\rightarrow \widehat{B}_{C_{\rhobar}}(k_E)\subseteq \widehat{B}(k_E)\subseteq \widehat{G}(k_E)$ that is generic. Then Proposition \ref{piI} still holds replacing $\rho$ by $\rhobar$ and $E$ by $k_E$ and allows to define a finite length admissible smooth representation $\widetilde\Pi(\rhobar)_I$ of $G_J(\Qp)$ over $k_E$ (for $I\subseteq w_{C_{\rhobar}}(S^\vee)\cap C_{\rhobar}$ a subset of pairwise orthogonal roots and $J=w_{C_{\rhobar}}^{-1}(I)^\vee$, where $w_{C_{\rhobar}}\in W_{C_{\rhobar}}$) with socle filtration as in Proposition \ref{piI}. Indeed, the proof of this proposition is essentially based on the results of Appendix A and Appendix B, but all these results hold replacing $E$ by $k_E$ and ``unitary continuous'' by ``smooth'' (and are easier to prove over $k_E$ than over $E$!). We then proceed exactly as in \S\ref{cons} and define $\Pi(\rhobar)_I:=\Ind_{B^-(\Qp)G_J(\Qp)}^{G(\Qp)}\widetilde\Pi(\rhobar)_I$ and $\Pi(\rhobar)_{C_{\rhobar},w_{C_{\rhobar}}}:=\ilim I{\Pi(\rhobar)_I}$, where $I$ runs among the subsets of $w_{C_{\rhobar}}(S^\vee)\cap C_{\rhobar}$ of pairwise orthogonal roots. But now, arguing as in the proof of Theorem \ref{classic}(ii) and using \cite[Cor.\ 4.3.5]{Em2} (and the genericity of $\rhobar$), we {\it know} that the socle filtration ${\rm Fil}_j\Pi(\rhobar)_{C_{\rhobar},w_{C_{\rhobar}}}$ of $\Pi(\rhobar)_{C_{\rhobar},w_{C_{\rhobar}}}$ is such that for $j\in \Z_{\geq 0}$,
\begin{multline*}
{\rm Fil}_{j}\Pi(\rhobar)_{C_{\rhobar},w_{C_{\rhobar}}}/{\rm Fil}_{j-1}\Pi(\rhobar)_{C_{\rhobar},w_{C_{\rhobar}}}\cong \\
\bigoplus_{\substack{I\subseteq w_{C_{\rhobar}}(S^\vee)\cap C_{\rhobar}\\ \vert I\vert=j}}\Ind_{B^-(\Qp)}^{G(\Qp)}\Big(\big(\prod_{\alpha\in I^\vee}s_{\alpha}\big)w_{C_{\rhobar}}\Big)^{-1}(\chi_{\rhobar})\cdot(\omega^{-1}\circ\theta)
\end{multline*}
($I$ being a subset of pairwise orthogonal roots). Finally, we set
$$\Pi(\rhobar)^{\ord}:=\oplus_{w_{C_{\rhobar}}\in W_{C_{\rhobar}}}\Pi(\rhobar)_{C_{\rhobar},w_{C_{\rhobar}}}$$
and check as in Lemma \ref{invpi} that $\Pi(\rhobar)^{\ord}$ doesn't depend on which good conjugate we choose. Also, the genericity of $\rhobar$ implies that all the irreducible constituents of $\Pi(\rhobar)^{\ord}$ are distinct.

We now give the link between $\Pi(\rhobar)^{\ord}$ and certain Serre weights of $\rhobar$. Let $X_1(T):=\{\lambda\in X(T) : 0\leq \langle\lambda,\alpha^\vee\rangle \leq p-1\ \forall \alpha\in S\}$. Since $G^{\rm der}$ is simply connected, Serre weights for $G(\Fp)$ are exactly given by the $k_E$-representations
$$F(\lambda)\vert_{G(\Fp)},$$
where $\lambda\in X_1(T)$ and $F(\lambda)$ is the unique (absolutely) irreducible representation of the algebraic group $G\times_{\Zp}k_E$ of highest weight $\lambda$. We have $F(\lambda)\vert_{G(\Fp)}\cong F(\mu)\vert_{G(\Fp)}$ if and only if $\lambda-\mu\in (p-1)X^0(T)$ (for all this see e.g.\ \cite[\S3.1]{He1}). In the sequel, we often write $F(\lambda)$ instead of $F(\lambda)\vert_{G(\Fp)}$ for a Serre weight.

\begin{definit}\label{ordser}
The set of \emph{ordinary Serre weights} of $\rhobar$ is the set of irreducible constituents of $\soc_{G(\Zp)}\big(\Pi(\rhobar)^{\ord}\vert_{G(\Zp)}\big)$ (up to isomorphism).
\end{definit}

Equivalently, it is the union for $w_{C_{\rhobar}}\in W_{C_{\rhobar}}$ (forgetting possible multiplicities) of the irreducible constituents of $\soc_{G(\Zp)}(\Pi(\rhobar)_{C_{\rhobar},w_{C_{\rhobar}}}\vert_{G(\Zp)})$.

We now make more explicit the ordinary Serre weights of $\rhobar$ in a special but important case.

\begin{definit}\label{genbarfor}
We say that $\rhobar$ is \emph{inertially generic} if $(\alpha^{\vee}\circ \widehat\chi_{\rhobar})\vert_{\ip}\notin \{1,\omega,\omega^{-1}\}$ for all $\alpha\in R^{+}$ (or equivalently all $\alpha\in R$).
\end{definit}

Again, this is equivalent to $(\chi_{\rhobar}\circ \alpha^{\vee})\vert_{\Zp^{\times}}\notin \{1,\omega,\omega^{-1}\}$ for all $\alpha\in R^{+}$. Of course, $\rhobar$ inertially generic implies $\rhobar$ generic. Note that the existence of inertially generic $\rhobar$ implies that $p$ is large enough so that Lemma~\ref{borelp} holds: if $G$ is not a torus then one has $p>3$ (otherwise all powers of $\omega$ belong to $\{1,\omega,\omega^{-1}\}$). When $G=\GL_n$, one can check that there are inertially generic $\rhobar$ if and only if $p>2n$.

Since any continuous character $\Zp^{\times}\rightarrow k_E^{\times}$ is an (integral) power of the reduction mod $p$ on $\Zp^{\times}$, one easily checks that, for any continuous character $\chi:T(\Qp)\rightarrow k_E^{\times}$ such that $(\chi\circ \alpha^{\vee})\vert_{\Zp^{\times}}\ne 1$ for all $\alpha\in S$, there exists $\lambda_{\chi}\in X_1(T)$ uniquely determined modulo $(p-1)X^0(T)$ such that $\overline{\lambda_{\chi}\vert_{T(\Zp)}}=\chi\vert_{T(\Zp)}$. One then has $\langle \lambda_{\chi},\alpha^\vee\rangle\in \{1,\dots,p-2\}$ for all $\alpha\in S$. If moreover $(\chi\circ \alpha^{\vee})\vert_{\Zp^{\times}}\notin \{1,\omega,\omega^{-1}\}$ for all $\alpha\in S$, one has $\langle \lambda_{\chi},\alpha^\vee\rangle\in \{2,\dots,p-3\}$.

\begin{prop}\label{ordserex}
Assume $\rhobar$ is ordinary and inertially generic. Then $\lambda_{w_{C_{\rhobar}}^{-1}(\chi_{\rhobar})}-\theta\in X_1(T)$ for all $w_{C_{\rhobar}}\in W_{C_{\rhobar}}$ and the set of ordinary Serre weights of $\rhobar$ is
\begin{align}
\{F(\lambda_{w_{C_{\rhobar}}^{-1}(\chi_{\rhobar})}-\theta): w_{C_{\rhobar}}\in W_{C_{\rhobar}}\}\qquad\qquad\qquad\qquad\qquad\qquad\qquad\label{eq:2} \\
\qquad\qquad\qquad = \{F(\lambda_{\chi_{\o{\rho_1}}}-\theta): \text{$\o{\rho_1}$ is an ordinary conjugate of $\rhobar$}\}.\label{eq:3}
\end{align}
Moreover, the Serre weights in~\eqref{eq:2} are distinct.
\end{prop}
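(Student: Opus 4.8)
The plan is to reduce everything to two ingredients: (a) an explicit description of $\soc_{G(\Zp)}$ of a smooth principal series over $k_E$, and (b) the combinatorial fact (from Lemma~\ref{conjexe} and Proposition~\ref{conj}) that the ordinary conjugates $\o{\rho_1}$ of $\rhobar$ are, up to conjugation by $\widehat B_{C_{\rhobar}}(k_E)$, exactly the $\dot w^{-1}\rhobar\dot w$ for $w$ ranging over a set of coset representatives encoding $W_{C_{\rhobar}}$, so that their characters $\widehat\chi_{\o{\rho_1}}$ run precisely through $\{w_{C_{\rhobar}}^{-1}(\widehat\chi_{\rhobar}):w_{C_{\rhobar}}\in W_{C_{\rhobar}}\}$; via \eqref{cft} this translates into $\chi_{\o{\rho_1}}$ running through $\{w_{C_{\rhobar}}^{-1}(\chi_{\rhobar}):w_{C_{\rhobar}}\in W_{C_{\rhobar}}\}$. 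Granting (a) and (b), the equality of \eqref{eq:2} and \eqref{eq:3} is immediate: both sides are $\{F(\lambda_{w^{-1}(\chi_{\rhobar})}-\theta):w\in W_{C_{\rhobar}}\}$, once one checks that for an ordinary conjugate $\o{\rho_1}$ with $C_{\o{\rho_1}}\subseteq C_{\rhobar}$ a $\widehat B_{C_{\rhobar}}(k_E)$-conjugate has the same $\widehat\chi$, hence the same Serre weight, so the choice of representative does not matter.

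First I would verify the preliminary claim that $\lambda_{w_{C_{\rhobar}}^{-1}(\chi_{\rhobar})}-\theta\in X_1(T)$. Since $\rhobar$ is inertially generic, for every $\alpha\in S$ we have $(w_{C_{\rhobar}}^{-1}(\chi_{\rhobar})\circ\alpha^\vee)|_{\Zp^\times}=(\chi_{\rhobar}\circ w_{C_{\rhobar}}(\alpha)^\vee)|_{\Zp^\times}\notin\{1,\omega,\omega^{-1}\}$, so by the paragraph preceding the proposition $\langle\lambda_{w_{C_{\rhobar}}^{-1}(\chi_{\rhobar})},\alpha^\vee\rangle\in\{2,\dots,p-3\}$; subtracting $\theta$ (which pairs to $1$ with each $\alpha^\vee$, $\alpha\in S$) gives a value in $\{1,\dots,p-4\}\subseteq\{0,\dots,p-1\}$, so the weight lies in $X_1(T)$. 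Next, for ingredient (a): by construction $\Pi(\rhobar)_{C_{\rhobar},w_{C_{\rhobar}}}$ has $G(\Qp)$-socle the smooth principal series $\Ind_{B^-(\Qp)}^{G(\Qp)}\,w_{C_{\rhobar}}^{-1}(\chi_{\rhobar})\cdot(\omega^{-1}\circ\theta)$ (the $j=0$ graded piece, with $I=\emptyset$, in the socle filtration displayed in \S\ref{variant2}, using genericity so that the socle is irreducible). One then computes the $G(\Zp)$-socle of this principal series: restricting functions to $G(\Zp)$ and using the Iwahori/Iwasawa decomposition, together with the standard mod~$p$ computation of $\soc_{G(\Zp)}$ of a principal series (Frobenius reciprocity for the finite group $G(\Fp)$, as in \cite{He1}), yields exactly $F(\lambda_{\chi}-\theta)$ where $\chi=w_{C_{\rhobar}}^{-1}(\chi_{\rhobar})\cdot(\omega^{-1}\circ\theta)$ has the same restriction to $T(\Zp)$ as $\lambda_{w_{C_{\rhobar}}^{-1}(\chi_{\rhobar})}-\theta$ (the $\omega^{-1}\circ\theta$ twist precisely accounts for the $-\theta$ shift). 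Taking the union over $w_{C_{\rhobar}}\in W_{C_{\rhobar}}$ gives the set in \eqref{eq:2}.

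For the final assertion — that the Serre weights in \eqref{eq:2} are distinct — I would argue as follows. Suppose $F(\lambda_{w^{-1}(\chi_{\rhobar})}-\theta)\cong F(\lambda_{w'^{-1}(\chi_{\rhobar})}-\theta)$ for $w,w'\in W_{C_{\rhobar}}$; since $G^{\rm der}$ is simply connected, this forces $\lambda_{w^{-1}(\chi_{\rhobar})}-\lambda_{w'^{-1}(\chi_{\rhobar})}\in(p-1)X^0(T)$, hence $w^{-1}(\chi_{\rhobar})|_{T(\Zp)}=w'^{-1}(\chi_{\rhobar})|_{T(\Zp)}$, i.e.\ $(w^{-1}(\chi_{\rhobar})\cdot w'(\chi_{\rhobar})^{-1})|_{T(\Zp)}=1$ after applying $w'$; equivalently, writing $v:=w'^{-1}w$, the character $\chi_{\rhobar}\circ v(\mu)^\vee \cdots$ — more cleanly, $v(\chi_{\rhobar})$ and $\chi_{\rhobar}$ agree on $T(\Zp)$. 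Pairing with coroots and using inertial genericity, I expect to deduce $v$ fixes $\chi_{\rhobar}$ on inertia, and then a standard argument (the $W$-action on the generic character $\widehat\chi_{\rhobar}|_{\ip}$ is free, because no proper reflection subgroup can fix it when all $\alpha^\vee\circ\widehat\chi_{\rhobar}|_{\ip}\notin\{1,\omega,\omega^{-1}\}$) gives $v=1$, i.e.\ $w=w'$. \textbf{The main obstacle} is precisely this last freeness argument: one must show that inertial genericity rules out $w^{-1}(\chi_{\rhobar})$ and $w'^{-1}(\chi_{\rhobar})$ having equal restriction to $T(\Zp)$ for $w\neq w'$ in $W_{C_{\rhobar}}$ — a priori the two sides could differ by an element of order dividing $p-1$ in $X(T)/X^0(T)$ without the characters being literally equal, so one needs to track the $X^0(T)$-ambiguity in $\lambda_\chi$ carefully and check it does not conspire with the Weyl-group action; concretely one reduces, via $G_\alpha\cong\GL_2$ for simple $\alpha$ and the description of $W_{C_{\rhobar}}$ as length-minimal coset representatives, to the rank-one statement that $\omega^a=\omega^b$ on $\Zp^\times$ with $a,b\in\{2,\dots,p-3\}$ forces $a=b$, which is clear.
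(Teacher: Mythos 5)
Your skeleton matches the paper's (the $X_1(T)$ check via inertial genericity, distinctness via the $W$-action on $\chi_{\rhobar}\vert_{T(\Zp)}$, identification of $G(\Zp)$-socles, and a Bruhat argument for the equality of \eqref{eq:2} and \eqref{eq:3}), but there is a genuine gap in your ingredient (a). By Definition \ref{ordser} the ordinary Serre weights are the constituents of $\soc_{G(\Zp)}$ of the \emph{whole} representation $\Pi(\rhobar)_{C_{\rhobar},w_{C_{\rhobar}}}$, not of its $G(\Qp)$-socle. You compute $\soc_{G(\Zp)}$ of the bottom principal series only, which gives one containment; but $\Pi(\rhobar)_{C_{\rhobar},w_{C_{\rhobar}}}$ has principal series constituents $\Ind_{B^-(\Qp)}^{G(\Qp)}w^{-1}(\chi_{\rhobar})\cdot(\omega^{-1}\circ\theta)$ for all $w=(\prod_{\alpha\in I^\vee}s_\alpha)w_{C_{\rhobar}}$, each with $G(\Zp)$-socle $F(\lambda_{w^{-1}(\chi_{\rhobar})}-\theta)$, and a priori these could inject into $\Pi(\rhobar)_{C_{\rhobar},w_{C_{\rhobar}}}\vert_{G(\Zp)}$ (the $G(\Qp)$-extensions between layers may well split over $G(\Zp)$). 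Ruling this out is the real content of the step. The paper does it Hecke-theoretically: by distinctness and a d\'evissage, $\Hom_{G(\Zp)}(F(\lambda_{w^{-1}(\chi_{\rhobar})}-\theta),\Pi(\rhobar)_{C_{\rhobar},w_{C_{\rhobar}}})$ is at most one-dimensional and carries the ordinary Hecke character, so by Corollary \ref{prat} it is identified with $\Hom_{G(\Qp)}(\Ind_{B^-(\Qp)}^{G(\Qp)}w^{-1}(\chi_{\rhobar})\cdot(\omega^{-1}\circ\theta),\Pi(\rhobar)_{C_{\rhobar},w_{C_{\rhobar}}})$, which by irreducibility of the principal series is nonzero only for $w=w_{C_{\rhobar}}$. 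Some argument of this kind is indispensable.

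On distinctness: your first reduction (to $\lambda_{w^{-1}(\chi_{\rhobar})}\equiv\lambda_{w'^{-1}(\chi_{\rhobar})}$ modulo $(p-1)X^0(T)$, hence equality of the characters on $T(\Zp)$) is fine, but your proposed resolution of the "main obstacle" — reducing via $G_\alpha\cong\GL_2$ for \emph{simple} $\alpha$ to a rank-one congruence — is not a valid reduction: the stabilizer of $\lambda_{\chi_{\rhobar}}$ in $W$ acting modulo $p-1$ need not be detected by simple roots, and $W_{C_{\rhobar}}$ is not a set of minimal coset representatives. The paper's argument is: a nontrivial coincidence forces $\lambda_{\chi_{\rhobar}}-w_1w_2^{-1}\lambda_{\chi_{\rhobar}}\in(p-1)X(T)\cap\Z R=(p-1)\Z R$ (connected centre is used here), so $\lambda_{\chi_{\rhobar}}$ is fixed by a nontrivial element of the affine Weyl group $(p-1)\Z R\rtimes W$, hence by \cite[Prop.\ V.3.3.1]{Bou} by an affine reflection, giving $\langle\lambda_{\chi_{\rhobar}},\alpha^\vee\rangle\in(p-1)\Z$ for some root $\alpha$ that need not be simple — and it is inertial genericity for \emph{all} $\alpha\in R^+$ that then yields the contradiction. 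Finally, for \eqref{eq:2}$=$\eqref{eq:3} note that Proposition \ref{conj} as stated only classifies \emph{good} conjugates, whereas \eqref{eq:3} ranges over all ordinary conjugates; the correct input is the Bruhat-decomposition step inside its proof, which shows that any ordinary conjugate arises from some $w\in W_{C_{\rhobar}}$ and has character $w^{-1}(\chi_{\rhobar})$.
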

\begin{proof}
We first claim that for all $w\in W$, we have $\lambda_{w^{-1}(\chi_{\rhobar})} - \theta \in X_1(T)$. Note that $(w^{-1}(\chi_{\rhobar})\circ \alpha^\vee)\vert_{\Zp^{\times}} =(\chi_{\rhobar}\circ w(\alpha)^\vee)\vert_{\Zp^{\times}}\notin \{1,\omega,\omega^{-1}\}$ for $\alpha\in S$, as $\rhobar$ is inertially generic. Hence $\langle \lambda_{w^{-1}(\chi_{\rhobar})}-\theta,\alpha^\vee\rangle =\langle \lambda_{w^{-1}(\chi_{\rhobar})},\alpha^\vee\rangle-1\in \{1,\dots,p-4\}$ ($\alpha\in S$), which proves the claim.

Next we show that the Serre weights $F(\lambda_{w^{-1}(\chi_{\rhobar})}-\theta)$ for $w\in W$ are distinct. If $F(\lambda_{w_1^{-1}(\chi_{\rhobar})}-\theta) \cong
  F(\lambda_{w_2^{-1}(\chi_{\rhobar})}-\theta)$ for distinct elements $w_1$, $w_2$ of $W$, then $w_1^{-1}
  \lambda_{\chi_{\rhobar}} \equiv w_2^{-1} \lambda_{\chi_{\rhobar}} \pmod {(p-1)X(T)}$ since these weights induce the same
  character $T(\Zp) \to k_E^\times$.  This implies that $\lambda_{\chi_{\rhobar}} - w_1 w_2^{-1} \lambda_{\chi_{\rhobar}}$ is
  an element of $(p-1)X(T)$, but it is also clearly contained in $\Z R$.  As the centre of $G$ is connected, it follows that
  it is even an element of $(p-1)\Z R$. In other words, $\lambda_{\chi_{\rhobar}}$ is fixed by a non-trivial element of the
  affine Weyl group $(p-1)\Z R \rtimes W$. By \cite[Prop.\ V.3.3.1]{Bou}, $\lambda_{\chi_{\rhobar}}$ is fixed by an affine
  reflection, i.e.\ $\langle \lambda_{\chi_{\rhobar}}, \alpha^\vee\rangle \in (p-1)\Z$ or equivalently $\chi_{\rhobar} \circ
  \alpha^\vee = 1$. But this contradicts the inertial genericity of $\rhobar$.

We now prove that $F(\lambda_{w_{C_{\rhobar}}^{-1}(\chi_{\rhobar})}-\theta)$ is the $G(\Zp)$-socle of $\Pi(\rhobar)_{C_{\rhobar},w_{C_{\rhobar}}}$. First, it follows from \cite[(2.13)]{He2} that $F(\lambda_{\chi})$ is the $G(\Zp)$-socle of $\Ind_{B^-(\Qp)}^{G(\Qp)}\chi$ for any $\chi:T(\Qp)\rightarrow k_E^{\times}$ such that $(\chi\circ \alpha^{\vee})\vert_{\Zp^{\times}}\ne 1$ $\forall\alpha\in S$. In particular $F(\lambda_{w^{-1}(\chi_{\rhobar})}-\theta)\cong \soc_{G(\Zp)}\!\big(\!\Ind_{B^-(\Qp)}^{G(\Qp)}\!w^{-1}(\chi_{\rhobar})\cdot\omega^{-1}\circ\theta\big)$ for any $w\in W$. Since the Serre weights $F(\lambda_{w^{-1}(\chi_{\rhobar})}-\theta)$ are all distinct, an obvious d\'evissage on the constituents of $\Pi(\rhobar)_{C_{\rhobar},w_{C_{\rhobar}}}$ shows that $\dim_{k_E}\Hom_{G(\Zp)}(F(\lambda_{w^{-1}(\chi_{\rhobar})}-\theta),\Pi(\rhobar)_{C_{\rhobar},w_{C_{\rhobar}}})\in \{0,1\}$. Moreover, when the dimension is $1$, it follows from Corollary \ref{prat} below that the Hecke algebra ${\mathcal H}_{G}\big(F(\lambda_{w^{-1}(\chi_{\rhobar})}-\theta)\big)$ of \S\ref{locres} acts on $\Hom_{G(\Zp)}(F(\lambda_{w^{-1}(\chi_{\rhobar})}-\theta),\Pi(\rhobar)_{C_{\rhobar},w_{C_{\rhobar}}})$ via an ordinary character (see Definition \ref{deford}, since the dimension is $1$ there is no place for any other character). From Corollary \ref{prat} again we deduce that restriction to the $G(\Zp)$-socle induces an isomorphism for all $w\in W$,
\begin{multline*}
\Hom_{G(\Qp)}\Big(\Ind_{B^-(\Qp)}^{G(\Qp)}\!w^{-1}(\chi_{\rhobar})\cdot\omega^{-1}\circ\theta,\Pi(\rhobar)_{C_{\rhobar},w_{C_{\rhobar}}}\Big)\buildrel\sim\over\rightarrow \\
\Hom_{G(\Zp)}\big(F(\lambda_{w^{-1}(\chi_{\rhobar})}-\theta),\Pi(\rhobar)_{C_{\rhobar},w_{C_{\rhobar}}}\big).
\end{multline*}
But from the socle filtration of $\Pi(\rhobar)_{C_{\rhobar},w_{C_{\rhobar}}}$, we know that the left-hand side is nonzero (of dimension $1$) if and only if $w=w_{C_{\rhobar}}$.

Finally, to see the equality of (\ref{eq:2}) and (\ref{eq:3}), note that if $\rhobar \cong \o{\rho_1}$ are both ordinary, then by the Bruhat decomposition there is a $w \in W$ such that $\dot w^{-1} \rhobar \dot w$ is ordinary (which implies $w\in W_{C_{\rhobar}}$) and $w^{-1}(\chi_{\rhobar}) = \chi_{\o{\rho_1}}$.
\end{proof}

\begin{ex}\label{exord}
Assume $G$ is ${\GL}_n$, $T$ is the torus of diagonal matrices and $B$ the upper triangular matrices, then we can identify $X(T)$ (resp.\ $X_1(T)$) with $n$-tuples $\lambda:=(\lambda_1,\dots,\lambda_n)\in \Z^n$ (resp.\ $n$-tuples $\lambda:=(\lambda_1,\dots,\lambda_n)\in \Z^n$ such that $0\leq \lambda_i-\lambda_{i+1}\leq p-1$). Choose $\theta:=(n-1,n-2,\dots,0)\in X(T)$ as a twisting element. Then the set of ordinary Serre weights of $\rhobar$ (inertially generic ordinary) in the sense of Definition \ref{ordser} coincides with the set of $F(\lambda)$ for $\lambda\in X_1(T)$ such that $\rhobar$ has a conjugate $\overline{\rho_1}:\gp\rightarrow \widehat{B}(k_E)=B(k_E)$ with
\begin{eqnarray}\label{lift}
\overline{\rho_1}\vert_{\ip} = \begin{pmatrix}\omega^{\lambda_1+(n-1)}&*&\cdots &*\\
0&\omega^{\lambda_2+(n-2)}&\varddots &\vdots\\
\vdots & \varddots & \varddots & *\\
0&\dots & 0 & \omega^{\lambda_n}\end{pmatrix}.
\end{eqnarray}
Alternatively, $F(\lambda)$ for $\lambda \in X_1(T)$ is an ordinary Serre weight of $\rhobar$ if and only if a conjugate of $\rhobar$ admits an upper-triangular
crystalline lift $\rho$ such that $\wh\chi_\rho\vert_{\ip} = \diag(\varepsilon^{\lambda_1+(n-1)}, \cdots, \varepsilon^{\lambda_n})$ (use \cite[Lem.\ 3.1.5]{GG}).
\end{ex}

\begin{rem}
For $\rhobar$ ordinary, semi-simple and sufficiently generic, the set of ordinary Serre weights of $\rhobar$ is also the set of Serre weights $F(\lambda)$ of \cite[Prop.\ 6.28]{He1} for which $\rhobar\vert_{\ip}\cong \tau(1,\lambda+\theta)$ in the notation of {\it loc.\ cit.} (note that $G$ is assumed here to be ${\GL}_n$ but the statement can easily be extended to $G$ as in the present paper).
\end{rem}

\subsection{Some questions}\label{ques}

We state some questions on the representations $\Pi(\rho)^{\ord}$, $\Pi(\rhobar)^{\ord}$, $(\LL\vert_{\widehat{B}_{C_{\rho}}})^{\ord}$ and $(\LLbar\vert_{\widehat{B}_{C_{\rhobar}}})^{\ord}$.

Start first with $\rho:\gp\longrightarrow \widehat{B}_{C_{\rho}}(E)\subseteq \widehat{B}(E)\subseteq \widehat{G}(E)$ such that the closed subset $C_{\rho}\subseteq R^{+\vee}$ is minimal under conjugation by $\widehat{B}(E)$ (i.e.\ $\rho$ is a good conjugate) and such that $\rho$ is generic, and let $\Pi(\rho)^{\ord}$ be as in \S\ref{cons}. Conjecture \ref{folk} and the genericity of $\rho$ imply that, for $w_{C_{\rho}}\in W_{C_{\rho}}$, the irreducible constituents of $\Pi(\rho)_{C_{\rho},w_{C_{\rho}}}$ should be exactly the principal series
\begin{equation}\label{const}
\Big(\Ind_{B^-(\Qp)}^{G(\Qp)}\Big(\big(\prod_{\alpha\in I^\vee}s_{\alpha}\big)w_{C_{\rho}}\Big)^{-1}(\chi_{\rho})\cdot(\varepsilon^{-1}\circ\theta)\Big)^{{\mathcal C}^0}
\end{equation}
for $I$ running among the subsets of $w_{C_{\rho}}(S^\vee)\cap {C_{\rho}}$ of pairwise orthogonal roots. Note that this is indeed true if $(\overline{\chi_{\rho}\circ \alpha^{\vee})\cdot\varepsilon^{-1}}$ is never the trivial character of $\Qp^{\times}$ in $k_E^{\times}$ for all $w\in W_{\rho}$ and all $\alpha\in w(S)$ (see (\ref{wrho}), Remark \ref{add}(i) and Theorem \ref{classic}(ii)). More precisely, one can conjecture the following (compare with Lemma \ref{cw}):

\begin{conj}\label{conjloc}
There exists a unique admissible unitary continuous representation $\Pi(\rho)_{C_{\rho},w_{C_{\rho}}}$ of $G(\Qp)$ over $E$ with socle filtration $0={\rm Fil}_{-1}\Pi(\rho)_{C_{\rho},w_{C_{\rho}}}\subsetneq {\rm Fil}_0\Pi(\rho)_{C_{\rho},w_{C_{\rho}}}\subseteq \cdots$ such that for $j\in \Z_{\geq 0}$,
\begin{multline*}
{\rm Fil}_{j}\Pi(\rho)_{C_{\rho},w_{C_{\rho}}}/{\rm Fil}_{j-1}\Pi(\rho)_{C_{\rho},w_{C_{\rho}}}\cong \\
\bigoplus_{\substack{I\subseteq w_{C_{\rho}}(S^\vee)\cap C_{\rho}\\ \vert I\vert=j}}\Big(\Ind_{B^-(\Qp)}^{G(\Qp)}\Big(\big(\prod_{\alpha\in I^\vee}s_{\alpha}\big)w_{C_{\rho}}\Big)^{-1}(\chi_{\rho})\cdot(\varepsilon^{-1}\circ\theta)\Big)^{{\mathcal C}^0}
\end{multline*}
for $I$ running among the subsets of $w_{C_{\rho}}(S^\vee)\cap {C_{\rho}}$ of pairwise orthogonal roots.
\end{conj}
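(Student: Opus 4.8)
The representation $\Pi(\rho)_{C_{\rho},w_{C_{\rho}}}$ is already constructed in \S\ref{cons} as the inductive limit $\ilim I{\Pi(\rho)_I}$ along a compatible system of injections $\Pi(\rho)_{I'}\hookrightarrow \Pi(\rho)_I$ for $I'\subseteq I\subseteq w_{C_{\rho}}(S^\vee)\cap C_{\rho}$, so the content of the conjecture is to (a) identify the socle filtration of this representation and (b) prove uniqueness. The plan is to reduce both points to the already-proven Proposition \ref{piI} together with the behaviour of continuous parabolic induction; this is the exact analogue of what is carried out in characteristic $p$ in \S\ref{variant2}, where the corresponding socle filtration is established precisely because Theorem \ref{classic}(ii) gives irreducibility of the smooth principal series under genericity. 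First I would record that, \emph{granting Conjecture \ref{folk}}, the genericity of $\rho$ forces every principal series appearing in (\ref{const}) to be topologically irreducible; that these are pairwise distinct and residually of finite length is already established in Step 1 of the proof of Proposition \ref{piI}. The hypothesis in Definition \ref{gen} only yields this irreducibility unconditionally when $\overline{(\chi_{\rho}\circ\alpha^\vee)\cdot\varepsilon^{-1}}\neq 1$ for all $\alpha^\vee\in w(S^\vee)$, $w\in W_{\rho}$ (see (\ref{wrho})), via Theorem \ref{classic}(ii).

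Next I would compute the socle filtration of each $\Pi(\rho)_I=\big(\Ind_{B^-(\Qp)G_J(\Qp)}^{G(\Qp)}\widetilde\Pi(\rho)_I\big)^{{\mathcal C}^0}$ with $J:=w_{C_{\rho}}^{-1}(I)^\vee$. By Proposition \ref{piI} the graded pieces of the socle filtration of $\widetilde\Pi(\rho)_I$ are the $G_J(\Qp)$-principal series of (\ref{ouf}); since continuous parabolic induction from the standard parabolic $B^-G_J$ is exact and, by \cite[Thm.\ 4.4.6]{Em2} and \cite[Cor.\ 4.3.5]{Em2} (the very tools already used in \S\ref{cons} to compute the $\Hom$-spaces between the $\Pi(\rho)_{I'}$), sends the non-split extensions building $\widetilde\Pi(\rho)_I$ to non-split extensions and carries the irreducible constituents to the irreducible principal series (\ref{const}), the socle filtration of $\Pi(\rho)_I$ has graded pieces $\bigoplus_{I'\subseteq I,\,|I'|=j}$ of the corresponding principal series. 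Because the socle filtration is compatible with subobjects (as in Lemma \ref{cw}), the chosen injections $\Pi(\rho)_{I'}\hookrightarrow\Pi(\rho)_I$ respect socle filtrations, and passing to the inductive limit yields exactly the socle filtration asserted in the conjecture. (Finiteness of the filtration is clear since $\Pi(\rho)_{C_{\rho},w_{C_{\rho}}}$ is already known to be of finite length.)

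For uniqueness I would run the argument of Step 3 of the proof of Proposition \ref{piI} one level higher. The inputs are: all constituents (\ref{const}) are distinct (genericity); $\dim_E\Hom_{G(\Qp)}(\Pi(\rho)_{I'},\Pi(\rho)_I)=1$ for $I'\subseteq I$ (proved in \S\ref{cons}); and, via the Appendix lemmas (Lemma \ref{penible3}, Lemma \ref{penible2}) combined with \cite[Cor.\ 4.3.5]{Em2}, $\dim_E{\rm Ext}^1_{G(\Qp)}$ between two distinct constituents indexed by $I_1$ and $I_2$ is at most one and is nonzero exactly when one of $I_1,I_2$ is obtained from the other by adjoining a single root, so that the submodule structure is again a ``hypercube''. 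An induction on $|I_2\backslash I_1|$ over the subquotients of a candidate representation, identical in shape to Step 3, then pins it down up to isomorphism; as there, the finiteness hypothesis $\dim_E{\rm Ext}^1<\infty$ in the relevant Appendix lemma is handled by applying Lemma \ref{penible3} inductively, factoring off the torus part and one $\GL_2$-factor at a time.

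The main obstacle is Conjecture \ref{folk} itself: the reduction above requires the continuous principal series $\big(\Ind_{B^-(\Qp)}^{G(\Qp)}\big((\prod_{\alpha\in I^\vee}s_{\alpha})w_{C_{\rho}}\big)^{-1}(\chi_{\rho})\cdot(\varepsilon^{-1}\circ\theta)\big)^{{\mathcal C}^0}$ to be topologically irreducible whenever the twisted character is nontrivial on every simple coroot, and Theorem \ref{classic}(ii) only delivers this after reducing mod $\pE$, hence only under the stronger mod-$p$ nontriviality condition; under bare genericity the constituents of $\Pi(\rho)_I$ need not be irreducible and the socle filtration could a priori differ. Thus what one can actually prove by this route is the conditional statement (Conjecture \ref{folk} $\Rightarrow$ the conjecture), which coincides unconditionally with the conjecture under the strengthened hypothesis already flagged in the text. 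A secondary, purely technical point is to verify that the continuous (Banach-space) analogues of the Appendix A/B $\Ext^1$-finiteness and non-split-extension statements hold in this setting, but these are expected to go through exactly as in the proof of Proposition \ref{piI}.
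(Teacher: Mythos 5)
This statement is stated in the paper as a \emph{conjecture} (in \S\ref{ques}); the paper offers no proof of it, and in the paragraph immediately following it the authors explicitly flag as open the very facts your argument needs. So the right comparison is not with a proof of the paper but with the paper's own assessment of what is missing.

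Your existence/socle-filtration step is plausibly reducible, as you say, to Conjecture \ref{folk} plus the adjunction \cite[Thm.\ 4.4.6, Cor.\ 4.3.5]{Em2}, mirroring what is done unconditionally in \S\ref{variant2}. The genuine gap is in the uniqueness step. There you assert that, via Lemmas \ref{penible2} and \ref{penible3}, one gets that $\dim_E{\rm Ext}^1_{G(\Qp)}$ between two distinct constituents indexed by $I_1$ and $I_2$ is at most one and nonzero exactly when one is obtained from the other by adjoining a single root. But those Appendix lemmas only control extensions between \emph{completed tensor products} $\Pi_1\widehat\otimes_E\Pi_2$ over a \emph{product group} $G_1\times G_2$: they reduce $\Ext^1$ for $G_1\times G_2$ to $\Ext^1$ for the factors. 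The constituents here are continuous parabolic inductions $\big(\Ind_{B^-(\Qp)}^{G(\Qp)}w^{-1}(\chi_\rho)\cdot(\varepsilon^{-1}\circ\theta)\big)^{{\mathcal C}^0}$ of the full group $G(\Qp)$, which do not decompose this way, so the lemmas say nothing about them. The required inputs --- that ${\rm Ext}^1_{G(\Qp)}$ between $\big(\Ind_{B^-(\Qp)}^{G(\Qp)}w(\chi)\cdot(\varepsilon^{-1}\circ\theta)\big)^{{\mathcal C}^0}$ and $\big(\Ind_{B^-(\Qp)}^{G(\Qp)}\chi\cdot(\varepsilon^{-1}\circ\theta)\big)^{{\mathcal C}^0}$ vanishes unless $w=s_\alpha$ for $\alpha\in S$, and is then one-dimensional --- are precisely the questions the paper poses as open right after the conjecture. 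The paper proves them only for $\GL_2(\Qp)$ (Proposition \ref{ch}), and that proof relies on the computation of $R^1\Ord_P$ from \cite{Em3}, which is available for $\GL_2$ but is not supplied here for a general split $G$ and a general pair of Weyl elements. Step 3 of Proposition \ref{piI} works only because $\widetilde\Pi(\rho)_I$ lives on a group of the form $T'_J\times\GL_2^J$, where everything reduces to $\GL_2$; it does not ``run one level higher'' after inducing to $G(\Qp)$. Hence even granting Conjecture \ref{folk}, your argument does not establish uniqueness, and the statement remains a conjecture.
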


Hauseux has very recently announced a proof of this conjecture (assuming the irreducibility of the representations in (\ref{const})), see \cite{Ha2}. One can also ask the stronger question if there exists a unique admissible unitary continuous representation of $G(\Qp)$ over $E$ with socle $(\Ind_{B^-(\Qp)}^{G(\Qp)}w_{C_{\rho}}^{-1}(\chi_{\rho})\cdot(\varepsilon^{-1}\circ\theta))^{{\mathcal C}^0}$ and (multiplicity free) constituents given by (\ref{const}) (that is, we only fix the socle and the constituents instead of the whole socle filtration). Let us also mention the following recent theorem due to Hauseux (which was raised as an open question in the first version of this paper).

\begin{thm}[\cite{Ha1}]\label{hau}
Let $\chi:T(\Qp)\rightarrow \oE^{\times}\subseteq E^{\times}$ unitary continuous. If $\chi':T(\Qp)\rightarrow \oE^{\times}\subseteq E^{\times}$ is unitary continuous, we have 
$${\rm Ext}^1_{G(\Qp)}\Big(\big(\Ind_{B^-(\Qp)}^{G(\Qp)}\chi'\cdot(\varepsilon^{-1}\circ\theta)\big)^{{\mathcal C}^0}, \big(\Ind_{B^-(\Qp)}^{G(\Qp)}{\chi}\cdot(\varepsilon^{-1}\circ\theta)\big)^{{\mathcal C}^0}\Big)\ne 0$$
if and only if $\chi'=\chi$ or $\chi'=s_{\alpha}(\chi)$ for $\alpha\in S$. If moreover $\chi\circ \alpha^{\vee}\ne 1$ for every $\alpha\in R^+$, then the above ${\rm Ext}^1_{G(\Qp)}$ with $\chi'=s_{\alpha}(\chi)$ \($\alpha\in S$\) has dimension $1$.
\end{thm}

\begin{rem}\label{rememer}
Emerton informed us that, at least assuming $\chi\circ \alpha^{\vee}\ne 1$ for every $\alpha\in R^+$, he always expects a nonzero element in 
$${\rm Ext}^{\ell(w)}_{G(\Qp)}\Big(\big(\Ind_{B^-(\Qp)}^{G(\Qp)}w(\chi)\cdot(\varepsilon^{-1}\circ\theta)\big)^{{\mathcal C}^0}, \big(\Ind_{B^-(\Qp)}^{G(\Qp)}{\chi}\cdot(\varepsilon^{-1}\circ\theta)\big)^{{\mathcal C}^0}\Big),$$ 
where $\ell(w)$ is the length of $w$ in the Weyl group $W$ (\cite[\S II.1.5]{Ja}). Note that $\ell(w)=1$ when $w=s_{\alpha}$ if and only if $\alpha\in S$. 
\end{rem}

The underlying hope behind this paper is that there may exist a direct ``functorial'' \ link \ between \ the \ $G(\Qp)$-representation \ $\Pi(\rho)^{\ord}$ \ of \ \S\ref{cons} \ and \ the $\gp$-representation
$$(\LL\vert_{\widehat{B}_{C_{\rho}}})^{\ord}\circ \rho,$$
where $(\LL\vert_{\widehat{B}_{C_{\rho}}})^{\ord}$ is the algebraic $\widehat{B}_{C_{\rho}}$-representation of \S\ref{C}. More precisely, one can ask whether, for any choice of fundamental weights $(\widehat\lambda_{\alpha})_{\alpha\in S}\in X(\widehat T)^{|S|}$ and $(\lambda_{\alpha})_{\alpha\in S}\in X(T)^{|S|}$ as in \S\ref{fundamental}, there exists a covariant additive functor $F$, generalizing that of \cite{Co} in the case of $G={\GL}_2$, from the abelian category of finite length admissible unitary continuous representations of $G(\Qp)$ over $E$ to the abelian category of finite-dimensional continuous representations of $\gp$ over $E$ satisfying (at least) the following properties:
\begin{enumerate}
\item for any unitary continuous $\chi:T(\Qp)\rightarrow \oE^{\times}\subseteq E^{\times}$ one has
$$F\Big(\big(\Ind_{B^-(\Qp)}^{G(\Qp)}{\chi}\cdot(\varepsilon^{-1}\circ\theta)\big)^{{\mathcal C}^0}\Big)=\big(\sum_{\alpha\in S}\wh\lambda_{\alpha}\big)\circ\widehat\chi,$$
where $\widehat\chi$ is the character of $\gp$ corresponding to $\chi$ in \S\ref{prel} and $\theta:=\sum_{\alpha\in S}\lambda_{\alpha}$;
\item $F$ is exact when restricted to the full subcategory of representations such that all their irreducible constituents are subquotients of unitary continuous principal series of $G(\Qp)$ over $E$;
\item $F(\Pi(\rho)^{\rm ord})=(\LL\vert_{\widehat{B}_{C_{\rho}}})^{\ord}\circ \rho$.
\end{enumerate}

Assume that $p$ is a good prime for $G$ and consider $\rhobar:\gp\longrightarrow \widehat{B}_{C_{\rhobar}}(k_E)\subseteq \widehat{B}(k_E)\subseteq \widehat{G}(k_E)$ such that the closed subset $C_{\rhobar}\subseteq R^{+\vee}$ is minimal under conjugation by $\widehat{B}(k_E)$ and such that $\rhobar$ is generic. One can ask all the previous questions in the setting of \S\ref{variant2} replacing $\Pi(\rho)^{\ord}$ (resp.\ $\Pi(\rho)_{C_{\rho},w_{C_{\rho}}}$) by $\Pi(\rhobar)^{\ord}$ (resp.\ $\Pi(\rhobar)_{C_{\rhobar},w_{C_{\rhobar}}}$) and $(\LL\vert_{\widehat{B}_{C_{\rho}}})^{\ord}$ by $(\LLbar\vert_{\widehat{B}_{C_{\rhobar}}})^{\ord}$. Note that Theorem \ref{hau} also holds over $k_E$ (see the proof of \cite[Thm.\ 5.2.3]{Ha1}) and that one can prove the statement in Remark \ref{rememer} over $k_E$ modulo a conjecture of Emerton (\cite[Conj.\ 3.7.2]{Em3}): see \cite[Thm.\ 5.3.2]{Ha1}. Finally see \cite{Ha2} for a proof of the mod $p$ analogue of Conjecture \ref{conjloc} and \cite{Br7} for the construction of a functor $F$ on mod $p$ representations which satisfies the mod $p$ analogue of properties (i)--(iii) above.

\section{Local-global compatibility results and conjectures}\label{mainsection}

We conjecture that the representations $\Pi(\rho)^{\ord}$ or $\Pi(\rhobar)^{\ord}$ of $G(\Qp)$ defined in \S\ref{cons} and \S\ref{variant2} occur (under suitable conditions) in spaces of automorphic forms for unitary groups that are compact at infinity and split at places above $p$. Over $k_E$ we prove a weak form of this conjecture.

\subsection{The global setting}\label{set}

We define the relevant spaces of automorphic forms and state some of their properties.

We let $F^+$ be a finite totally real extension of $\Q$ with ring of integers $\oFF$ and $F$ a totally imaginary quadratic extension of $F$ with ring of integers $\oF$. We denote by $c$ the non-trivial element of $\Gal(F/F^+)$. If $v$ (resp.\ $w$) is a finite place of $F^+$ (resp.\ $F$), we let $F_v^+$ (resp.\ $F_w$) be the completion of $F^+$ (resp.\ $F$) at $v$ (resp.\ $w$) and $\oFFv$ (resp.\ $\oFw$) the ring of integers of $F_v^+$ (resp.\ $F_w)$. If $v$ splits in $F$ and $w,w^c$ are the two places of $F$ above $v$, we have $\oFFv=\oFw\buildrel c \over \simeq {\mathcal O}_{\!F_{w^c}}$, where the last isomorphism is induced by $c$. We let $\oFFp:=\oFF\otimes_{\Z}\Zp=\prod_{v\vert p}\oFFv$ and ${\mathbb A}_{F^+}^{\infty}$ (resp.\ ${\mathbb A}_{F^+}^{\infty,p}$) denote the finite ad\`eles of $F^+$ (resp.\ the finite ad\`eles of $F^+$ outside $p$). Finally we assume that all places of $F^+$ above $p$ split in $F$ and, for each $v\vert p$ in $F^+$, we choose one place $\tilde v\in \{w,w^c\}$ of $F$ above $v$ (this choice won't be important).

We let $n\in \Z_{>1}$, $N$ a positive integer prime to $p$ and $G$ a connected reductive algebraic group over $\oFF[1/N]$ satisfying the following conditions:
\begin{enumerate}
\item there is an isomorphism $\iota:G\times_{\oFF[1/N]} \oF[1/N]\buildrel\sim\over\longrightarrow {\GL_n}_{/\oF[1/N]}$;
\item $G\times_{\oFF[1/N]} F^+$ is an outer form of ${\GL_n}_{/F^+}$;
\item $G\times_{\oFF[1/N]} F^+$ is quasi-split at all finite places of $F^+$;
\item $G\times_{\oFF[1/N]} F^+$ is isomorphic to ${\rm U}_n(\R)$ at all infinite places of $F^+$.
\end{enumerate}

It is easy to see that such groups exist (cf.\ e.g.\ \cite[\S7.1.1]{EGH}). Condition (i) implies that if $v$ is any finite place of $F^+$ that splits in $F$ and if $w\vert v$ in $F$ the isomorphism $\iota$ induces $\iota_w:G(F_v^+)\buildrel\sim\over\rightarrow {\GL_n}(F_w)$ which restricts to an isomorphism still denoted by $\iota_w:G(\oFFv)\buildrel\sim\over\rightarrow {\GL_n}(\oFw)$ if $v$ doesn't divide $N$. Condition (ii) implies that $c\circ \iota_w:G(F_v^+)\buildrel\sim\over\rightarrow {\GL_n}(F_{w^c})$ (resp.\ $c\circ \iota_w:G(\oFFv)\buildrel\sim\over\rightarrow {\GL_n}({\mathcal O}_{\!F_{w^c}})$ if $v$ doesn't divide $N$) is conjugate in ${\GL_n}(F_{w^c})$ (resp.\ in ${\GL_n}({\mathcal O}_{\!F_{w^c}})$) to $\tau^{-1}\circ \iota_{w^c}$, where $\tau$ is the transpose in ${\GL_n}(F_{w^c})$ (resp.\ in ${\GL_n}({\mathcal O}_{\!F_{w^c}})$). Conditions (iii) and (iv) force $n[F^+:\Q]$ to be divisible by $4$ when $n$ is even.

If $U$ is any compact open subgroup of $\GAp\times G(\oFFp)$ and $M$ any $\oE$-module endowed with an $\oE$-linear action of $G(\oFFp)$, we let $S(U,M)$ be the $\oE$-module of functions
$$f:G(F^+)\backslash \GA\longrightarrow M$$
such that $f(gu)=u_p^{-1}(f(g))$, where $g\in \GA$ and $u_p\in G(\oFFp)$ is the projection of $u\in U\subseteq \GAp\times G(\oFFp)$. If $M$ is a finite type $\oE$-module, then so is $S(U,M)$. We also define
\begin{equation*}
S(U^p,M):=\ilim{U_p}{S(U^pU_p,M)},
\end{equation*}
where $U^p$ is a (fixed) compact open subgroup of $\GAp$ and where $U_p$ runs among compact open subgroups of $G(\oFFp)$. We can identify $S(U^p,M)$ with functions $f:G(F^+)\backslash \GA/U^p\rightarrow M$ for which there exists a compact open subgroup $U_p$ of $G(\oFFp)$ such that $f(gu)=u_p^{-1}(f(g))$ for $g\in \GA$ and $u\in U_p$. When the action of $G(\oFFp)$ on $M$ is trivial, we endow  $S(U^p,M)$ with a linear left action of $G(F^+\otimes_{\Q}\Qp)$ by $(hf)(g):=f(gh)$ ($h\in G(F^+\otimes_{\Q}\Qp)$, $g\in \GA$).

If $U$ is any compact open subgroup of $\GAp\times G(\oFFp)$, following \cite[\S7.1.2]{EGH} we say that $U$ is {\it unramified} at a finite place $v$ of $F^+$ which splits in $F$ and doesn't divide $N$ if we have $U=U^{v}\times G(\oFFv)$, where $U^{v}$ is a compact open subgroup of $\GAv$. Note that a compact open subgroup of $\GAp\times G(\oFFp)$ is unramified at all but a finite number of finite places of $F^+$. If $U$ is a compact open subgroup of $\GAp\times G(\oFFp)$ and $\Sigma$ a finite set of finite places of $F^+$ containing the set of places of $F^+$ that split in $F$ and divide $pN$ and the set of places of $F^+$ that split in $F$ at which $U$ is {\it not} unramified, we denote by $\TT^\Sigma:=\oE[T^{(j)}_w]$ the commutative polynomial $\oE$-algebra generated by formal variables $T^{(j)}_w$ for $j\in \{1,\dots,n\}$ and $w$ a place of $F$ lying above a finite place of $F^+$ that splits in $F$ and {\it doesn't} belong to $\Sigma$. The algebra $\TT^\Sigma$ acts on $S(U,M)$ by making $T^{(j)}_w$ act by the double coset
$$\iota_w^{-1}\left[{\GL_n}(\oFw)\begin{pmatrix}{\bf 1}_{n-j}&\\& \varpi_w{\bf 1}_{j}\end{pmatrix}{\GL_n}(\oFw)\right],$$
where $\varpi_w$ is a uniformizer in $\oFw$. Explicitly, if we write
$${\GL_n}(\oFw)\smat{{\bf 1}_{n-j}&\\& \varpi_w{\bf 1}_{j}}{\GL_n}(\oFw)=\coprod_{i}g_i\smat{{\bf 1}_{n-j}&\\& \varpi_w{\bf 1}_{j}}{\GL_n}(\oFw),$$
we have for $f\in S(U,M)$ and $g\in \GA$,
$$(T^{(j)}_wf)(g):=\sum_if\bigg(g\iota_w^{-1}\Big(g_i\smat{{\bf 1}_{n-j}&\\& \varpi_w{\bf 1}_{j}}\Big)\bigg).$$
One checks that $T^{(j)}_{w^c} = (T_w^{(n)})^{-1}T^{(n-j)}_{w}$ on $S(U,M)$. If $S$ is any $\TT^\Sigma$-module and $I$ any ideal of $\TT^\Sigma$, we set $S[I]:=\{x\in S : Ix=0\}$.

For a compact open subgroup $U^p$ of $\GAp$, we now focus on the spaces
$$S(U^p,\oE),\ \ \ S(U^p,\oE/\pE^n),\ \ \ S(U^p,k_E).$$
In particular, $S(U^p,\oE)$ is a free $\oE$-module and $S(U^p,\oE)\otimes_{\oE}\oE/\pE^n=S(U^p,\oE/\pE^n)$. These spaces are admissible smooth representations of $G(F^+\otimes_{\Q}\Qp)$ since their subspaces of $U_p$-invariant vectors for any $U_p$ are $S(U^pU_p,\oE)$, $S(U^pU_p,\oE/\pE^n)$ or $S(U^pU_p,k_E)$. We also consider the $p$-adic completion of $S(U^p,\oE)$, that is
\begin{equation*}
\widehat S(U^p,\oE):=\plim{n}{\big(S(U^p,\oE)/\pE^n\big)}\cong \plim{n}{S(U^p,\oE/\pE^n)},
\end{equation*}
which has the induced action of $G(F^+\otimes_{\Q}\Qp)$. If we tensor $\widehat S(U^p,\oE)$ by $E$, we obviously get an admissible unitary continuous representation $\widehat S(U^p,E)$ of $G(F^+\otimes_{\Q}\Qp)$ over $E$ with $\widehat S(U^p,\oE)$ as unit ball.

If $\Sigma$ a finite set of finite places of $F^+$ containing the set of places of $F^+$ that split in $F$ and divide $pN$ and the set of places of $F^+$ that split in $F$ and at which $U^p$ is not unramified, the algebra $\TT^\Sigma$ acts on $S(U^pU_p,\oE)$, $S(U^pU_p,\oE/\pE^n)$, $S(U^pU_p,k_E)$ for any $U_p$ and thus also on $S(U^p,\oE)$, $S(U^p,\oE/\pE^n)$, $S(U^p,k_E)$, $\widehat S(U^p,\oE)$ and $\widehat S(U^p,E)$. This action commutes with that of $G(F^+\otimes_{\Q}\Qp)$. Therefore, if $I$ is any ideal of $\TT^\Sigma$, $\widehat S(U^p,E)[I]$ is an admissible unitary continuous representation of $G(F^+\otimes_{\Q}\Qp)$ over $E$ which is a closed subrepresentation of $\widehat S(U^p,E)$.

If $\m^\Sigma$ is a maximal ideal of $\TT^{\Sigma}$ with residue field $k_E$, we can define the localized subspaces $S(U^pU_p,k_E)_{\m^\Sigma}$ and their inductive limit
$$\ilim{U_p}{S(U^pU_p,k_E)_{\m^\Sigma}}=S(U^p,k_E)_{\m^\Sigma},$$
which inherits an induced (admissible smooth) action of $G(F^+\otimes_{\Q}\Qp)$. We have $S(U^pU_p,k_E)[\m^{\Sigma}]\!\subseteq S(U^pU_p,k_E)_{\m^\Sigma}\subseteq S(U^pU_p,k_E)$ and thus inclusions of admissible smooth $G(F^+\otimes_{\Q}\Qp)$-representations
$$S(U^p,k_E)[\m^{\Sigma}]\subseteq S(U^p,k_E)_{\m^\Sigma}\subseteq S(U^p,k_E).$$
Moreover, as representations of $G(F^+\otimes_{\Q}\Qp)$, $S(U^p,k_E)_{\m^\Sigma}$ is a direct summand of $S(U^p,k_E)$ (the maximal vector subspace on which the elements of $\m^{\Sigma}$ act nilpotently). Finally, if ${U'}^p\subseteq U^p$ is a smaller compact open subgroup, if $\Sigma'\supseteq \Sigma$ is as above with respect to ${U'}^p$ and if $\m^{\Sigma'}:=\m^{\Sigma}\cap \TT^{\Sigma'}$ is the unique maximal ideal of $\TT^{\Sigma'}\subseteq \TT^{\Sigma}$ with residue field $k_E$ that is contained in $\m^{\Sigma}$, then we have $S(U^p,k_E)\subseteq S({U'}^p,k_E)$, $S(U^p,k_E)_{\m^\Sigma}\subseteq S({U'}^p,k_E)_{\m^{\Sigma'}}$ and $S(U^p,k_E)[\m^\Sigma]\subseteq S({U'}^p,k_E)[\m^{\Sigma'}]$.

\subsection{The conjectures}\label{globconj}

We state our local-global compatibility conjectures. We keep the setting and notation of \S\ref{set} and assume moreover that $p$ splits in $F^+$.

We start with the $p$-adic case.

Let $r:\gF\rightarrow {\GL}_n(E)$ be a continuous representation. We assume:
\begin{enumerate}
\item $r$ is ramified only at a finite number of places of $F$;
\item $r^c\cong r^\vee\otimes\varepsilon^{1-n}$ (where $r^c(g):=r(cgc)$ for $g\in \gF$);
\item $r$ is an absolutely irreducible representation of $\gF$.
\end{enumerate}
Let $U^p\subseteq \GAp$ be a compact open subgroup and $\Sigma$ a finite set of finite places of $F^+$ containing the set of places of $F^+$ that split in $F$ and divide $pN$, the set of places of $F^+$ that split in $F$ at which $U^p$ is not unramified {\it and} the set of places of $F^+$ that split in $F$ at which $r$ is ramified. We associate to $r$ and $\Sigma$ the prime ideal $\p^\Sigma$ in $\TT^\Sigma$ generated by all elements
$$\Big((-1)^j{\rm Norm}(w)^{j(j-1)/2}T_w^{(j)}-a^{(j)}_w\Big)_{j,w},$$
where $j\in \{1,\dots,n\}$, $w$ is a place of $F$ lying above a finite place of $F^+$ that splits in $F$ and doesn't belong to $\Sigma$, ${\rm Norm}(w)$ is the cardinality of the residue field at $w$ and where $X^n+a_w^{(1)}X^{n-1}+\cdots + a_w^{(n-1)}X+a_w^{(n)}$ is the characteristic polynomial of $r(\Frob_w)$ (an element of $\oE[X]$).

For a finite place $w$ of $F$, we denote by $r_w$ the restriction of $r$ to a decomposition subgroup at $w$. We assume finally:
\begin{enumerate}
\setcounter{enumi}{3}
\item $r_w$ is generic ordinary (in the sense of \S\ref{Ch} and Definition \ref{gen}) for all places $w$ of $F$ above $p$.
\end{enumerate}
Recall this means that $r_w$ is upper triangular (up to conjugation) and its diagonal characters $(\chi_{w,j})_{1\leq j\leq n}$ satisfy $\chi_{w,i}\chi_{w,j}^{-1}\notin \{1,\varepsilon,\varepsilon^{-1}\}$ for $i\ne j$. By the relation $r^c\cong r^\vee\otimes\varepsilon^{1-n}$ this is equivalent to $r_{\tilde v}$ being generic ordinary for all places $\tilde v$ where $v\vert p$ in $F^+$. In particular we can define the $G(F_v^+)$-representation $\Pi(r_{\tilde v})^{\ord}$, where $G(F_v^+)$ acts via $\iota_{\tilde v}:G(F_v^+)\buildrel\sim\over\rightarrow {\GL_n}(F_{\tilde v})=\GL_n(\Qp)$ and where we choose $\theta:=(n-1,n-2,\dots,0)$ as twisting element and the upper triangular matrices in ${\GL_n}(F_{\tilde v})$ as Borel subgroup (as in Example \ref{exord}).

\begin{lem}\label{trans}
  Suppose that $\rho : \gp \to \GL_n(E)$ is generic ordinary.  Then the $\GL_n(\Qp)$-representation $\Pi(\rho^\vee)^{\ord}\otimes (\varepsilon^{n-1}\circ \det)$ is isomorphic to $\Pi(\rho)^{\ord}$ but where $\GL_n(\Qp)$
  acts via the inverse transpose on the latter.
\end{lem}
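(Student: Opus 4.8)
The plan is to transport $\Pi(\rho)^{\ord}$ through the inverse-transpose automorphism $\iota\colon\GL_n\to\GL_n$, $g\mapsto {}^tg^{-1}$. Since $\iota$ is an automorphism of $\GL_n(\Qp)$ as a $p$-adic analytic group, precomposing the action with $\iota$ defines an auto-equivalence $\Pi\mapsto\Pi^{\iota}$ of the category of admissible unitary continuous $E$-representations of $\GL_n(\Qp)$; in particular it preserves irreducibility, socle filtrations and $\Ext^1$-groups. Writing $\Pi^{\iota}$ for $\Pi$ with $\GL_n(\Qp)$ acting via $\iota$, the assertion to prove is $\Pi(\rho^\vee)^{\ord}\otimes(\varepsilon^{n-1}\circ\det)\cong\Pi(\rho)^{\ord,\iota}$. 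By Lemma~\ref{invpi} we may assume $\rho$ is a good conjugate valued in $\widehat B_{C_{\rho}}(E)$. Then $\rho':=\dot w_0\,\iota\rho\,\dot w_0^{-1}$ (with $\dot w_0$ a representative of the longest element $w_0$) is again valued in $\widehat B$, and I would first check that it is a good conjugate of $\rho^\vee$, that it is generic, and that $C_{\rho'}=\sigma(C_{\rho})$ and $\widehat\chi_{\rho'}=\sigma\circ\widehat\chi_{\rho}$, where $\sigma:=-w_0$ is the opposition automorphism of the based root datum (it permutes $S^\vee$, preserves $R^{+\vee}$, and normalises the $W$-action). Consequently $\Pi(\rho^\vee)^{\ord}=\Pi(\rho')^{\ord}$ and $\chi_{\rho'}=\sigma(\chi_{\rho})$.

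The one genuine computation concerns principal series. As $\iota$ carries $B^-$ onto $B$ and acts by inversion on $T$, one gets for any unitary continuous character $\psi$ of $T(\Qp)$ a chain of $\GL_n(\Qp)$-isomorphisms
$$\big(\Ind_{B^-(\Qp)}^{\GL_n(\Qp)}\psi\big)^{{\mathcal C}^0,\iota}\;\cong\;\big(\Ind_{B(\Qp)}^{\GL_n(\Qp)}(\psi\circ\iota)\big)^{{\mathcal C}^0}\;\cong\;\big(\Ind_{B^-(\Qp)}^{\GL_n(\Qp)}\sigma(\psi)\big)^{{\mathcal C}^0},$$
the last step conjugating the inducing subgroup by $\dot w_0$. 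Next I would verify the identity $\sigma(\varepsilon^{-1}\circ\theta)=(\varepsilon^{-1}\circ\theta)\cdot(\varepsilon^{n-1}\circ\det)\vert_{T(\Qp)}$ for the twisting element $\theta=(n-1,\dots,1,0)$, and pull the genuine character $\varepsilon^{n-1}\circ\det$ of $\GL_n(\Qp)$ out of the induction. Combining, for every $w\in W$,
$$\big(\Ind_{B^-(\Qp)}^{\GL_n(\Qp)}w^{-1}(\chi_{\rho})\cdot(\varepsilon^{-1}\circ\theta)\big)^{{\mathcal C}^0,\iota}\;\cong\;\big(\Ind_{B^-(\Qp)}^{\GL_n(\Qp)}(w^\sigma)^{-1}(\chi_{\rho'})\cdot(\varepsilon^{-1}\circ\theta)\big)^{{\mathcal C}^0}\otimes(\varepsilon^{n-1}\circ\det),$$
where $w^\sigma:=\sigma w\sigma^{-1}$, using $\sigma(\chi_{\rho})=\chi_{\rho'}$. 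Thus $\iota$ carries the principal series appearing as the graded pieces of the socle filtration of $\Pi(\rho)_{C_{\rho},w_{C_{\rho}}}$ (Proposition~\ref{piI}), up to the twist by $\varepsilon^{n-1}\circ\det$, precisely to those of $\Pi(\rho')_{C_{\rho'},w_{C_{\rho}}^\sigma}$: indeed $\sigma$ sends a subset $I\subseteq w_{C_{\rho}}(S^\vee)\cap C_{\rho}$ of pairwise orthogonal roots to a subset $\sigma(I)\subseteq w_{C_{\rho}}^\sigma(S^\vee)\cap C_{\rho'}$ of the same type, with $\big(\prod_{\alpha\in\sigma(I)^\vee}s_\alpha\big)w_{C_{\rho}}^\sigma=\sigma\big(\prod_{\alpha\in I^\vee}s_\alpha\cdot w_{C_{\rho}}\big)\sigma^{-1}$, and $w\mapsto w^\sigma$ restricts to bijections $W_{C_{\rho}}\congto W_{C_{\rho'}}$ and (via the description of Lemma~\ref{cw}) $W_{\rho}\congto W_{\rho'}$.

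To conclude, the representations $\Pi(\rho)_{C_{\rho},w_{C_{\rho}}}^{\iota}\otimes(\varepsilon^{-(n-1)}\circ\det)$ and $\Pi(\rho')_{C_{\rho'},w_{C_{\rho}}^\sigma}$ are admissible unitary continuous $\GL_n(\Qp)$-representations with isomorphic socle filtrations and matching graded pieces, so the uniqueness assertion of Proposition~\ref{piI} — applied to the building blocks $\widetilde\Pi(\rho)_I$, the other steps (parabolic induction and the inductive limit over $I$) being insensitive to an auto-equivalence — identifies them; summing over $w_{C_{\rho}}\in W_{C_{\rho}}$ and using the bijection $w_{C_{\rho}}\mapsto w_{C_{\rho}}^\sigma$ with $W_{C_{\rho'}}$ gives $\Pi(\rho)^{\ord,\iota}\cong\Pi(\rho')^{\ord}\otimes(\varepsilon^{n-1}\circ\det)=\Pi(\rho^\vee)^{\ord}\otimes(\varepsilon^{n-1}\circ\det)$. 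I expect the only real difficulty to be bookkeeping: keeping straight how $\sigma=-w_0$ interacts with the twisting element $\theta$ and the $\det$-twist (the identity $\sigma(\varepsilon^{-1}\circ\theta)=(\varepsilon^{-1}\circ\theta)(\varepsilon^{n-1}\circ\det)\vert_{T(\Qp)}$ is exactly what forces the twist in the statement), and checking that all the combinatorial data $(C_{\rho},W_{C_{\rho}},W_{\rho},I,w_{C_{\rho}})$ transforms $\sigma$-equivariantly so that Proposition~\ref{piI} genuinely applies on both sides.
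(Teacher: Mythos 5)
Your argument is correct and is essentially the paper's own proof: the paper's $\star$-operation is exactly your "apply $\iota$ then conjugate by $\dot w_0$", it uses the same good conjugate $\rho'=\dot w_0\rho^\vee\dot w_0^{-1}$ with $C_{\rho'}=-w_0(C_\rho)$ and $\chi_{\rho'}=w_0(\chi_\rho^{-1})$, the same identity $w_0((\varepsilon^{-1}\circ\theta)^{-1})=(\varepsilon^{-1}\circ\theta)\cdot(\varepsilon^{n-1}\circ\det)$, and the same appeal to the uniqueness in Proposition \ref{piI} on the blocks $\widetilde\Pi(\rho)_I$ before checking compatibility with parabolic induction and the limit over $I$. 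The only cosmetic difference is that the paper records the computation at the level of the Levi subgroups $G_J$ (formula (\ref{dualtrans})) rather than for the full principal series first.
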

\begin{proof}
Let $w_0\in W$ be the longest element, $J\subseteq S$ a subset of pairwise orthogonal roots and $G_J\subseteq {\GL_n}_{/\Qp}$ as in \S\ref{prel}. The inverse transpose $g\mapsto \tau^{-1}(g)$ preserves $G_J$. If $\Pi_J$ is an admissible unitary continuous representation of $G_J(\Qp)$ over $E$, we let $\Pi_J^{\star}$ be the admissible unitary continuous representation of $G_{-w_0(J)}(\Qp)$ with the same underlying Banach space as $\Pi_J$ and where $g\in G_{-w_0(J)}(\Qp)$ acts by $\tau(\dot{w}_0g \dot{w}_0^{-1})^{-1}=\dot{w}_0\tau(g)^{-1} \dot{w}_0^{-1}\in G_J(\Qp)$ (note that $\dot{w}_0G_J\dot{w}_0^{-1}=G_{w_0(J)}=G_{-w_0(J)}$). If $\chi:T(\Qp)\rightarrow \oE^{\times}\subseteq E^{\times}$ is a unitary continuous character, one checks that
\begin{equation}\label{dualtrans}
\Big(\big(\Ind_{B^-(\Qp)\cap G_J(\Qp)}^{G_J(\Qp)}\chi\big)^{{\mathcal C}^0}\Big)^{\star}\cong\big(\Ind_{B^-(\Qp)\cap G_{-w_0(J)}(\Qp)}^{G_{-w_0(J)}(\Qp)}w_0(\chi^{-1})\big)^{{\mathcal C}^0}.
\end{equation}
Without loss of generality, $\rho$ is a good conjugate, and we let $\rho':=\dot w_0 \rho^\vee \dot w_0^{-1}$ (a good conjugate of $\rho^\vee$). It is easy to see that $C_{\rho'}=-w_0(C_{\rho})$, $W_{C_{\rho'}}=w_0W_{C_{\rho}}w_0^{-1}$ and $\chi_{\rho'}=w_0(\chi_{\rho}^{-1})$. Since $w_0((\varepsilon^{-1}\circ \theta)^{-1})=(\varepsilon^{-1}\circ \theta)\cdot (\varepsilon^{n-1}\circ\det)$, this implies by Proposition \ref{piI} and (\ref{dualtrans}) that
$$\widetilde\Pi(\rho)_I^{\star}\cong \widetilde\Pi(\rho')_{-w_0(I)}\otimes (\varepsilon^{n-1}\circ\det),$$
where $I$ is as in Proposition \ref{piI}. Finally, if $\Pi$ is an admissible unitary continuous representation of $\GL_n(\Qp)$ over $E$ and if we denote by $\Pi^{\star}$ the admissible unitary continuous representation of $\GL_n(\Qp)$ with the same underlying vector space as $\Pi$ but where $g\in \GL_n(\Qp)$ acts by $\tau(g)^{-1}$, one checks that
\begin{multline*}
\Big(\big(\Ind_{B^-(\Qp)G_J(\Qp)}^{\GL_n(\Qp)}\widetilde\Pi(\rho)_I\big)^{{\mathcal C}^0}\Big)^{\star}\cong \big(\Ind_{B^-(\Qp)G_{-w_0(J)}(\Qp)}^{\GL_n(\Qp)}\widetilde\Pi(\rho)_I^{\star}\big)^{{\mathcal C}^0}\cong\\
\big(\Ind_{B^-(\Qp)G_{-w_0(J)}(\Qp)}^{\GL_n(\Qp)}\widetilde\Pi(\rho')_{-w_0(I)}\big)^{{\mathcal C}^0}\otimes (\varepsilon^{n-1}\circ\det)
\end{multline*}
from which it follows that $\big(\Pi(\rho)^{\ord}\big)^{\star}\cong \Pi(\rho^\vee)^{\ord}\otimes (\varepsilon^{n-1}\circ \det)$ (see \S\ref{cons}).
\end{proof}

Since $r_{\tilde v^c}\cong r^\vee_{\tilde v}\otimes \varepsilon^{1-n}$, we have $\Pi(r_{\tilde v^c})^{\ord}\cong \Pi(r_{\tilde v}^\vee)^{\ord}\otimes (\varepsilon^{1-n}\circ \det)$ and Lemma \ref{trans} implies that the $\GL_n(\Qp)$-representation $\Pi(r_{\tilde v^c})^{\ord}\otimes (\varepsilon^{2(n-1)}\circ \det)$ is isomorphic to $\Pi(r_{\tilde v})^{\ord}$ but where $\GL_n(\Qp)$ acts via the inverse transpose on the latter. Equivalently, the $\GL_n(\Qp)$-representation $\Pi(r_{\tilde v^c})^{\ord}\otimes (\varepsilon^{n-1}\circ \det)$ is isomorphic to $\Pi(r_{\tilde v})^{\ord}\otimes (\varepsilon^{n-1}\circ \det)$ but where $\GL_n(\Qp)$ acts via the inverse transpose. As $c\circ \iota_{\tilde v^c}$ is conjugate to $\tau^{-1}\circ \iota_{\tilde v}$ (see \S\ref{set}), we see that the $G(F_v^+)$-representation $\Pi(r_{\tilde v})^{\ord}\otimes (\varepsilon^{n-1}\circ \det)$ ultimately only depends on $v\vert p$ and not on the choice of $\tilde v$ above $v$.

If $\Pi$ is an admissible unitary continuous representation over $E$ of $G(F^+\otimes_{\Q}\Qp)$ and if $\Pi',\Pi''\subseteq \Pi$ are two closed invariant subspaces such that all the irreducible subquotients of both $\Pi'$ and $\Pi''$ are isomorphic to irreducible subquotients of (unitary continuous) principal series as in (\ref{para}), then the same is true for the closed invariant subspace $\Pi'+\Pi''\subseteq \Pi$ (since any irreducible subquotient of $\Pi'+\Pi''$ appears either in $\Pi'$ or in $\Pi''$). Therefore one can define $\Pi^{\ord}\subseteq \Pi$ as the maximal closed $G(F^+\otimes_{\Q}\Qp)$-subrepresentation such that all its irreducible subquotients are isomorphic to irreducible subquotients of (unitary continuous) principal series of $G(F^+\otimes_{\Q}\Qp)$.

\begin{conj}\label{theconj}
Let $r:\gF\rightarrow {\GL}_n(E)$ be a continuous representation that satisfies conditions \(i\) to \(iv\) above and assume that there exist a compact open subgroup $U^p\subseteq \GAp$ and a finite set $\Sigma$ of finite places of $F^+$ as above such that $\widehat S(U^p,E)[\p^{\Sigma}]\ne 0$. Then there is an integer $d\in \Z_{>0}$ depending only on $U^p$ and $r$ such that we have an isomorphism of admissible unitary continuous representations of $G(F^+\otimes_{\Q}\Qp)=\prod_{v\vert p}G(F_v^+)$ over $E$,
\begin{equation}\label{beta0}
\bigg(\mathop{\widehat\bigotimes}\limits_{v\vert p}\Big(\Pi(r_{\tilde v})^{\ord}\otimes (\varepsilon^{n-1}\circ\det)\Big)\bigg)^{\oplus d}\congto \widehat S(U^p,E)[\p^{\Sigma}]^{\ord}.
\end{equation}
\end{conj}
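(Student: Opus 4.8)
The plan is to transport into the $p$-adic Banach setting the strategy behind the mod~$p$ result of Theorem~\ref{thmintro}: first produce the full $G(F^+\otimes_{\Q}\Qp)$-socle of the right-hand side inside $\widehat S(U^p,E)[\p^{\Sigma}]$ by a classicality plus local--global compatibility argument at the places above~$p$; then show that this socle embedding is essential; and finally extend it to all of $\bigoplus_{w=(w_v)}\big(\widehat\bigotimes_{v\vert p}(\Pi(r_{\tilde v})_{C_{r_{\tilde v}},w_v}\otimes(\varepsilon^{n-1}\circ\det))\big)^{\oplus d}$ by a $p$-adic analogue of the key local Theorem~\ref{main}, whence the isomorphism once surjectivity is checked.

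\emph{The socle.} By the construction in \S\ref{cons} and the genericity of $r_{\tilde v}$, the $G(F^+\otimes_{\Q}\Qp)$-socle of $\widehat\bigotimes_{v\vert p}(\Pi(r_{\tilde v})^{\ord}\otimes(\varepsilon^{n-1}\circ\det))$ is $\bigoplus_{w=(w_v)}\widehat\bigotimes_{v\vert p}(I(r_{\tilde v})_{w_v}\otimes(\varepsilon^{n-1}\circ\det))$, a direct sum of completed tensor products of unitary continuous principal series. For each $w$ one wants an embedding of this completed tensor product into $\widehat S(U^p,E)[\p^{\Sigma}]$ with positive multiplicity $d_w$. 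I would obtain it from the hypothesis $\widehat S(U^p,E)[\p^{\Sigma}]\ne 0$ together with an ordinary automorphy-lifting theorem (\cite{Ge}), which supplies a globally irreducible crystalline ordinary lift of the mod-$p$ reduction whose restriction at $\tilde v$ is the ordinary crystalline lift attached to the refinement $w_v$, and then from classical local--global compatibility at $\tilde v$, which identifies the $\GL_n(\Qp)$-component of the corresponding classical automorphic representation, up to twist, with the smooth principal series whose unitary completion is $I(r_{\tilde v})_{w_v}$. Independence of $d_w$ of $w$ should follow because the various $w$ correspond merely to reordering the refinement and conjugating the ordinary filtration, which leaves the relevant automorphic multiplicities unchanged.

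\emph{Essentiality, extension and surjectivity.} Next one must check that the socle of $\widehat S(U^p,E)[\p^{\Sigma}]^{\ord}$ is \emph{exactly} $\bigoplus_{w=(w_v)}\widehat\bigotimes_{v\vert p}(I(r_{\tilde v})_{w_v}\otimes(\varepsilon^{n-1}\circ\det))^{\oplus d_w}$, i.e.\ that no other unitary continuous principal series embeds --- the $p$-adic counterpart of the use of \cite[Cor.\ 9.13]{He2} in the mod~$p$ case, which again reduces to classifying which principal series can occur by local--global compatibility and the classification of the refinements of the $r_{\tilde v}$. Granting a $p$-adic version of Theorem~\ref{main}, namely that for $\Pi:=\widehat S(U^p,E)[\p^{\Sigma}]^{\ord}$ restriction to the $G(\Qp)$-socle induces an isomorphism $\Hom_{G(\Qp)}(\Pi(r_{\tilde v})_{C_{r_{\tilde v}},w_v}\otimes(\varepsilon^{n-1}\circ\det),\Pi)\congto\Hom_{G(\Qp)}(I(r_{\tilde v})_{w_v}\otimes(\varepsilon^{n-1}\circ\det),\Pi)$ and likewise for the completed tensor products over $v\vert p$, the socle embedding extends uniquely to the desired embedding of the full left-hand side. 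Surjectivity, and the fact that a single integer $d$ works, would then follow from a matching length/multiplicity count: one shows that $\widehat S(U^p,E)[\p^{\Sigma}]^{\ord}$ has no constituents beyond those of the $\Pi(r_{\tilde v})^{\ord}$ (a ``no more principal series at the cosocle'' statement dual to essentiality), and a Breuil--M\'ezard-type equality then forces the global multiplicity to equal $d$ times the number of completed tensor products on the left, and in particular $d_w=d$ for all $w$.

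\emph{The main obstacle.} The crux is the $p$-adic local theorem: controlling ${\rm Ext}^1$ between unitary continuous principal series of $G(\Qp)$ over $E$ is open in general --- it is precisely the subject of the questions in \S\ref{ques} and is tied to the irreducibility Conjecture~\ref{folk} --- so the analogue of Theorem~\ref{main} in characteristic zero is currently available only for $G=\GL_2$. A secondary difficulty is that, absent an Emerton-type $p$-adic local--global compatibility theorem for general reductive $G$, one cannot presently pin down the whole ordinary part $\widehat S(U^p,E)[\p^{\Sigma}]^{\ord}$ rather than merely its socle; this is exactly why the paper proves only the mod~$p$ weak form (Theorem~\ref{thmintro}) and leaves the characteristic-zero statement open.
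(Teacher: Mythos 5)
The statement you are asked about is Conjecture \ref{theconj}: the paper gives \emph{no proof} of it, and indeed explicitly leaves it open, proving only a weak form of its mod $p$ counterpart (Conjecture \ref{theconjbar}) in Theorem \ref{ouf!}. Your text is therefore correctly read not as a proof but as a strategy outline, and to your credit you say so yourself in the final paragraph. As an outline it faithfully transposes the paper's mod $p$ argument (socle via Serre-weight/ordinarity input and Frobenius reciprocity, essentiality, extension via the key local Theorem \ref{main}) to the Banach setting, which is surely the intended route.

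The genuine gaps, which you partly name but which must be stated as such: (1) the $p$-adic analogue of Theorem \ref{main} requires control of ${\rm Ext}^1$ between unitary continuous principal series of $G(\Qp)$ over $E$ beyond the $\GL_2$ case treated in Appendix B; this is exactly the content of the open questions of \S\ref{ques} and is tied to Conjecture \ref{folk}, so the ``extension'' step has no proof at present. (2) Producing the socle $\widehat\bigotimes_{v\vert p}(I(r_{\tilde v})_{w_v}\otimes(\varepsilon^{n-1}\circ\det))$ inside $\widehat S(U^p,E)[\p^{\Sigma}]$ from classical automorphic forms is not a formal consequence of an ordinary automorphy lifting theorem: one needs a density/classicality statement identifying the closure of the classical vectors in the completed space with the unitary principal series, i.e.\ an Emerton-type $p$-adic local--global compatibility for $\GL_n$, which is not available. (3) The surjectivity and the equality of all the multiplicities $d_w$ with a single $d$ are asserted via a ``Breuil--M\'ezard-type equality'' that is not established; note that even in the mod $p$ case the paper only obtains an essential \emph{injection} with possibly unequal $d_w$ (Theorem \ref{thmintro}), not an isomorphism. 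So your proposal is a reasonable heuristic for why the conjecture should hold, consistent with the paper's own discussion, but it is not a proof and cannot be compared with one, since none exists in the paper.
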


\begin{rem}\label{comments}
Note that $\widehat S(U^p,E)[\p^{\Sigma}]\ne 0$ implies assumption (i) above on $r$ and that both sides of (\ref{beta0}) are admissible unitary continuous representations. It should also be true that $\widehat S(U^p,E)[\p^{\Sigma}]$, and thus $\widehat S(U^p,E)[\p^{\Sigma}]^{\ord}$, don't depend on $\Sigma$ as above (for the latter, this is implied by the conjecture).
\end{rem}

\begin{rem}\label{HT}
One can formulate a slight generalization of Conjecture \ref{theconj} where we assume in (iv) above that $r_{\tilde v}$ is generic ordinary for some {\it subset} $\Sigma_p$ of the places $v\vert p$ only and $r_{\tilde v}$ is potentially semi-stable with distinct Hodge--Tate weights at the other $v\vert p$. In (\ref{beta0}), one then has to replace the completed tensor product over all places $v$ of $F^+$ dividing $p$ on the left-hand side by the completed tensor product over the places of $\Sigma_p$ and $\widehat S(U^p,E)[\p^{\Sigma}]^{\ord}$ on the right-hand side by
$$\Hom_{V_p}\bigg(\bigotimes_{\substack{v\vert p\\ v\notin\Sigma_p}}L(\lambda_{\tilde v}),\widehat S(U^p,E)[\p^{\Sigma}]\bigg)^{\ord}.$$
Here $V_p$ is a suitable compact open subgroup of $\prod_{\substack{v\vert p\\ v\notin\Sigma_p}}G(\oFFv)$ and $L(\lambda_{\tilde v})$ is the irreducible algebraic representation of $G\times_{\oFF[1/N]}F^+_v\buildrel \iota\over\simeq {\GL_n}_{/F_{\tilde v}}$ over $E$ of highest weight $\lambda_{\tilde v}=(\lambda_{{\tilde v},1},\dots,\lambda_{{\tilde v},n})$, where $\lambda_{{\tilde v},1}> \lambda_{{\tilde v},2}-1> \cdots> \lambda_{{\tilde v},n-1}-(n-2)> \lambda_{{\tilde v},n}-(n-1)$ are the Hodge--Tate weights of $r_{\tilde v}$ (the Hodge--Tate weight of $\varepsilon$ being $1$ by definition).
\end{rem}

We now state the mod $p$ conjecture.

Let $\rbar:\gF\rightarrow {\GL}_n(k_E)$ be a continuous representation. We assume:
\begin{enumerate}
\item $\rbar^c\cong \rbar^\vee\otimes\omega^{1-n}$;
\item $\rbar$ is an absolutely irreducible representation of $\gF$.
\end{enumerate}
Let $U^p\subseteq \GAp$ be a compact open subgroup and $\Sigma$ a finite set of finite places of $F^+$ containing the set of places of $F^+$ that split in $F$ and divide $pN$, the set of places of $F^+$ that split in $F$ at which $U^p$ is not unramified and the set of places of $F^+$ that split in $F$ at which $\rbar$ is ramified. We associate to $\rbar$ and $\Sigma$ the maximal ideal $\m^\Sigma$ in $\TT^\Sigma$ with residue field $k_E$ generated by $\pE$ and all elements
$$\Big((-1)^j{\rm Norm}(w)^{j(j-1)/2}T_w^{(j)}-a^{(j)}_w\Big)_{j,w},$$
where $j\in \{1,\dots,n\}$, $w$ is a place of $F$ lying above a finite place of $F^+$ that splits in $F$ and doesn't belong to $\Sigma$, $X^n+\overline a_w^{(1)}X^{n-1}+\cdots + \overline a_w^{(n-1)}X+\overline a_w^{(n)}$ is the characteristic polynomial of $\rbar(\Frob_w)$ (an element of $k_E[X]$) and where $a^{(j)}_w$ is any element in $\oE$ lifting $\overline a^{(j)}_w$.

For a finite place $w$ of $F$, we denote by $\rbar_w$ be the restriction of $\rbar$ to a decomposition subgroup at $w$. We assume also:
\begin{enumerate}
 \setcounter{enumi}{2}
\item $\rbar_w$ is generic ordinary in the sense of Definition \ref{genbar} for all places $w$ of $F$ above $p$.
\end{enumerate}
Again, by the relation (i) above, (iii) is equivalent to $\rbar_{\tilde v}$ being generic ordinary for all places $\tilde v$ where $v\vert p$ in $F^+$. In particular we can define the $G(F_v^+)$-representation $\Pi(\rbar_{\tilde v})^{\ord}$ where $G(F_v^+)$ acts via $\iota_{\tilde v}:G(F_v^+)\buildrel\sim\over\rightarrow {\GL_n}(F_{\tilde v})$ (and where we choose $\theta:=(n-1,n-2,\dots,0)$ as twisting element). By the same proof as for Lemma \ref{trans}, the $G(F_v^+)$-representation $\Pi(\rbar_{\tilde v})^{\ord}\otimes (\omega^{n-1}\circ \det)$ only depends on $v\vert p$ and not on the choice of $\tilde v$ above $v$.

If $\Pi$ is an admissible smooth representation of $G(F^+\otimes_{\Q}\Qp)$ over $k_E$, we define $\Pi^{\ord}\subseteq \Pi$ as the maximal $G(F^+\otimes_{\Q}\Qp)$-subrepresentation such that all its irreducible subquotients are isomorphic to irreducible subquotients of principal series of $G(F^+\otimes_{\Q}\Qp)$ over $k_E$.

\begin{conj}\label{theconjbar}
Let $\rbar:\gF\rightarrow {\GL}_n(k_E)$ be a continuous representation that satisfies conditions \(i\) to \(iii\) above and assume that there exist a compact open subgroup $U^p\subseteq \GAp$ and a finite set $\Sigma$ of finite places of $F^+$ as above such that $S(U^p,k_E)[\m^{\Sigma}]\ne 0$. Then there is an integer $d\in \Z_{>0}$ depending only on $U^p$ and $\rbar$ such that we have an isomorphism of admissible smooth representations of $G(F^+\otimes_{\Q}\Qp)=\prod_{v\vert p}G(F_v^+)$ over $k_E$,
\begin{equation}\label{beta1}
\bigg({\bigotimes_{v\vert p}}\Big(\Pi(\rbar_{\tilde v})^{\ord}\otimes (\omega^{n-1}\circ\det)\Big)\bigg)^{\oplus d}\congto  S(U^p,k_E)[\m^{\Sigma}]^{\ord}.
\end{equation}
\end{conj}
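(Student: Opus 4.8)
The plan is to promote the essential injection already established in Theorem~\ref{thmintro} (the precise form being Theorem~\ref{ouf!}) to the isomorphism~\eqref{beta1}, and to deduce the stated weakenings of the hypotheses from stronger versions of its inputs. Write $\Pi:=\bigotimes_{v\vert p}\bigl(\Pi(\rbar_{\tilde v})^{\ord}\otimes(\omega^{n-1}\circ\det)\bigr)$. Granting Theorem~\ref{thmintro} in the generality of the conjecture, there are two gaps to close: first, the integers $d_w$ attached to $w=(w_v)\in\prod_{v\vert p}W_{C_{\rbar_{\tilde v}}}$ must all be shown equal to a common $d$, so that the left-hand side of the injection becomes $\Pi^{\oplus d}$; second, that injection into $S(U^p,k_E)[\m^{\Sigma}]^{\ord}$ must be shown surjective.

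For the equality of the $d_w$: by Frobenius reciprocity together with the ordinary Serre weight results of Gee--Geraghty (\cite{GG}, \cite{Ge}), $d_w$ is the multiplicity of the ordinary Serre weight $\bigotimes_{v\vert p}F(\lambda_{w_v^{-1}(\chi_{\rbar_{\tilde v}})}-\theta)$ in $S(U^pU_p,k_E)[\m^{\Sigma}]$ for small enough $U_p$ (cf.\ Proposition~\ref{ordserex}). So the claim is a mod~$p$ multiplicity one statement: every ordinary crystalline lift of the relevant inertial type contributes the same number of Hecke eigenforms. I would obtain this from a Taylor--Wiles--Kisin patching argument, showing that the patched module is free of rank one over the patched framed deformation ring and that each of the ordinary local crystalline deformation rings in play is formally smooth of the expected dimension; the multiplicities of all the ordinary weights then coincide.

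The surjectivity is the heart of the matter. By construction $S(U^p,k_E)[\m^{\Sigma}]^{\ord}$ is built entirely out of smooth principal series of $\prod_{v\vert p}\GL_n(\Qp)$, so it suffices to (a) determine which principal series $\bigotimes_{v\vert p}\bigl(\Ind_{B^-(\Qp)}^{\GL_n(\Qp)}\psi_{\tilde v}\bigr)$ can occur as subquotients, and (b) check that each is already a constituent of $\Pi$. For (a) I would apply Emerton's ordinary part functor $\Ord_{B^-}$ to $S(U^p,k_E)[\m^{\Sigma}]$ and invoke local-global compatibility at $p$: the eigenvalues of $T(\Qp)$ on $\Ord_{B^-}S(U^p,k_E)[\m^{\Sigma}]$ are forced, via the identification of Hecke eigenvalues with the parameters of $\rbar$, to be controlled by the diagonal characters of the $\rbar_{\tilde v}$, which pins down each $\psi_{\tilde v}$ up to the Weyl group twists permitted by genericity---exactly the characters $\bigl((\prod_{\alpha\in I_v^\vee}s_\alpha)w_v\bigr)^{-1}(\chi_{\rbar_{\tilde v}})\cdot(\omega^{-1}\circ\theta)$ with $w_v\in W_{C_{\rbar_{\tilde v}}}$ and $I_v\subseteq w_v(S^\vee)\cap C_{\rbar_{\tilde v}}$ a set of pairwise orthogonal roots. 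Part (b) is then a purely representation-theoretic comparison with the socle filtration computation of $\Pi(\rbar_{\tilde v})_{C_{\rbar_{\tilde v}},w_v}$ recalled in \S\ref{variant2}, using the irreducibility of the relevant principal series in characteristic $p$ (the smooth analogue of Theorem~\ref{classic}(ii)) and the fact, from genericity, that all constituents of $\Pi$ are distinct. Once $\Pi^{\oplus d}$ and $S(U^p,k_E)[\m^{\Sigma}]^{\ord}$ are known to have the same irreducible constituents with the same multiplicities, the essentiality of the injection---its restriction to socles is an isomorphism, by Theorem~\ref{thmintro} together with \cite[Cor.~9.13]{He2}---forces it to be an isomorphism, and one reads off that $d$ depends only on $U^p$ and $\rbar$.

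Finally, to reach the exact hypotheses of the conjecture---$\rbar$ merely modular, $\rbar_{\tilde v}$ upper triangular and generic rather than inertially generic, and no auxiliary technical conditions---one would need the input results proved in that generality: the Gee--Geraghty ordinary modularity and Serre weight statements without the Taylor--Wiles primes hypotheses, and the key local Theorem~\ref{main} for $\rbar_{\tilde v}$ generic on the whole group rather than on inertia. I expect the decisive obstacle to be the surjectivity step: controlling all of $S(U^p,k_E)[\m^{\Sigma}]^{\ord}$ requires a form of strong local-global compatibility at $p$ (via completed cohomology and the functor $\Ord_{B^-}$, or via freeness of a sufficiently large patched module) that is not presently available for $\GL_n$ with $n>2$---which is precisely why this statement is phrased as a conjecture rather than proved alongside Theorem~\ref{thmintro}.
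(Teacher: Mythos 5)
The statement you are trying to prove is stated in the paper as a \emph{conjecture} (Conjecture \ref{theconjbar}); the paper offers no proof of it, and the most it establishes is the weak form Theorem \ref{ouf!}, namely an essential injection with a priori distinct multiplicities $d_\sigma$, under the stronger hypotheses (inertial genericity, $p>2n+2$, Taylor--Wiles-type conditions) of Proposition \ref{Nul}. Your proposal correctly identifies the two missing steps (equality of the multiplicities, and surjectivity of the injection), and you are right that the paper itself points out that equal $d_\sigma$ would only upgrade Theorem \ref{ouf!} to an essential injection of the shape \eqref{beta1}, not to an isomorphism. But what you have written is a research programme, not a proof: each of its decisive inputs is open. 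The multiplicity-one statement you want to extract from a Taylor--Wiles--Kisin patching argument (freeness of rank one of the patched module over the patched deformation ring, formal smoothness of the ordinary crystalline local deformation rings) is not proved anywhere in the paper or its references, and would in any case reintroduce exactly the auxiliary hypotheses the conjecture is formulated to avoid. The surjectivity step is worse: determining all irreducible constituents of $S(U^p,k_E)[\m^{\Sigma}]^{\ord}$ via $\Ord_{B^-}$ and ``local-global compatibility at $p$'' presupposes a mod $p$ local-global compatibility for $\GL_n$, $n>2$, that does not exist; you acknowledge this yourself in your final paragraph.

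Two smaller points. First, $S(U^p,k_E)[\m^{\Sigma}]^{\ord}$ is by definition built from irreducible \emph{subquotients} of principal series, not from full principal series; your step (a) must also exclude constituents of non-generic (reducible) principal series, and $\Ord_{B^-}$ applied to such a constituent may vanish, so the functor does not by itself see everything you need it to see. Second, your concluding argument (``same constituents with the same multiplicities plus an essential injection forces an isomorphism'') requires knowing that the target is of finite length with exactly the predicted Jordan--H\"older content, which is again the content of the unavailable local-global compatibility rather than a consequence of admissibility. In short: your diagnosis of why this is a conjecture is accurate, but the proposal does not close any of the gaps it identifies.
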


\begin{rem}\label{caution}
As in Remark \ref{comments}, note that $S(U^p,k_E)[\m^{\Sigma}]\ne 0$ implies assumption (i) above on $\rbar$ and that both sides of (\ref{beta1}) are admissible representations. Also, Conjecture \ref{theconjbar} in particular implies that $S(U^p,k_E)[\m^{\Sigma}]^{\ord}$ doesn't depend on $\Sigma$, but in fact it shouldn't be too hard to prove that $S(U^p,k_E)[\m^{\Sigma}]$ (and thus $S(U^p,k_E)[\m^{\Sigma}]^{\ord}$) is independent of $\Sigma$ for any $p$, see e.g.\ \cite[Lem.\ 4.6]{BDJ} for an analogous result.
\end{rem}

\subsection{A local result}\label{locres}

In this section, we prove the main local theorems which our local-global compatibility results (in \S\ref{theresult}) will rely on. We keep the same notation and assumptions as in \S\ref{variant2}. 

We start with some preliminaries. If $\sigma$ is a Serre weight for $G(\Fp)$ (seen as a representation of $G(\Zp)$), we denote by $\ind_{G(\Zp)}^{G(\Qp)}\sigma$ the usual smooth representation that is compactly induced from $\sigma$ and by ${\mathcal H}_{G}(\sigma):=\End_{G(\Qp)}\big(\ind_{G(\Zp)}^{G(\Qp)}\sigma\big)$ the (Hecke) $k_E$-algebra of its endomorphisms (see e.g.\ \cite[\S2.1]{He2}). The $k_E$-algebra ${\mathcal H}_{G}(\sigma)$ is commutative and of finite type, and for any smooth representation $\pi$ of $G(\Qp)$ over $k_E$, the vector space $\Hom_{G(\Zp)}(\sigma,\pi\vert_{G(\Zp)})$ is naturally a (right) ${\mathcal H}_{G}(\sigma)$-module via Frobenius reciprocity $\Hom_{G(\Zp)}(\sigma,\pi\vert_{G(\Zp)})\cong \Hom_{G(\Qp)}\big(\ind_{G(\Zp)}^{G(\Qp)}\sigma,\pi\big)$. Moreover, ${\mathcal H}_{G}(\sigma)$ can be identified with the $k_E$-vector space of compactly supported functions $\varphi:G(\Qp)\rightarrow \End_{k_E}(\sigma)$ such that $\varphi(h_1gh_2)=h_1\circ\varphi(g)\circ h_2$ for $h_1,h_2\in G(\Zp)$ and $g\in G(\Qp)$ endowed with the multiplication given by convolution.

Let $P=MU_M$ be a standard parabolic subgroup of $G$ over $\Zp$, where $U_M$ (resp.\ $M$) is the unipotent radical (resp.\ the Levi subgroup), $P^-$ the opposite parabolic subgroup of $P$ and $U^-_{M}$ the unipotent radical of $P^-$. Replacing $G$ by $M$ and $\sigma$ by the subspace of coinvariants $\sigma_{U_M^-(\Zp)}$ (which is a Serre weight for $M(\Fp)$, see e.g.\ \cite[Lem.\ 2.3]{He2}), one can define in an analogous way the (commutative noetherian) algebra ${\mathcal H}_{M}(\sigma_{U_M^-(\Zp)})$. When $M=T$, we write $U^-$ instead of $U_T^-$.

\begin{ex}\label{plustard}
Let $G={\GL}_n$, $T$ the torus of diagonal matrices and $B$ the upper triangular matrices. For $1\leq j\leq n$ let $M_j:={\GL}_{n-j}\times {\GL}_{j}$ and $P_j:=M_jU_{M_j}$. One has ${\mathcal H}_{G}(\sigma)=k_E[T_{\sigma,1},\dots,T_{\sigma,n-1},T_{\sigma,n},T_{\sigma,n}^{-1}]$, where $T_{\sigma,j}$, $1\leq j\leq n$, corresponds to $\varphi_j:G(\Qp)\rightarrow \End_{k_E}(\sigma)$ with support on $G(\Zp)\smat{{\bf 1}_{n-j}&\\& p{\bf 1}_{j}}G(\Zp)$ sending $\smat{{\bf 1}_{n-j}&\\& p{\bf 1}_{j}}$ to the endomorphism $\sigma\twoheadrightarrow \sigma_{U_{M_j}^-(\Zp)}\buildrel\sim\over\leftarrow \sigma^{U_{M_j}(\Zp)}\hookrightarrow \sigma$.
\end{ex}

By \cite[\S2]{He2} the map $\varphi\mapsto \big(m\mapsto p_{U_M^-}\circ \big(\sum_{u\in U_M^-(\Zp)\backslash U_M^-(\Qp)}\varphi(um)\big)\big)$ (where $m\in M(\Qp)$ and $p_{U_M^-}$ is the projection $\sigma\twoheadrightarrow \sigma_{U_{M}^-(\Zp)}$) induces an injective homomorphism of $k_E$-algebras ${\mathcal S}_M:{\mathcal H}_{G}(\sigma)\hookrightarrow {\mathcal H}_{M}(\sigma_{U_M^-(\Zp)})$ which is a localization map (\cite[Prop.\ 2.12]{He2}). It follows that if $\eta:{\mathcal H}_{G}(\sigma)\rightarrow k_E$ is a morphism of $k_E$-algebras that factors as a morphism of $k_E$-algebras ${\mathcal H}_{G}(\sigma)\buildrel{{\mathcal S}_M}\over\hookrightarrow {\mathcal H}_{M}(\sigma_{U_M^-(\Zp)})\rightarrow k_E$, then the morphism ${\mathcal H}_{M}(\sigma_{U_M^-(\Zp)})\rightarrow k_E$ is unique and we denote it by ${\mathcal S}_M^{-1}(\eta)$.

\begin{definit}\label{deford}
We say that a morphism of $k_E$-algebras ${\mathcal H}_{G}(\sigma)\rightarrow k_E$ is an \emph{ordinary character} (of ${\mathcal H}_{G}(\sigma)$) if it factors as a morphism of $k_E$-algebras ${\mathcal H}_{G}(\sigma)\buildrel{\mathcal S}_T\over\hookrightarrow {\mathcal H}_{T}(\sigma_{U^-(\Zp)})\rightarrow k_E$.
\end{definit}

If $\eta:{\mathcal H}_{G}(\sigma)\rightarrow k_E$ is any morphism of $k_E$-algebras, $\Hom_{G(\Zp)}(\sigma,\pi\vert_{G(\Zp)})[\eta]$ will denote the maximal $k_E$-vector subspace of $\Hom_{G(\Zp)}(\sigma,\pi\vert_{G(\Zp)})$ on which ${\mathcal H}_{G}(\sigma)$ acts by the character $\eta$.

The following theorem (a special case of \cite[Thm.\ 3.1]{He2}) will be crucial.

\begin{thm}[\cite{He2}]\label{he}
Let $\lambda\in X_1(T)$ such that $1\leq \langle \lambda,\alpha^\vee\rangle \leq p-1$ for all $\alpha\in S$ and let $\eta:{\mathcal H}_{G}(F(\lambda))\rightarrow k_E$ be an ordinary character. Then we have a canonical isomorphism of smooth $G(\Qp)$-representations
\begin{equation}\label{isocomp}
\big(\ind_{G(\Zp)}^{G(\Qp)}F(\lambda)\big)\otimes_{{\mathcal H}_{G}(F(\lambda)),\eta}k_E\buildrel\sim\over\longrightarrow  \Ind_{B^-(\Qp)}^{G(\Qp)}\chi,
\end{equation}
where $\chi:B^-(\Qp)\twoheadrightarrow T(\Qp)\rightarrow k_E^{\times}$ is the smooth character giving the action of $T(\Qp)$ on the $1$-dimensional vector space
$$\big(\ind_{T(\Zp)}^{T(\Qp)}F(\lambda)_{U^-(\Zp)}\big)\otimes_{{\mathcal H}_{T}(F(\lambda)_{U^-(\Zp)}),{\mathcal S}_T^{-1}(\eta)}k_E.$$
\end{thm}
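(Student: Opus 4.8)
The plan is to recall how \cite[Thm.\ 3.1]{He2} is proved in this case. First, by Frobenius reciprocity $\Hom_{G(\Qp)}\big(\ind_{G(\Zp)}^{G(\Qp)}F(\lambda),V\big)\cong\Hom_{G(\Zp)}\big(F(\lambda),V|_{G(\Zp)}\big)$, it suffices to produce a canonical (up to scalar) nonzero $G(\Zp)$-equivariant map $F(\lambda)\to\Ind_{B^-(\Qp)}^{G(\Qp)}\chi$. By the Iwasawa decomposition $G(\Qp)=B^-(\Qp)G(\Zp)$ one has $\big(\Ind_{B^-(\Qp)}^{G(\Qp)}\chi\big)|_{G(\Zp)}\cong\Ind_{B^-(\Zp)}^{G(\Zp)}\big(\chi|_{B^-(\Zp)}\big)$, and since by construction $\chi|_{T(\Zp)}$ is the character $F(\lambda)_{U^-(\Zp)}$ of $T(\Zp)$, which factors through $T(\Fp)$, this is the inflation to $G(\Zp)$ of the finite principal series $\Ind_{B^-(\Fp)}^{G(\Fp)}\overline\chi$, where $\overline\chi:=F(\lambda)_{U^-(\Zp)}$ viewed as a character of $T(\Fp)$. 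Frobenius reciprocity over $G(\Fp)$ then gives $\Hom_{G(\Fp)}\big(F(\lambda),\Ind_{B^-(\Fp)}^{G(\Fp)}\overline\chi\big)\cong\End_{T(\Fp)}(\overline\chi)=k_E$, and under the hypothesis $1\leq\langle\lambda,\alpha^\vee\rangle\leq p-1$ for all $\alpha\in S$ the resulting copy of $F(\lambda)$ is precisely the $G(\Zp)$-socle of $\Ind_{B^-(\Qp)}^{G(\Qp)}\chi$, occurring there with multiplicity one (cf.\ \cite[(2.13)]{He2}). Let $\Phi:\ind_{G(\Zp)}^{G(\Qp)}F(\lambda)\to\Ind_{B^-(\Qp)}^{G(\Qp)}\chi$ be the corresponding $G(\Qp)$-equivariant map.

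The heart of the proof is to show that ${\mathcal H}_{G}(F(\lambda))$ acts on the one-dimensional space $\Hom_{G(\Zp)}\big(F(\lambda),\Ind_{B^-(\Qp)}^{G(\Qp)}\chi\big)$ through the character $\eta$. For this one uses the Satake-type homomorphism ${\mathcal S}_T:{\mathcal H}_{G}(F(\lambda))\hookrightarrow{\mathcal H}_{T}\big(F(\lambda)_{U^-(\Zp)}\big)$ recalled above, together with its defining compatibility with the Jacquet-type functor $r$ attached to it: for any smooth representation $\pi$ there is a natural $T(\Qp)$-equivariant arrow $\Hom_{G(\Zp)}(F(\lambda),\pi)\to\Hom_{T(\Zp)}\big(F(\lambda)_{U^-(\Zp)},r\pi\big)$ that intertwines the ${\mathcal H}_{G}(F(\lambda))$-action with the ${\mathcal H}_{T}(F(\lambda)_{U^-(\Zp)})$-action via ${\mathcal S}_T$. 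Applied to $\pi=\Ind_{B^-(\Qp)}^{G(\Qp)}\chi$, one checks this arrow is injective on the one-dimensional source (the distinguished weight-$\lambda$ function spanning the socle copy of $F(\lambda)$ has nonzero image in $r\pi$) and lands in the subquotient of $r\pi$ isomorphic to $\chi$, on which ${\mathcal H}_{T}(F(\lambda)_{U^-(\Zp)})$ acts by ${\mathcal S}_T^{-1}(\eta)$ by the very definition of $\chi$ in the statement. Hence ${\mathcal H}_{G}(F(\lambda))$ acts through ${\mathcal S}_T^{-1}(\eta)\circ{\mathcal S}_T=\eta$, so $\Phi$ kills the submodule generated by $(h-\eta(h))m$ for $h\in{\mathcal H}_{G}(F(\lambda))$, $m\in\ind_{G(\Zp)}^{G(\Qp)}F(\lambda)$, and descends to a $G(\Qp)$-map $\overline\Phi:\big(\ind_{G(\Zp)}^{G(\Qp)}F(\lambda)\big)\otimes_{{\mathcal H}_{G}(F(\lambda)),\eta}k_E\to\Ind_{B^-(\Qp)}^{G(\Qp)}\chi$.

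Finally, that $\overline\Phi$ is an isomorphism rests on the analysis of the ${\mathcal H}_{G}(F(\lambda))$-module structure of $\ind_{G(\Zp)}^{G(\Qp)}F(\lambda)$ from \cite[\S2--3]{He2}. The cleanest packaging is to establish first the integral isomorphism $\big(\ind_{G(\Zp)}^{G(\Qp)}F(\lambda)\big)\otimes_{{\mathcal H}_{G}(F(\lambda)),{\mathcal S}_T}{\mathcal H}_{T}\big(F(\lambda)_{U^-(\Zp)}\big)\cong\Ind_{B^-(\Qp)}^{G(\Qp)}\big(\ind_{T(\Zp)}^{T(\Qp)}F(\lambda)_{U^-(\Zp)}\big)$ of $G(\Qp)\times{\mathcal H}_{T}(F(\lambda)_{U^-(\Zp)})$-modules and then to base-change along ${\mathcal S}_T^{-1}(\eta)$, using that $\ind_{T(\Zp)}^{T(\Qp)}F(\lambda)_{U^-(\Zp)}$ is free of rank one over ${\mathcal H}_{T}(F(\lambda)_{U^-(\Zp)})$ and that $\Ind_{B^-(\Qp)}^{G(\Qp)}(-)$ is exact; alternatively one checks directly that $\overline\Phi$ is injective (its source has $G(\Zp)$-socle $F(\lambda)$ with multiplicity one, on which $\overline\Phi$ is nonzero, so any nonzero $G(\Qp)$-subrepresentation of the source meets $\ker\overline\Phi$ trivially) and surjective ($\Ind_{B^-(\Qp)}^{G(\Qp)}\chi$ is generated over $G(\Qp)$ by the distinguished weight-$\lambda$ function spanning its $G(\Zp)$-socle). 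The main obstacle is the Hecke-equivariance step: pinning down, via the Satake map ${\mathcal S}_T$ and its compatibility with the Jacquet-type functor, the precise character by which ${\mathcal H}_{G}(F(\lambda))$ acts on $\Hom_{G(\Zp)}(F(\lambda),\Ind_{B^-(\Qp)}^{G(\Qp)}\chi)$ and matching it with $\eta$; once this and the structure of mod $p$ principal series over $G(\Zp)$ are in hand, the construction of the map and the isomorphism statement are comparatively formal.
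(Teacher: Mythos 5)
The paper itself gives no proof of this statement: Theorem \ref{he} is imported wholesale as a special case of \cite[Thm.\ 3.1]{He2}, so the only meaningful comparison is with Herzig's proof. Your sketch does reproduce the correct architecture of that proof (construct the map by Frobenius reciprocity from the socle embedding $F(\lambda)\hookrightarrow \Ind_{B^-(\Qp)}^{G(\Qp)}\chi$, verify that ${\mathcal H}_{G}(F(\lambda))$ acts on the one-dimensional Hom space through $\eta$ using the compatibility of ${\mathcal S}_T$ with a Jacquet-type construction, then prove bijectivity). One intermediate assertion is false as written: $\big(\Ind_{B^-(\Qp)}^{G(\Qp)}\chi\big)\vert_{G(\Zp)}\cong \Ind_{B^-(\Zp)}^{G(\Zp)}(\chi\vert_{B^-(\Zp)})$ is \emph{not} the inflation of the finite principal series $\Ind_{B^-(\Fp)}^{G(\Fp)}\overline{\chi}$; it is the space of locally constant functions on the profinite set $B^-(\Zp)\backslash G(\Zp)$ and is infinite-dimensional, the finite principal series being only a subrepresentation. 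The facts you actually need survive (one has $\Hom_{G(\Zp)}(F(\lambda),\Ind_{B^-(\Zp)}^{G(\Zp)}\chi)\cong\Hom_{T(\Zp)}(F(\lambda)_{U^-(\Zp)},\chi)=k_E$ directly, and the socle statement is \cite[(2.13)]{He2}), but the justification given is wrong.

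The genuine gap is in the last step, which you call ``comparatively formal'' but which is where essentially all of the work in \cite[\S3]{He2} lies. Route A posits the integral isomorphism $\big(\ind_{G(\Zp)}^{G(\Qp)}F(\lambda)\big)\otimes_{{\mathcal H}_{G}(F(\lambda)),{\mathcal S}_T}{\mathcal H}_{T}(F(\lambda)_{U^-(\Zp)})\cong\Ind_{B^-(\Qp)}^{G(\Qp)}\big(\ind_{T(\Zp)}^{T(\Qp)}F(\lambda)_{U^-(\Zp)}\big)$ with no argument; this is just the theorem in universal form and carries all of its difficulty. Route B is circular: you deduce injectivity from the claim that $\big(\ind_{G(\Zp)}^{G(\Qp)}F(\lambda)\big)\otimes_{{\mathcal H}_{G}(F(\lambda)),\eta}k_E$ has irreducible $G(\Zp)$-socle $F(\lambda)$, but $-\otimes_{{\mathcal H}_{G}(F(\lambda)),\eta}k_E$ is only right exact, so even the one-dimensionality of $\Hom_{G(\Zp)}(F(\lambda),\cdot)$ of the source is not formal, and identifying its $G(\Zp)$-socle is essentially equivalent to the theorem one is trying to prove. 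Surjectivity (that $\Ind_{B^-(\Qp)}^{G(\Qp)}\chi$ is generated over $G(\Qp)$ by the socle copy of $F(\lambda)$) is also not automatic when the principal series is reducible, which the hypotheses here permit. What is missing, and cannot be waved through, is Herzig's explicit coset and filtration analysis showing that the map becomes bijective exactly when $\eta$ factors through ${\mathcal S}_T$; as a sketch of strategy your proposal is faithful, but as a proof it defers precisely the hard content to the reference.
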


Note that we necessarily have $F(\lambda)\hookrightarrow \soc_{G(\Zp)}\big(\Ind_{B^-(\Qp)}^{G(\Qp)}\chi\big)$, in particular $\chi\vert_{T(\Zp)}$ gives the action of $T(\Zp)$ on $F(\lambda)_{U^-(\Zp)}$. Conversely, for any character $\chi:T(\Qp)\rightarrow k_E^{\times}$ such that $\chi\vert_{T(\Zp)}\cong F(\lambda)_{U^-(\Zp)}$, it is easy to see that there is a unique morphism of $k_E$-algebras $\eta':{\mathcal H}_{T}(F(\lambda)_{U^-(\Zp)})\rightarrow k_E$ such that (\ref{isocomp}) holds with $\eta:=\eta'\circ{\mathcal S}_T$.

\begin{ex}\label{plustard2}
Keep the setting and notation of Example \ref{plustard} and let $\sigma:=F(\lambda)$ with $\lambda:=(\lambda_1,\dots,\lambda_n)\in \Z^n$ such that $1\leq \lambda_i-\lambda_{i+1}\leq p-1$. Then
$$\eta:{\mathcal H}_{\GL_n}(F(\lambda))=k_E[T_{F(\lambda),1},\dots,T_{F(\lambda),n-1},T_{F(\lambda),n},T_{F(\lambda),n}^{-1}]\longrightarrow k_E$$
is ordinary if and only if $\eta_{\lambda,j}:=\eta(T_{F(\lambda),j})\in k_E^{\times}$ for all $j$ and we then have
\begin{multline*}
\big(\ind_{\GL_n(\Zp)}^{\GL_n(\Qp)}F(\lambda)\big)\otimes_{{\mathcal H}_{\GL_n}(F(\lambda)),\eta}k_E\cong \\ \Ind_{B^-(\Qp)}^{\GL_n(\Qp)}\big(\omega^{\lambda_1}\nr\big({\scriptstyle\frac{\eta_{\lambda,n-1}}{\eta_{\lambda,n}}}\big)\otimes \cdots \otimes \omega^{\lambda_{n-1}}\nr\big({\scriptstyle\frac{\eta_{\lambda,1}}{\eta_{\lambda,2}}}\big)\otimes \omega^{\lambda_n}\nr\big({\scriptstyle\frac{1}{\eta_{\lambda,1}}}\big)\big).
\end{multline*}
\end{ex}

\begin{cor}\label{prat}
Keep the same assumptions as in Theorem \ref{he} and let $\pi$ be a smooth representation of $G(\Qp)$ over $k_E$. Then restriction to $F(\lambda)$ induces an isomorphism
$$\Hom_{G(\Qp)}\big(\Ind_{B^-(\Qp)}^{G(\Qp)}\chi,\pi\big)\buildrel\sim\over\longrightarrow \Hom_{G(\Zp)}\big(F(\lambda),\pi\vert_{G(\Zp)}\big)[\eta].$$
\end{cor}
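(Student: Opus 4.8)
The plan is to read this off from the canonical isomorphism (\ref{isocomp}) of Theorem \ref{he} by applying $\Hom_{G(\Qp)}(-,\pi)$ and unwinding the base change. Write $\sigma:=F(\lambda)$, $\mathcal H:={\mathcal H}_{G}(\sigma)$ and $\mathfrak a:=\ker(\eta)\subseteq\mathcal H$. Since $\mathcal H$ is a commutative $k_E$-algebra of finite type, $\mathfrak a$ is generated by finitely many elements $T_1-\eta(T_1),\dots,T_r-\eta(T_r)$ for suitable $k_E$-algebra generators $T_1,\dots,T_r$ of $\mathcal H$. As $\mathcal H$ acts on $\ind_{G(\Zp)}^{G(\Qp)}\sigma$ by $G(\Qp)$-equivariant endomorphisms, the submodule $\mathfrak a\cdot\ind_{G(\Zp)}^{G(\Qp)}\sigma=\sum_{i}(T_i-\eta(T_i))\ind_{G(\Zp)}^{G(\Qp)}\sigma$ is a $G(\Qp)$-subrepresentation, and (using commutativity of $\mathcal H$) one has $\big(\ind_{G(\Zp)}^{G(\Qp)}\sigma\big)\otimes_{\mathcal H,\eta}k_E\cong\ind_{G(\Zp)}^{G(\Qp)}\sigma\big/\mathfrak a\cdot\ind_{G(\Zp)}^{G(\Qp)}\sigma$ as $G(\Qp)$-representations.

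First I would apply the left-exact functor $\Hom_{G(\Qp)}(-,\pi)$ to this presentation and to (\ref{isocomp}); this identifies $\Hom_{G(\Qp)}\big(\Ind_{B^-(\Qp)}^{G(\Qp)}\chi,\pi\big)$ with $\{f\in\Hom_{G(\Qp)}(\ind_{G(\Zp)}^{G(\Qp)}\sigma,\pi): f\circ(T_i-\eta(T_i))=0\ \forall i\}$. Since the $T_i$ generate $\mathcal H$ and $\mathcal H$ is commutative, the condition $f\circ T_i=\eta(T_i)f$ for all $i$ is equivalent to $f\circ T=\eta(T)f$ for all $T\in\mathcal H$, so this subspace is exactly $\Hom_{G(\Qp)}(\ind_{G(\Zp)}^{G(\Qp)}\sigma,\pi)[\eta]$. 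Next, Frobenius reciprocity $\Hom_{G(\Qp)}(\ind_{G(\Zp)}^{G(\Qp)}\sigma,\pi)\cong\Hom_{G(\Zp)}(\sigma,\pi\vert_{G(\Zp)})$ is $\mathcal H$-equivariant — indeed this is how the $\mathcal H$-action on the right-hand side was defined in \S\ref{locres} — so taking $[\eta]$-parts yields $\Hom_{G(\Qp)}\big(\Ind_{B^-(\Qp)}^{G(\Qp)}\chi,\pi\big)\cong\Hom_{G(\Zp)}\big(\sigma,\pi\vert_{G(\Zp)}\big)[\eta]$. (In particular the image of the restriction map automatically lands in the $[\eta]$-part, which is part of the assertion.)

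It then remains to check that this composite isomorphism is the restriction-to-$F(\lambda)$ map. Unwinding the adjunctions, it sends $f:\Ind_{B^-(\Qp)}^{G(\Qp)}\chi\to\pi$ to $f\circ\phi$, where $\phi:\sigma\to\Ind_{B^-(\Qp)}^{G(\Qp)}\chi$ is the composite of the canonical $G(\Zp)$-inclusion $\sigma\hookrightarrow\ind_{G(\Zp)}^{G(\Qp)}\sigma$, the quotient map onto $\ind_{G(\Zp)}^{G(\Qp)}\sigma\otimes_{\mathcal H,\eta}k_E$, and the isomorphism (\ref{isocomp}). This $\phi$ is nonzero: otherwise the image of $\sigma$ would lie in $\mathfrak a\cdot\ind_{G(\Zp)}^{G(\Qp)}\sigma$, and since $\ind_{G(\Zp)}^{G(\Qp)}\sigma$ is generated as a $G(\Qp)$-representation by the image of $\sigma$, this would force $\ind_{G(\Zp)}^{G(\Qp)}\sigma\otimes_{\mathcal H,\eta}k_E=0$, contradicting the fact that it is isomorphic to a nonzero principal series. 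As $\sigma=F(\lambda)$ is irreducible, $\phi$ is therefore injective, and its image is the copy of $F(\lambda)$ in $\soc_{G(\Zp)}\big(\Ind_{B^-(\Qp)}^{G(\Qp)}\chi\big)$ recorded right after Theorem \ref{he}; hence $f\circ\phi$ is exactly the restriction of $f$ to $F(\lambda)$. There is no substantive obstacle here: the corollary is a formal consequence of Theorem \ref{he}, right-exactness of base change, and Frobenius reciprocity, the only point requiring (routine) care being the map-chasing of this last paragraph.
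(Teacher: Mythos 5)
Your proposal is correct and follows essentially the same route as the paper: Frobenius reciprocity identifies $\Hom_{G(\Zp)}(F(\lambda),\pi)[\eta]$ with $\Hom_{G(\Qp)}\big((\ind_{G(\Zp)}^{G(\Qp)}F(\lambda))\otimes_{{\mathcal H}_{G}(F(\lambda)),\eta}k_E,\pi\big)$, and Theorem \ref{he} does the rest. Your final paragraph checking that the composite isomorphism really is restriction to the copy of $F(\lambda)$ in the socle is a worthwhile verification that the paper leaves implicit.
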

\begin{proof}
By Frobenius reciprocity we have
\begin{eqnarray*}
\Hom_{G(\Zp)}\big(F(\lambda),\pi\vert_{G(\Zp)}\big)[\eta]&=&\Hom_{G(\Qp)}\!\big(\!\ind_{G(\Zp)}^{G(\Qp)}F(\lambda),\pi\big)[\eta]\\
&=&\Hom_{G(\Qp)}\!\big(\big(\!\ind_{G(\Zp)}^{G(\Qp)}F(\lambda)\big)\otimes_{{\mathcal H}_{G}(F(\lambda)),\eta}k_E,\pi\big)
\end{eqnarray*}
and the statement follows from Theorem \ref{he}.
\end{proof}

The following proposition and its two corollaries are entirely due to Pa{\v{s}}k{\=u}nas.

\begin{prop}\label{pask1}
Let $\sigma$ be a Serre weight for $G(\Fp)$, $\eta:{\mathcal H}_{G}(\sigma)\rightarrow k_E$ a morphism of $k_E$-algebras and $\pi(\sigma,\eta):=\big(\ind_{G(\Zp)}^{G(\Qp)}\sigma\big)\otimes_{{\mathcal H}_{G}(\sigma),\eta}k_E$. Let $\Pi$ be an admissible smooth representation of $G(\Qp)$ over $k_E$ such that $\Pi\vert_{G(\Zp)}$ is an injective object in the category of smooth representations of $G(\Zp)$ over $k_E$. If $\Hom_{G(\Qp)}\big(\pi(\sigma,\eta),\Pi\big)=0$ then ${\rm Ext}^1_{G(\Qp)}\big(\pi(\sigma,\eta),\Pi\big)=0$ \(in the category of smooth representations of $G(\Qp)$ over $k_E$\).
\end{prop}
\begin{proof}
Let $\mathcal E$ be an extension $0\rightarrow \Pi\rightarrow {\mathcal E}\rightarrow \pi(\sigma,\eta)\rightarrow 0$ in the category of smooth representations of $G(\Qp)$ over $k_E$. It is enough to prove that the functor $\Hom_{G(\Qp)}(\pi(\sigma,\eta),\cdot)$ is exact on that sequence. By Frobenius reciprocity as in the proof of Corollary \ref{prat}, this functor is $\Hom_{G(\Zp)}(\sigma,\cdot)[\eta]$. Since $\Pi\vert_{G(\Zp)}$ is injective, the extension splits when restricted to $G(\Zp)$, hence the sequence remains exact after applying $\Hom_{G(\Zp)}(\sigma,\cdot)$. If we denote with the subscript $\eta$ the generalized eigenspace for the action of the commutative algebra ${\mathcal H}_{G}(\sigma)$ corresponding to the eigencharacter $\eta$, the sequence is thus still exact after applying $\Hom_{G(\Zp)}(\sigma,\cdot)_{\eta}$. But the assumption implies $\Hom_{G(\Zp)}(\sigma,\Pi)_{\eta}=0$, hence $\Hom_{G(\Zp)}(\sigma,{\mathcal E})_{\eta}\buildrel\sim\over\rightarrow \Hom_{G(\Zp)}(\sigma,\pi(\sigma,\eta))_{\eta}$ and thus $\Hom_{G(\Zp)}(\sigma,{\mathcal E})[\eta]\buildrel\sim\over\rightarrow \Hom_{G(\Zp)}(\sigma,\pi(\sigma,\eta))[\eta]$. This finishes the proof.
\end{proof}

\begin{cor}\label{pask2}
Let $\Pi$ be as in Proposition \ref{pask1} and $\pi$ be a smooth representation of $G(\Qp)$ over $k_E$ which is of finite length such that its irreducible constituents are all principal series. Let $\pi_1\subseteq \pi$ be a subrepresentation such that $\Hom_{G(\Qp)}(C,\Pi)=0$ if $C$ is an irreducible constituent of $\pi/\pi_1$. Then restriction to $\pi_1$ induces an isomorphism 
$$\Hom_{G(\Qp)}(\pi,\Pi)\buildrel\sim\over\longrightarrow \Hom_{G(\Qp)}(\pi_1,\Pi).$$
\end{cor}
\begin{proof}
Note that we have $\Hom_{G(\Qp)}\big(\pi/\pi_1,\Pi\big)=0$ by d\'evissage. It follows from \cite[Lem.\ 2.5]{He2}, \cite[Lem.\ 2.14]{He2} and \cite[Thm.\ 3.1]{He2} that, if $C$ is an (irreducible) principal series, there exist $(\sigma,\eta)$ such that $C\cong \pi(\sigma,\eta)$ with $\pi(\sigma,\eta)$ as in the statement of Proposition \ref{pask1}. By Proposition \ref{pask1} we thus have ${\rm Ext}^1_{G(\Qp)}\big(C,\Pi\big)=0$ for all irreducible constituents of $\pi/\pi_1$, which implies ${\rm Ext}^1_{G(\Qp)}\big(\pi/\pi_1,\Pi\big)=0$ by d\'evissage. Applying $\Hom_{G(\Qp)}(\cdot,\Pi)$ to the exact sequence $0\rightarrow \pi_1\rightarrow \pi\rightarrow \pi/\pi_1\rightarrow 0$ then gives the result. 
\end{proof}

We now give a $p$-adic version of Corollary \ref{pask2}. We refer to Appendix A for the definition of an admissible unitary continuous representation $\pi$ of $G(\Qp)$ over $E$ which is residually of finite length, and denote by $\overline\pi^{\sss}$ the semi-simplification of its mod $\pE$ reduction.

\begin{cor}\label{pask3}
Let $\Pi$ be an admissible unitary continuous representation of $G(\Qp)$ over $E$ which has a unit ball $\Pi^0$ such that $(\Pi^0\otimes_{\oE}k_E)\vert_{G(\Zp)}$ is an injective object in the category of smooth representations of $G(\Zp)$ over $k_E$. Let $\pi$ be an admissible unitary continuous representation of $G(\Qp)$ over $E$ which is residually of finite length such that the irreducible constituents of $\overline\pi^{\sss}$ are principal series. Let $\pi_1\subseteq \pi$ be a closed subrepresentation such that $\Hom_{G(\Qp)}(C,\Pi^0\otimes_{\oE}k_E)=0$ if $C$ is an irreducible constituent of $\overline{\pi/\pi_1}^{\sss}$. Then restriction to $\pi_1$ induces an isomorphism 
$$\Hom_{G(\Qp)}(\pi,\Pi)\buildrel\sim\over\longrightarrow \Hom_{G(\Qp)}(\pi_1,\Pi).$$
\end{cor}
\begin{proof}
Let $\pi^0$ be a unit ball in $\pi$ and set $\pi_1^0:=\pi^0\cap \pi_1$. Note that $\pi^0/\pi_1^0$ is a unit ball in $\pi/\pi_1$. By the proof of Corollary \ref{pask2} applied to $\Pi^0\otimes_{\oE}k_E$, $\pi^0\otimes_{\oE}k_E$ and $\pi_1^0\otimes_{\oE}k_E$, we get
\begin{eqnarray*}
\Hom_{G(\Qp)}\big((\pi^0\otimes_{\oE}k_E)/(\pi^0_1\otimes_{\oE}k_E),\Pi^0\otimes_{\oE}k_E\big)&=&0\\
{\rm Ext}^1_{G(\Qp)}\big((\pi^0\otimes_{\oE}k_E)/(\pi^0_1\otimes_{\oE}k_E),\Pi^0\otimes_{\oE}k_E\big)&=&0.
\end{eqnarray*}
By \cite[Prop.\ B1]{Ha1} (and the lines before that proposition) we have 
$$\dim_E{\rm Ext}^1_{G(\Qp)}\big(\pi/\pi_1,\Pi\big)\leq \dim_{k_E}{\rm Ext}^1_{G(\Qp)}\big((\pi^0/\pi_1^0)\otimes_{\oE}k_E,\Pi^0\otimes_{\oE}k_E\big)=0$$
and likewise with Hom,  where the first ${\rm Ext}^1$ is in the category of admissible unitary continuous representation of $G(\Qp)$ over $E$. Thus $\Hom_{G(\Qp)}\big(\pi/\pi_1,\Pi\big)=0$ and ${\rm Ext}^1_{G(\Qp)}\big(\pi/\pi_1,\Pi\big)=0$. We conclude as for Corollary \ref{pask2}.
\end{proof}

We now fix a continuous homomorphism
$$\rhobar:\gp\longrightarrow \widehat{B}_{C_{\rhobar}}(k_E)\subseteq \widehat{B}(k_E)\subseteq \widehat{G}(k_E),$$
such that the closed subset $C_{\rhobar}\subseteq R^{+\vee}$ is minimal under conjugation by $\widehat{B}(k_E)$ and we assume that $\rhobar$ is inertially generic (Definition \ref{genbarfor}). We recall that this implies that $p$ is large enough so that Lemma~\ref{borelp} holds (see \S\ref{variant2}).

If $\sigma$ is a Serre weight for $G(\Fp)$ and $\pi$ an admissible smooth representation of $G(\Qp)$ over $k_E$, we write $\Hom_{G(\Zp)}(\sigma,\pi\vert_{G(\Zp)})^{\ord}$ for the maximal vector subspace of $\Hom_{G(\Zp)}(\sigma,\pi\vert_{G(\Zp)})$ on which the action of ${\mathcal H}_{G}(\sigma)$ extends to ${\mathcal H}_{T}(\sigma_{U^-(\Zp)})$.

\begin{thm}\label{main}
Let $\Pi$ be an admissible smooth representation of $G(\Qp)$ over $k_E$ such that $\Pi\vert_{G(\Zp)}$ is an injective object in the category of smooth representations of $G(\Zp)$ over $k_E$. Let $w_{C_{\rhobar}}\in W_{C_{\rhobar}}$ and assume
\begin{equation}\label{nul}
\Hom_{G(\Zp)}(F(\lambda_{w^{-1}(\chi_{\rhobar})}-\theta),\Pi\vert_{G(\Zp)})^{\ord}=0
\end{equation}
for all $w=\big(\prod_{\alpha\in I^\vee}s_{\alpha}\big)w_{C_{\rhobar}}$, where $I\subseteq w_{C_{\rhobar}}(S^\vee)\cap C_{\rhobar}$ runs among the \emph{non-empty} subsets of pairwise orthogonal roots and where $\lambda_{w^{-1}(\chi_{\rhobar})}$ is as in the proof of Proposition \ref{ordserex}. Then restriction to the $G(\Zp)$-socle induces an isomorphism
$$\Hom_{G(\Qp)}\big(\Pi(\rhobar)_{C_{\rhobar},w_{C_{\rhobar}}},\Pi\big)\buildrel\sim\over\longrightarrow \Hom_{G(\Zp)}\big(F(\lambda_{w_{C_{\rhobar}}^{-1}(\chi_{\rhobar})}-\theta),\Pi\vert_{G(\Zp)}\big)[\eta_{\rhobar,w_{C_{\rhobar}}}],$$
where $\eta_{\rhobar,w_{C_{\rhobar}}}$ is the ordinary character of ${\mathcal H}_{G}(F(\lambda_{w_{C_{\rhobar}}^{-1}(\chi_{\rhobar})}-\theta))$ associated to $\chi=w_{C_{\rhobar}}^{-1}(\chi_{\rhobar})\cdot\omega^{-1}\circ\theta$ in Theorem \ref{he}.
\end{thm}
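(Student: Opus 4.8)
The map is restriction to the $G(\Zp)$-socle $F(\lambda_{w_{C_{\rhobar}}^{-1}(\chi_{\rhobar})}-\theta)$, which indeed lands in the $\eta_{\rhobar,w_{C_{\rhobar}}}$-eigenspace by Theorem~\ref{he} applied to the socle constituent $\Ind_{B^-(\Qp)}^{G(\Qp)}w_{C_{\rhobar}}^{-1}(\chi_{\rhobar})\cdot\omega^{-1}\circ\theta$ of $\Pi(\rhobar)_{C_{\rhobar},w_{C_{\rhobar}}}$. The strategy is to prove injectivity and surjectivity separately. For \emph{injectivity}: a nonzero $G(\Qp)$-morphism $\Pi(\rhobar)_{C_{\rhobar},w_{C_{\rhobar}}}\to\Pi$ must be injective on the $G(\Qp)$-socle (which is irreducible, namely the socle principal series), hence nonzero on its $G(\Zp)$-socle $F(\lambda_{w_{C_{\rhobar}}^{-1}(\chi_{\rhobar})}-\theta)$ since the latter embeds in the former by \cite[(2.13)]{He2}; the multiplicity-one statement from the proof of Proposition~\ref{ordserex} (that $\dim_{k_E}\Hom_{G(\Zp)}(F(\lambda),\Pi(\rhobar)_{C_{\rhobar},w_{C_{\rhobar}}})\in\{0,1\}$) together with genericity then gives that the restriction map is injective on $\Hom$-spaces.

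For \emph{surjectivity}, which is the heart of the matter: start with $\phi\in\Hom_{G(\Zp)}(F(\lambda_{w_{C_{\rhobar}}^{-1}(\chi_{\rhobar})}-\theta),\Pi|_{G(\Zp)})[\eta_{\rhobar,w_{C_{\rhobar}}}]$. By Corollary~\ref{prat} it extends uniquely to a $G(\Qp)$-map $\psi_0$ from the socle principal series into $\Pi$. The plan is to extend $\psi_0$ layer by layer up the socle filtration of $\Pi(\rhobar)_{C_{\rhobar},w_{C_{\rhobar}}}$ described in \S\ref{variant2} (or equivalently to build the extension over the ``hypercube'' structure, cf.\ Remark~\ref{add}(ii)), using the adjunction formula \eqref{adj} to reduce, via parabolic induction from the standard Levi $G_J$ attached to each $I\subseteq w_{C_{\rhobar}}(S^\vee)\cap C_{\rhobar}$, to a statement about $\Ord_P(\Pi)$. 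Here Lemma~\ref{inj} is crucial: $\Ord_P(\Pi)|_{M(\Zp)}$ is again injective, so one can run the same kind of $\Hom$/$\Ext$ bookkeeping on the Levi. The obstruction to extending $\psi_0$ across the next socle layer lives in an $\Ext^1_{G(\Qp)}$ group between consecutive graded pieces (principal series $\Ind_{B^-}^G w^{-1}(\chi_{\rhobar})\cdot\omega^{-1}\circ\theta$ for $w=(\prod_{\alpha\in I^\vee}s_\alpha)w_{C_{\rhobar}}$, $|I|=j$ versus $|I|=j+1$). To kill these obstructions I would use Lemma~\ref{quot} to write the relevant piece of $\Pi$ (or of $\Ord_P(\Pi)$) as an iterated extension of modules $(\ind_{G(\Zp)}^{G(\Qp)}\sigma)\otimes_{\mathcal H,\eta}k_E$, and then Lemma~\ref{extJ} (applied on the Levi $G_J\cong T'_J\times\GL_2^J$ via Lemmas~\ref{adm} and \ref{strgalpha}) to show the $\Ext^1$ between an ordinary piece and a non-ordinary piece vanishes. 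The hypothesis~\eqref{nul} is exactly what forces the relevant $\Hom$'s into $\Pi$ to come only from \emph{non-ordinary} characters $\eta$, so that Lemma~\ref{extJ} applies and no genuine obstruction survives; it also guarantees uniqueness of the extension, hence that the extended map is still canonical.

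Concretely I would organize the induction as: for each chain $\emptyset=I_0\subset I_1\subset\cdots\subset I_j$ of subsets of pairwise orthogonal roots in $w_{C_{\rhobar}}(S^\vee)\cap C_{\rhobar}$, show that $\psi_0$ extends uniquely to $\Pi(\rhobar)_{I_j}$, by induction on $j$; the inductive step uses \eqref{adj} with the parabolic $P=B^-G_{J_j}$ (where $J_j=w_{C_{\rhobar}}^{-1}(I_j)^\vee$), reducing to extending a map out of $\widetilde\Pi(\rhobar)_{I_j}$ into $\Ord_P(\Pi)$, and then the socle-filtration description of $\widetilde\Pi(\rhobar)_{I_j}$ from Proposition~\ref{piI} together with the $\Ext^1$-vanishing from Lemma~\ref{extJ}. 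Passing to the inductive limit over all $I$ gives the extension of $\psi_0$ to all of $\Pi(\rhobar)_{C_{\rhobar},w_{C_{\rhobar}}}$, and uniqueness at each stage (again forced by~\eqref{nul}) shows this is a genuine inverse to the restriction map.

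\textbf{Main obstacle.} The delicate point is controlling the $\Ext^1_{G(\Qp)}$ obstruction groups between successive layers: one has to show that every potential obstruction class is actually trivial, which requires knowing that the only $G(\Zp)$-homomorphisms from the relevant Serre weights into $\Pi$ that could interfere are non-ordinary (so that the vanishing of $\Ext^1$ between ordinary and non-ordinary pieces on the $\GL_2$-factors, via Breuil's and Pa\v{s}k\={u}nas's results as packaged in Lemma~\ref{extJ}, kicks in). This is precisely where assumption~\eqref{nul} and inertial genericity of $\rhobar$ are used in an essential way, and keeping track of the bookkeeping through the ordinary functor $\Ord_P$ and the isomorphism $G_J\cong T'_J\times\GL_2^J$ is where the real work lies; the rest is formal manipulation with Frobenius reciprocity, \eqref{adj}, and Theorem~\ref{he}.
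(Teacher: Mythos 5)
Your surjectivity argument is essentially the proof in the paper: reduce via the inductive limit to each $\Pi(\rhobar)_I$, pass to the Levi $G_J$ through the adjunction \eqref{adj} and $\Ord_{P_J}$, use Lemma \ref{inj} to lift the given map to a compact induction, and then kill the ``junk'' coming from the kernel of that presentation by combining Lemma \ref{quot} with the ordinary/non-ordinary $\Ext^1$-vanishing of Lemma \ref{extJ}, assumption \eqref{nul} being what forces everything in the way to be non-ordinary. One caution on phrasing: the obstruction is \emph{not} an $\Ext^1$ between consecutive graded pieces of $\widetilde\Pi(\rhobar)_I$ (those $\Ext^1$'s are nonzero --- they are exactly what glues the hypercube together); it is an $\Ext^1$ between $\widetilde\Pi(\rhobar)_I$ and the image $f(K)$ of the kernel of the compact-induction presentation, whose subquotients $(\ind_{G_J(\Zp)}^{G_J(\Qp)}F_{J'})\otimes_{{\mathcal H},\eta}k_E$ have non-ordinary $\eta$ by \eqref{nul} (via the chain of Frobenius reciprocities translating \eqref{nul} into the statement \eqref{zero} about $\Ord_{P_J}(\Pi)$). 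Your later paragraphs make clear you intend the latter, so this is only a misstatement, not a gap.

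The genuine gap is in your injectivity argument. The claim that a nonzero $G(\Qp)$-morphism $\Pi(\rhobar)_{C_{\rhobar},w_{C_{\rhobar}}}\to\Pi$ ``must be injective on the $G(\Qp)$-socle'' is false: the kernel of such a morphism is a subrepresentation that may well contain the socle, in which case the morphism factors through a quotient built from the constituents $\Pi_{J'}$, $J'\ne\varnothing$, and restricts to zero on the $G(\Zp)$-socle. Multiplicity one of the Serre weight in the \emph{source} does not rule this out. The correct argument --- and the one the paper uses --- is that for $J'\ne\varnothing$ one has $\Hom_{G_J(\Qp)}(\Pi_{J'},\Ord_{P_J}(\Pi))\cong\Hom_{G_J(\Zp)}(F_{J'},\Ord_{P_J}(\Pi))[\eta]$ for an \emph{ordinary} $\eta$ by Corollary \ref{prat}, and this vanishes by \eqref{zero}, i.e.\ by hypothesis \eqref{nul}; hence any morphism killing the socle is zero. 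So injectivity, like surjectivity, genuinely requires \eqref{nul}, and your proposal as written does not invoke it there.
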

\begin{proof}
First, the fact that $F(\lambda_{w_{C_{\rhobar}}^{-1}(\chi_{\rhobar})}-\theta)$ is the $G(\Zp)$-socle of $\Pi(\rhobar)_{C_{\rhobar},w_{C_{\rhobar}}}$ follows from the proof of Proposition \ref{ordserex} (and the inertial genericity of $\rhobar$). By Corollary \ref{prat} applied to $\lambda=\lambda_{w_{C_{\rhobar}}^{-1}(\chi_{\rhobar})}-\theta$ and $\eta=\eta_{\rhobar,w_{C_{\rhobar}}}$, it suffices to prove that the restriction to $\Pi(\rhobar)_{\varnothing} = \Ind_{B^-(\Qp)}^{G(\Qp)}w_{C_{\rhobar}}^{-1}(\chi_{\rhobar})\cdot(\omega^{-1}\circ\theta)$ induces an isomorphism
\begin{equation}
\Hom_{G(\Qp)}\big(\Pi(\rhobar)_{C_{\rhobar},w_{C_{\rhobar}}},\Pi\big)\buildrel\sim\over\longrightarrow \Hom_{G(\Qp)}\big(\Pi(\rhobar)_{\varnothing},\Pi\big).\label{eq:6}
\end{equation}
Recall from \S\ref{variant2} that the irreducible constituents of $\Pi(\rhobar)_{C_{\rhobar},w_{C_{\rhobar}}}/\Pi(\rhobar)_{\varnothing}$ are the principal series $\Ind_{B^-(\Qp)}^{G(\Qp)}w^{-1}(\chi_{\rhobar})\cdot(\omega^{-1}\circ\theta)$ where $w=\big(\prod_{\alpha\in I^\vee}s_{\alpha}\big)w_{C_{\rhobar}}$ with $I\subseteq w_{C_{\rhobar}}(S^\vee)\cap C_{\rhobar}$ running among the \emph{non-empty} subsets of pairwise orthogonal roots. The result follows from Corollary \ref{pask2} (with (\ref{nul}) and Corollary \ref{prat}).
\end{proof}

Taking the direct sum over $w_{C_{\rhobar}}\in W_{C_{\rhobar}}$ in Theorem \ref{main} yields the following corollary.

\begin{cor}\label{pascite}
We keep the notation of Theorem \ref{main} and assume
\begin{equation*}
\Hom_{G(\Zp)}(F(\lambda_{w^{-1}(\chi_{\rhobar})}-\theta),\Pi\vert_{G(\Zp)})^{\ord}=0
\end{equation*}
for all $w_{C_{\rhobar}}\in W_{C_{\rhobar}}$ and all $w=\big(\prod_{\alpha\in I^\vee}s_{\alpha}\big)w_{C_{\rhobar}}$, where $I\subseteq w_{C_{\rhobar}}(S^\vee)\cap C_{\rhobar}$ runs among the non-empty subsets of pairwise orthogonal roots. Then restriction to the $G(\Zp)$-socle induces an isomorphism
$$\Hom_{G(\Qp)}\big(\Pi(\rhobar)^{\ord},\Pi\big)\buildrel\sim\over\longrightarrow \bigoplus_{w_{C_{\rhobar}}\in W_{C_{\rhobar}}}\Hom_{G(\Zp)}\big(F(\lambda_{w_{C_{\rhobar}}^{-1}(\chi_{\rhobar})}-\theta),\Pi\vert_{G(\Zp)}\big)[\eta_{\rhobar,w_{C_{\rhobar}}}].$$
\end{cor}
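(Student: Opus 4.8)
The plan is to deduce the corollary from Theorem \ref{main} by applying it separately to each $w_{C_{\rhobar}}\in W_{C_{\rhobar}}$ and taking a direct sum. By the construction recalled in \S\ref{variant2} one has $\Pi(\rhobar)^{\ord}=\bigoplus_{w_{C_{\rhobar}}\in W_{C_{\rhobar}}}\Pi(\rhobar)_{C_{\rhobar},w_{C_{\rhobar}}}$, and since $W_{C_{\rhobar}}$ is finite this gives a canonical decomposition $\Hom_{G(\Qp)}(\Pi(\rhobar)^{\ord},\Pi)\cong\bigoplus_{w_{C_{\rhobar}}}\Hom_{G(\Qp)}(\Pi(\rhobar)_{C_{\rhobar},w_{C_{\rhobar}}},\Pi)$. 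The vanishing hypothesis of the corollary is stated simultaneously for all $w_{C_{\rhobar}}$, hence in particular for each individual $w_{C_{\rhobar}}$ it yields the hypothesis \eqref{nul} of Theorem \ref{main}. Thus Theorem \ref{main} applies to each summand and provides, via restriction to the $G(\Zp)$-socle of $\Pi(\rhobar)_{C_{\rhobar},w_{C_{\rhobar}}}$, a canonical isomorphism onto $\Hom_{G(\Zp)}\big(F(\lambda_{w_{C_{\rhobar}}^{-1}(\chi_{\rhobar})}-\theta),\Pi\vert_{G(\Zp)}\big)[\eta_{\rhobar,w_{C_{\rhobar}}}]$.

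Next I would match this with restriction to the $G(\Zp)$-socle of the whole $\Pi(\rhobar)^{\ord}$. By the proof of Proposition \ref{ordserex} (using inertial genericity of $\rhobar$), the $G(\Zp)$-socle of $\Pi(\rhobar)_{C_{\rhobar},w_{C_{\rhobar}}}$ is the single Serre weight $F(\lambda_{w_{C_{\rhobar}}^{-1}(\chi_{\rhobar})}-\theta)$, and these Serre weights are pairwise distinct as $w_{C_{\rhobar}}$ runs over $W_{C_{\rhobar}}$. Since $\soc_{G(\Zp)}(-)$ commutes with finite direct sums, $\soc_{G(\Zp)}\big(\Pi(\rhobar)^{\ord}\vert_{G(\Zp)}\big)=\bigoplus_{w_{C_{\rhobar}}}F(\lambda_{w_{C_{\rhobar}}^{-1}(\chi_{\rhobar})}-\theta)$, and restricting a $G(\Qp)$-morphism $\Pi(\rhobar)^{\ord}\to\Pi$ to this socle decomposes, along the direct sum decomposition of $\Pi(\rhobar)^{\ord}$, as the direct sum of the restrictions of its components to the socles of the $\Pi(\rhobar)_{C_{\rhobar},w_{C_{\rhobar}}}$. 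Combining with the summand-by-summand isomorphisms from Theorem \ref{main} gives the desired isomorphism
$$\Hom_{G(\Qp)}\big(\Pi(\rhobar)^{\ord},\Pi\big)\buildrel\sim\over\longrightarrow \bigoplus_{w_{C_{\rhobar}}\in W_{C_{\rhobar}}}\Hom_{G(\Zp)}\big(F(\lambda_{w_{C_{\rhobar}}^{-1}(\chi_{\rhobar})}-\theta),\Pi\vert_{G(\Zp)}\big)[\eta_{\rhobar,w_{C_{\rhobar}}}].$$

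There is essentially no genuine obstacle here: the corollary is simply the direct sum over $w_{C_{\rhobar}}\in W_{C_{\rhobar}}$ of Theorem \ref{main}. The only points to verify are the formal compatibilities used above, namely that $\Hom_{G(\Qp)}(-,\Pi)$ and $\soc_{G(\Zp)}$ commute with the finite direct sum defining $\Pi(\rhobar)^{\ord}$, that the hypothesis of the corollary specializes to \eqref{nul} for each $w_{C_{\rhobar}}$, and that (by distinctness of the socle weights and the generalized eigenspace conditions recorded in Theorem \ref{main}) the restriction map indeed has image in the stated direct sum of $\eta_{\rhobar,w_{C_{\rhobar}}}$-eigenspaces. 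All of these are immediate, so the proof reduces to citing Theorem \ref{main} and Proposition \ref{ordserex} and summing.
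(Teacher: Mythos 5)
Your proposal is correct and coincides with the paper's (implicit) argument: the corollary is stated without proof precisely because it is the direct sum over $w_{C_{\rhobar}}\in W_{C_{\rhobar}}$ of Theorem \ref{main}, using $\Pi(\rhobar)^{\ord}=\oplus_{w_{C_{\rhobar}}}\Pi(\rhobar)_{C_{\rhobar},w_{C_{\rhobar}}}$ and the fact (from the proof of Proposition \ref{ordserex}) that the $G(\Zp)$-socles of the summands are pairwise distinct Serre weights. The formal compatibilities you list are exactly the ones needed, and they all hold.
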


If $\rho:\gp\longrightarrow \widehat{B}_{C_{\rho}}(E)\subseteq \widehat{B}(E)\subseteq \widehat{G}(E)$ is a continuous homomorphism such that the closed subset $C_{\rho}\subseteq R^{+\vee}$ is minimal under conjugation by $\widehat{B}(E)$, we let $\overline{\chi_\rho}:T(\Qp)\rightarrow k_E^\times$ be the character $\chi_\rho:T(\Qp)\rightarrow \oE^\times\subset E^\times$ of \S\ref{cons} composed with $\oE^\times\twoheadrightarrow k_E^\times$. Using Corollary \ref{pask3} instead of Corollary \ref{pask2}, we also have $p$-adic versions of Corollary \ref{pascite} (and Theorem \ref{main}) that we state without proof. 

\begin{cor}\label{mainpadic}
Let $\rho:\gp\longrightarrow \widehat{B}_{C_{\rho}}(E)\subseteq \widehat{B}(E)$ be as above such that $(\overline{\chi_{\rho}}\circ \alpha^{\vee})\vert_{\Zp^\times}\notin \{1,\omega,\omega^{-1}\}$ for all $\alpha\in R^{+}$. Let $\Pi$ be as in Corollary \ref{pask3} and assume
\begin{equation*}
\Hom_{G(\Zp)}(F(\lambda_{w^{-1}(\overline{\chi_{\rho}})}-\theta),(\Pi^0\otimes k_E)\vert_{G(\Zp)})^{\ord}=0
\end{equation*}
for all $w_{C_{\rho}}\in W_{C_{\rho}}$ and all $w=\big(\prod_{\alpha\in I^\vee}s_{\alpha}\big)w_{C_{\rho}}$, where $I\subseteq w_{C_{\rho}}(S^\vee)\cap C_{\rho}$ runs among the non-empty subsets of pairwise orthogonal roots. Then restriction to the $G(\Qp)$-socle induces an isomorphism
$$\Hom_{G(\Qp)}\big(\Pi(\rho)^{\ord},\Pi\big)\buildrel\sim\over\longrightarrow \bigoplus_{w_{C_{\rho}}\in W_{C_{\rho}}}\!\!\!\!\Hom_{G(\Qp)}\Big(\big(\Ind_{B^-(\Qp)}^{G(\Qp)}w_{C_{\rho}}^{-1}(\chi_{\rho})\cdot(\varepsilon^{-1}\circ\theta)\big)^{{\mathcal C}^0},\Pi\Big).$$
\end{cor}

\subsection{A global result}\label{theresult}

We apply the main results of \S\ref{locres} to prove a weak version of Conjecture \ref{theconjbar} and a special case of Conjecture \ref{theconj}. We keep the notation of \S\ref{set} and \S\ref{globconj}. We suppose from now on that $F/F^+$ is unramified at all finite places.

We say that a compact open subgroup $U\subseteq \GAp\times G(\oFFp)$ is sufficiently small if there exists a finite place $v$ of $F^+$ such that the projection of $U$ to $G(F_v^+)$ contains no element of finite order. We say a compact open subgroup $U^p\subseteq \GAp$ is sufficiently small if $U^pG(\oFFp)$ is sufficiently small. The following lemma is well-known (see e.g.\ \cite[\S7.1.2]{EGH}).

\begin{lem}\label{small}
Let $M$ be an $\oE$-module endowed with an $\oE$-linear action of $G(\oFFp)$ and let $U\subseteq \GAp\times G(\oFFp)$ be a sufficiently small compact open subgroup. Then one has a natural isomorphism $S(U,M)\buildrel\sim\over\rightarrow M^{\oplus r}$ for some integer $r\geq 0$ that only depends on $U$.
\end{lem}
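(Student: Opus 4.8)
The plan is to reduce to the finiteness of a double coset space and then use the "sufficiently small" hypothesis to trivialize the relevant finite arithmetic subgroups, after which the isomorphism is pure bookkeeping.

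First I would record that, since $G\times_{\oFF[1/N]}F^+$ is isomorphic to ${\rm U}_n(\R)$ at all infinite places, the group $G(F^+\otimes_\Q\R)$ is compact; combined with the discreteness of $G(F^+)$ in $G({\mathbb A}_{F^+})$ and the fact that $U$ is compact open inside $\GA=G({\mathbb A}_{F^+}^\infty)$, this gives that $G(F^+)\backslash\GA/U$ is finite (this is classical; one could also just cite \cite[\S7.1.2]{EGH}). I then fix representatives $g_1,\dots,g_r\in\GA$ with $\GA=\coprod_{i=1}^r G(F^+)g_iU$, noting that $r=\#\bigl(G(F^+)\backslash\GA/U\bigr)$ depends only on $U$.

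Next I would set $\Gamma_i:=g_iUg_i^{-1}\cap G(F^+)$ for $1\le i\le r$. Each $\Gamma_i$ is the intersection of the discrete subgroup $G(F^+)\subseteq G({\mathbb A}_{F^+})$ with the compact subset $(g_iUg_i^{-1})\times G(F^+\otimes_\Q\R)$, hence is finite. The key point is that $\Gamma_i=\{1\}$: choosing a finite place $v$ of $F^+$ at which the projection $U_v$ of $U$ to $G(F_v^+)$ has no element of finite order (the definition of sufficiently small), the conjugate $g_{i,v}U_vg_{i,v}^{-1}\subseteq G(F_v^+)$ is again torsion-free, and the injective diagonal embedding $G(F^+)\hookrightarrow G(F_v^+)$ carries $\Gamma_i$ into $g_{i,v}U_vg_{i,v}^{-1}$; a finite subgroup of a torsion-free group is trivial.

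Finally I would produce the isomorphism by evaluation: $f\mapsto\bigl(f(g_i)\bigr)_{1\le i\le r}$ defines an $\oE$-linear map $S(U,M)\to M^{\oplus r}$, visibly natural in $M$. It is injective because any $x\in\GA$ can be written $x=\gamma g_iu$ with $\gamma\in G(F^+)$, $u\in U$, whence $f(x)=u_p^{-1}\bigl(f(g_i)\bigr)$. For surjectivity, given $(m_i)_i\in M^{\oplus r}$ I would define $f(\gamma g_iu):=u_p^{-1}(m_i)$; this is well defined since $\gamma g_iu=\gamma'g_iu'$ forces $\gamma'^{-1}\gamma=g_i(u'u^{-1})g_i^{-1}\in\Gamma_i=\{1\}$, hence $u'=u$, so there is no ambiguity, and one checks directly using that the $p$-projection $U\to G(\oFFp)$ is a homomorphism that the resulting $f$ is left $G(F^+)$-invariant and satisfies $f(gu)=u_p^{-1}f(g)$, i.e.\ $f\in S(U,M)$. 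The main (and only) obstacle is the triviality of the $\Gamma_i$, which is exactly what the "sufficiently small" condition provides; everything else is formal manipulation of double cosets.
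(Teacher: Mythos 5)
Your proof is correct and is exactly the standard argument that the paper is alluding to when it calls the lemma ``well-known'' and cites \cite[\S7.1.2]{EGH}: finiteness of the double coset space $G(F^+)\backslash\GA/U$ from compactness at infinity, triviality of the stabilizers $\Gamma_i=g_iUg_i^{-1}\cap G(F^+)$ from the sufficiently-small hypothesis, and evaluation at representatives. Nothing is missing.
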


\begin{lem}\label{tensor}
Let $M$ be a finite-dimensional smooth representation of $G(\oFFp)$ over $k_E$ and $U^p\subseteq \GAp$ a compact open subgroup. Then one has
$$S\big(U^pG(\oFFp),M\big)\buildrel\sim\over\longrightarrow \Hom_{G(\oFFp)}\big(M^\vee,S(U^p,k_E)\big).$$
\end{lem}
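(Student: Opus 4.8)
The statement is essentially a compatibility between the (smooth) automorphic forms functor $S(-,-)$ and a tensor/Hom duality at the places above $p$. The plan is to unwind the definition of $S(U^pG(\oFFp),M)$ as a direct limit over compact open $U_p \subseteq G(\oFFp)$ of the finite-level spaces $S(U^pG(\oFFp)U_p', M)$ — but here the level at $p$ is already maximal, namely all of $G(\oFFp)$, so in fact $S(U^pG(\oFFp),M)$ is simply the space of functions $f : G(F^+)\backslash\GA/U^p \to M$ with $f(gu) = u_p^{-1}(f(g))$ for $u \in G(\oFFp)$. First I would fix the decomposition into double cosets $G(F^+)\backslash\GA/U^pG(\oFFp) = \coprod_{i} G(F^+) g_i U^pG(\oFFp)$, which is finite (by finiteness of class numbers for the definite unitary group $G$), and for each $i$ let $\Gamma_i := g_i^{-1}G(F^+)g_i \cap U^pG(\oFFp)$; since $G$ is definite at infinity and compact mod centre, each $\Gamma_i$ is finite.

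\textbf{Key steps.} Evaluation at the $g_i$ gives the standard identification $S(U^pG(\oFFp),M) \cong \bigoplus_i M^{\Gamma_{i,p}}$, where $\Gamma_{i,p}$ denotes the image of $\Gamma_i$ in $G(\oFFp)$ acting on $M$ — and since $M$ is a \emph{smooth} representation of $G(\oFFp)$, every element of $\Gamma_{i,p}$ acts through a finite quotient, so the $\Gamma_{i,p}$-invariants make sense and are computed at finite level. The parallel computation for the right-hand side: $S(U^p,k_E) = \ilim{U_p} S(U^pU_p,k_E)$ carries its smooth $G(\oFFp)$-action, and $\Hom_{G(\oFFp)}(M^\vee, S(U^p,k_E))$ decomposes along the \emph{same} double cosets as $\bigoplus_i \Hom_{\Gamma_{i,p}}(M^\vee, k_E)$ — one uses Frobenius/Shapiro reciprocity for the finite group through which $\Gamma_{i,p}$ acts, together with the fact that $M^\vee$ is finite-dimensional so the $\Hom$ commutes with the direct limit defining $S(U^p,k_E)$. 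Finally, for a finite group $H$ acting on a finite-dimensional $k_E$-space $M$, one has the natural isomorphism $M^{H} \cong \Hom_{H}(M^\vee, k_E)$ (send $m$ to the pairing against $m$; this is where one must be slightly careful in characteristic $p$, but it holds because it is the statement that taking $H$-invariants of $M$ is dual to taking $H$-coinvariants of $M^\vee$, and the invariants–coinvariants are identified here only after dualizing, which is always an isomorphism of vector spaces). Assembling these over $i$ and checking $G(\oFFp)$-equivariance of the identifications (both sides carry the residual $G(\oFFp)$-action away from the $U_p$-level, which matches up because the double-coset decomposition and the groups $\Gamma_i$ are defined before imposing any $p$-level) yields the asserted isomorphism.

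\textbf{Main obstacle.} The genuinely delicate point is the interplay between the direct limit $S(U^p,k_E) = \ilim{U_p} S(U^pU_p,k_E)$ and the $\Hom_{G(\oFFp)}$: one needs $M^\vee$ finite-dimensional (hence finitely generated as a smooth $G(\oFFp)$-representation, so its image in any smooth representation lands at some finite level) to commute $\Hom$ past the colimit, and then one must match the $\Gamma_{i,p}$-action on $M$ (a finite group acting on a finite-dimensional space) against the $U_p$-level at which that action becomes trivial. Keeping track of which compact open $U_p$ suffices, uniformly in $i$, and verifying that the resulting isomorphism is independent of all choices (of $g_i$, of $U_p$) and is $G(\oFFp)$-equivariant, is the bookkeeping that the proof must carry out carefully; the rest is formal. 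Concretely, one can avoid double cosets altogether by invoking Lemma~\ref{small}: choosing $U^p$ (or rather $U^pG(\oFFp)$) sufficiently small, $S(U^pG(\oFFp),M) \cong M^{\oplus r}$ and $S(U^p,k_E)\cong k_E^{\oplus r}$ as $G(\oFFp)$-representations compatibly (same $r$), whence the claim reduces to the elementary identity $\Hom_{G(\oFFp)}(M^\vee, k_E^{\oplus r}) \cong (M^{\vee\vee})^{\oplus r} \cong M^{\oplus r}$; the general case then follows by a standard approximation/corestriction argument passing to a sufficiently small $U^p$ and taking invariants.
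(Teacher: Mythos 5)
Your main argument (the double-coset computation) is correct but takes a different route from the paper, which simply writes down the canonical map sending $f$ to $\ell\mapsto\big(g\mapsto\ell(f(g))\big)$, checks it is $G(\oFFp)$-equivariant and lands in the smooth vectors, and refers to the proof of \cite[Lem.\ 7.4.3]{EGH} for bijectivity. Your route makes the bijectivity explicit: writing $G(F^+)\backslash\GA/U^pG(\oFFp)=\coprod_i G(F^+)g_iU^pG(\oFFp)$ (finite since $G$ is definite), with $\Gamma_{i,p}$ the (finite) image at $p$ of $g_i^{-1}G(F^+)g_i\cap U^pG(\oFFp)$, one gets $S(U^pG(\oFFp),M)\cong\oplus_i M^{\Gamma_{i,p}}$ by evaluation at the $g_i$, while $S(U^p,k_E)\cong\oplus_i\Ind_{\Gamma_{i,p}}^{G(\oFFp)}k_E$ as smooth $G(\oFFp)$-representations, so Frobenius reciprocity gives $\Hom_{G(\oFFp)}(M^\vee,S(U^p,k_E))\cong\oplus_i\Hom_{\Gamma_{i,p}}(M^\vee,k_E)\cong\oplus_i((M^\vee)_{\Gamma_{i,p}})^\vee\cong\oplus_i M^{\Gamma_{i,p}}$, the last step using finite-dimensionality of $M$. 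This is a complete self-contained proof, at the cost of more bookkeeping; the paper's version has the advantage of exhibiting the isomorphism as an explicit natural map, which is what gets used later (e.g.\ in Proposition \ref{injsigma}), so you should still record that your chain of identifications agrees with that evaluation map. One small confusion: neither side of the asserted isomorphism carries a residual $G(\oFFp)$-action (the level at $p$ is all of $G(\oFFp)$ on the left, and the $\Hom$ is taken over $G(\oFFp)$ on the right), so there is no equivariance to check, only naturality in $M$ and independence of the choice of the $g_i$.

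The closing ``shortcut'' via Lemma \ref{small}, however, does not work as stated. Lemma \ref{small} applies to $S(U,M)$ for a full level $U\subseteq\GAp\times G(\oFFp)$ and gives an isomorphism of $\oE$-modules $S(U,M)\cong M^{\oplus r}$; it says nothing about $S(U^p,k_E)$, which is the direct limit over shrinking levels at $p$ and is infinite-dimensional, so the claim $S(U^p,k_E)\cong k_E^{\oplus r}$ is false, and there is no $G(\oFFp)$-equivariant content in Lemma \ref{small} to exploit. The subsequent ``standard approximation/corestriction argument'' is not an argument. If you want to use Lemma \ref{small}, the correct way is the one the paper takes in Proposition \ref{injsigma}: first prove the present lemma by one of the two genuine arguments above, and only then combine it with Lemma \ref{small}. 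So you should delete that final paragraph and keep the double-coset proof.
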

\begin{proof}
The proof is essentially that of \cite[Lem.\ 7.4.3]{EGH}, so we just define the map. We send $f:G(F^+)\backslash \GA\rightarrow M$ to
$$\ell\in  M^\vee \mapsto \big(g\mapsto \ell(f(g))\big).$$
Then $u_p\cdot \ell$ is sent to $\big(g\mapsto \ell(u_p^{-1}(f(g)))=\ell(f(gu_p))\big)=u_p\cdot \big(g\mapsto \ell(f(g))\big)$, where $u_p\in G(\oFFp)$, and the map is thus $G(\oFFp)$-equivariant. Since the action of $G(\oFFp)$ on $M^\vee$ is smooth, its image lies in $S(U^p,k_E)$.
\end{proof}

If $U^p\subseteq \GAp$ is a compact open subgroup, we denote by $\Sigma$ a finite set of finite places of $F^+$ containing the set of places of $F^+$ that split in $F$ and divide $pN$ and the set of places of $F^+$ that split in $F$ at which $U^p$ is not unramified.

\begin{prop}\label{injsigma}
Let $U^p\subseteq \GAp$ be a sufficiently small compact open subgroup, $\Sigma$ a set of places of $F^+$ as above and $\m^{\Sigma}$ a maximal ideal of $\TT^{\Sigma}$ \(see \S\ref{set}\) with residue field $k_E$. Then the admissible smooth $G(F^+\otimes_{\Q}\Qp)$-representations $S(U^p,k_E)$ and $S(U^p,k_E)_{\m^{\Sigma}}$ are injective objects in the category of smooth representations of $G(\oFFp)$ over $k_E$.
\end{prop}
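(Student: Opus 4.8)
The plan is to reduce everything, via Lemmas~\ref{small} and~\ref{tensor}, to a computation of $\Hom$-spaces against \emph{finite-dimensional} smooth representations of $G(\oFFp)$. First I would dispose of $S(U^p,k_E)_{\m^\Sigma}$: as recalled in \S\ref{set}, it is a direct summand of $S(U^p,k_E)$ as a representation of $G(F^+\otimes_\Q\Qp)$, hence also as a representation of the compact open subgroup $G(\oFFp)\subseteq G(F^+\otimes_\Q\Qp)$; since a direct summand of an injective object is injective, it then suffices to treat $S(U^p,k_E)$ itself.

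Write $K_p:=G(\oFFp)$. The next step is to observe that the category $\mathrm{Rep}^{\mathrm{sm}}_{k_E}(K_p)$ is locally Noetherian: every smooth representation is the union of the finite-dimensional subrepresentations generated by its vectors (a vector fixed by an open subgroup of the profinite group $K_p$ has finite orbit), and finite-dimensional representations are Noetherian. Consequently, by the Noetherian form of Baer's criterion, an object $V$ is injective in $\mathrm{Rep}^{\mathrm{sm}}_{k_E}(K_p)$ as soon as, for every inclusion $N'\subseteq N$ of finite-dimensional smooth $K_p$-representations, the restriction map $\Hom_{K_p}(N,V)\to\Hom_{K_p}(N',V)$ is surjective; equivalently, as soon as $N\mapsto\Hom_{K_p}(N,V)$ is exact on the subcategory of finite-dimensional smooth $K_p$-representations.

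I would then apply this with $V=S(U^p,k_E)$. Because $U^p$ is sufficiently small, so is $U:=U^pG(\oFFp)$ by definition, so Lemma~\ref{small} provides a natural isomorphism $S(U,M)\cong M^{\oplus r}$ with $r\ge 0$ depending only on $U^p$, valid for every $k_E$-module $M$ with smooth $G(\oFFp)$-action; and Lemma~\ref{tensor} provides a natural isomorphism $S(U,M)\cong\Hom_{G(\oFFp)}(M^\vee,S(U^p,k_E))$ for $M$ finite-dimensional. Taking $M=N^\vee$, so that $M^\vee\cong N$ since $N$ is finite-dimensional, and composing, one obtains a natural isomorphism $\Hom_{K_p}(N,S(U^p,k_E))\cong(N^\vee)^{\oplus r}$. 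The functor $N\mapsto(N^\vee)^{\oplus r}$ is exact on finite-dimensional smooth representations because $N\mapsto N^\vee$ is; by naturality the restriction map $\Hom_{K_p}(N,S(U^p,k_E))\to\Hom_{K_p}(N',S(U^p,k_E))$ is identified with the surjection $(N^\vee)^{\oplus r}\to(N'^\vee)^{\oplus r}$ induced by $N'\hookrightarrow N$. This gives the required exactness, hence the injectivity of $S(U^p,k_E)|_{K_p}$, and the proposition follows.

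I expect the one genuinely substantive point to be the reduction in the second paragraph — that injectivity in $\mathrm{Rep}^{\mathrm{sm}}_{k_E}(K_p)$ may be tested against finite-dimensional representations only — together with checking that the isomorphisms of Lemmas~\ref{small} and~\ref{tensor} are natural enough (the former being evaluation at a set of double-coset representatives, the latter an explicit $G(\oFFp)$-equivariant formula) for the composite isomorphism to be functorial in $N$; everything after that is formal. It is worth double-checking along the way that $U^pG(\oFFp)$ sufficiently small is indeed exactly the defining condition for $U^p$ sufficiently small, so that Lemma~\ref{small} applies with no further hypothesis.
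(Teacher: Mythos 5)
Your proof is correct, and the substantive computation at its heart --- identifying $\Hom_{G(\oFFp)}(N,S(U^p,k_E))$ with $S(U^pG(\oFFp),N^\vee)\cong (N^\vee)^{\oplus r}$ via Lemmas \ref{tensor} and \ref{small}, and exploiting exactness of $N\mapsto (N^\vee)^{\oplus r}$ --- is exactly the one the paper uses. Where you genuinely diverge is in the reduction to finite-dimensional representations. The paper first invokes \cite[Prop.\ 2.1.9]{Em3} to reduce injectivity in the full smooth category to injectivity in the category of \emph{admissible} smooth representations, then writes an admissible $\pi'$ as a countable increasing union $\cup_m\pi'_m$ of finite-dimensional subrepresentations and passes to the limit using a Mittag--Leffler argument (which is where admissibility of $S(U^p,k_E)$ is used, to get finite-dimensionality of the relevant $\Hom$ spaces). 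You instead observe that every smooth representation of the compact group $G(\oFFp)$ is the sum of its finite-dimensional subrepresentations and apply the Baer-type criterion for the resulting locally Noetherian Grothendieck category. Your route avoids both the citation of Emerton's proposition and the Mittag--Leffler step, and it never needs admissibility of the test objects; the price is that the ``Noetherian form of Baer's criterion'' must itself be justified --- a short Zorn's lemma argument (take a maximal extension $(M_0,f_0)$ of $f:M'\to E$ inside $M$; if $M_0\ne M$, pick a finite-dimensional $N\subseteq M$ not contained in $M_0$, extend $f_0|_{N\cap M_0}$ to $N$ by hypothesis, and glue on $M_0+N$ to contradict maximality) --- so you should either include that argument or cite a standard reference for it. Your remaining checks (that the direct summand reduction handles $S(U^p,k_E)_{\m^\Sigma}$, that $U^pG(\oFFp)$ sufficiently small is the definition of $U^p$ sufficiently small, and that the isomorphism of Lemma \ref{tensor} is natural in $M$ because it is given by an explicit formula) are all as you expect.
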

\begin{proof}
The second representation being a direct summand of the first (see \S\ref{set}), it is enough to prove the statement for the first. By \cite[Prop.\ 2.1.9]{Em3}, it is enough to prove that $S(U^p,k_E)$ is injective in the category of {\it admissible} smooth representations of $G(\oFFp)$ over $k_E$. Let $j:\pi\hookrightarrow \pi'$ be an injection of admissible smooth $G(\oFFp)$-representations over $k_E$, we have to prove that
$$\Hom_{G(\oFFp)}\big(\pi',S(U^p,k_E)\big)\buildrel \cdot\circ j\over\longrightarrow \Hom_{G(\oFFp)}\big(\pi,S(U^p,k_E)\big)$$
is surjective. Write $\pi'=\cup_m \pi'_m$, where $(\pi'_m)_{m\in \Z_{>0}}$ is an increasing sequence of finite-dimensional vector subspaces preserved by $G(\oFFp)$ (recall $\pi'$ is admissible). It is enough to prove that all maps
\begin{equation}\label{surjm}
\Hom_{G(\oFFp)}\big(\pi'_m,S(U^p,k_E)\big)\buildrel \cdot\circ j\vert_{\pi\cap\pi'_m}\over\longrightarrow \Hom_{G(\oFFp)}\big(\pi\cap \pi'_m,S(U^p,k_E)\big)
\end{equation}
are surjective. Indeed, since $\Hom_{G(\oFFp)}\big(\pi'_m/(\pi\cap\pi'_m),S(U^p,k_E)\big)$ is finite-dimen\-sional ($S(U^p,k_E)$ being admissible), the Mittag-Leffler conditions are satisfied on the projective system $\big(\Hom_{G(\oFFp)}\big(\pi'_m/(\pi\cap\pi'_m),S(U^p,k_E)\big)\big)_{m\in \Z_{>0}}$ and the surjection survives the projective limit. The surjection in (\ref{surjm}) then follows from Lemma \ref{tensor} applied to $M=(\pi'_m)^{\vee}$ and $M=(\pi\cap\pi'_m)^\vee$ and Lemma \ref{small}.
\end{proof}

Let $\rbar:\gF\rightarrow {\GL}_n(k_E)$ be a continuous representation such that $\rbar^c\cong \rbar^\vee\otimes\omega^{1-n}$ and $\rbar$ is absolutely irreducible. If $U^p\subseteq \GAp$ is a compact open subgroup we denote by $\Sigma$ a finite set of finite places of $F^+$ containing the set of places of $F^+$ that split in $F$ and divide $pN$, the set of places of $F^+$ that split in $F$ at which $U^p$ is not unramified and the set of places of $F^+$ that split in $F$ at which $\rbar$ is ramified. Recall that we have associated to $\rbar$ and $\Sigma$ a maximal ideal $\m^{\Sigma}$ of $\TT^{\Sigma}$ with residue field $k_E$ in \S\ref{globconj}.

We denote by $G_p$ the restriction of scalars from $\oFFp$ to $\Zp$ of the algebraic group $G\times_{\oFF[1/N]}\oFFp$, so that $G_p(\Zp)=G(\oFFp)=\prod_{v\vert p}G(\oFFv)$ (the algebraic group $G_p$ is isomorphic to $\prod_{\widetilde v}\!{\GL_n}_{/\Zp}$). If $\sigma$ is any Serre weight for $G_p(\Fp)$ and if $U^p$ and $\Sigma$ are as above, then the commutative $k_E$-algebra
$$\TT^{\Sigma}\otimes_{\oE}{\mathcal H}_{G_p}(\sigma)=(\TT^{\Sigma}\otimes_{\oE}k_E)\otimes_{k_E}{\mathcal H}_{G_p}(\sigma)$$
(see \S\ref{locres} for ${\mathcal H}_{G_p}(\sigma)$) acts on $\Hom_{G(\oFFp)}(\sigma,S(U^p\!,k_E))=\Hom_{G_p(\Zp)}(\sigma,S(U^p\!,k_E))$ and also on $\Hom_{G(\oFFp)}(\sigma,S(U^p,k_E)_{\m^{\Sigma}})$. Identifying $G_p(\Zp)$ with $\prod_{v\vert p}\!\GL_n({\mathcal O}_{\!F_{\tilde v}})$ via $\prod_{v\vert p}\iota_{\tilde v}$, a Serre weight $\sigma$ for $G_p(\Fp)$ is of the form $\sigma=\otimes_{v\vert p}\sigma_{\tilde v}$, where $\sigma_{\tilde v}$ is a Serre weight for $\GL_n({\mathcal O}_{\!F_{\tilde v}})=\GL_n(\Zp)$. Using the proof of \cite[Lem.\ 8.2]{He2} we have ${\mathcal H}_{G_p}(\sigma)=\otimes_{v\vert p}{\mathcal H}_{\GL_n}(\sigma_{\tilde v})$. By Example \ref{plustard} we thus have an isomorphism
\begin{equation}\label{isov}
{\mathcal H}_{G_p}(\sigma)\cong k_E[T_{\sigma_{\tilde v},1},\dots,T_{\sigma_{\tilde v},n-1},T_{\sigma_{\tilde v},n}^{\pm 1},\ v\vert p].
\end{equation}
Recall that $\Hom_{G(\oFFp)}(\sigma,S(U^p,k_E)_{\m^{\Sigma}})^{\ord}\subseteq \Hom_{G(\oFFp)}(\sigma,S(U^p,k_E)_{\m^{\Sigma}})$ was defined in \S\ref{locres}. Note that it is compatible with base change for algebraic extensions of $k_E$.

\begin{prop}\label{prop:ordinarygal}
  Suppose that $U^p \subseteq \GAp$ is a compact open subgroup, $\Sigma$ is a finite set of places as above, and that $\rbar
  : \gF\rightarrow {\GL}_n(k_E)$ is absolutely irreducible.  Suppose that $\sigma = \otimes_{v | p} \sigma_{\tilde v} =
  \otimes_{v | p} F(\lambda_{\tilde v})$ is a Serre weight for $G(\oFFp)$ \(so $0\leq \lambda_{\tilde v,i}-\lambda_{\tilde
    v,i+1}\leq p-1$ for all $i$\) and that $\eta : {\mathcal H}_{G_p}(\sigma) \to k_E$ is an ordinary character. If
  $\Hom_{G(\oFFp)}\big(\sigma,S(U^p,k_E)_{\m^{\Sigma}}\big)[\eta]\ne 0$, then
  \begin{equation*}
    \rbar_{\tilde v}\cong \begin{pmatrix}\omega^{\lambda_{\tilde v,1}}\nr(u_{\tilde v,1})&*&\cdots &*\\
      0&\omega^{\lambda_{\tilde v,2}-1}\nr(\frac{u_{\tilde v,2}}{u_{\tilde v,1}})&\varddots &\vdots\\
      \vdots & \varddots & \varddots & *\\
      0&\dots & 0 & \omega^{\lambda_{\tilde v,n}-(n-1)}\nr(\frac{u_{\tilde v,n}}{u_{\tilde v,n-1}})\end{pmatrix},
  \end{equation*}
  where $u_{\tilde v, j} = \eta(T_{\sigma_{\tilde v},n}^{-1}T_{\sigma_{\tilde v},n-j}) \in k_E^\times$.
\end{prop}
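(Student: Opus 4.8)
The idea is to reduce the statement, place by place, to the known ordinary modularity results of Gee--Geraghty via a Hecke-algebra/Galois-representation dictionary, combined with Theorem~\ref{he}. First I would unravel the hypothesis: by Frobenius reciprocity, $\Hom_{G(\oFFp)}(\sigma,S(U^p,k_E)_{\m^{\Sigma}})[\eta]\ne 0$ is the same as a nonzero $\TT^{\Sigma}$-equivariant map $\big(\ind_{G_p(\Zp)}^{G_p(\Qp)}\sigma\big)\otimes_{{\mathcal H}_{G_p}(\sigma),\eta}k_E\hookrightarrow S(U^p,k_E)_{\m^{\Sigma}}$. Since $\eta$ is an ordinary character and $\sigma=\otimes_{v\mid p}F(\lambda_{\tilde v})$ with each $\lambda_{\tilde v}$ satisfying $0\le \lambda_{\tilde v,i}-\lambda_{\tilde v,i+1}\le p-1$, Theorem~\ref{he} applied to each $\GL_n$-factor (and Lemma~\ref{adm}(i) to split ${\mathcal H}_{G_p}(\sigma)$ as a tensor product) identifies this representation with a tensor product of smooth principal series $\otimes_{v\mid p}\Ind_{B^-(\Qp)}^{\GL_n(\Qp)}\chi_{\tilde v}$, where $\chi_{\tilde v}$ is the explicit character of $T(\Qp)$ read off from $\eta$ via the recipe in Example~\ref{plustard2}; concretely $\chi_{\tilde v}=\omega^{\lambda_{\tilde v,1}}\nr(\tfrac{\eta_{\tilde v,n-1}}{\eta_{\tilde v,n}})\otimes\cdots\otimes\omega^{\lambda_{\tilde v,n}}\nr(\tfrac{1}{\eta_{\tilde v,1}})$ in the notation there, so that the matching Galois character on the $j$-th diagonal entry is $\omega^{\lambda_{\tilde v,j}-(j-1)}\nr(u_{\tilde v,j}/u_{\tilde v,j-1})$ with $u_{\tilde v,j}=\eta(T_{\sigma_{\tilde v},n}^{-1}T_{\sigma_{\tilde v},n-j})$.

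Next I would feed this into the global theory. The existence of a nonzero Hecke-eigenvector in $\Hom_{G(\oFFp)}(\sigma,S(U^p,k_E)_{\m^{\Sigma}})$ on which each $T_{\sigma_{\tilde v},j}$ acts invertibly (which is exactly what ``$\eta$ ordinary'' gives, since the $u_{\tilde v,j}\in k_E^\times$) is precisely the condition that $\rbar$ is \emph{modular and ordinary} of Serre weight $\sigma$ in the sense of \cite[\S6]{GG}. By the results of Gee--Geraghty (\cite{GG}, \cite{Ge}) on ordinary Serre weights — which is where the absolute irreducibility of $\rbar$ and the automorphy lifting machinery enter — $\rbar$ then admits, after conjugation, an upper-triangular crystalline lift whose diagonal Hodge--Tate twisted characters match the weight $\sigma$ and whose ``$U_p$-eigenvalues'' match the ordinary character; reducing mod $p$ (cf.\ the argument recalled in Example~\ref{exord}, using \cite[Lem.\ 3.1.5]{GG}) gives that $\rbar_{\tilde v}$ is upper-triangular with the $j$-th diagonal character equal to $\omega^{\lambda_{\tilde v,j}-(j-1)}$ times the prescribed unramified twist. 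Matching the unramified parts carefully — i.e.\ checking that the eigenvalue of $\Frob$ on the $j$-th graded piece is exactly $u_{\tilde v,j}/u_{\tilde v,j-1}$ — is a bookkeeping exercise in the normalizations of the reciprocity map and of the Hecke operators $T_w^{(j)}$ versus the operators $T_{\sigma_{\tilde v},j}$, which I would carry out using the explicit double-coset formulas of \S\ref{set} and \S\ref{locres}.

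The main obstacle is the Hecke-to-Galois matching: one must be sure that ``the Hecke eigenvalue of the automorphic form at $p$, suitably normalized'' corresponds under classical/ordinary local-global compatibility to ``the eigenvalue of Frobenius on the associated filtered piece of $\rbar_{\tilde v}$,'' with all twists by $\omega^{\lambda_{\tilde v,j}-(j-1)}$ accounted for. This is the content of the ordinary part of Gee--Geraghty's work, but translating between their conventions, Herzig's conventions for ${\mathcal H}_{\GL_n}(\sigma)$ (Example~\ref{plustard}, Theorem~\ref{he}), and the conventions of \S\ref{set} for $\TT^{\Sigma}$ requires care; a sign or a Tate twist error would shift the answer. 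A secondary (but routine) point is that $\eta$ ordinary forces each $u_{\tilde v,j}\in k_E^\times$, so all the unramified characters $\nr(u_{\tilde v,j}/u_{\tilde v,j-1})$ are well-defined, and that the genericity/regularity constraint $0\le \lambda_{\tilde v,i}-\lambda_{\tilde v,i+1}\le p-1$ is exactly what is needed to invoke Theorem~\ref{he} and the Gee--Geraghty weight results in the crystalline (Fontaine--Laffaille) range. Once the dictionary is fixed, the conclusion is immediate from the displayed shape of $\rbar_{\tilde v}$.
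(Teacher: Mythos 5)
Your overall strategy (translate the ordinary Hecke eigensystem into a statement about $\rbar_{\tilde v}$ via the Gee--Geraghty circle of ideas) is in the right family, but the central step of your argument points at the wrong result and skips the idea that actually makes the proof work. You write that ``by the results of Gee--Geraghty on ordinary Serre weights --- which is where the absolute irreducibility of $\rbar$ and the automorphy lifting machinery enter --- $\rbar$ then admits, after conjugation, an upper-triangular crystalline lift whose \dots $U_p$-eigenvalues match the ordinary character.'' That is not what those results say: \cite[Thm.\ 6.1.6]{GG} (and the automorphy lifting theorems behind it) go in the \emph{opposite} direction --- given that $\rbar_{\tilde v}$ is ordinary with an ordinary crystalline lift of the right weights, they produce modularity in the weight $\sigma$; this is exactly how they are used in Proposition \ref{Nul}. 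They do not take a mod $p$ ordinary eigensystem and output the shape of $\rbar_{\tilde v}$ --- that implication \emph{is} Proposition \ref{prop:ordinarygal}, so as written your proof is circular at this point. Likewise \cite[Lem.\ 3.1.5]{GG}, invoked in Example \ref{exord}, constructs a crystalline lift \emph{from} the displayed mod $p$ shape, not the other way around.

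The missing idea is the passage to characteristic zero. The paper embeds $F(\lambda')\hookrightarrow L(\lambda')\otimes_{\oE}k_E$ for a suitable $\oE$-lattice $L(\lambda')_{/\oE}$ in the algebraic representation, so that $S(U^pG(\oFFp),F(\lambda'))_{\m^{\Sigma}}$ sits inside $S(U^pG(\oFFp),L(\lambda'))_{\m^{\Sigma}}\otimes_{\oE}k_E$; checks that the ``weighted'' double coset operators of \cite{Ge} on the latter reduce to your operators $T_{\sigma_{\tilde v},n}^{-1}T_{\sigma_{\tilde v},n-j}$; and then lifts the unit eigenvalues $u_{\tilde v,j}$ to unit eigenvalues $\widetilde u_{\tilde v,j}\in\oE^{\times}$ by a Deligne--Serre argument (\cite[Lem.\ 4.5.1]{EGH}). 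Only now is there an automorphic representation $\pi$ in characteristic zero with an attached $p$-adic Galois representation, to which one applies classical local-global compatibility at $p$ in the ordinary case (\cite[Cor.\ 2.7.8(1)]{Ge}: unit $U_p$-eigenvalues force the local representation to be upper-triangular with the prescribed diagonal characters), and reduction mod $\pE$ gives the displayed matrix. Note also that the absolute irreducibility of $\rbar$ is used here not through automorphy lifting but to ensure the base change of $\pi$ to $\GL_n/F$ is cuspidal, so that local-global compatibility applies. Your opening reduction via Theorem \ref{he} is correct and consistent with the formula for $u_{\tilde v,j}$, but it is not needed for this proposition; without the lifting step your phrase ``reducing mod $p$'' has nothing to reduce.
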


\begin{proof}
  Step 1: We make some easy reductions.\\
  Let $\sigma$ be any Serre weight for $G_p(\Fp)$. Since the actions of $\TT^{\Sigma}$ and ${\mathcal H}_{G_p}(\sigma)$
  commute we have
  \begin{eqnarray*}
    \Hom_{G(\oFFp)}\big(\sigma,S(U^p,k_E)_{\m^{\Sigma}}\big)^{\ord}&=&\Hom_{G(\oFFp)}\big(\sigma,S(U^p,k_E)\big)^{\ord}_{\m^{\Sigma}}\\
    &=&S\big(U^pG(\oFFp),\sigma^\vee\big)^{\ord}_{\m^{\Sigma}},
  \end{eqnarray*}
  where the second equality follows from Lemma \ref{tensor}. Since $S(U^p,k_E)_{\m^\Sigma}\subseteq
  S({U'}^p,k_E)_{\m^{\Sigma'}}$ if ${U'}^p\subseteq U^p$ and $\Sigma'\supseteq \Sigma$ (see \S\ref{set}), we have an analogous
  inclusion with the $\Hom_{G(\oFFp)}(\sigma,\cdot)^{\ord}$. Thus we can assume $U^p$ sufficiently small.

  \noindent
  Step 2: We check some compatibilities with \cite{Ge} and \cite{GG}.\\
  Let $T_p\subseteq B_p\subseteq G_p$ be the split maximal torus and the Borel subgroup in $G_p$ which, under $\iota$,
  correspond respectively to diagonal and upper triangular matrices in $\prod_{\widetilde v}\!{\GL_n}_{/\Zp}$. We have
  $\sigma^\vee\cong F(\lambda')$, where $\lambda'\in X_1(T_p)$ satisfies $\lambda'_{\tilde v, i} = -\lambda_{\tilde v,n-i+1}$
  for all $\tilde v$, $i$. Let $L(\lambda')_{/{\oE}}=\otimes_{v\vert p}L(\lambda'_{\tilde v})_{/{\oE}}$ be the algebraic
  representation of $G_p\times_{\Zp}\oE$ given by $({\rm ind}_{B^-_p}^{G_p}\lambda')_{/\oE}$ (algebraic induction functor of
  \cite[\S I.3.3]{Ja}). We consider $L(\lambda')$ as a representation of $G_p(\Zp)=G(\oFFp)$ over $\oE$ and we have an
  injection of smooth finite-dimensional $G(\oFFp)$-representations over $k_E$: $F(\lambda')\hookrightarrow
  L(\lambda')\otimes_{\oE}k_E$ (\cite[\S II.2]{Ja}). Let $U^p\subseteq \GAp$ be a sufficiently small compact open subgroup and
  $\Sigma$ a set of finite places as above, then $S(U^pG(\oFFp),L(\lambda')\otimes_{\oE}k_E)\cong
  S(U^pG(\oFFp),L(\lambda'))\otimes_{\oE}k_E$ by Lemma \ref{small} and we deduce an injection of $\TT^{\Sigma}$-modules
  $S(U^pG(\oFFp),F(\lambda'))\hookrightarrow S(U^pG(\oFFp),L(\lambda'))\otimes_{\oE}k_E$ which induces an injection of
  $\TT^{\Sigma}$-modules
  $$S(U^pG(\oFFp),F(\lambda'))_{\m^{\Sigma}}\hookrightarrow S(U^pG(\oFFp),L(\lambda'))_{\m^{\Sigma}}\otimes_{\oE}k_E.$$
  By \cite[Def.\ 2.3.2]{Ge} or \cite[\S6.1]{GG}
  there is an action on $S(U^pG(\oFFp),L(\lambda'))$ (and thus on $S(U^pG(\oFFp),L(\lambda'))_{\m^{\Sigma}}$) of the
  ``weighted'' double cosets
  \begin{eqnarray}\label{dcp}
    p^{-\sum_{i=1}^j\lambda'_{\tilde v,n-i+1}}\iota_{\tilde v}^{-1}\left[{\GL_n}({\mathcal O}_{\!F_{\tilde v}})
      \begin{pmatrix}{\bf 1}_{n-j}&\\& p{\bf 1}_{j}\end{pmatrix}{\GL_n}({\mathcal O}_{\!F_{\tilde v}})\right]
  \end{eqnarray}
  for $v\vert p$ and $j\in \{1,\dots,n\}$ whose reduction modulo $\pE$ preserve the subspace $S(U^pG(\oFFp),F(\lambda'))$
  (and thus $S(U^pG(\oFFp),F(\lambda'))_{\m^{\Sigma}}$). Moreover, by a variant \ of \ \cite[Prop.\ 4.4.2]{EGH}, \ one \ can \ check \ that
  \ the \ action \ of \ (\ref{dcp}) \ on $S(U^pG(\oFFp),\!F(\lambda'))\!\cong \!\Hom_{G(\oFFp)}(\sigma,S(U^p,k_E))$ coincides with the
  action of the operator $T_{\sigma_{\tilde v},n}^{-1}T_{\sigma_{\tilde v},n-j}$ of (\ref{isov}). (In {\it loc.\ cit.}, it
  coincides with the action of $T_{\sigma_{\tilde v},j}$ on $(\sigma^\vee\otimes S(U^p,k_E))^{G(\oFFp)}$ as defined in
  \cite[\S2.2]{EGH}, but this is the same as the action of $T_{\sigma_{\tilde v},n}^{-1}T_{\sigma_{\tilde v},n-j}$ on
  $\Hom_{G(\oFFp)}(\sigma,S(U^p,k_E))$ as defined in \S\ref{locres}, see \cite[\S2.3]{He2}.)

  \noindent
  Step 3: We conclude.\\
  Replacing $E$ by a finite extension if necessary, by \cite[Lem.\ 4.5.1]{EGH} and Step 2, the Hecke eigenvalues
  $u_{\tilde v, j} \in k_E^\times$ of $T_{\sigma_{\tilde v},n}^{-1}T_{\sigma_{\tilde v},n-j}$ on $S(U^pG(\oFFp),F(\lambda'))_{\m^{\Sigma}}$ lift to
  eigenvalues $\widetilde u_{\tilde v, j} \in \oE^{\times}$ of (\ref{dcp}) on the larger space $S(U^pG(\oFFp),L(\lambda'))_{\m^{\Sigma}}
  \otimes_{\oE}k_E$.
  Consider the embedding $S(U^pG(\oFFp),L(\lambda'))_{\m^{\Sigma}}\subseteq
  S(L(\lambda'))\otimes_{\oE}\overline E$, where the latter is a semi-simple $\GA$-representation (see \S\ref{set} and
  \cite[Lem.\ 2.2.5]{Ge}). If $\pi$ is any irreducible constituent of $S(L(\lambda'))\otimes_{\oE}\overline E$ such that
  $\pi\cap S(U^pG(\oFFp),L(\lambda'))_{\m^{\Sigma}}\ne 0$, then its base change to ${\GL_n}_{/F}$ (which exists by
  \cite[Lem.\ 2.2.5]{Ge} and \cite[Cor.\ 5.3]{La}) is a cuspidal automorphic representation of $\GL_n({\mathbb A}_F)$ since
  its associated $p$-adic representation of $\gF$ is irreducible (as it reduces to the irreducible $\rbar$).
  Now assume that the eigenvalues of the operators (\ref{dcp}) on $\pi\cap
  S(U^pG(\oFFp),L(\lambda'))_{\m^{\Sigma}}=\pi^{U^pG(\oFFp)}\cap S(U^pG(\oFFp),L(\lambda'))_{\m^{\Sigma}}$ are all in
  $\oE^{\times}$. We claim the proof of \cite[Cor.\ 2.7.8(1)]{Ge} applies to give the desired result. In our situation, the level
  is prime to $p$, so we do not need the regularity condition on $\lambda$. The characteristic polynomial in the proof becomes
  $\sum_j (-1)^j p^{j(j-1)/2 + \sum_{i=1}^j \lambda'_{\tilde v, n-i+1}} \widetilde u_{\tilde v, j} X^j$ whose roots
  $\alpha_j$ have (distinct) valuations $j-1+\lambda'_{n-j+1}$. Finally observe that in the notation of \cite[Cor.\ 2.7.8(1)]{Ge},
  $\psi_{\tilde v, j}(\mathrm{Art}_{F_{\tilde v}}(p)) = \prod_{i = 1}^j (\alpha_i p^{-(i-1)-\lambda'_{n-i+1}})$ is congruent to
  $\widetilde u_{\tilde v, j}$ modulo $\varpi_E$.
   \end{proof}

Following \cite[\S6]{GG} we say that $\rbar$ (continuous, absolutely irreducible) is {\it modular and ordinary} if there exist a sufficiently small compact open subgroup
$U^p = \prod_{v \nmid p} U_v\subseteq \GAp$ such that $U_v$ is a hyperspecial maximal compact subgroup of $G(F_v^+)$ for all places $v$ of $F^+$ that are inert in $F$,
a finite set of finite places $\Sigma$ as above and a Serre weight $\sigma$ for $G_p(\Fp)$ such that
$$\Hom_{G(\oFFp)}\big(\sigma,S(U^p,k_E)_{\m^{\Sigma}}\big)^{\ord}\ne 0.$$
(Note that the modularity assumption implies $\rbar^c\cong \rbar^\vee\otimes\omega^{1-n}$.) By Proposition~\ref{prop:ordinarygal} if $\rbar$ is modular and ordinary then $\rbar_w$ is ordinary for all $w\vert p$ in $F$.

\begin{prop}\label{Nul}
  Let $\rbar:\gF\rightarrow {\GL}_n(k_E)$ be a continuous representation. We assume that $\rbar$ satisfies the following
  assumptions:
  \begin{enumerate}
  \item $\rbar|_{{\Gal}(\overline F/F(\sqrt[p]{1}))}$ is absolutely irreducible;
  \item $\rbar$ is modular and ordinary;
  \item  $p > 2n+2$ and $\zeta_p \not\in F$.
  \end{enumerate}
  Suppose that for all places $v \vert p$ we have
  \begin{equation}\label{eq:1}
    \rbar_{\tilde v}\cong \begin{pmatrix}\omega^{\lambda_{\tilde v,1}}\nr(u_{\tilde v,1})&*&\cdots &*\\
      0&\omega^{\lambda_{\tilde v,2}-1}\nr(\frac{u_{\tilde v,2}}{u_{\tilde v,1}})&\varddots &\vdots\\
      \vdots & \varddots & \varddots & *\\
      0&\dots & 0 & \omega^{\lambda_{\tilde v,n}-(n-1)}\nr(\frac{u_{\tilde v,n}}{u_{\tilde v,n-1}})\end{pmatrix},
  \end{equation}
  where $1\leq \lambda_{\tilde v,i}-\lambda_{\tilde v,i+1}\leq p-1$ and $u_{\tilde v,i} \in k_E^\times$ for all $i$. Let $\sigma = \otimes_{v | p} \sigma_{\tilde v} := \otimes_{v | p} F(\lambda_{\tilde v})$, a Serre weight for $G(\oFFp)$.

  Then there exist a sufficiently small compact open subgroup $U^p\subseteq \GAp$ and a finite set of finite places $\Sigma$
  as above such that
  \begin{equation*}
    \Hom_{G(\oFFp)}\big(\sigma,S(U^p,k_E)_{\m^{\Sigma}}\big)^{\ord}\ne 0.
  \end{equation*}

  If moreover the integers $\lambda_{\tilde v,i}-(i-1)$ are distinct modulo $p-1$ \(for any $v \vert p$\), then for any such $(U^p,\Sigma)$ and any finite
extension $k'_E$ of $k_E$, there is only one ordinary character $\eta$ of ${\mathcal H}_{G_p}(\sigma)$ such that
$\Hom_{G(\oFFp)}\big(\sigma,S(U^p,k'_E)_{\m^{\Sigma}}\big)[\eta]\ne 0$ and this character sends $T_{\sigma_{\tilde v},j}$ in
\eqref{isov} to $u_{\tilde v,n}^{-1}u_{\tilde v,n-j}\in k_E^{\times}$ in \eqref{eq:1} for $v\vert p$ and $1\leq j\leq n$
\(where $u_{\tilde v,0}:=1$ for all $v\vert p$\).
\end{prop}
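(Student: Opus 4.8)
The plan is to derive the first assertion from the theory of ordinary Serre weights of Gee and Gee--Geraghty, and the uniqueness of $\eta$ from Proposition~\ref{prop:ordinarygal}.

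First I would produce $(U^p,\Sigma)$ with $\Hom_{G(\oFFp)}(\sigma,S(U^p,k_E)_{\m^{\Sigma}})^{\ord}\ne 0$. Since $\rbar$ is modular and ordinary, such a statement already holds for \emph{some} Serre weight $\sigma_0$ and \emph{some} sufficiently small compact open subgroup and finite set of places; the point is to upgrade $\sigma_0$ to the prescribed weight $\sigma=\otimes_{v\vert p}F(\lambda_{\tilde v})$. For each $v\vert p$ the representation $\rbar_{\tilde v}$ displayed in~\eqref{eq:1} is upper triangular, and because $1\leq\lambda_{\tilde v,i}-\lambda_{\tilde v,i+1}\leq p-1$ its diagonal characters lift to the crystalline characters $\varepsilon^{\lambda_{\tilde v,i}-(i-1)}\nr(\widetilde u_{\tilde v,i}/\widetilde u_{\tilde v,i-1})$ (Teichm\"uller lifts, $\widetilde u_{\tilde v,0}:=1$); thus $\rbar_{\tilde v}$ has an upper-triangular crystalline lift with pairwise distinct Hodge--Tate weights, i.e.\ an ordinary crystalline lift of weight $\lambda_{\tilde v}$ in the sense of \cite[Lem.\ 3.1.5]{GG} (compare Example~\ref{exord}). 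Under hypotheses (i) and (iii) -- which are precisely the Taylor--Wiles-type conditions required by the automorphy lifting and Hida-theoretic arguments of \cite{Ge}, \cite[\S6]{GG} -- the modularity and ordinarity of $\rbar$ together with the existence of these local ordinary lifts propagate to the assertion that $\sigma$ (in the normalisation of \S\ref{globconj}, which differs from Definition~\ref{ordser} only by the twist $\omega^{n-1}\circ\det$) is an ordinarily modular Serre weight of $\rbar$, yielding $(U^p,\Sigma)$ as wanted. Shrinking $U^p$ and enlarging $\Sigma$ is harmless thanks to the inclusions $S(U^p,k_E)_{\m^{\Sigma}}\subseteq S({U'}^p,k_E)_{\m^{\Sigma'}}$ recorded in \S\ref{set}.

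Next I would prove uniqueness of $\eta$. Fix $(U^p,\Sigma)$ as above and a finite extension $k'_E$ of $k_E$. The space $\Hom_{G(\oFFp)}(\sigma,S(U^p,k'_E)_{\m^{\Sigma}})$ is finite dimensional, its $^{\ord}$-part decomposes after a scalar extension into generalized eigenspaces for the commutative algebra ${\mathcal H}_{G_p}(\sigma)$, and every character occurring there is ordinary because the operators $T_{\sigma_{\tilde v},j}$ act invertibly on the $^{\ord}$-part; as each generalized eigenspace contains an honest eigenvector, the first assertion (compatible with base change, as noted before Proposition~\ref{prop:ordinarygal}) shows that some ordinary $\eta$ has $\Hom_{G(\oFFp)}(\sigma,S(U^p,k'_E)_{\m^{\Sigma}})[\eta]\ne 0$. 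Conversely, let $\eta$ be \emph{any} ordinary character with $\Hom_{G(\oFFp)}(\sigma,S(U^p,k'_E)_{\m^{\Sigma}})[\eta]\ne 0$. By Proposition~\ref{prop:ordinarygal} (whose proof applies verbatim over $k'_E$), for each $v\vert p$ the representation $\rbar_{\tilde v}$ is isomorphic to the upper-triangular matrix with $i$-th diagonal character $\omega^{\lambda_{\tilde v,i}-(i-1)}\nr(\eta_{\tilde v,i}/\eta_{\tilde v,i-1})$, where $\eta_{\tilde v,j}:=\eta(T_{\sigma_{\tilde v},n}^{-1}T_{\sigma_{\tilde v},n-j})$ and $\eta_{\tilde v,0}:=1$. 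Passing to semisimplifications and comparing with the assumed form~\eqref{eq:1}, the multisets $\{\omega^{\lambda_{\tilde v,i}-(i-1)}\nr(u_{\tilde v,i}/u_{\tilde v,i-1})\}_i$ and $\{\omega^{\lambda_{\tilde v,i}-(i-1)}\nr(\eta_{\tilde v,i}/\eta_{\tilde v,i-1})\}_i$ of characters of $\gp$ coincide; since the $\lambda_{\tilde v,i}-(i-1)$ are pairwise distinct modulo $p-1$ the $\omega^{\lambda_{\tilde v,i}-(i-1)}$ are pairwise distinct on inertia, so this bijection respects the indexing, and an induction on $i$ from $u_{\tilde v,0}=\eta_{\tilde v,0}=1$ gives $\eta_{\tilde v,i}=u_{\tilde v,i}$ for all $i$. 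Unwinding (using $\eta(1)=1$ and that the double coset with no factor of $p$ is trivial) yields $\eta(T_{\sigma_{\tilde v},j})=u_{\tilde v,n}^{-1}u_{\tilde v,n-j}$ for all $v\vert p$ and $1\leq j\leq n$, which determines $\eta$ uniquely and is the asserted formula.

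The hard part is the weight upgrade in the second paragraph: passing from ordinary modularity of \emph{some} weight to that of the prescribed $\sigma$ is the ``ordinary'' case of the weight part of Serre's conjecture for $\GL_n$ and rests on the global input of \cite{Ge}, \cite{GG}; the delicate point there is to match their renormalisation by a power of $p$ of the $U_p$-operators with the Hecke operators $T_{\sigma_{\tilde v},j}$ via a variant of \cite[Prop.\ 4.4.2]{EGH}, exactly as in the proof of Proposition~\ref{prop:ordinarygal}. Once that is granted, the uniqueness part is formal.
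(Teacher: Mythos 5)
Your overall strategy matches the paper's: produce a local ordinary crystalline lift of weight $\lambda_{\tilde v}$ via \cite[Lem.\ 3.1.5]{GG}, feed it into the Gee--Geraghty weight-change machinery to get ordinary modularity in the weight $\sigma$, and deduce the uniqueness of $\eta$ from Proposition \ref{prop:ordinarygal}. Your uniqueness argument is correct and in fact supplies detail the paper leaves implicit (it simply says the last part ``follows from Proposition~\ref{prop:ordinarygal}''); the comparison of the two upper-triangular forms via semisimplification and the distinctness of the $\omega^{\lambda_{\tilde v,i}-(i-1)}$ on inertia is exactly what is needed to rule out a permutation of the diagonal.

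The gap is in the weight-change step. You assert that hypotheses (i) and (iii) ``are precisely the Taylor--Wiles-type conditions required by'' \cite{Ge} and \cite[\S 6]{GG}. They are not: \cite[Thm.\ 6.1.6]{GG} (and the automorphy lifting theorem \cite[Thm.\ 5.1.1]{GG} it relies on) is stated under the \emph{bigness} hypothesis that $\rbar(\Gal(\overline F/F(\sqrt[p]{1})))$ is big, together with the condition that $\overline F^{\Ker(\mathrm{ad}(\rbar))}$ does not contain $F(\sqrt[p]{1})$. Hypotheses (i) and (iii) of the proposition (absolute irreducibility of $\rbar|_{\Gal(\overline F/F(\sqrt[p]{1}))}$ plus $p>2n+2$ and $\zeta_p\notin F$) yield only \emph{adequacy}, which is weaker, so the cited theorem does not apply as stated. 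The paper closes this by replacing the application of \cite[Thm.\ 5.1.1]{GG} inside the proof of \cite[Thm.\ 6.1.6]{GG} with the proof of \cite[Thm.\ 4.4.1]{blggt2} together with Thorne's improved modularity theorem \cite[Thm.\ 2.3.1]{blggt2}, using that crystalline ordinary local Galois representations are potentially diagonalizable and that potential diagonalizability is preserved under the equivalence $\sim$ of \cite[\S 1.4]{blggt2}, so that the auxiliary automorphic representation can be chosen ordinary of level prime to $p$. Without some such substitute your second paragraph does not go through under the stated hypotheses. (Two smaller points the paper also records: the extra condition on the level at places in $S_a$ in \cite[\S 6.1]{GG} is only needed to make $U$ sufficiently small, and since the level is prime to $p$ no regularity condition on $\lambda$ is needed.)
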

\begin{proof}
  By \cite[Lem.\ 3.1.5]{GG} it follows that for all $v \vert p$, $\rbar_{\tilde v}$ has an ordinary crystalline lift of
  Hodge--Tate weights $(\lambda_{\tilde v,1}, \lambda_{\tilde v,2}-1,\dots, \lambda_{\tilde v,n}-(n-1))$. The first part of
  the proposition now follows from \cite[Thm.\ 6.1.6]{GG} under assumptions (i$'$) $\rbar({\Gal}(\overline F/F(\sqrt[p]{1})))$
  big and (iii$'$) ${\overline F}^{\Ker({\rm ad}(\rbar))}$ does not contain $F(\sqrt[p]{1})$ (instead of (i) and (iii)).
  (Note that the extra condition on the level $U$ at places $v \in S_a$ in \cite[\S6.1]{GG} is only used to guarantee that $U$ is
  sufficiently small.) To conclude, we replace the application of \cite[Thm.\ 5.1.1]{GG} in the proof of \cite[Thm.\ 6.1.6]{GG}
  by the proof of \cite[Thm.\ 4.4.1]{blggt2} and Thorne's improved modularity theorem \cite[Thm.\ 2.3.1]{blggt2} (to guarantee
  that $\pi'$ can be chosen to be ordinary of level prime to $p$). (Note that crystalline ordinary local Galois
  representations are potentially diagonalizable and that if $\rho_1 \sim \rho_2$ in the notation of
  \cite[\S1.4]{blggt2}, then $\rho_1$ is crystalline ordinary if and only if $\rho_2$ is crystalline ordinary; see
  \cite[\S1.4]{blggt2}.) The last part follows from Proposition~\ref{prop:ordinarygal}.
\end{proof}

Note that the congruence condition on the $\lambda_{\tilde v,i}-(i-1)$ is satisfied whenever $\rbar_{\tilde v}$ is inertially generic.

Assume that, for all $w\vert p$ in $F$, $\rbar_w$ is ordinary (i.e.\ upper triangular up to conjugation) and generic in the sense of Definition \ref{genbar} (equivalently, assume $\rbar_{\tilde v}$ is ordinary and generic at all places $\tilde v$ where $v\vert p$ in $F^+$). We have associated to $\rbar_{\tilde v}\otimes \omega^{n-1}$ in Definition \ref{ordser} a set of ordinary Serre weights for $\iota_{\tilde v}(G(\oFFv))=\GL_n({\mathcal O}_{\!F_{\tilde v}})$ defined as the set of irreducible constituents of the $\GL_n({\mathcal O}_{\!F_{\tilde v}})$-socle of $\Pi(\rbar_{\tilde v})^{\ord}\otimes (\omega^{n-1}\circ\det)$. By Lemma \ref{trans} (more precisely its variant for $\rbar_{\tilde v}$) and the discussion that follows, the set of ordinary Serre weights of $\rbar_{\tilde v}\otimes \omega^{n-1}$ considered as a set of $G(\oFFv)$-representations via $\iota_{\tilde v}$ is the same as the set of ordinary Serre weights of $\rbar_{\tilde v^c}\otimes \omega^{n-1}$ considered as a set of $G(\oFFv)$-representations via $\iota_{\tilde v^c}$. We say that a Serre weight $\sigma=\otimes_{v\vert p}\sigma_{\tilde v}$ for $G_p(\Fp)$ is an {\it ordinary Serre weight of $\rbar\otimes \omega^{n-1}$} if, for every $v\vert p$, $\sigma_{\tilde v}$ is an ordinary Serre weight of $\rbar_{\tilde v}\otimes \omega^{n-1}$. This doesn't depend on the choice of $\tilde v$ above $v$.

\begin{cor}\label{cor:Nul}
  Suppose that $\rbar:\gF\rightarrow {\GL}_n(k_E)$ satisfies assumptions \(i\)--\(iii\) of Proposition~\ref{Nul}, and suppose in addition
  that
  \begin{enumerate}
    \setcounter{enumi}{3}
  \item for all $w\vert p$ in $F$, $\rbar_w$ is \(ordinary\) inertially generic \(Definition \ref{genbarfor}\).
  \end{enumerate}
  Then there exist a sufficiently small compact open subgroup $U^p\subseteq \GAp$ and a finite set of finite places $\Sigma$
  as above such that, for any Serre weight $\sigma$ for $G(\oFFp)$, we have
  \begin{equation*}
    \Hom_{G(\oFFp)}\big(\sigma,S(U^p,k_E)_{\m^{\Sigma}}\big)^{\ord}\ne 0
  \end{equation*}
  if and only if $\sigma$ is an ordinary Serre weight of $\rbar\otimes\omega^{n-1}$.
\end{cor}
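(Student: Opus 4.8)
The plan is to deduce this from Proposition~\ref{Nul} (which gives the ``if'' direction) and Proposition~\ref{prop:ordinarygal} (which gives the ``only if'' direction), using the explicit description of ordinary Serre weights in Example~\ref{exord} to pass back and forth between the shape~\eqref{eq:1} of an upper-triangular $\rbar_{\tilde v}$ that appears in those propositions and the shape~\eqref{lift} that appears in the definition of the ordinary Serre weights of $\rbar_{\tilde v}\otimes\omega^{n-1}$. First I would record that, since $\rbar$ is modular and ordinary, Proposition~\ref{prop:ordinarygal} already forces $\rbar_w$ to be ordinary for every $w\vert p$; combined with hypothesis~(iv) this shows each $\rbar_{\tilde v}$ is inertially generic ordinary, so that $\Pi(\rbar_{\tilde v})^{\ord}$ and the \emph{finite} set of ordinary Serre weights of $\rbar_{\tilde v}\otimes\omega^{n-1}$ are defined. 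Throughout I write a Serre weight of $G(\oFFp)$ as $\sigma=\otimes_{v\vert p}\sigma_{\tilde v}$ with $\sigma_{\tilde v}=F(\lambda_{\tilde v})$, $\lambda_{\tilde v}\in X_1(T)$.

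For the ``if'' direction, fix an ordinary Serre weight $\sigma$ of $\rbar\otimes\omega^{n-1}$. By Example~\ref{exord} (applicable because $\rbar_{\tilde v}$ is inertially generic ordinary), for each $v\vert p$ the twist $\rbar_{\tilde v}\otimes\omega^{n-1}$ has an upper-triangular conjugate whose restriction to $\ip$ is $\diag(\omega^{\lambda_{\tilde v,1}+(n-1)},\dots,\omega^{\lambda_{\tilde v,n}})$; untwisting by $\omega^{n-1}$, $\rbar_{\tilde v}$ has a conjugate of the shape~\eqref{eq:1} for suitable $u_{\tilde v,i}\in k_E^\times$, with $1\le \lambda_{\tilde v,i}-\lambda_{\tilde v,i+1}\le p-1$ (in fact in $\{2,\dots,p-3\}$ by inertial genericity, cf.\ the proof of Proposition~\ref{ordserex}). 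Hence Proposition~\ref{Nul} applies under its hypotheses~(i)--(iii), which are hypotheses of the Corollary, and produces a sufficiently small $U^p_\sigma\subseteq\GAp$ and a finite set $\Sigma_\sigma$ of places as above with $\Hom_{G(\oFFp)}(\sigma,S(U^p_\sigma,k_E)_{\m^{\Sigma_\sigma}})^{\ord}\ne 0$. Since there are only finitely many ordinary Serre weights $\sigma$, I would then set $U^p:=\bigcap_\sigma U^p_\sigma$, which is again sufficiently small (a subgroup of a sufficiently small group is sufficiently small), and $\Sigma:=\bigcup_\sigma\Sigma_\sigma$; the inclusions of the localized spaces $S(U^p_\sigma,k_E)_{\m^{\Sigma_\sigma}}\subseteq S(U^p,k_E)_{\m^{\Sigma}}$ recalled in \S\ref{set} then yield $\Hom_{G(\oFFp)}(\sigma,S(U^p,k_E)_{\m^{\Sigma}})^{\ord}\ne 0$ for every ordinary Serre weight $\sigma$.

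For the ``only if'' direction, which holds for \emph{any} $U^p$ and $\Sigma$ and in particular for the pair just constructed, suppose $\Hom_{G(\oFFp)}(\sigma,S(U^p,k_E)_{\m^{\Sigma}})^{\ord}\ne 0$. This space is finite-dimensional by admissibility of $S(U^p,k_E)_{\m^{\Sigma}}$ (Proposition~\ref{injsigma}), and by the very definition of $(\cdot)^{\ord}$ it carries an action of ${\mathcal H}_{T_p}(\sigma_{U^-_p(\Zp)})$ extending that of ${\mathcal H}_{G_p}(\sigma)$. Decomposing it into generalized eigenspaces for this commutative algebra (after possibly enlarging $k_E$) and choosing an honest eigenvector in a nonzero one, I obtain an ordinary character $\eta:{\mathcal H}_{G_p}(\sigma)\to k_E$ — ordinary precisely because it factors through ${\mathcal H}_{T_p}(\sigma_{U^-_p(\Zp)})$ by construction — with $\Hom_{G(\oFFp)}(\sigma,S(U^p,k_E)_{\m^{\Sigma}})[\eta]\ne 0$. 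Proposition~\ref{prop:ordinarygal} then forces $\rbar_{\tilde v}$ to be conjugate to the shape~\eqref{eq:1} with the given $\lambda_{\tilde v}$ and with $u_{\tilde v,j}=\eta(T_{\sigma_{\tilde v},n}^{-1}T_{\sigma_{\tilde v},n-j})\in k_E^\times$; twisting back by $\omega^{n-1}$ puts $\rbar_{\tilde v}\otimes\omega^{n-1}$ into the shape~\eqref{lift} of Example~\ref{exord} with $\lambda=\lambda_{\tilde v}\in X_1(T)$, so $\sigma_{\tilde v}=F(\lambda_{\tilde v})$ is an ordinary Serre weight of $\rbar_{\tilde v}\otimes\omega^{n-1}$. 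As this holds for all $v\vert p$, $\sigma$ is an ordinary Serre weight of $\rbar\otimes\omega^{n-1}$, which completes the proof.

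The main obstacle here is bookkeeping rather than any genuine difficulty, since the substantive input is all contained in Propositions~\ref{Nul} and~\ref{prop:ordinarygal} and Example~\ref{exord}: one has to check carefully that the shape~\eqref{eq:1} occurring in those two propositions matches, after the $\omega^{n-1}$ twist, the shape~\eqref{lift} of Example~\ref{exord} — including the unramified parts, which is where the explicit formula $u_{\tilde v,j}=\eta(T_{\sigma_{\tilde v},n}^{-1}T_{\sigma_{\tilde v},n-j})$ in Proposition~\ref{prop:ordinarygal} is used — and one must be slightly careful in the passage from ``$(\cdot)^{\ord}\ne 0$'' to ``$(\cdot)[\eta]\ne 0$ for an ordinary $\eta$'', the point being simply that ${\mathcal H}_{T_p}(\sigma_{U^-_p(\Zp)})$ acts on a finite-dimensional $k_E$-vector space and that a character of ${\mathcal H}_{G_p}(\sigma)$ which lifts along ${\mathcal H}_{G_p}(\sigma)\hookrightarrow{\mathcal H}_{T_p}(\sigma_{U^-_p(\Zp)})$ is ordinary by definition.
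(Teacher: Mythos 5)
Your proof is correct and follows exactly the route the paper takes: its own proof is the one-line citation ``follows immediately from Propositions \ref{prop:ordinarygal} and \ref{Nul}, and Example~\ref{exord}.'' Your expansion supplies precisely the bookkeeping that citation suppresses, including the two points genuinely worth making explicit — intersecting the finitely many $U^p_\sigma$ and taking the union of the $\Sigma_\sigma$ to get a single pair $(U^p,\Sigma)$ valid for all ordinary Serre weights, and extracting an honest ordinary eigencharacter $\eta$ from the nonvanishing of $\Hom(\sigma,\cdot)^{\ord}$ before invoking Proposition~\ref{prop:ordinarygal}.
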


\begin{proof}
  This follows immediately from Propositions \ref{prop:ordinarygal} and \ref{Nul}, and Example~\ref{exord}.
\end{proof}

The following statement is our main result and can be seen as a weak form of Conjecture \ref{theconjbar}. Recall that an injection $\pi\hookrightarrow \Pi$ between smooth representations of $G(F^+\otimes_{\Q}\Qp)$ over $k_E$ is said to be {\it essential} if, for any nonzero subrepresentation $\pi'\subseteq \Pi$, we have $\pi\cap \pi'\ne 0$ in $\Pi$.

\begin{thm}\label{ouf!}
Let $\rbar:\gF\rightarrow {\GL}_n(k_E)$ be a continuous representation that satisfies conditions \(i\) to \(iv\) of Proposition \ref{Nul} and Corollary \ref{cor:Nul} \(so in particular $p>2n+2$\). Fix a good conjugate of each $\rbar_{\tilde v}$ and, for every ordinary Serre weight $\sigma_{\tilde v}$ of $\rbar_{\tilde v}\otimes \omega^{n-1}$, let $w_{\sigma_{\tilde v}}\in W_{C_{\rbar_{\tilde v}}}$ be the unique element corresponding to $\sigma_{\tilde v}\otimes (\omega^{1-n}\circ \det)$ in Proposition \ref{ordserex} applied to $\rhobar=\rbar_{\tilde v}$. Fix $U^p\subseteq \GAp$ a sufficiently small compact open subgroup and $\Sigma$ a finite set of finite places as in Corollary \ref{cor:Nul}. Then, for each ordinary Serre weight $\sigma=\otimes_{v\vert p}\sigma_{\tilde v}$ of $\rbar\otimes \omega^{n-1}$, there is an integer $d_{\sigma}>0$ such that we have an essential injection of admissible smooth representations of $G(F^+\otimes_{\Q}\Qp)=\prod_{v\vert p}G(F_v^+)$ over $k_E$,
\begin{equation}\label{beta2}
\bigoplus_{\sigma=\otimes\sigma_{\tilde v}} \bigg({\bigotimes_{v\vert p}}\Big(\Pi(\rbar_{\tilde v})_{C_{\rbar_{\tilde v}},w_{\sigma_{\tilde v}}}\otimes (\omega^{n-1}\circ\det)\Big)\bigg)^{\oplus d_{\sigma}}\hookrightarrow S(U^p,k_E)[\m^{\Sigma}]^{\ord}.
\end{equation}
\end{thm}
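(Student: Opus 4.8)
The plan is to build the injection \eqref{beta2} by first producing it at the level of $G(\oFF\otimes_{\Z}\Zp)$-socles and then extending it to the full tilting-style representations $\Pi(\rbar_{\tilde v})_{C_{\rbar_{\tilde v}},w_{\sigma_{\tilde v}}}$ using the key local Theorem \ref{main}. Concretely, for each ordinary Serre weight $\sigma=\otimes_{v\vert p}\sigma_{\tilde v}$ of $\rbar\otimes\omega^{n-1}$, Corollary \ref{cor:Nul} gives $\Hom_{G(\oFFp)}(\sigma,S(U^p,k_E)_{\m^{\Sigma}})^{\ord}\ne 0$; set $d_\sigma := \dim_{k_E}\Hom_{G(\oFFp)}(\sigma,S(U^p,k_E)_{\m^{\Sigma}})[\eta_\sigma]$, where $\eta_\sigma=\otimes_{v\vert p}\eta_{\rbar_{\tilde v},w_{\sigma_{\tilde v}}}$ is the ordinary character picked out by Proposition \ref{Nul} (its uniqueness uses the congruence condition, which holds by inertial genericity). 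By the last sentence of Proposition \ref{Nul}, $\eta_\sigma$ corresponds exactly to the $\GL_n(\Zp)$-socles $\otimes_{v\vert p} I(\rbar_{\tilde v})_{w_{\sigma_{\tilde v}}}$ of $\otimes_{v\vert p}(\Pi(\rbar_{\tilde v})_{C_{\rbar_{\tilde v}},w_{\sigma_{\tilde v}}}\otimes\omega^{n-1}\circ\det)$, so Frobenius reciprocity (or Corollary \ref{prat} applied over $G_p$, together with the factorization $\mathcal H_{G_p}(\sigma)=\otimes_{v\vert p}\mathcal H_{\GL_n}(\sigma_{\tilde v})$) gives a canonical identification
$$\Hom_{G(\oFFp)}\big(\sigma,S(U^p,k_E)_{\m^\Sigma}\big)[\eta_\sigma]\;\cong\;\Hom_{G(F^+\otimes_\Q\Qp)}\Big(\mathop{\bigotimes}_{v\vert p} I(\rbar_{\tilde v})_{w_{\sigma_{\tilde v}}},\,S(U^p,k_E)_{\m^\Sigma}\Big),$$
which is $d_\sigma$-dimensional. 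Taking a basis gives an injection $\big(\otimes_{v\vert p} I(\rbar_{\tilde v})_{w_{\sigma_{\tilde v}}}\big)^{\oplus d_\sigma}\hookrightarrow S(U^p,k_E)_{\m^\Sigma}$ that in fact lands in $S(U^p,k_E)[\m^\Sigma]$ (since $\eta_\sigma$ is an ordinary character, the Hecke operators of $\TT^\Sigma$ act via a character, so the image is $\m^\Sigma$-torsion).

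The next step is to extend each such map from the socle $\otimes_{v\vert p} I(\rbar_{\tilde v})_{w_{\sigma_{\tilde v}}}$ to the full $\otimes_{v\vert p}\Pi(\rbar_{\tilde v})_{C_{\rbar_{\tilde v}},w_{\sigma_{\tilde v}}}$ (up to the twist by $\omega^{n-1}\circ\det$). This is where Theorem \ref{main} enters. One applies it (or rather the product over $v\vert p$ version; since everything factors as a tensor product over the places $v\vert p$ and $G_p\cong\prod_{\tilde v}\GL_n$, the argument of Theorem \ref{main} goes through for $G_p$ verbatim) with $\Pi := S(U^p,k_E)_{\m^\Sigma}$, which is an injective object in the category of smooth $G(\oFFp)$-representations over $k_E$ by Proposition \ref{injsigma}. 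The hypothesis \eqref{nul} of Theorem \ref{main} is precisely the statement that $\Hom_{G(\oFFp)}(F(\lambda_{w^{-1}(\chi_{\rbar})}-\theta),S(U^p,k_E)_{\m^\Sigma})^{\ord}=0$ for the ``non-socle'' weights $w$; but Corollary \ref{cor:Nul} tells us exactly which Serre weights $\sigma$ have $\Hom(\sigma,-)^{\ord}\ne 0$ — only the ordinary Serre weights of $\rbar\otimes\omega^{n-1}$ — and Proposition \ref{ordserex} shows these are precisely the $F(\lambda_{w_{C_{\rbar_{\tilde v}}}^{-1}(\chi_{\rbar_{\tilde v}})}-\theta)$ with $w_{C_{\rbar_{\tilde v}}}\in W_{C_{\rbar_{\tilde v}}}$, and that the $w$'s of the form $(\prod_{\alpha\in I^\vee}s_\alpha)w_{C_{\rbar_{\tilde v}}}$ with $I$ nonempty give \emph{different} (non-dominant-shifted) weights, which therefore are \emph{not} ordinary Serre weights of $\rbar\otimes\omega^{n-1}$ — hence \eqref{nul} holds. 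Thus Theorem \ref{main} yields
$$\Hom_{G(F^+\otimes_\Q\Qp)}\Big(\mathop{\bigotimes}_{v\vert p}\big(\Pi(\rbar_{\tilde v})_{C_{\rbar_{\tilde v}},w_{\sigma_{\tilde v}}}\otimes\omega^{n-1}\circ\det\big),\,S(U^p,k_E)[\m^\Sigma]\Big)\;\xrightarrow{\ \sim\ }\;\Hom\Big(\mathop{\bigotimes}_{v\vert p} I(\rbar_{\tilde v})_{w_{\sigma_{\tilde v}}},\,S(U^p,k_E)[\m^\Sigma]\Big),$$
(after noting that passing from $S(U^p,k_E)_{\m^\Sigma}$ to the $\m^\Sigma$-torsion subspace $S(U^p,k_E)[\m^\Sigma]$ on the right is harmless because the source is generated by a weight on which $\TT^\Sigma$ acts through the ordinary character — here one uses that the Hecke action commutes with $G(F^+\otimes_\Q\Qp)$). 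Applying this to the $d_\sigma$ maps constructed above, one lifts each injection $\big(\otimes I(\rbar_{\tilde v})_{w_{\sigma_{\tilde v}}}\big)^{\oplus d_\sigma}\hookrightarrow S(U^p,k_E)[\m^\Sigma]$ to a map $\big(\otimes(\Pi(\rbar_{\tilde v})_{C_{\rbar_{\tilde v}},w_{\sigma_{\tilde v}}}\otimes\omega^{n-1}\circ\det)\big)^{\oplus d_\sigma}\to S(U^p,k_E)[\m^\Sigma]^{\ord}$ (the image lies in the ordinary part since $\Pi(\rbar_{\tilde v})_{C_{\rbar_{\tilde v}},w_{\sigma_{\tilde v}}}$ is by construction a successive extension of principal series). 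Such a lift is automatically injective because its restriction to the socle is injective by construction. Finally, summing over all ordinary Serre weights $\sigma$ of $\rbar\otimes\omega^{n-1}$ and observing that the various socles $\otimes_{v\vert p} I(\rbar_{\tilde v})_{w_{\sigma_{\tilde v}}}$ are pairwise non-isomorphic irreducible principal series (by the genericity of each $\rbar_{\tilde v}$, which makes the $W_{C_{\rbar_{\tilde v}}}$-indexed principal series distinct) and hence their images form an internal direct sum, we obtain the injection \eqref{beta2}.

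It remains to check \eqref{beta2} is \emph{essential}. For this I would argue: let $\pi'\subseteq S(U^p,k_E)[\m^\Sigma]^{\ord}$ be nonzero; since $\pi'$ is a nonzero smooth $G(F^+\otimes_\Q\Qp)$-representation all of whose constituents are subquotients of principal series, it contains an irreducible subrepresentation which, by genericity (Theorem \ref{classic}(ii) applied place-by-place, so that the relevant principal series are irreducible), is itself an irreducible principal series $\otimes_{v\vert p}\pi_{\tilde v}$; its $G(\oFFp)$-socle is a Serre weight $\tau$, necessarily with $\Hom_{G(\oFFp)}(\tau,S(U^p,k_E)[\m^\Sigma])\ne 0$, and one checks the corresponding Hecke eigensystem is ordinary (a principal series has an ordinary associated Hecke character), so by Corollary \ref{cor:Nul}, $\tau$ is an ordinary Serre weight of $\rbar\otimes\omega^{n-1}$; thus $\tau=\otimes_{v\vert p}\sigma_{\tilde v}$ for one of our $\sigma$'s and $\pi_{\tilde v}=I(\rbar_{\tilde v})_{w_{\sigma_{\tilde v}}}$, i.e.\ $\otimes_{v\vert p}\pi_{\tilde v}$ is (isomorphic to) the socle of a summand of the left-hand side of \eqref{beta2}. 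Combining with the isomorphism from Theorem \ref{main} (which shows $\Hom$ from the summand into $S(U^p,k_E)[\m^\Sigma]^{\ord}$ injects into $\Hom$ from its socle), one deduces that the image of that summand must meet $\pi'$ nontrivially — more carefully, any embedding of the socle into $S(U^p,k_E)[\m^\Sigma]^{\ord}$ extends uniquely to the summand and the two must have intersecting images — establishing essentiality. The main obstacle, and the place requiring genuine care, is the verification that hypothesis \eqref{nul} of Theorem \ref{main} is met with $\Pi = S(U^p,k_E)_{\m^\Sigma}$: this is exactly the point where one must combine the Gee--Geraghty ordinary-weight computation packaged in Corollary \ref{cor:Nul} (which pins down the full set of ordinary Serre weights appearing in cohomology) with the representation-theoretic fact from Proposition \ref{ordserex} that the ``intermediate'' weights $w(\sum\lambda_\alpha)$ for nonempty $I$ are \emph{not} among them — so that the local Theorem \ref{main} can be applied at all. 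Everything else is bookkeeping: compatibility of the tensor decomposition over $v\vert p$ with the Hecke and $\Ord_{P_J}$ machinery, and the elementary fact that a lift of an injection-on-socles is an injection.
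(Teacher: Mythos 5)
Your proposal follows the paper's proof essentially step for step: socle-level maps from Proposition \ref{Nul} and Corollary \ref{cor:Nul} together with Theorem \ref{he}, verification of hypothesis (\ref{nul}) of Theorem \ref{main} via Corollary \ref{cor:Nul} and the distinctness of the weights $F(\lambda_{w^{-1}(\chi_{\rbar_{\tilde v}})}-\theta)$ from Proposition \ref{ordserex}, extension from the socle to the full $\Pi(\rbar_{\tilde v})_{C_{\rbar_{\tilde v}},w_{\sigma_{\tilde v}}}$ by applying Theorem \ref{main} to $S(U^p,k_E)_{\m^{\Sigma}}$ (injective by Proposition \ref{injsigma}), injectivity from injectivity on socles, and essentialness via the fact that Hecke eigensystems on Serre weights of subquotients of principal series are ordinary. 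The one loose point is your passage from the localization $S(U^p,k_E)_{\m^{\Sigma}}$ to the eigenspace $S(U^p,k_E)[\m^{\Sigma}]$: the local ordinary character $\eta_\sigma$ of ${\mathcal H}_{G_p}(\sigma)$ says nothing about how $\TT^{\Sigma}$ acts (elements of $\m^{\Sigma}$ act locally nilpotently on the localization, not necessarily by zero), so one should instead define $d_{\sigma}$ directly as $\dim_{k_E}\Hom_{G(\oFFp)}\big(\sigma,S(U^p,k_E)[\m^{\Sigma}]\big)[\eta]$ as the paper does, observing that this is still nonzero because the finite-dimensional $\eta$-eigenspace in the localization contains a simultaneous $\TT^{\Sigma}$-eigenvector.
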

\begin{proof}
For each ordinary Serre weight $\sigma$ of $\rbar\otimes\omega^{n-1}$ let
\begin{eqnarray*}
d_{\sigma}&:=&\dim_{k_E}\Hom_{G(\oFFp)}\big(\sigma,S(U^p,k_E)[\m^{\Sigma}]\big)[\eta],
\end{eqnarray*}
where $\eta$ is the unique ordinary character of ${\mathcal H}_{G_p}(\sigma)$ in Proposition \ref{Nul}. We have in particular
$d_{\sigma}>0$. Write $\sigma_{\tilde v}=F(\lambda_{\tilde v})$ with $\lambda_{\tilde v}=(\lambda_{\tilde
  v,1},\dots,\lambda_{\tilde v,n})\in \Z^n$ ($1\leq \lambda_{\tilde v,i}-\lambda_{\tilde v,i+1}\leq p-1$) and set
$\eta_{\tilde v}:=\eta\vert_{{\mathcal H}_{\GL_n}(\sigma_{\tilde v})}$ (recall ${\mathcal H}_{G_p}(\sigma)=\otimes_{v\vert
  p}{\mathcal H}_{\GL_n}(\sigma_{\tilde v})$). By definition of $w_{\sigma_{\tilde v}}$ we have $w_{\sigma_{\tilde v}}^{-1}
(\wh \chi_{\rbar_{\tilde v} \otimes \omega^{n-1}}) = \diag(\omega^{\lambda_{\tilde v,1}+n-1}\nr(u_{\tilde v,1}),\linebreak[1] \omega^{\lambda_{\tilde
    v,2}+n-2}\nr({\scriptstyle\frac{u_{\tilde v,2}}{u_{\tilde v,1}}}),\linebreak[1] \dots,\linebreak[1] \omega^{\lambda_{\tilde
    v,n}}\nr({\scriptstyle\frac{u_{\tilde v,n}}{u_{\tilde v,n-1}}}))$ \ for \ some \ $u_{\tilde v,i} \in k_E^\times$, \ and \ by
\ Proposition~\ref{Nul} \ we \ have $\eta(T_{\sigma_{\tilde v}, j}) = u_{\tilde v,n}^{-1} u_{\tilde v,n-j}$.
Thus we get from Example \ref{plustard2} that
\begin{multline*}
\big(\ind_{\GL_n({\mathcal O}_{\!F_{\tilde v}})}^{\GL_n(F_{\tilde v})}\sigma_{\tilde v}\big)\otimes_{{\mathcal H}_{\GL_n}(\sigma_{\tilde v}),\eta_{\tilde v}}k_E\cong \\
\Ind_{B^-(F_{\tilde v})}^{\GL_n(F_{\tilde v})}\omega^{\lambda_{\tilde v,1}}\nr(u_{\tilde v,1})\otimes \omega^{\lambda_{\tilde v,2}}\nr({\scriptstyle\frac{u_{\tilde v,2}}{u_{\tilde v,1}}})\otimes \cdots \otimes \omega^{\lambda_{\tilde v,n}}\nr({\scriptstyle\frac{u_{\tilde v,n}}{u_{\tilde v,n-1}}})
\end{multline*}
and hence that
\begin{multline}\label{explicit}
\big(\ind_{\GL_n({\mathcal O}_{\!F_{\tilde v}})}^{\GL_n(F_{\tilde v})}\sigma_{\tilde v}\big)\otimes_{{\mathcal H}_{\GL_n}(\sigma_{\tilde v}),\eta_{\tilde v}}k_E\cong \Ind_{B^-(F_{\tilde v})}^{\GL_n(F_{\tilde v})} w_{\sigma_{\tilde v}}^{-1}(\chi_{\rbar_{\tilde v}\otimes \omega^{n-1}})\cdot(\omega^{-1}\circ\theta),
\end{multline}
where $\chi_{\rbar_{\tilde v}\otimes \omega^{n-1}}=\chi_{\rbar_{\tilde v}}\otimes (\omega^{n-1}\circ\det)$ is as in \S\ref{variant2}.
Let $T_p$ and $B_p$ be as in Step 2 of the proof of Proposition \ref{prop:ordinarygal}. By Proposition \ref{injsigma}, Corollary \ref{cor:Nul} and (\ref{explicit}) we can apply Theorem \ref{main} to $(G_p,B_p,T_p)$, $\Pi:=S(U^p,k_E)_{\m^{\Sigma}}$, $\rhobar:=\oplus_{v\vert p}(\rbar_{\tilde v}\otimes \omega^{n-1})$, the Serre weight $\sigma$ and the character $\eta_{\rhobar,w_{C_{\rhobar}}}:=\eta$. By the analogue over $k_E$ of Remark \ref{rema3} and Corollary \ref{prat}, we get that the restriction to the $G(\oFFp)$-socle induces an isomorphism of $\TT^\Sigma$-modules
\begin{multline*}
\Hom_{G(F^+\otimes_{\Q}\Qp)}\bigg({\bigotimes_{v\vert p}}\Big(\Pi(\rbar_{\tilde v})_{C_{\rbar_{\tilde v}},w_{\sigma_{\tilde v}}}\otimes (\omega^{n-1}\circ\det)\Big),S(U^p,k_E)_{\m^{\Sigma}}\bigg)\buildrel\sim\over\longrightarrow \\
\Hom_{G(\oFFp)}\big(\sigma,S(U^p,k_E)_{\m^{\Sigma}}\big)[\eta].
\end{multline*}
Taking the $\m^{\Sigma}$-eigenspaces on both sides and using the fact that all constituents of $\otimes_{v\vert p}\Pi(\rbar_{\tilde v})_{C_{\rbar_{\tilde v}},w_{\sigma_{\tilde v}}}$ are principal series, we deduce
\begin{multline}\label{close}
\Hom_{G(F^+\otimes_{\Q}\Qp)}\bigg({\bigotimes_{v\vert p}}\Big(\Pi(\rbar_{\tilde v})_{C_{\rbar_{\tilde v}},w_{\sigma_{\tilde v}}}\otimes (\omega^{n-1}\circ\det)\Big),S(U^p,k_E)[\m^{\Sigma}]^{\ord}\bigg)\buildrel\sim\over\longrightarrow \\
\Hom_{G(\oFFp)}\big(\sigma,S(U^p,k_E)[\m^{\Sigma}]\big)[\eta].
\end{multline}
Let $f_1,\dots,f_{d_{\sigma}}$ be a $k_E$-basis of the right-hand side of (\ref{close}) and $F_1,\dots,F_{d_{\sigma}}$ the corresponding basis of the left-hand side, then $\oplus_{i=1}^{d_{\sigma}}F_i$ induces a $G(F^+\otimes_{\Q}\Qp)$-equivariant map
$$ \bigg({\bigotimes_{v\vert p}}\Big(\Pi(\rbar_{\tilde v})_{C_{\rbar_{\tilde v}},w_{\sigma_{\tilde v}}}\otimes (\omega^{n-1}\circ\det)\Big)\bigg)^{\oplus d_{\sigma}}\longrightarrow S(U^p,k_E)[\m^{\Sigma}]^{\ord},$$
which is injective as it is injective on the $G(\oFFp)$-socle. Summing over all $\sigma$, we get by the same argument an injection as in (\ref{beta2}). It remains to prove that it is essential, and, since $S(U^p,k_E)[\m^{\Sigma}]^{\ord}\otimes_{k_E}\overline k_E\hookrightarrow S(U^p,\overline k_E)[\m^{\Sigma}]^{\ord}$, it is enough to prove this replacing $k_E$ by an algebraic closure $\overline k_E$. Assume the injection is not essential and let $\Pi'$ be a nonzero subrepresentation of $S(U^p,\overline k_E)[\m^{\Sigma}]^{\ord}$ which has zero intersection with the left-hand side of (\ref{beta2}) (tensored by $\overline k_E$). Since $\Pi'$ is admissible, it satisfies the descending chain condition and, replacing $\Pi'$ by an irreducible subrepresentation, we can assume $\Pi'$ irreducible. Since it lies in $S(U^p,\overline k_E)[\m^{\Sigma}]^{\ord}$ it is thus an irreducible subquotient of a principal series of $G_p(\Qp)$ over $\overline k_E$. Let $\sigma'\subseteq \Pi'$ be a Serre weight for $G_p(\Fp)$, then by \cite[Cor.\ 9.13(i)]{He2} (more precisely the last sentence in {\it loc.\ cit.}) the action of ${\mathcal H}_{G_p}(\sigma')$ on $\Hom_{G_p(\Zp)}(\sigma',\Pi')$ factors through ${\mathcal S}_{T_p}$ (see \S\ref{locres} for ${\mathcal S}_M$) and thus there is an injection $\sigma'\hookrightarrow \Pi'\hookrightarrow S(U^p,\overline k_E)[\m^{\Sigma}]^{\ord}$ which gives an element in $\Hom_{G(\oFFp)}(\sigma',S(U^p,\overline k_E)[\m^{\Sigma}])[\eta']$ for some ordinary character $\eta'$ of ${\mathcal H}_{G_p}(\sigma')$. But Corollary \ref{cor:Nul} and the isomorphism
$$\Hom_{G(\oFFp)}\big(\sigma',S(U^p,k_E)[{\m^{\Sigma}}]\big)\otimes_{k_E}\overline k_E\buildrel\sim\over\longrightarrow \Hom_{G(\oFFp)}\big(\sigma',S(U^p,\overline k_E)[\m^{\Sigma}]\big)$$
(recall that $\sigma'$ is defined over $k_E$) imply that $\sigma'$ must be an ordinary Serre weight $\sigma$ of $\rbar\otimes\omega^{n-1}$ and that $\eta'$ must be $\eta$, so that $\sigma'$ can't have a zero intersection with the left-hand side of (\ref{beta2}) by construction. This proves we must have $\Pi'=0$.
\end{proof}

The integers $d_{\sigma}$ in Theorem \ref{ouf!} {\it a priori} depend on all the data, that is, on $\rbar$, $U^p$, $\Sigma$ and $\sigma$. Note that if all $d_{\sigma}$ are equal, and if $d$ is their common value, then one recovers an essential injection as in (\ref{beta1}).

We end this paper with some evidence for the $p$-adic case, i.e.\ Conjecture \ref{theconj}. If $r:\gF\rightarrow {\GL}_n(E)$ is a continuous representation, we denote by $\overline r$ the semi-simplification of its mod $\pE$ reduction. If $r$ is ramified only at a finite number of places of $F$, we recall that $r$ is modular if there exist a compact open subgroup $U^p\subseteq \GAp$, a finite set $\Sigma$ of finite places of $F^+$ (containing the set of places of $F^+$ that split in $F$ and divide $pN$, the set of places of $F^+$ that split in $F$ at which $U^p$ is not unramified and the set of places of $F^+$ that split in $F$ at which $r$ is ramified) and irreducible algebraic representations $L(\lambda_{\tilde v})$ of $G\times_{\oFF[1/N]}F^+_v\simeq {\GL_n}_{/F_{\tilde v}}$ over $E$ for $v\vert p$ of highest weight $\lambda_{\tilde v}=(\lambda_{{\tilde v},1}\geq \dots \geq \lambda_{{\tilde v},n})$ such that $\Hom_{U_p}\big(\bigotimes_{v\vert p}L(\lambda_{\tilde v}),\widehat S(U^p,E)[\p^{\Sigma}]\big)\ne 0$ for $U_p$ a small enough compact open subgroup of $\prod_{v\vert p}G(\oFFv)$ (see \cite[\S7.1.4]{EGH} and note that $\Hom_{U_p}\big(\bigotimes_{v\vert p}L(\lambda_{\tilde v}),\widehat S(U^p,E)\big)=S\big(U^pU_p,\otimes_{v\vert p}L(\lambda'_{\tilde v})\big)$ where $\lambda'_{\tilde v, i} := -\lambda_{\tilde v,n-i+1}$; also recall that the prime ideal $\p^{\Sigma}$ of $\TT^\Sigma$ was defined in \S\ref{globconj}). Moreover $r$ modular implies that $r_{\tilde v}$ is potentially semi-stable for all $v\vert p$ and that $\lambda_{{\tilde v},1}> \lambda_{{\tilde v},2}-1> \cdots > \lambda_{{\tilde v},n}-(n-1)$ are its Hodge--Tate weights (see \cite[Thm.\ 7.2.1]{EGH} together with our convention at the end of Remark \ref{HT}).

\begin{thm}\label{ouf!2}
Let $r:\gF\rightarrow {\GL}_n(E)$ be a continuous representation that satisfies the following assumptions:
 \begin{enumerate}
  \item $r$ is modular;
  \item $\rbar|_{{\Gal}(\overline F/F(\sqrt[p]{1}))}$ is absolutely irreducible;
  \item $r_w$ is generic ordinary for all $w\vert p$ in $F$;
  \item $(\rbar)_w$ is \(ordinary\) inertially generic with $C_{(\rbar)_w}$ maximal for all $w\vert p$ in $F$;
  \item $p > 2n+2$ and $\zeta_p \not\in F$.
 \end{enumerate}
Fix $U^p\subseteq \GAp$ a sufficiently small compact open subgroup and a finite set $\Sigma$ of finite places of $F^+$ as above such that $\Hom_{U_p}\big(\bigotimes_{v\vert p}L(\lambda_{\tilde v}),\widehat S(U^p,E)[\p^{\Sigma}]\big)\ne 0$ for $U_p$ small enough. Then there is an integer $d\in \Z_{>0}$ such that we have an injection of admissible unitary continuous representations of $G(F^+\otimes_{\Q}\Qp)=\prod_{v\vert p}G(F_v^+)$ over $E$,
\begin{equation}\label{beta3}
\bigg(\mathop{\widehat\bigotimes}\limits_{v\vert p}\Big(\Pi(r_{\tilde v})^{\ord}\otimes (\varepsilon^{n-1}\circ\det)\Big)\bigg)^{\oplus d}\hookrightarrow \widehat S(U^p,E)[\p^{\Sigma}]^{\ord}.
\end{equation}
\end{thm}
\begin{proof}
Note that assumptions (i) and (iii) imply that $\rbar\otimes\omega^{n-1}$ is modular and ordinary. Assumption (iv) implies $W_{C_{(\rbar)_w}}=\{1\}$, or equivalently that $(\rbar)_w$ is upper triangular and ``as indecomposable as possible'' (see \S\ref{Ch}), or equivalently again by Corollary \ref{cor:Nul} that $\rbar\otimes\omega^{n-1}$ has only one ordinary Serre weight. We also get from assumptions (iii) and (iv) that $W_{C_{r_w}}=\{1\}$, or equivalently that $\Pi(r_{w})^{\ord}$ has an irreducible socle. Moreover it follows from assumptions (i) and (iii) that $r_w$ is potentially crystalline for all $w\vert p$ in $F$ with associated Weil--Deligne representation corresponding to an irreducible smooth principal series of ${\GL_n}(F_w)$ over $E$. By \cite[Thm.\ 7.2.1(iv)]{EGH} and the well-known description of the locally algebraic vectors of $\widehat S(U^p,E)$ (see e.g.\ \cite[Prop.\ 5.1]{Br5}), we get a $\prod_{v\vert p}G(F_v^+)$-equivariant injection
$$\mathop{\bigotimes}\limits_{v\vert p}\Big(\Ind_{B^-(F_{\tilde v})}^{\GL_n(F_{\tilde v})} (\chi_{r_{\tilde v}\otimes \varepsilon^{n-1}})\cdot(\varepsilon^{-1}\circ\theta)\Big)^{\alg}\hookrightarrow \widehat S(U^p,E)[\p^{\Sigma}]$$
where ``alg'' means the locally algebraic vectors of the irreducible unitary continuous principal series $\big(\Ind_{B^-(F_{\tilde v})}^{\GL_n(F_{\tilde v})} (\chi_{r_{\tilde v}\otimes \varepsilon^{n-1}})\cdot(\varepsilon^{-1}\circ\theta)\big)^{{\mathcal C}^0}$. Either by an application of Emerton's functor of ordinary parts or by the fact that $\big(\Ind_{B^-(F_{\tilde v})}^{\GL_n(F_{\tilde v})} (\chi_{r_{\tilde v}\otimes \varepsilon^{n-1}})\cdot(\varepsilon^{-1}\circ\theta)\big)^{{\mathcal C}^0}$ is the universal unitary completion of $\big(\Ind_{B^-(F_{\tilde v})}^{\GL_n(F_{\tilde v})} (\chi_{r_{\tilde v}\otimes \varepsilon^{n-1}})\cdot(\varepsilon^{-1}\circ\theta)\big)^{\alg}$, we have
\begin{multline*}
\Hom_{G(F^+\otimes_{\Q}\Qp)}\Big( \mathop{\bigotimes}\limits_{v\vert p} \big(\Ind_{B^-(F_{\tilde v})}^{\GL_n(F_{\tilde v})} (\chi_{r_{\tilde v}\otimes \varepsilon^{n-1}})\cdot(\varepsilon^{-1}\circ\theta)\big)^{\alg}, \widehat S(U^p,E)[\p^{\Sigma}]\Big)=\\\Hom_{G(F^+\otimes_{\Q}\Qp)}\Big( \mathop{\widehat\bigotimes}\limits_{v\vert p} \big(\Ind_{B^-(F_{\tilde v})}^{\GL_n(F_{\tilde v})} (\chi_{r_{\tilde v}\otimes \varepsilon^{n-1}})\cdot(\varepsilon^{-1}\circ\theta)\big)^{{\mathcal C}^0}, \widehat S(U^p,E)[\p^{\Sigma}]\Big).
\end{multline*}
Let $d$ be the (finite positive) dimension of this space of homomorphisms, then, as in the proof of Theorem \ref{ouf!}, one applies Corollary \ref{mainpadic} to $\Pi:=\widehat S(U^p,E)_{\p^{\Sigma}}$ and $\rho:=\oplus_{v\vert p}(r_{\tilde v}\otimes \varepsilon^{n-1})$, and concludes as in the proof of Theorem \ref{ouf!}.
\end{proof}

\begin{rem}
(i) With some more work, one should be able to prove that the injection (\ref{beta3}) is essential for $d$ as in the above proof, at least in some cases. First, it follows from Corollary \ref{prat} and assumption (v) in the statement of Theorem \ref{ouf!2} that an irreducible constituent in the $G(F^+\otimes_{\Q}\Qp)$-socle of $\widehat S(U^p,E)[\p^{\Sigma}]^{\ord}$ is always a {\it full} principal series (because it reduces to a full principal series). Secondly (at least in some cases), one should be able to replace the control of the $G(\oFFp)$-socle of $S(U^p,k_E)[\m^{\Sigma}]^{\ord}$ given by Corollary \ref{cor:Nul} by a control of the locally analytic vectors of the $G(F^+\otimes_{\Q}\Qp)$-socle of $S(U^p,E)[\p^{\Sigma}]^{\ord}$ given by \cite[Prop.\ 8.1]{Br5} and \cite[\S9]{Br5}, and get that any principal series in this socle should always be (one copy of) $\mathop{\widehat\bigotimes} \big(\Ind_{B^-(F_{\tilde v})}^{\GL_n(F_{\tilde v})} (\chi_{r_{\tilde v}\otimes \varepsilon^{n-1}})\cdot(\varepsilon^{-1}\circ\theta)\big)^{{\mathcal C}^0}$. See also the recent \cite{BC} for stronger results on Conjecture \ref{theconj} when $n=3$.\\
(ii) When $G={\rm GSp}_4$, analogous results to those of Theorem \ref{ouf!} and Theorem \ref{ouf!2} should follow by the same method from recent results on ordinary Serre weights for ${\rm GSp}_4$ (see \cite{HT} and \cite[\S7]{GG}) and the transfer from compact mod centre ${\rm GSp}_4$ to ${\rm GL}_4$ (see for example \cite[Thm.\ B]{So}, as well as \cite[Thm.\ 12.1]{GT}), showing that the representation $\Pi(\rho)^{\rm ord}$ hopefully remains relevant beyond ${\rm GL}_n$.\\
(iii) Finally, it is natural to wonder what happens with respect to $L$-packets when the classical Langlands correspondence is involved, that is, when $\rho$ is potentially semi-stable with distinct Hodge--Tate weights. Let us assume for simplicity that $\rho$ is crystalline (and generic ordinary as in Definition \ref{gen}). In that case, the Weil representation associated to $\rho$ is the same as the Weil representation $W(\widehat\chi_{\rho})$ associated to $\widehat\chi_{\rho}:\gp\rightarrow \widehat{T}(E)$ as in \S\ref{cons}. The image of $W(\widehat\chi_{\rho})$ in $\widehat{T}(E)$ is generated by one (semi-simple) element $t$ since $W(\widehat\chi_{\rho})$ is unramified. As the center of $G$ is connected, a classical result of Steinberg (\cite[\S3.9]{spr-st}) implies that the centralizer $Z(t)$ of $t$ in $\widehat G$ is a connected reductive algebraic group. In particular, the associated $L$-packet is a singleton (an irreducible unramified principal series of $G(\Qp)$). Therefore, at least in this case, no (classical) endoscopic phenomenon occurs.
\end{rem}

\appendix

\section{Some results on unitary continuous representations I}

In this appendix, we give technical results on admissible unitary continuous representations of $p$-adic analytic groups that are used in the text. We didn't try to reach the greatest generality.

We refer to \cite[\S2]{Sc1} and \cite[\S2]{Em2} for the definition and basic properties of the abelian category of admissible unitary continuous representations of a $p$-adic analytic group $G$ on $p$-adic Banach spaces over $E$ (more precisely, with the notation of \cite[\S2]{Em2}, it is the category ${\rm Mod}_G^{\pE-{\rm adm}}(\oE)^{\rm fl}\otimes \Q$, that is, the category ${\rm Mod}_G^{\pE-{\rm adm}}(\oE)^{\rm fl}$ of \cite[Prop.\ 2.4.10]{Em2} up to isogeny). If $\Pi,\Pi'$ are two admissible unitary continuous representations of $G$ over $E$, we denote by ${\rm Ext}^1_G(\Pi',\Pi)$ the $E$-vector spaces of Yoneda extensions in this abelian category. 

We didn't seek to be optimal in the statements that follow as they suffice for our purpose (thus some of them might still be true under weaker assumptions).

\begin{lem0}\label{proj}
Let $(\cdots \rightarrow M_n\rightarrow M_{n-1}\rightarrow \cdots)$ be a projective system of $\oE/\pE^n$-modules ($n\in \Z_{>0}$) such that $M:= \plim{n}{M_n}$ is of finite type over $\oE$. Let $\Pi$ be a $p$-adic Banach space over $E$ and $\Pi^0\subseteq \Pi$ be a unit ball. Then there is a topological isomorphism
$$M\otimes_{\oE}\Pi\cong \big(\plim{n}({M_n}\otimes_{\oE}\Pi^0)\big)\otimes_{\oE}E,$$
where ${M_n}\otimes_{\oE}\Pi^0={M_n}\otimes_{\oE}\Pi^0/\pE^n$ is endowed with the discrete topology and $\plim{n}({M_n}\otimes_{\oE}\Pi^0)$ with the projective limit topology.
\end{lem0}
\begin{proof}
Since $E$ is discretely valued, the Banach space $\Pi$ admits an orthonormalizable basis $(e_i)_{i\in I}$ by \cite[Prop.\ 10.1]{Sc2} and we can take $\Pi^0$ to be the unit ball for that basis. Then $\plim{n}({M_n}\otimes_{\oE}\Pi^0)=\plim{n}({M_n}\otimes_{\oE}(\oplus_{i\in I}\oE e_i))$ which can be identified with the $\oE$-module $\widehat M:=\{(m_i)_{i\in I} : m_i\in M, m_i\rightarrow 0{\ \rm when\ }i\rightarrow \infty\}$, where the convergence condition means that for any $n\geq 1$ there exists a finite subset $I_n\subseteq I$ such that $m_i\in \Ker(M\rightarrow M_n)$ if $i\notin I_n$. Since $M= \plim{}{M_n}$, the $\oE$-submodules $\Ker(M\rightarrow M_n)$ form a basis of open neighbourhoods of $0$ in $M$ for the natural profinite (i.e.\ $\pE$-adic) topology on $M$, and this convergence condition is equivalent to asking that for any $n\geq 1$ there exists a finite subset $I_n\subseteq I$ such that $m_i\in \pE^nM$ if $i\notin I_n$. In other terms $\widehat M$ together with its projective limit topology is the $\pE$-adic completion of $M\otimes _{\oE}(\oplus_{i\in I}\oE e_i)$. Since $M$ is of finite type over $\oE$, this is just $M\otimes_{\oE}\Pi^0$.
\end{proof}

\begin{lem0}\label{tf}
Let $M$ be an $\oE$-module such that each element is killed by a power of $\pE$, it has no nonzero divisible element and the submodule of elements killed by $\pE$ is a finite-dimensional $k_E$-vector space. Then $M$ is of finite type over $\oE$.
\end{lem0}
\begin{proof}
For $n\in \Z_{>0}$ let $M[\pE^n]\subseteq M$ be the submodule of elements killed by $\pE^n$ and let $r$ be the dimension of $M[\pE]$. Then $M[\pE^n]$ is of finite type (as follows from the exact sequence $0\rightarrow M[\pE]\rightarrow M[\pE^n]\buildrel\pE\over\rightarrow M[\pE^{n-1}]$ and a straightforward induction on $n$) and isomorphic to $\oplus_{i=1}^r\oE/(\pE^{d_i(n)})$ for some $d_i(n)\in \Z_{>0}$. Since $M[\pE^{n-1}]=M[\pE^{n}][\pE^{n-1}]$, we see that, up to a permutation on the set $\{d_1(n),\dots ,d_r(n)\}$, we have $d_i(n-1)\leq d_i(n)$. We claim that there is $D\in \Z_{>0}$ such that $d_i(n)\leq D$ for all $i$ and all $n$. Indeed, if not, then for each $d\in \Z_{>0}$, $M[\pE]$ contains a nonzero element which is in $\pE^dM$. Since $M[\pE]$ is a {\it finite} set ($k_E$ being finite), we see that $M$ must contain a nonzero divisible element which is impossible. Since the $d_i(n)$ are bounded and increasing, we have $M[\pE^{n}]=M[\pE^{n+1}]$ for $n$ large enough which finishes the proof.
\end{proof}

The following lemma will often be tacitly used in the sequel.

\begin{lem0}\label{penible1}
Let $G_1$, $G_2$ be two $p$-adic analytic groups and $\Pi_1$, $\Pi_2$ two admissible unitary continuous representations of respectively $G_1$ and $G_2$ over $E$. Then the completed tensor product $\Pi_1\widehat \otimes_E \Pi_2$ is an admissible unitary continuous representation of the $p$-adic analytic group $G_1\times G_2$.
\end{lem0}
\begin{proof}
For $i\in \{1,2\}$ let $H_i\subseteq G_i$ be a compact open subgroup and $\Pi_i^0\subseteq \Pi_i$ a unit ball. By \cite[Thm.\ 2.3]{Sc1} we have to prove that $\Hom_{\oE}(\Pi_1^0\widehat\otimes_{\oE}\Pi_2^0,\oE)$ (the $\oE$-dual) is an $\oE[[H_1\times H_2]]$-module of finite type, where $\oE[[H_1\times H_2]]$ is the Iwasawa algebra of $H_1\times H_2$. Let $(H_{i,m})_{m\in \Z_{\geq 1}}$ be a decreasing sequence of normal compact open subgroups of $H_i$ so that $\oE[[H_i]]\cong \plim{m}{\oE[H_i/H_{i,m}]}$. Since $\Pi_i^0/\pE^n$ is (smooth) admissible, $(\Pi_i^0/\pE^n)^{H_{i,m}}$ is a finite type $\oE$-module for all $n,m\in \Z_{\geq 1}$ (and even a finite $\oE$-module as $\oE/\pE^n$ is finite). Since $\Pi_1^0/\pE^n\otimes_{\oE}\Pi_2^0/\pE^n$ is smooth, we have a topological isomorphism (for the profinite topology)
\begin{multline}\label{profi}
\Hom_{\oE}\big(\Pi_1^0/\pE^n\otimes_{\oE}\Pi_2^0/\pE^n,\oE/\pE^n\big)\cong \\
\plim{m}{\Big(\Hom_{\oE}\big((\Pi_1^0/\pE^n)^{H_{1,m}},\oE/\pE^n\big)\otimes_{\oE} \Hom_{\oE}\big((\Pi_1^0/\pE^n)^{H_{1,m}},\oE/\pE^n\big)\Big)}.
\end{multline}
Set $M_i:=\Hom_{\oE}(\Pi_i^0,\oE)$ which is an $\oE[[H_i]]$-module of finite type by assumption. Since
\begin{multline*}
(\Pi_i^0/\pE^n)^{H_{i,m}}=\Hom_{\oE}(M_i/\pE^n,\oE/\pE^n)^{H_{i,m}}=\\
\Hom_{\oE}\big(M_i/\pE^n\otimes_{\oE[[H_i]]}\oE[[H_i/H_{i,m}]],\oE/\pE^n\big),
\end{multline*}
we have by biduality,
\begin{equation}\label{bid}
M_i/\pE^n\otimes_{\oE[[H_i]]}\oE[[H_i/H_{i,m}]]\cong \Hom_{\oE}\big((\Pi_i^0/\pE^n)^{H_{i,m}},\oE/\pE^n\big)
\end{equation}
and thus an isomorphism of $\oE[[H_1/H_{1,m}]]\otimes \oE[[H_2/H_{2,m}]]=\oE[[H_1/H_{1,m}\times H_2/H_{2,m}]]$-modules of finite type,
\begin{multline*}
(M_1/\pE^n\otimes_{\oE}M_2/\pE^n)\otimes_{\oE[[H_1]]\otimes_{\oE}\oE[[H_2]]}\oE[[H_1/H_{1,m}\times H_2/H_{2,m}]] \cong \\
\Hom_{\oE}\big((\Pi_1^0/\pE^n)^{H_{1,m}},\oE/\pE^n\big)\otimes_{\oE} \Hom_{\oE}\big((\Pi_1^0/\pE^n)^{H_{1,m}},\oE/\pE^n\big).
\end{multline*}
Any finite type $\oE/\pE^n[[H_1\times H_2]]$ module $M$ satisfies
$$\plim{m}{\big(M\otimes_{\oE[[H_1\times H_2]]} \oE[[H_1/H_{1,m}\times H_2/H_{2,m}]]\big)}\cong M$$
(use $\pi\cong \ilim{m}{\pi^{H_{1,m}\times H_{2,m}}}$ and (\ref{bid}), where $\pi$ is the smooth representation of $H_1\times H_2$ corresponding to $M$). Taking the projective limit over $m$ and using (\ref{profi}) thus yields an isomorphism of $\oE/\pE^n[[H_1\times H_2]]$-modules of finite type,
\begin{multline*}
(M_1/\pE^n\otimes_{\oE}M_2/\pE^n)\otimes_{\oE[[H_1]]\otimes_{\oE}\oE[[H_2]]}\oE[[H_1\times H_2]]\cong \\
\Hom_{\oE}\big(\Pi_1^0/\pE^n\otimes_{\oE}\Pi_2^0/\pE^n,\oE/\pE^n\big).
\end{multline*}
Taking now the projective limit over $n$ yields an isomorphism of $\oE[[H_1\times H_2]]$-modules of finite type,
\begin{multline*}
(M_1\otimes_{\oE}M_2)\otimes_{\oE[[H_1]]\otimes_{\oE}\oE[[H_2]]}\oE[[H_1\times H_2]]\cong\\
\plim{n}{\Hom_{\oE}\big(\Pi_1^0/\pE^n\otimes_{\oE}\Pi_2^0/\pE^n,\oE/\pE^n\big)}\cong \Hom_{\oE}\big(\Pi_1^0\widehat\otimes_{\oE}\Pi_2^0,\oE\big),
\end{multline*}
which finishes the proof.
\end{proof}

We say that an admissible unitary continuous representation $\Pi$ of a $p$-adic analytic group $G$ is {\it residually of finite length} if for some (or equivalently any) unit ball $\Pi^0\subseteq \Pi$ preserved by $G$, the admissible smooth $G$-representation $\Pi^0\otimes_{\oE}k_E$ is of finite length.

\begin{lem0}\label{penible2}
Let $G_1$, $G_2$ be two $p$-adic analytic groups and $\Pi_1$, $\Pi'_1$ \(resp.\ $\Pi_2$\) some admissible unitary continuous representation\(s\) of $G_1$ \(resp.\ $G_2$\) over $E$. Assume that $\Pi'_1$ is residually of finite length and $\dim_E{\Hom}_{G_1}(\Pi'_1,\Pi_1)<\infty$. Then there is a canonical equivariant isomorphism of admissible unitary continuous representations of $G_2$ over $E$,
$${\Hom}_{G_1}(\Pi'_1,\Pi_1)\otimes_E\Pi_2\buildrel\sim\over\longrightarrow {\Hom}_{G_1}(\Pi'_1,\Pi_1\widehat\otimes_E\Pi_2).$$
\end{lem0}
\begin{proof}
Let $\Pi_1^0$, ${\Pi'_1}^{\!0}$ (resp.\ $\Pi_2^0$) be invariant unit balls in $\Pi_1$, $\Pi'_1$ (resp.\ $\Pi_2$), then we have ${\Hom}_{G_1}({\Pi'_1},\Pi_1\widehat\otimes_{E}\Pi_2)\cong E\otimes_{\oE}{\Hom}_{G_1}({\Pi'_1}^{\!0},\Pi_1^0\widehat\otimes_{\oE}\Pi_2^0)$. We have obvious isomorphisms
\begin{eqnarray}\label{limit}
{\Hom}_{G_1}({\Pi'_1}^{\!0},\Pi_1^0)&\buildrel\sim\over\rightarrow &\plim{n}{{\Hom}_{G_1}({\Pi'_1}^{\!0}/\pE^n,\Pi_1^0/\pE^n)}\\
\label{limit'}{\Hom}_{G_1}({\Pi'_1}^{\!0},\Pi_1^0\widehat\otimes \Pi_2^0)&\buildrel\sim\over\rightarrow & \plim{n}{{\Hom}_{G_1}({\Pi'_1}^{\!0}/\pE^n,\Pi_1^0/\pE^n\otimes \Pi_2^0/\pE^n)}.
\end{eqnarray}
Writing $\Pi_2^0/\pE^n=\ilim{i}{M_i}$, where $(M_i)_{i\in I}$ is an increasing sequence of free $\oE/\pE^n$-submodules of $\Pi_2^0/\pE^n$ of finite rank, we have isomorphisms
\begin{eqnarray}\label{homiso}
\nonumber {\Hom}_{G_1}({\Pi'_1}^{\!0}/\pE^n,\Pi_1^0/\pE^n\otimes \Pi_2^0/\pE^n)\!&\!\cong \!&\!{\Hom}_{G_1}\big({\Pi'_1}^{\!0}/\pE^n,\ilim{i}{(\Pi_1^0/\pE^n\otimes M_i)}\big)\\
\nonumber \!&\!\cong \!& \!\ilim{i}{{\Hom}_{G_1}\big({\Pi'_1}^{\!0}/\pE^n,\Pi_1^0/\pE^n\otimes M_i\big)}\\
\nonumber \!&\!\cong \!& \!\ilim{i}{\big({\Hom}_{G_1}\big({\Pi'_1}^{\!0}/\pE^n,\Pi_1^0/\pE^n\big)\otimes M_i \big)}\\
\!&\!\cong\!& \!{\Hom}_{G_1}\!\big({\Pi'_1}^{\!0}/\pE^n,\Pi_1^0/\pE^n\big) \!\otimes\!\Pi_2^0/\pE^n,
\end{eqnarray}
where we use that ${\Pi'_1}^{\!0}/\pE^n$ is a finite length representation of $G_1$ for the second isomorphism. Taking the projective limit over $n$ and using (\ref{limit'}), (\ref{limit}) together with Lemma \ref{proj} gives the result.
\end{proof}

Note that the assumption $\dim_E{\Hom}_{G_1}(\Pi'_1,\Pi_1)<\infty$ in Lemma \ref{penible2} is automatically \ satisfied \ if \ {\it also} \ $\Pi_1$ \ is \ residually \ of \ finite \ length \ (using that ${\Hom}_{G_1}({\Pi'_1}^{\!0},\Pi_1^0)/\pE$ embeds into ${\Hom}_{G_1}({\Pi'_1}^{\!0}/\pE,\Pi_1^0/\pE)$ which is finite-dimensional over $k_E$ as both ${\Pi'_1}^{\!0}/\pE$ and $\Pi_1^{0}/\pE$ are admissible smooth representations of $G_1$ of finite length).

\begin{lem0}\label{penible2bis}
Let $G_1$, $G_2$ be two $p$-adic analytic groups and $\Pi_1$, $\Pi'_1$ \(resp.\ $\Pi_2$, $\Pi'_2$\) be two admissible unitary continuous representations of $G_1$ \(resp.\ $G_2$\) over $E$. Assume that $\Pi_1$ and $\Pi'_1$ are residually of finite length and $\dim_E{\rm Ext}^1_{G_1}(\Pi'_1,\Pi_1)<\infty$. Let $\Pi$ be an extension of $\Pi'_1\widehat\otimes \Pi'_2$ by $\Pi_1\widehat\otimes \Pi_2$ \(in the category of admissible unitary continuous representations of $G_1\times G_2$ over $E$\). Then we have an exact sequence of admissible unitary continuous representations of $G_2$ over $E$,
\begin{multline*}
0\rightarrow \Hom_{G_1}(\Pi'_1,\Pi_1)\otimes_E \Pi_2\rightarrow \Hom_{G_1}(\Pi'_1,\Pi)\rightarrow \End_{G_1}({\Pi'_1})\otimes_E \Pi'_2\rightarrow \\
{\rm Ext}^1_{G_1}(\Pi'_1,\Pi_1)\otimes_E \Pi_2.
\end{multline*}
\end{lem0}
\begin{proof}
Note first that ${\Hom}_{G_1}(\Pi'_1,\Pi_1)$ and ${\End}_{G_1}(\Pi'_1)$ are finite dimensional $E$-vector spaces, as we saw above. Let $\Pi_1^0$, ${\Pi'_1}^{\!0}$, $\Pi_2^0$, ${\Pi'_2}^{\!0}$ be invariant unit balls in $\Pi_1$, $\Pi'_1$, $\Pi_2$, $\Pi'_2$ and $\Pi^0$ an invariant unit ball in $\Pi$ such that we have an exact sequence of $\oE[G_1\times G_2]$-modules,
$$0\longrightarrow \Pi_1^0\widehat\otimes \Pi_2^0\longrightarrow \Pi^0\longrightarrow {\Pi'_1}^{\!0}\widehat\otimes {\Pi'_2}^{\!0}\longrightarrow 0.$$
Let us first prove that it is enough to have an exact sequence for every $n\geq 1$,
\begin{multline}\label{n}
0\rightarrow \Hom_{G_1}({\Pi'_1}^{\!0}/\pE^n,\Pi_1^0/\pE^n)\otimes \Pi_2^0/\pE^n\rightarrow \Hom_{G_1}({\Pi'_1}^{\!0}/\pE^n,\Pi^0/\pE^n)\rightarrow \\
\End_{G_1}({\Pi'_1}^{\!0}/\pE^n)\otimes {\Pi'_2}^{\!0}/\pE^n\rightarrow {\rm Ext}^1_{G_1}({\Pi'_1}^{\!0}/\pE^n,\Pi_1^0/\pE^n)\otimes \Pi_2^0/\pE^n,
\end{multline}
where the ${\rm Ext}^1$ is in the abelian category of smooth representations of $G_1$ over $\oE/\pE^n$-modules. The Mittag-Leffler condition on the projective system
$$\big(\Hom_{G_1}({\Pi'_1}^{\!0}/\pE^n,\Pi_1^0/\pE^n)\otimes \Pi_2^0/\pE^n\big)_n$$
is satisfied since $\Pi_2^0/\pE^n\rightarrow \Pi_2^0/\pE^{n-1}$ is surjective and $\Hom_{G_1}({\Pi'_1}^{\!0}/\pE^n,\Pi_1^0/\pE^n)$ is a finite type $\oE/\pE^n$-module (as follows from the assumptions of residual finite length and a d\'evissage, see above). Thus the sequence remains exact after taking $\plim{}$ (decomposing a projective system of exact sequences $0\rightarrow A_n \rightarrow B_n\rightarrow C_n\rightarrow D_n$ as $0\rightarrow A_n \rightarrow B_n\rightarrow {\rm Im}(B_n)\rightarrow 0$ and $0\rightarrow {\rm Im}(B_n) \rightarrow C_n\rightarrow D_n$ and noting that the second sequence always remains exact at the limit since there is no surjectivity to check). One easily checks that the $\oE$-module $M:=\plim{}{\rm Ext}^1_{G_1}({\Pi'_1}^{\!0}/\pE^n,\Pi_1^0/\pE^n)$ is a submodule of ${\rm Ext}^1_{\oE[G_1]}({\Pi'_1}^{\!0},\Pi_1^0)$ (:= extensions as linear representations of $G_1$ on $\oE$-modules) and satisfies $E\otimes M\cong {\rm Ext}^1_{G_1}(\Pi'_1,\Pi_1)$. The natural surjection
$$\Hom_{G_1}({\Pi'_1}^{\!0},\Pi_1^0/\pE)\twoheadrightarrow {\rm Ext}^1_{\oE[G_1]}({\Pi'_1}^{\!0},\Pi_1^0)[\pE]$$
(see the notation in the proof of Lemma \ref{tf}) coming from the exact sequence $0\rightarrow \Pi_1^0\buildrel\pE\over\rightarrow \Pi_1^0\rightarrow \Pi_1^0/\pE\rightarrow 0$ shows that ${\rm Ext}^1_{\oE[G_1]}({\Pi'_1}^{\!0},\Pi_1^0)[\pE]$ is finite-dimensional over $k_E$. If $M_{\rm tors}$ is the torsion part of $M$, then {\it a fortiori} $M_{\rm tors}[\pE]=M[\pE]$ is also finite-dimensional. Moreover $M_{\rm tors}$ has no nonzero divisible element as it is contained in a projective limit of $\oE/\pE^n$-modules. Applying Lemma \ref{tf}, we see that $M_{\rm tors}$ is of finite type over $\oE$. Since $E\otimes M\cong {\rm Ext}^1_{G_1}(\Pi'_1,\Pi_1)$ is finite-dimensional over $E$, we get that $M$ is a finite type $\oE$-module. Using isomorphisms like (\ref{limit}) and Lemma \ref{proj} (which is were we use that $M$ is of finite type), we then conclude that taking the projective limit of (\ref{n}) and tensoring by $E$ gives the result. Let us now prove (\ref{n}). Applying $\Hom_{G_1}({\Pi'_1}^{\!0}/\pE^n,\cdot)$ to the exact sequence $0\rightarrow \Pi_1^0/\pE^n\otimes \Pi_2^0/\pE^n\rightarrow \Pi^0/\pE^n\rightarrow {\Pi'_1}^{\!0}/\pE^n\otimes {\Pi'_2}^{\!0}/\pE^n\rightarrow 0$ and using isomorphisms like (\ref{homiso}) we get an exact sequence
\begin{multline}\label{nn}
0\rightarrow \Hom_{G_1}({\Pi'_1}^{\!0}/\pE^n,\Pi_1^0/\pE^n)\otimes \Pi_2^0/\pE^n\rightarrow \Hom_{G_1}({\Pi'_1}^{\!0}/\pE^n,\Pi^0/\pE^n)\rightarrow \\
\End_{G_1}({\Pi'_1}^{\!0}/\pE^n)\otimes {\Pi'_2}^{\!0}/\pE^n\rightarrow {\rm Ext}^1_{G_1}({\Pi'_1}^{\!0}/\pE^n,\Pi_1^0/\pE^n\otimes \Pi_2^0/\pE^n)
\end{multline}
(the ${\rm Ext}^1$ are still in the category of smooth $G_1$-representations over $\oE/\pE^n$). Let $f\in \End_{G_1}({\Pi'_1}^{\!0}/\pE^n)\otimes {\Pi'_2}^{\!0}/\pE^n\cong \Hom_{G_1}({\Pi'_1}^{\!0}/\pE^n,{\Pi'_1}^{\!0}/\pE^n\otimes {\Pi'_2}^{\!0}/\pE^n)$, then $f$ defines a natural $G_1$-equivariant morphism ${\Pi'_1}^{\!0}/\pE^n\rightarrow {\Pi'_1}^{\!0}/\pE^n\otimes {\Pi'_2}^{\!0}/\pE^n$ and the image of $f$ in the ${\rm Ext}^1$ on the right is given by $0\rightarrow \Pi_1^0/\pE^n\otimes \Pi_2^0/\pE^n\rightarrow V\rightarrow {\Pi'_1}^{\!0}/\pE^n\rightarrow 0$, where $V$ is the fiber product
$$\begin{matrix} \Pi^0/\pE^n & \twoheadrightarrow &{\Pi'_1}^{\!0}/\pE^n\otimes {\Pi'_2}^{\!0}/\pE^n \\
\uparrow &&\uparrow f\\
V&\rightarrow &{\Pi'_1}^{\!0}/\pE^n\end{matrix}.$$
Lift in $V$ a finite set of generators of the $G_1$-representation ${\Pi'_1}^{\!0}/\pE^n$ (recall it is of finite length). Since $\Pi^0/\pE^n$ is a smooth representation of $G_2$, this finite set and the $G_1$-representation it generates both lie in
$$V \cap \big((\Pi^0/\pE^n)^{H_2}\times {\Pi'_1}^{\!0}/\pE^n\big)$$
for a sufficiently small compact open subgroup $H_2$ of $G_2$. Moreover we have an exact sequence of $G_1$-representations,
$$0\rightarrow \Pi_1^0/\pE^n\otimes (\Pi_2^0/\pE^n)^{H_2}\rightarrow V \cap \big((\Pi^0/\pE^n)^{H_2}\times {\Pi'_1}^{\!0}/\pE^n\big) \rightarrow {\Pi'_1}^{\!0}/\pE^n\rightarrow 0,$$
which gives back $V$ by pushout along $\Pi_1^0/\pE^n\otimes (\Pi_2^0/\pE^n)^{H_2}\hookrightarrow \Pi_1^0/\pE^n\otimes \Pi_2^0/\pE^n$. If $V$ is split, then any $G_1$-equivariant section ${\Pi'_1}^{\!0}/\pE^n\hookrightarrow V$ lies in $V \cap \big((\Pi^0/\pE^n)^{H_2}\times {\Pi'_1}^{\!0}/\pE^n\big)$ for $H_2$ sufficiently small (as follows again from the fact that ${\Pi'_1}^{\!0}/\pE^n$ is of finite length) and thus $V \cap \big((\Pi^0/\pE^n)^{H_2}\times {\Pi'_1}^{\!0}/\pE^n\big)$ is also split. Conversely, if $V \cap \big((\Pi^0/\pE^n)^{H_2}\times {\Pi'_1}^{\!0}/\pE^n\big)$ is split, then so is the pushout $V$. All this shows that one can replace ${\rm Ext}^1_{G_1}({\Pi'_1}^{\!0}/\pE^n,\Pi_1^0/\pE^n\otimes \Pi_2^0/\pE^n)$ in (\ref{nn}) by the inductive limit
\begin{equation*}
\ilim{H_2}{\rm Ext}^1_{G_1}\big({\Pi'_1}^{\!0}/\pE^n,\Pi_1^0/\pE^n\otimes (\Pi_2^0/\pE^n)^{H_2}\big).
\end{equation*}
Since $\Pi_2^0/\pE^n$ is admissible, this inductive limit can also be computed replacing the increasing sequence of submodules $(\Pi_2^0/\pE^n)^{H_2}$ by an increasing sequence of free $\oE/\pE^n$-submodules of $\Pi_2^0/\pE^n$ of finite rank, the union of which is $\Pi_2^0/\pE^n$. Thus we see that it is isomorphic to ${\rm Ext}^1_{G_1}({\Pi'_1}^{\!0}/\pE^n,\Pi_1^0/\pE^n)\!\otimes \Pi_2^0/\pE^n$, giving at last (\ref{n}).
\end{proof}

\begin{lem0}\label{penible3}
Let $G_1$, $G_2$ be two $p$-adic analytic groups and $\Pi_1$, $\Pi'_1$ \(resp.\ $\Pi_2$, $\Pi'_2$\) be two admissible unitary continuous representations of $G_1$ \(resp.\ $G_2$\) over $E$. Assume that $\Pi_1$ is residually of finite length, $\dim_E{\rm Ext}^1_{G_1}(\Pi'_1,\Pi_1)<\infty$, ${\Hom}_{G_1}(\Pi'_1,\Pi_1)=0$ and ${\End}_{G_1}(\Pi'_1)=E$.\\
\textup{(i)} Assume that $\Pi'_1$ is residually of finite length and either ${\Hom}_{G_2}(\Pi'_2,\Pi_2)=0$ or ${\rm Ext}^1_{G_1}(\Pi'_1,\Pi_1)=0$. Then we have
$${\rm Ext}^1_{G_1\times G_2}(\Pi'_1\widehat\otimes_E \Pi'_2,\Pi_1\widehat\otimes_E \Pi_2)=0.$$
\textup{(ii)} \ Assume \ that \ $\Pi_2$ \ is \ residually \ of \ finite \ length, \ $\dim_E{\rm Ext}^1_{G_1}(\Pi'_1,\Pi_1)=1$, $\dim_E{\rm Ext}^1_{G_2}(\Pi_2,\Pi_2)<\infty$ and ${\End}_{G_2}(\Pi_2)=E$. Then we have
\begin{equation*}
\dim_E{\rm Ext}^1_{G_1\times G_2}(\Pi'_1\widehat\otimes_E \Pi_2,\Pi_1\widehat\otimes_E \Pi_2)=1,
\end{equation*}
where the corresponding unique non-split extension is realized by $V_1\widehat\otimes_E\Pi_2$, $V_1$ being the unique non-split extension of $\Pi'_1$ by $\Pi_1$.
\end{lem0}
\begin{proof}
(i) Let $\Pi$ be an extension of $\Pi'_1\widehat\otimes \Pi'_2$ by $\Pi_1\widehat\otimes \Pi_2$, applying $\Hom_{G_1}(\Pi'_1,\cdot)$ we get by Lemma \ref{penible2bis} an exact sequence of admissible unitary continuous representations of $G_2$,
$$0\rightarrow \Hom_{G_1}(\Pi'_1,\Pi)\rightarrow \Pi'_2\rightarrow {\rm Ext}^1_{G_1}(\Pi'_1,\Pi_1)\otimes \Pi_2.$$
If $\Hom_{G_2}(\Pi'_2,\Pi_2)=0$ or ${\rm Ext}^1_{G_1}(\Pi'_1,\Pi_1)=0$ the map on the right is zero and tensoring by $\Pi'_1$ we deduce a commutative $G_1\times G_2$-equivariant diagram,
$$\begin{matrix} \Pi'_1\widehat\otimes \Hom_{G_1}(\Pi'_1,\Pi)& \buildrel\sim\over\longrightarrow &\Pi'_1\widehat\otimes \Pi'_2\\ \downarrow && \Vert \\ \Pi&\longrightarrow & \Pi'_1\widehat\otimes \Pi'_2\end{matrix}$$
yielding a splitting of $\Pi$.\\
(ii) Let us first prove $\dim_E{\rm Ext}^1_{G_1\times G_2}(\Pi'_1\widehat\otimes \Pi_2,\Pi_1\widehat\otimes \Pi_2)\leq 1$. Let $\Pi$ be an extension of $\Pi'_1\widehat\otimes \Pi_2$ by $\Pi_1\widehat\otimes \Pi_2$, applying $\Hom_{G_2}(\Pi_2,\cdot)$ we get by Lemma \ref{penible2bis} an exact sequence of admissible unitary continuous representations of $G_1$,
$$0\rightarrow \Pi_1\rightarrow \Hom_{G_2}(\Pi_2,\Pi)\rightarrow \Pi'_1\rightarrow {\rm Ext}^1_{G_2}(\Pi_2,\Pi_2)\otimes \Pi_1.$$
Since $\Hom_{G_1}(\Pi'_1,\Pi_1)=0$, the map on the right is zero and thus $\Hom_{G_2}(\Pi_2,\Pi)$ gives an element of ${\rm Ext}^1_{G_1}(\Pi'_1,\Pi_1)$. Tensoring by $\Pi_2$ we deduce a short exact sequence,
$$0\rightarrow \Pi_1\widehat\otimes \Pi_2\rightarrow \Hom_{G_2}(\Pi_2,\Pi)\widehat\otimes \Pi_2\rightarrow \Pi'_1\widehat\otimes \Pi_2\rightarrow 0$$
yielding a canonical $G_1\times G_2$-equivariant isomorphism $\Hom_{G_2}(\Pi_2,\Pi)\widehat\otimes \Pi_2\buildrel\sim\over\rightarrow \Pi$. This implies that tensoring by $\Pi_2$ induces a canonical surjection,
$${\rm Ext}^1_{G_1}(\Pi'_1,\Pi_1)\twoheadrightarrow {\rm Ext}^1_{G_1\times G_2}(\Pi'_1\widehat\otimes \Pi_2,\Pi_1\widehat\otimes \Pi_2)$$
and in particular, we get
$$\dim_E{\rm Ext}^1_{G_1\times G_2}(\Pi'_1\widehat\otimes \Pi_2,\Pi_1\widehat\otimes \Pi_2)\leq \dim_E{\rm Ext}^1_{G_1}(\Pi'_1,\Pi_1)=1.$$
Now the representation $V_1\widehat\otimes \Pi_2$ yields an element in ${\rm Ext}^1_{G_1\times G_2}(\Pi'_1\widehat\otimes \Pi_2,\Pi_1\widehat\otimes \Pi_2)$. This element is nonzero since $\Hom_{G_1}(\Pi'_1,V_1\widehat\otimes \Pi_2)=\Hom_{G_1}(\Pi'_1,V_1)\otimes \Pi_2=0$ (as follows from ${\Hom}_{G_1}(\Pi'_1,\Pi_1)=0$, ${\End}_{G_1}(\Pi'_1)=E$, $V_1$ non-split and Lemma \ref{penible2}) and thus {\it a fortiori} $\Hom_{G_1\times G_2}(\Pi'_1\widehat\otimes \Pi_2,V_1\widehat\otimes \Pi_2)=0$. This finishes the proof.
\end{proof}

\section{Some results on unitary continuous representations II}

We recall here more or less well-known results concerning unitary continuous principal series of $\Gp$ or of a product of $\Gp$ that are used in the text.

\begin{prop0}\label{fu}
For $i\in \{1,\dots,n\}$ \($n\in \Z_{\geq 1}$\) let $\chi_{1,i},\chi_{2,i}:\Qp^{\times}\rightarrow \oE^{\times}\subseteq E^{\times}$ be unitary continuous characters such that $\chi_{1,i}\ne \chi_{2,i}$. View $\chi_{2,i}\otimes\chi_{1,i}$ as a character of $\smat{*&0\\ *&*}\subseteq \Gp$ by $\smat{x & 0\\ t & y}\mapsto \chi_2(x)\chi_1(y)$ and define
$$\Pi_i:=\big(\Ind_{\smat{*&0\\ *&*}}^{\Gp}\chi_{2,i}\otimes\chi_{1,i}\big)^{{\mathcal C}^0},$$
where the continuous parabolic induction is as in \S\ref{prel}. Then the unitary continuous representation $\Pi_1\widehat\otimes_E\cdots\widehat\otimes_E\Pi_n$ of the product group $\Gp\times\cdots\times\Gp$ \($n$ times\) is admissible, topologically irreducible, residually of finite length and has only scalar endomorphisms.
\end{prop0}
\begin{proof}
  Throughout this proof, irreducible means topologically irreducible. The admissibility follows from Lemma \ref{penible1}. For the finite length mod $\pE$ and the scalar endomorphisms, it is enough to prove that the reduction $\overline\Pi_1\otimes_{k_E}\cdots\otimes_{k_E}\overline\Pi_n$, where $\overline\Pi_i:=\Ind_{\smat{*&0\\ *&*}}^{\Gp}\overline\chi_{2,i}\otimes\overline\chi_{1,i}$ (smooth induction), is of finite length and has scalar endomorphisms. From \cite[Thm.\ 30]{BL} and the (classical) fact that the tensor product of two smooth irreducible representations over $k_E$ with scalar endomorphisms is an irreducible representation of the product group with scalar endomorphisms, we deduce that this tensor product over $k_E$ is of finite length, indecomposable and with distinct constituents. Thus its endomorphisms are just $k_E$. Let us indicate how one can prove the irreducibility. Note that this is straightforward if all the $\overline\Pi_i$ are irreducible. First each $\Pi_i$ is irreducible. Indeed, if $\overline\Pi_i$ is reducible, then it is a non-split extension of a twist of the Steinberg representation (which is irreducible) by a $1$-dimensional representation (see \cite[Thm.\ 30]{BL}). It implies that any nonzero strict invariant closed subspace $\Pi'_i\subsetneq \Pi_i$ has to be $1$-dimensional, which is easily checked to be impossible since we assumed $\chi_{1,i}\ne \chi_{2,i}$ (one can also use locally analytic vectors as in what follows or as in \cite[Prop.\ 5.3.4]{Em1}). Secondly, it is enough to prove that any nonzero closed invariant subspace of the completed tensor product contains an element of the form $v_1\otimes v_2\otimes\cdots\otimes v_n$. Indeed, by irreducibility of each $\Pi_i$ (and since it is a closed subspace) it will then contain the whole usual tensor product and hence the completed tensor product (again as it is closed). To prove this we use locally analytic vectors. As the completed tensor product of principal series is a principal series for the product group (in both continuous and locally analytic worlds) and as $\chi_{1,i},\chi_{2,i}$, being continuous {\it characters} of $\Qp^{\times}$, are automatically locally analytic, one easily checks that the subspace of locally analytic vectors of $\Pi_1\widehat\otimes_E\cdots\widehat\otimes_E\Pi_n$ is the representation $\Pi_1^{\rm an}\widehat\otimes_E\cdots\widehat\otimes_E\Pi_n^{\rm an}$, where $\Pi_i^{\rm an}$ denotes the locally analytic principal series $\big(\Ind_{\smat{*&0\\ *&*}}^{\Gp}\chi_{2,i}\otimes\chi_{1,i}\big)^{\rm an}$ (\cite[\S3]{Sc1}). By density of locally analytic vectors in continuous admissible representations (\cite[Thm.\ 4.2]{Sc1}), it is enough to prove that the irreducible constituents of $\Pi_1^{\rm an}\widehat\otimes_E\cdots\widehat\otimes_E\Pi_n^{\rm an}$ are the finitely many representations $C_1\widehat\otimes_E\cdots\widehat\otimes_EC_n$, where $C_i$ is an irreducible constituent of $\Pi_i^{\rm an}$. Indeed, any nonzero closed invariant subspace will contain such a constituent and thus {\it a fortiori} a vector $v_1\otimes v_2\otimes\cdots\otimes v_n$. In other words one has to prove that $C_1\widehat\otimes_E\cdots\widehat\otimes_EC_n$ is irreducible. Changing the numbering, we can assume (see \cite[\S4]{ST}) that $C_i$ is locally algebraic if $1\leq i\leq m$ (in the sense of the Prasad's appendix in \cite{ST}) and is an irreducible locally analytic principal series if $m+1\leq i\leq n$ (where $0\leq m\leq n$). We can then rewrite the completed tensor product as $(C_1\otimes\cdots\otimes C_m)\otimes(C_{m+1}\widehat\otimes\cdots\widehat\otimes C_n)$ and one easily proves that it is irreducible if the factor $C_{m+1}\widehat\otimes\cdots\widehat\otimes C_n$ is irreducible (as $C_1\otimes\cdots\otimes C_m$ is obviously irreducible and the tensor product in the middle is a usual - i.e.\ not completed - tensor product). But this factor is a locally analytic principal series for the group $\Gp\times\cdots\times\Gp$ ($n-m$ times), and the fact it is irreducible follows then from the fact that the ``corresponding'' Verma module over the enveloping algebra of this group is obviously irreducible (being the tensor product of irreducible Verma modules for ${\rm GL}_2$): see \cite[\S4.1]{OS} (together with \cite[\S4]{ST}).
\end{proof}

The following proposition is not new (see e.g.\ \cite[Rk.\ 3.3.20]{Em4}, see also \cite{Ha1}), however, due to its importance in this paper, we provide a quick proof.

\begin{prop0}\label{ch}
Let $\chi_1,\chi_2:\Qp^{\times}\rightarrow \oE^{\times}\subseteq E^{\times}$ be unitary continuous characters such that $\chi_1\chi_2^{-1}\notin \{\varepsilon,\varepsilon^{-1}\}$ and view $\chi_2\varepsilon^{-1}\otimes\chi_1$ and $\chi_1\varepsilon^{-1}\otimes\chi_2$ as characters of $\smat{*&0\\ *&*}$ as in Proposition \ref{fu}.\\
\textup{(i)} If $\chi_1\ne \chi_2$, we have \(in the category of unitary continuous representations of $\Gp$\)
$$\dim_E{\rm Ext}^1_{\Gp}\Big(\big(\Ind_{\smat{*&0\\ *&*}}^{\Gp}\chi_2\varepsilon^{-1}\otimes\chi_1\big)^{{\mathcal C}^0},\big(\Ind_{\smat{*&0\\ *&*}}^{\Gp}\chi_1\varepsilon^{-1}\otimes\chi_2\big)^{{\mathcal C}^0}\Big)=1.$$
Moreover, this unique non-split extension has a central character.\\
\textup{(ii)} We have \(in the category of unitary continuous representations of $\Gp$\)
$$\dim_E{\rm Ext}^1_{\Gp}\Big(\big(\Ind_{\smat{*&0\\ *&*}}^{\Gp}\chi_1\varepsilon^{-1}\otimes\chi_2\big)^{{\mathcal C}^0},\big(\Ind_{\smat{*&0\\ *&*}}^{\Gp}\chi_1\varepsilon^{-1}\otimes\chi_2\big)^{{\mathcal C}^0}\Big)\leq 5.$$
\end{prop0}
\begin{proof}
The assumption $\chi_1\chi_2^{-1}\notin\{\varepsilon,\varepsilon^{-1}\}$ makes the two continuous $\Gp$-representations in (i) topologically irreducible (use e.g.\ Proposition \ref{fu}) and the assumption $\chi_1\ne \chi_2$ makes them distinct (Theorem \ref{classic}(iii)). The assertion on the central character is then automatic. To prove (i) and (ii), we apply the exact sequence
\begin{multline*}
0\rightarrow {\rm Ext}^1_M(U,\Ord_P(V))\rightarrow {\rm Ext}^1_G(\Ind_{P^-}^GU,V)\rightarrow \Hom_M(U,R^1\Ord_P(V))\rightarrow \\
{\rm Ext}^2_M(U,\Ord_P(V))
\end{multline*}
of \cite[(3.7.5)]{Em3} to $A=\oE/\pE^n$, $G=\Gp$, $P=\smat{*&*\\ 0&*}$, $M=\smat{*&0\\ 0&*}$, $U=\chi_2\varepsilon^{-1}\otimes\chi_1$ mod $\pE^n$ and $V=\Ind_{P^-}^G\chi_1\varepsilon^{-1}\otimes\chi_2$ mod $\pE^n$, where extensions are in the abelian category of locally admissible smooth representations over $\oE/\pE^n$ of the relevant group (see \cite[Def.2.2.17]{Em2}, the ${\rm Ext}^1$ are the same as in the category of admissible smooth representations). We have $\Ord_P(V)=\chi_1\varepsilon^{-1}\otimes\chi_2$ by \cite[Cor.\ 4.2.10]{Em3}. This implies in particular that ${\rm Ext}^1_M(U,\Ord_P(V))$ is an $\oE/\pE^n$-module of finite type (as follows for instance from a d\'evissage and \cite[Lem.\ 4.3.10]{Em3} together with \cite[Rem.\ 4.3.11]{Em3}). Since $R^1\Ord_P(V)=U$ by \cite[Cor.\ 4.2.10]{Em3} together with the comment after \cite[Conj.\ 3.7.2]{Em3}, the $\oE/\pE^n$-module $\Hom_M(U,R^1\Ord_P(V))$ is free of rank $1$. When $\chi_1\ne \chi_2$, let $N\in \Z_{\geq 0}$ be the maximal integer such that $\chi_1=\chi_2$ mod $\pE^N$, the same argument as in \cite[Lem.\ 4.3.10]{Em3} shows that ${\rm Ext}^i_M(U,\Ord_P(V))$, $i=1,2$ is an $\oE/\pE^N$-module. In particular the image ${\rm Im}({\rm Ext}^1)$ of ${\rm Ext}^1_G(\Ind_{P^-}^GU,V)$ in $\Hom_M(U,R^1\Ord_P(V))\cong \oE/\pE^n$ for $n>N$ contains $\pE^N(\oE/\pE^n)\cong \oE/\pE^{n-N}$. Passing to the projective limit over $n$ on the short exact sequences
\begin{multline*}
0\longrightarrow {\rm Ext}^1_M(U,\Ord_P(V))\longrightarrow {\rm Ext}^1_G(\Ind_{P^-}^GU,V) \longrightarrow {\rm Im}({\rm Ext}^1)\longrightarrow 0
\end{multline*}
(which still yields a short exact sequence since, ${\rm Ext}^1_M(U,\Ord_P(V))$ being a finite set for all $n$, the Mittag-Leffler conditions are satisfied) and tensoring by $E$ easily gives (i). (ii) is proved in the same way using that $\dim_E{\rm Ext}^1_M(\chi_1\varepsilon^{-1}\otimes\chi_2,\chi_1\varepsilon^{-1}\otimes\chi_2)=4$ (in the category of admissible unitary continuous representations of $M$ over $E$) and $\dim_E\Hom_M(\chi_1\varepsilon^{-1}\otimes\chi_2,\chi_2\varepsilon^{-1}\otimes\chi_1)\le 1$.
\end{proof}

\begin{rem0}
Pushing further the proof of Proposition \ref{ch}(ii) (using again \cite{Em3}), one gets that the dimension is in fact $4$ (at least when $p>2$).
\end{rem0}

\providecommand{\germ}{\mathfrak}

\bibliography{fondamental}
\bibliographystyle{amsalpha} 

\end{document}